\documentclass[12pt,twoside]{article}
\usepackage{amsmath,amsfonts,amssymb,amsthm,array,mathdots}
\usepackage{a4,enumitem,indentfirst}
\usepackage{etoolbox} % For qed at end of example

\usepackage{stmaryrd} % This is used for double brackets
%INSTEAD:

%\newcommand\llbracket{[\![}
%\newcommand\rrbracket{]\!]}
%\usepackage{xr}
%\externaldocument{gso}
% For cross-reference between parts 0 and I
\usepackage{url} % There is a reference like that
\usepackage{epsfig, color} % Needed because of Inkscape figures
\usepackage{tikz}
\usepackage{pgfplots}\pgfplotsset{compat=1.13}
\usetikzlibrary{arrows,positioning,calc,patterns}
% :set indentexpr=""

% \usepackage{showlabels}

%\graphicspath{{figures/}}

\theoremstyle{plain}

\newtheorem{theo}{Theorem}
%\renewcommand*{\thetheo}{\Alph{theo}}
%\addtocounter{theo}{0}
\newtheorem{prop}{Proposition}[section]
\newtheorem{conj}[prop]{Conjecture}
\newtheorem{coro}[prop]{Corollary}
\newtheorem{fact}[prop]{Fact}
\newtheorem{lemma}[prop]{Lemma}

\theoremstyle{definition}
\newtheorem{example}[prop]{Example}
\AtEndEnvironment{example}{\null\hfill$\diamond$}%

\newtheorem{rem}[prop]{Remark}
\AtEndEnvironment{rem}{\null\hfill$\diamond$}%

% \theoremstyle{remark}

%\newtheorem{case}{Case}
%\counterwithin*{case}{theo}
%\counterwithin*{case}{prop}
%\counterwithin*{case}{fact}

%For writing at the margin
%\newcommand{\margem}[1]{\marginpar{{\tiny {#1}}}}

\newcommand\Frenet{\mathfrak{F}}

\newcommand{\Word}{{\mathbf W}}

\newcommand{\conv}{\operatorname{convex}}
\newcommand{\nconv}{\operatorname{non-convex}}

\newcommand{\tok}{\preceq}

\newcommand{\bfx}{{\mathbf x}}
\newcommand{\bfy}{{\mathbf y}}

\newcommand{\Ac}{\operatorname{Ac}}
\newcommand{\Sing}{\operatorname{Sing}}
\newcommand{\sing}{\operatorname{sing}}

\newcommand\SO{\operatorname{SO}}

\newcommand\GL{\operatorname{GL}}
\newcommand\Lo{\operatorname{Lo}}
\newcommand\Up{\operatorname{Up}}

\newcommand\so{\operatorname{\mathfrak{so}}}
\newcommand\lo{\operatorname{\mathfrak{lo}}}

\newcommand\Spin{\operatorname{Spin}}
\newcommand\spin{\mathfrak{spin}}
\newcommand\Cliff{\operatorname{Cl}}
\newcommand\inv{\operatorname{inv}}

\newcommand\Diag{\operatorname{Diag}}
\newcommand\diag{\operatorname{diag}}
\newcommand\B{\operatorname{B}}
\newcommand\Quat{\operatorname{Quat}}
\newcommand\jacobi{\lambda}
\newcommand\fg{\mathfrak{g}}

\newcommand{\longacute}{\operatorname{acute}}
\newcommand{\longgrave}{\operatorname{grave}}

\newcommand{\longhat}{\operatorname{hat}}
\newcommand{\chop}{\operatorname{chop}}
\newcommand{\adv}{\operatorname{adv}}
\newcommand{\iti}{\operatorname{iti}}
\newcommand{\pathiti}{\operatorname{path}}
\newcommand{\Pathiti}{\operatorname{Path}}
\newcommand{\jet}{\operatorname{jet}}

\newcommand{\nmesmo}{\llbracket  n \rrbracket}
\newcommand{\nmaisum}{\llbracket n+1 \rrbracket}

\newcommand{\mult}{\operatorname{mult}}

\newcommand{\NN}{{\mathbb{N}}}
\newcommand{\ZZ}{{\mathbb{Z}}}

\newcommand{\RR}{{\mathbb{R}}}

\newcommand{\Ss}{{\mathbb{S}}}
\newcommand{\BB}{{\mathbb{B}}}
\newcommand{\DD}{{\mathbb{D}}}

\newcommand{\cH}{{\cal H}}
\newcommand{\cL}{{\cal L}}

\newcommand{\cU}{{\cal U}}

\newcommand{\cA}{{\cal A}}
\newcommand{\cB}{{\cal B}}

\newcommand{\cD}{{\cal D}}

\newcommand{\cJ}{{\cal J}}

\newcommand{\cW}{{\cal W}}

\newcommand{\bj}{{\mathbf{j}}}
\newcommand{\bL}{{\mathbf{L}}}
\newcommand{\bQ}{{\mathbf{Q}}}

\newcommand{\fa}{{\mathfrak a}}

\newcommand{\fh}{{\mathfrak h}}

\newcommand{\fl}{{\mathfrak l}}
\newcommand{\fn}{{\mathfrak n}}

\newcommand{\nondeg}{\mathcal{L}\mathbb{S}^n}
\newcommand{\Pos}{\operatorname{Pos}}
\newcommand{\Neg}{\operatorname{Neg}}
\newcommand{\Bru}{\operatorname{Bru}}
\newcommand{\cLjojo}{\cL_n^{\diamond}}
\newcommand{\Brujojo}{\operatorname{Bru}_{\acute\eta}^{\diamond}}
\newcommand{\Bruadv}{\operatorname{Bru}_{\acute\eta}^{0}}
\newcommand{\Bruchop}{\operatorname{Bru}_{\acute\eta}^{1}}

\parskip=4pt plus 8pt

\begin{document}

\title{%Combinatorialization of spaces of nondegenerate spherical curves --- I
Stratification  of spaces of \\ locally convex curves by itineraries}
\author{Victor Goulart
\and Nicolau C. Saldanha}
% \footnote{jose.g.nascimento@ufes.br;
% Departamento de Matem\'atica, UFES,
% Av. Fernando Ferrari 514; Campus de Goiabeiras, Vit\'oria, ES 29075-910, Brazil;
% Departamento de Matem\'atica, PUC-Rio,  
% R. Marqu\^es de S. Vicente 255, Rio de Janeiro, RJ 22451-900, Brazil.  

% \footnote{saldanha@puc-rio.br;
% Departamento de Matem\'atica, PUC-Rio.  }

\date{\today}
%\keywords{locally convex curves, nondegenerate curves, Bruhat stratification, Schubert stratification, Spin group, totally positive matrices} 
%\subjclass[2010]{Primary 52A55, \; Secondary 34B05}

\maketitle

\begin{abstract}

Locally convex (or nondegenerate) curves in the sphere $\Ss^n$
(or the projective space)
have been studied for several reasons,
including the study of linear ordinary differential equations
of order $n+1$.
Taking Frenet frames allows us to translate such curves $\gamma$
into corresponding curves $\Gamma$ 
in the flag space,
the  orthogonal group $\SO_{n+1}$ or its double cover $\Spin_{n+1}$.
Determining the homotopy type of the space of such closed curves or,
more generally, of spaces of such curves with prescribed
initial and final jets appears to be a hard problem,
which has been solved for $n=2$ but otherwise remains open.
This paper is a step towards solving the problem
for larger values of $n$.
In the process, we prove a related conjecture of B. Shapiro and M. Shapiro
regarding the behavior of fundamental systems of solutions 
to linear ordinary differential equations. 

We define the \emph{itinerary} 
of a locally convex curve $\Gamma:[0,1]\to\Spin_{n+1}$   
%we assign its itinerary: 
as a (finite) word $w$ in the 
alphabet $S_{n+1}\smallsetminus\{e\}$ of non-trivial permutations. 
This word encodes the succession of 
non-open Bruhat cells of $\Spin_{n+1}$
pierced by $\Gamma(t)$ as $t$ ranges from $0$ to $1$. 
We prove that, for each word $w$, the subspace of curves 
of itinerary $w$ is an embedded contractible 
(globally collared topological) submanifold of finite codimension,
thus defining a stratification of the space of curves. 
We show how to obtain explicit (topologically) transversal sections 
for each of these submanifolds. 
We study both a space of curves 
with minimum regularity hypotheses, 
where only topological transversality applies, 
and spaces of sufficiently regular curves, 
where transversality has the usual meaning. 
In both cases we also study 
%partial orders in the set of words related to  
the adjacency relation between strata. 
%(much like the Bruhat order describes the neighboring relation 
%between Bruhat cells in $\Spin_{n+1}$).

This is an important step in the construction of  
CW cell complexes mapped into
the original space of curves by weak homotopy equivalences.
Our stratification is not as nice as might be desired,
lacking for instance the Whitney property.
Somewhat surprisingly, 
the differentiability class of the curves
affects some properties of the stratification.
The necessary ingredients for the construction
of a dual CW complex are proved.
%}

\medskip 

\end{abstract}

\section{Introduction}
\label{sect:intro}

For a fixed integer $n\geq 2$,
let $\Spin_{n+1}$ be the universal covering
of the orthogonal group $\SO_{n+1}$.  
For $j\in\nmesmo=\{1,2,\ldots,n\}$, 
consider the skew-symmetric tridiagonal matrices 
$\fa_j=e_{j+1}e_j^\top-e_je_{j+1}^\top\in\so_{n+1}$ 
as elements of the Lie algebra $\spin_{n+1}$, via the 
isomorphism of Lie algebras induced by the covering map $\Pi$. 

A map $\Gamma:J\to\Spin_{n+1}$ 
defined on an interval $J\subseteq\RR$ 
is called a \emph{locally convex curve} 
%\cite{Alves-Saldanha, Goulart-Saldanha, Goulart-Saldanha0, Saldanha3, Saldanha-Shapiro} 
if it is absolutely continuous 
(hence differentiable almost everywhere) and 
its logarithmic derivative is almost everywhere of the form 
\begin{equation}
\label{equation:locallyconvex}
(\Gamma(t))^{-1}\Gamma'(t)=
\sum_{j\in\nmesmo}\kappa_j(t)\fa_j, 
\end{equation} 
for positive functions $\kappa_1,\ldots,\kappa_n:J\to(0,+\infty)$. 

Given a smooth locally convex curve $\Gamma$,  
the smooth curve $\gamma:J\to\Ss^n$ defined by 
$\gamma(t)=\Pi(\Gamma(t))e_1$ 
%on the $n$-sphere $\Ss^n\subset\RR^{n+1}$ 
satisfies 
$\det(\gamma(t),\gamma'(t),\ldots,\gamma^{(n)}(t))>0$ 
for all $t\in J$.
A parametric curve $\gamma:J\to\RR^{n+1}$ of class $C^n$ 
satisfying the inequality above is also called 
\emph{(positive) locally convex} 
\cite{Alves-Saldanha, Saldanha3, Saldanha-Shapiro}
or 
\emph{(positive) nondegenerate} 
\cite{Goulart-Saldanha, Khesin-Ovsienko, Khesin-Shapiro2, Little}. 
Such a curve $\gamma$ can be lifted to a locally convex curve 
$\Frenet_{\gamma}$ in $\SO_{n+1}$ 
(and therefore in $\Spin_{n+1}$) of class $C^1$ 
by taking the orthogonal matrix $\Frenet_{\gamma}(t)$ 
whose column-vectors are the result of applying the 
Gram-Schmidt algorithm to the ordered basis 
$(\gamma(t),\gamma'(t),\ldots,\gamma^{(n)}(t))$ of $\RR^{n+1}$. 
The orthogonal basis of $\RR^{n+1}$ thus obtained
is the (generalized) Frenet frame of the space curve $\gamma$. 
By the classical Frenet-Serret formulae, 
the coefficients $\kappa_1,\ldots,\kappa_n$
of the logarithmic derivative of $\Frenet_\gamma$ 
%are the generalized curvatures of $\gamma$, i.e.,  
admit geometric interpretations: 
$\kappa_1=v_\gamma=|\gamma'|$ is  
the velocity of $\gamma$; 
$\kappa_2=v_\gamma\varkappa_1$, 
where $\varkappa_1$ is the geodesic curvature of $\gamma$; 
$\kappa_3=v_\gamma\varkappa_2$, 
where $\varkappa_2$ is the geodesic torsion of $\gamma$, 
and so on  
\cite{Klingenberg1, Novikov-Yakovenko}.
%(see \cite{Klingenberg1, Novikov-Yakovenko}). 
The term locally convex comes from the fact that 
a nondegenerate curve $\gamma:J\to\RR^{n+1}$ 
can be partitioned into finitely many 
\emph{convex} arcs, i.e., arcs that intersect any $n$-dimensional subspace of $\RR^{n+1}$ at most $n$ times 
(with multiplicities taken into account); 
see Subsection \ref{subsect:convex}.

Given $r\in\NN^\ast$ and $z_0,z_1\in\Spin_{n+1}$, 
let $\cL_n^{[C^r]}(z_0;z_1)$ denote the space of 
locally convex curves $\Gamma:[0,1]\to\Spin_{n+1}$ 
of differentiability class $C^r$ with endpoints 
$\Gamma(0)=z_0$ and $\Gamma(1)=z_1$. 
We endow this space with the usual  
$C^r$ topology and consider the problem 
of describing its homotopy type. 
This is equivalent to the problem of studying the homotopy 
type of the space $\nondeg(z_0;z_1)$ of 
nondegenerate spherical curves $\gamma:[0,1]\to\Ss^n$ 
satisfying $\Frenet_\gamma(0)=z_0$ and $\Frenet_\gamma(1)=z_1$ 
%with respect to the usual $C^n$ norm of 
with the subspace topology inherited from 
$C^n([0,1],\RR^{n+1})$ 
(see Subsection \ref{subsect:Hilbert}). 
Some historical motivation for this problem 
is given at the end of this introduction. 
Of course, we have the natural homeomorphism 
$\cL_n^{[C^r]}(z_0;z_1)\approx\cL_n^{[C^r]}(1;z_0^{-1}z_1)$. 
The present paper provides an important preliminary step 
for the construction of an abstract cell complex $\cD_n(z)$
weak homotopy equivalent to 
$\cL_{n}^{[C^r]}(z)=\cL_{n}^{[C^r]}(1;z)$. 
The existence of $\cD_n(z)$ 
and the construction of its lowest dimensional 
skeletons is addressed in \cite{Goulart-Saldanha-cw},
which can be considered a sequel to the present paper
(see also %the last sections of 
\cite{Goulart-Saldanha} 
for a preliminary version). 

%In order to use a differential topology approach, 
% We also discuss briefly Hilbert manifold structures for spaces of curves:
% this subject has been discussed in many previous papers
% and will be further detailed in \cite{gsie} (in preparation).
In Subsection \ref{subsect:Hilbert} 
we endow the space $\cL_{n}^{[C^r]}(z_0;z_1)$ 
with a convenient Banach manifold atlas. 
For technical reasons, 
we also consider alternate Hilbert manifold versions 
$\cL_n^{[H^{r}]}(z_0;z_1)$ of these spaces. % ($r\in\NN^\ast$). 
A particularly interesting case is 
$\cL^{[H^1]}_n(z_0;z_1)$, where we are allowed 
to perform certain constructions that 
violate the continuity of the logarithmic derivative 
(e.g. the homotopies in the proofs of 
Lemmas \ref{lemma:convex2} and \ref{lemma:tok0}).  
Our approach is reminiscent of the construction 
in \cite{Klingenberg2} 
of the Hilbert manifold $H^1([0,1],M)$ of absolutely 
continuous curves in a compact Riemannian manifold $M$. 
General results from the homotopy theory of infinite-dimensional 
manifolds 
\cite{Burghelea-Henderson, Burghelea-Saldanha-Tomei1, Henderson, Palais} 
(explicitly, Facts \ref{fact:BST} and \ref{fact:BH} below) 
imply that, for fixed $z_0,z_1\in\Spin_{n+1}$, 
all versions of $\cL_n^\ast(z_0;z_1)$ 
%$\ast\in\{[H^r],[C^{r'}]\,\vert\,r\neq 2, \, r'\neq 1\}$, 
we are interested in 
are indeed homeomorphic: 
these include $\ast=H^1$ and $\ast=H^r, C^r$ for large $r$. 
This justifies dropping the distinctive superscripts $\ast$ 
and referring simply to the spaces $\cL_n(z_0;z_1)$  
when no serious confusion is likely to arise. 
In some situations, though, the distinction is crucial   
%see for instance  
%Remark \ref{rem:tok0} and 
(e.g. Section \ref{sect:Hk}). 

In order to state our main results, 
we rely on the Bruhat stratification 
of the spin group, studied in \cite{Goulart-Saldanha0}. 
We invoke many notations and results directly from that paper. 
For convenience, 
some recollection is provided in Subsection~\ref{subsect:Bruhat}. 
%adopting a conveninent system of labels: 
%for instance, Lemma \ref{lemma:transition} 
%of \cite{Goulart-Saldanha0} is herein refered to as 
%Lemma \cite{Goulart-Saldanha0}.\ref{lemma:transition}. 

In a nutshell, $\Spin_{n+1}$ is a disjoint union 
of unsigned Bruhat cells $\Bru_\sigma$, indexed on the 
symmetric group $S_{n+1}$ of permutations of the set 
$\nmaisum$. 
Each unsigned Bruhat cell $\Bru_\sigma$ has 
exactly $2^{n+1}$ connected components 
$\Bru_{q\acute\sigma}$, 
called signed Bruhat cells, 
each one an embedded submanifold of $\Spin_{n+1}$ 
diffeomorphic to $\RR^d$, where 
$d=\inv(\sigma)$ is the number of inversions 
of the permutation $\sigma$ 
(Corollary 1.2 of \cite{Goulart-Saldanha0}).  
The collection of signed Bruhat cells is indexed by the group 
$\widetilde\B^+_{n+1}\subset\Spin_{n+1}$, 
the lift of the subgroup $\B^+_{n+1}\subset\SO_{n+1}$ 
of signed permutation matrices with positive determinant. 
The acute map  
$\sigma\in S_{n+1}\mapsto \acute\sigma\in\widetilde\B^+_{n+1}$, 
which is not a homomorphism, 
is a right inverse to the natural projection 
$\widetilde \B^+_{n+1}\to S_{n+1}$ and is
defined in Equation \eqref{equation:acutegravei} 
in Subsection~\ref{subsect:Bruhat}  
(also, Equation (2)  
of \cite{Goulart-Saldanha0}). 
We can then write each element $z\in\widetilde\B^+_{n+1}$ as 
$z=q\acute\sigma$ for unique $\sigma\in S_{n+1}$ and 
$q\in\Quat_{n+1}$. 
Here, $\Quat_{n+1}\subset\widetilde\B^+_{n+1}$ 
is the lift to $\Spin_{n+1}$ of the subgroup 
$\Diag^+_{n+1}\subset\B^+_{n+1}$ of diagonal matrices. 

Let $\eta\in S_{n+1}$ have the maximum number of inversions, 
$\inv(\eta)=n(n+1)/2$, i.e., let $\eta:j\mapsto n+2-j$. 
The element $\eta$ is called the top permutation of $S_{n+1}$ 
and is often denoted in the literature by $w_0$.
%(the maximum with respect to the Bruhat order).  
%An usual notation for the Coxeter element is $w_0$, 
%but we spare this symbol to represent other kind of words 
%described below.
The cells $\Bru_{q\acute\eta}$, $q\in\Quat_{n+1}$, are open, 
and their union $\Bru_\eta$ 
is a dense open subspace of the spin group. 
We call the complement 
$\Sing_{n+1}=\Spin_{n+1}\smallsetminus\Bru_\eta$ 
the \emph{singular set} of the spin group;  
this is a singular variety of codimension one. 
Accordingly, we define the \emph{singular set} of a 
locally convex curve $\Gamma:[t_0,t_1]\to\Spin_{n+1}$ as 
$\sing(\Gamma)=
\Gamma^{-1}[\Sing_{n+1}]\smallsetminus\{t_0,t_1\}
\subset(t_0,t_1)$;
the elements of $\sing(\Gamma)$ are sometimes called
the moments of non-transversality between 
the osculating flag of $\gamma=(\Pi\circ\Gamma)e_1$
and the standard complete flag of $\RR^{n+1}$ 
\cite{Shapiro-Shapiro3}.
Theorem 3 of \cite{Goulart-Saldanha0} 
implies that nondegenerate curves $\Gamma\in\cL_n(z_0;z_1)$ 
have finite singular sets $\sing(\Gamma)\subset (0,1)$.  

Recall that the Hausdorff distance
\cite{Falconer}
between two nonempty compact sets $X, Y \subset [0,1]$ is:
\[ d_{\cH}(X,Y) = \max \left\{
(\sup_{x\in X} \inf_{y \in Y} |x-y|),
(\sup_{y\in Y} \inf_{x \in X} |x-y|) \right\}; \]
we also define $d_{\cH}(\emptyset,X) = 1$ for $X \ne \emptyset$
and  $d_{\cH}(\emptyset,\emptyset) = 0$.
Let $\cH([0,1]) \subset 2^{[0,1]}$ be the 
set of compact subsets of $[0,1]$;
this is a complete metric space with the Hausdorff distance where
the empty set is an isolated point.

\begin{theo}
\label{theo:Hausdorff}
Given $z_0,z_1\in\Spin_{n+1}$, 
the map $\sing: \cL_n(z_0;z_1) \to \cH([0,1])$ defined by  
$\Gamma\mapsto\sing(\Gamma)$ is continuous.
\end{theo}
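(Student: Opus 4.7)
My plan is to fix $\Gamma\in\cL_n(z_0;z_1)$ and verify continuity of $\sing$ at $\Gamma$ by establishing the two one-sided Hausdorff bounds that continuity amounts to. By Theorem \ref{theo:chopadvance} of \cite{Goulart-Saldanha0} I write $\sing(\Gamma)=\{t_1<\cdots<t_m\}\subset(0,1)$. Since every topology on $\cL_n(z_0;z_1)$ under consideration refines the uniform topology on paths in $\Spin_{n+1}$, I would reason directly with sequences $\Gamma_k\to\Gamma$ uniformly, and fix an arbitrary target precision $\epsilon>0$ so small that the intervals $(t_i-\epsilon,t_i+\epsilon)$ are pairwise disjoint and contained in $(0,1)$.

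For the upper bound $\sing(\Gamma_k)\subset N_\epsilon(\sing(\Gamma))$, let $K_\epsilon=[0,1]\smallsetminus\bigcup_i(t_i-\epsilon,t_i+\epsilon)$. Then $\Gamma(K_\epsilon)$ is a compact subset of the open set $\Bru_\eta$; choose an open $U$ with $\Gamma(K_\epsilon)\subset U\subset\overline{U}\subset\Bru_\eta$. Uniform convergence gives $\Gamma_k(K_\epsilon)\subset U$ for $k$ large, so $\sing(\Gamma_k)\cap K_\epsilon=\emptyset$. The one delicate point arises when $z_0$ or $z_1$ lies in $\Sing_{n+1}$: then $K_\epsilon$ stops short of the respective endpoint, and I would need a uniform lower bound on the time it takes every locally convex curve close to $\Gamma$ to enter $\Bru_\eta$ after starting at $z_0$ (and symmetrically for $z_1$). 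Such a uniform exit time follows from the positivity of the $\kappa_j$ in \eqref{equation:locallyconvex} combined with the chop analysis of \cite{Goulart-Saldanha0} applied at the endpoints.

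For the lower bound $\sing(\Gamma)\subset N_\epsilon(\sing(\Gamma_k))$, I would fix $t^*=t_i$, shrink $\epsilon$ so that $t^*$ is the only singular moment of $\Gamma$ in $[t^*-\epsilon,t^*+\epsilon]$, and let $\Bru_{q_-\acute\eta}$ and $\Bru_{q_+\acute\eta}$ be the signed Bruhat components containing $\Gamma(t^*-\epsilon)$ and $\Gamma(t^*+\epsilon)$ respectively. The key claim is $q_-\neq q_+$: a genuine singular crossing by a locally convex curve must change the quaternionic sign, because the chop/advance description of singular moments in \cite{Goulart-Saldanha0} assigns $t^*$ a non-trivial itinerary letter. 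Granting this, openness of signed cells forces $\Gamma_k(t^*\pm\epsilon)\in\Bru_{q_\pm\acute\eta}$ for large $k$, and since these two signed cells are distinct connected components of the open set $\Bru_\eta$, the continuous image $\Gamma_k([t^*-\epsilon,t^*+\epsilon])$ cannot remain inside $\Bru_\eta$, producing some $t^*_k\in\sing(\Gamma_k)\cap(t^*-\epsilon,t^*+\epsilon)$, as required.

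The principal obstacle is the separation claim $q_-\neq q_+$: it is the one step that genuinely uses the local structure of locally convex curves rather than soft topology, and I would justify it by unpacking the itinerary-at-a-singular-moment formalism of \cite{Goulart-Saldanha0}, where positivity of the $\kappa_j$ makes each crossing topologically transverse. The endpoint uniform-exit-time issue in upper semicontinuity is a secondary technical point, handled by the same localized chop analysis applied at $t=0$ and $t=1$.
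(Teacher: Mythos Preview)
Your upper semicontinuity argument is essentially the same as the paper's (which packages it inside Lemma~\ref{lemma:novanishingletter}, handling the endpoint issue by a compactness argument over the family together with Theorem~\ref{theo:chopadvance} of \cite{Goulart-Saldanha0}).

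The lower semicontinuity argument, however, has a genuine gap: the separation claim $q_-\neq q_+$ is false in general. With $\Gamma(t^\ast)\in\Bru_{\sigma_0}$ and $\rho_0=\eta\sigma_0$, Equation~\eqref{equation:chopadvancei} gives $\adv(\Gamma(t^\ast))=\chop(\Gamma(t^\ast))\,\hat\rho_0$, so $q_-=q_+$ exactly when $\hat\rho_0=1$. Having a nontrivial itinerary letter only says $\rho_0\neq e$; it does not force $\hat\rho_0\neq 1$. Indeed the paper notes (just before Lemma~\ref{lemma:cLjojo}) that $\Bruadv\cap\Bruchop=\Bru_{\acute\eta}$ holds precisely for $n\le 3$; for $n\ge 4$ there exist singular cells whose chop and advance land in the \emph{same} open signed Bruhat cell (for instance $\hat\eta=1$ when $n=6$, so a singular moment in $\Bru_e$ has $q_-=q_+$). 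In those cases your connectedness argument produces no contradiction and the singular time could, as far as your proof shows, disappear.

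The paper's fix is Lemma~\ref{lemma:conjecture}: rather than comparing global signed Bruhat cells, it passes via a projective transformation to a triangular coordinate chart $\cU_{q_\ast}$ around $z_\ast=\Gamma(t^\ast)$, where the transition lemma (total positivity) guarantees that the curve exits into the local component $q_\ast\bQ[\Pos_\eta]$ of $\cU_{q_\ast}\smallsetminus\Sing_{n+1}$ but enters from a different local component. This local separation is exactly what survives when the global cells coincide, and it is what your ``topologically transverse crossing'' intuition is really pointing at. If you replace your $q_-\neq q_+$ step by this local statement, your outline goes through for all $n$ and coincides with the paper's proof.
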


This is obtained as an immediate consequence of 
Lemma \ref{lemma:novanishingletter}. 
These results imply in particular that 
when a locally convex curve is deformed 
(while remaining in $\cL_n(z_0;z_1)$), 
points in the singular set
may join or split but never vanish or appear out of nowhere. 
This is closely related to 
the known fact \cite{Anisov, Shapiro-Shapiro, Shapiro} 
that convex curves 
%with endpoints $z_0,z_1\in\Spin_{n+1}$ 
(when they exist)  
form a connected component $\cL_{n,\conv}(z_0;z_1)$ 
of $\cL_n(z_0;z_1)$ (Lemma \ref{lemma:convex2}).
A locally convex curve $\Gamma:[t_0,t_1]\to\Spin_{n+1}$ 
is said to be \emph{(globally) convex} if 
$\sing((\Gamma(t_0))^{-1}\Gamma) = \emptyset$. 
We review in Subsection \ref{subsect:convex} 
the equivalence between this notion of convexity and
the geometric, more classical one 
\cite{Khesin-Shapiro2, Saldanha3, Shapiro-Shapiro, Shapiro}, 
introduced above in terms of the 
nondegenerate curve $\gamma=(\Pi\circ\Gamma)e_1$. 
Another closely related result is Lemma \ref{lemma:conjecture}, 
which proves Conjecture 2.6 of \cite{Shapiro-Shapiro3}.

%It turns out that $\sing(\Gamma)=\emptyset$ implies 
%that $\Gamma$ is convex, 
%but the reciprocal is not true. Indeed, 
%for any $\sigma\in S_{n+1}$, $\sigma\ne\eta$, take 
%$\fh=\sum_{j\in\nmesmo}\sqrt{j(n+1-j)}\fa_j\in\spin_{n+1}$ 
%and $\Gamma:[-\epsilon,\epsilon]\to\Spin_{n+1}$, 
%$\Gamma(t)=\acute\sigma\exp(t\fh)$, as in 
%Example \ref{example:fh} of \cite{Goulart-Saldanha0}.  
%For small $\epsilon>0$, the curve $\Gamma$ is convex but 
%$0\in\sing(\Gamma)$ 
%(see also Example \ref{example:inout} below). 

Given a locally convex curve 
$\Gamma:[t_0,t_1]\to\Spin_{n+1}$, 
write $\sing(\Gamma)=\{\tau_1<\cdots<\tau_\ell\}\subset(t_0,t_1)$ 
and, for each $j\in\llbracket\ell\rrbracket$, let  
$\Gamma(\tau_j)\in\Bru_{\eta\sigma_j}$, 
$\sigma_j\in S_{n+1}\smallsetminus\{e\}$. 
Let $\Word_{n}$ be the set of finite words in the 
alphabet $S_{n+1}\smallsetminus\{e\}$.   
We define the \emph{itinerary} of $\Gamma$ by 
$\iti(\Gamma)=(\sigma_1,\ldots,\sigma_\ell)\in\Word_n$. 

We define our working space of 
locally convex curves as
\begin{equation}
\label{equation:cL}
\cL_{n}=\bigsqcup_{q\in\Quat_{n+1}}\cL_{n}(1;q).
\end{equation}
Given $z_0,z_1\in\Spin_{n+1}$, 
we can determine explicitly an 
element $q\in\Quat_{n+1}$ such that 
the spaces $\cL_{n}(z_0;z_1)$ and 
$\cL_{n}(q)=\cL_{n}(1;q)$ 
are homeomorphic. 
Therefore, in order to understand 
all the spaces $\cL_{n}(z_0;z_1)$, one may restrict attention 
to the disjoint union of $2^{n+1}$ spaces 
in Equation \eqref{equation:cL}. 
The problem of determining whether the spaces
$\cL_{n}(q_0)$ and $\cL_n(q_1)$ are homeomorphic
(where $q_0,q_1 \in \Quat_{n+1}$,  $q_0 \ne q_1$)
has been considered in \cite{Alves-Saldanha,Saldanha-Shapiro};
Corollary 1.1 % \ref{coro:notcenter}
in \cite{Goulart-Saldanha-cw} gives partial results.
%  We hope to offer further contributions in 
%  forthcoming work;
%  see also Conjecture 15.2 % \ref{conj:Lnlarge}
%  in \cite{Goulart-Saldanha-cw}.
For $n = 3$,
our methods allow for a rather complete discussion in
\cite{Alves-Goulart-Saldanha},
culminating in the computation of the homotopy type
of $\cL_3(z)$ for all $z \in \Spin_4$:
see Corollary \ref{coro:center} and Theorem \ref{theo:L3}
in the Final Remarks.

% joint work with E.~Alves and B.~Shapiro.

For $w =(\sigma_1,\ldots,\sigma_\ell)\in\Word_n$, set
\begin{equation}
\label{equation:word}
\hat w=\hat\sigma_1\cdots\hat\sigma_\ell\in\Quat_{n+1}, 
\qquad 
\dim(w)=\dim(\sigma_{1})+\cdots+\dim(\sigma_{\ell}), 
\end{equation}
where $\sigma\in S_{n+1}\mapsto\hat\sigma\in\Quat_{n+1}$ is 
the $\longhat$ map defined in 
Equation \eqref{equation:acutegravei} 
%(also, Equation \eqref{equation:acutegrave} 
%of \cite{Goulart-Saldanha0}) 
and $\dim(\sigma)=\inv(\sigma)-1$, for all $\sigma\in S_{n+1}$. 
Notice that $\acute\eta\hat w\acute\eta\in\Quat_{n+1}$ for 
all words $w\in\Word_n$.
Let $\cL_n[w]\subset\cL_n$ be the 
subset of curves with itinerary $w$.

\begin{rem}
\label{rem:collared}
We recall the concepts of tubular neighborhood and collared topological submanifold. 
Let $M_0$ be a (finite or infinite dimensional) manifold
and $M_1 \subseteq M_0$:
the subset $M_1$ is called
a (globally) \emph{collared topological submanifold of codimension $d$}
if and only if there exists an open set $\hat A_0$,
$M_1 \subseteq \hat A_0 \subseteq M_0$,
which is a \emph{tubular neighborhood} of $M_1$
(based on \cite{Brown}).
We say that $\hat A_0$ as above is a tubular neighborhood if
there exist
an open ball $B \subseteq \RR^d$, $0 \in B$,
a continuous projection $\Pi: \hat A_0 \to M_1 \subseteq \hat A_0$
and a continuous map $\hat F: \hat A_0 \to B$
such that the map $(\Pi,\hat F): \hat A_0 \to M_1 \times B$ is a homeomorphism.
% Recall that $\Pi$ being a projection implies $\Pi\circ\Pi = \Pi$.
% Compact oriented surfaces of class $C^2$ contained in $M_0 = \RR^3$
Embedded $C^2$ submanifolds of Hilbert spaces 
with finite codimension 
are collared topological submanifolds:
in this case $\Pi$ can be taken to be the normal projection. 
\end{rem}

%Here and in the next results, 
We are mostly interested in the cases $H^1$ 
(essentially no regularity hypotheses) and 
$H^r$ for large $r$. 

\begin{theo}
\label{theo:stratification}
%Given $k\in\NN^\ast$, $k\neq 2$, 
For $r\in\NN^\ast$ and $w\in\Word_{n}$, we have 
%the space 
$\cL^{[H^r]}_{n}[w]\subseteq%$ is contained in 
\cL^{[H^r]}_n(\acute\eta\hat w\acute\eta)$; also:
\begin{enumerate}
\item\label{item:theo:stratification:H1}
{The set $\cL^{[H^1]}_{n}[w]$
is a contractible globally collared topological submanifold of 
$\cL^{[H^1]}_{n}(\acute\eta\hat w\acute\eta)$ 
of codimension $\dim(w)$.} 
\item\label{item:theo:stratification:Hklarge}
{If $r\geq3$, then $\cL_n^{[H^r]}[w]$ is 
an embedded $C^{r-1}$ submanifold of 
$\cL^{[H^r]}_{n}(\acute\eta\hat w\acute\eta)$ 
of codimension $\dim(w)$ 
(with tubular neighborhood fibred by normal balls).
}   
\end{enumerate}
\end{theo}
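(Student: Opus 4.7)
The plan is to establish the inclusion first, then tackle part (2) by Banach-manifold transversality, and finally handle part (1) through explicit topological constructions.

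For the inclusion $\cL_n^{[H^r]}[w]\subseteq\cL_n^{[H^r]}(\acute\eta\hat w\acute\eta)$, the endpoint $\Gamma(1)$ can be read off from the combinatorial data of the itinerary using the local Bruhat calculus of \cite{Goulart-Saldanha0}. Between consecutive singular moments, $\Gamma$ traverses the open cell $\Bru_\eta$ along a convex arc (Lemma \ref{lemma:convex2}), whose endpoint behavior is canonical and yields the outer factors of $\acute\eta$; each crossing of $\Bru_{\eta\sigma_j}$ at $\tau_j$ contributes a factor of $\hat\sigma_j$, and telescoping yields $\Gamma(1)=\acute\eta\hat w\acute\eta$. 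For the codimension, note that $\inv(\eta\sigma_j)=\inv(\eta)-\inv(\sigma_j)$, so $\Bru_{\eta\sigma_j}$ has codimension $\inv(\sigma_j)$ in $\Spin_{n+1}$; requiring $\Gamma$ to hit this cell imposes $\inv(\sigma_j)$ conditions while leaving the crossing time $\tau_j$ free, for a net codimension $\inv(\sigma_j)-1=\dim(\sigma_j)$ per singular moment, and total codimension $\dim(w)$.

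For part (2), with $r\geq 3$, I would apply standard Banach-manifold transversality. Let $\Delta=\{\bft\in\RR^\ell:0<t_1<\cdots<t_\ell<1\}$ and consider the evaluation map
\begin{equation*}
E:\cL_n^{[H^r]}(\acute\eta\hat w\acute\eta)\times\Delta\to(\Spin_{n+1})^\ell,\qquad (\Gamma,\bft)\mapsto(\Gamma(t_1),\ldots,\Gamma(t_\ell)),
\end{equation*}
which is of class $C^{r-1}$; the hypothesis $r\geq 3$ ensures enough smoothness to perform the subsequent transversality argument. To check that $E$ is transverse to $\prod_j\Bru_{\eta\sigma_j}$, one shows that compactly supported perturbations of the coefficients $\kappa_j$ of the logarithmic derivative near each $\tau_j$ surject onto the tangent space of $\Spin_{n+1}$ at $\Gamma(\tau_j)$, which follows from a standard bump-function argument exploiting the infinite-dimensional freedom in $\kappa_j$. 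The preimage $E^{-1}(\prod_j\Bru_{\eta\sigma_j})$ is then a $C^{r-1}$ submanifold, and projecting out $\bft$ — using Theorem \ref{theo:Hausdorff} to recognize the $\tau_j$ as the canonical singular points of $\Gamma$ — yields $\cL_n^{[H^r]}[w]$ with the stated embedded $C^{r-1}$ structure. The tubular neighborhood fibred by normal balls comes from the Banach-space version of the tubular neighborhood theorem applied to this submanifold.

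For part (1), the $H^1$ case, the classical transversality machinery is unavailable, so I would construct explicit topological charts. Around each $\Gamma_0\in\cL_n^{[H^1]}[w]$, pick small disjoint transit intervals $I_j\ni\tau_j$ and parametrize a neighborhood $U$ of $\Gamma_0$ in $\cL_n^{[H^1]}(\acute\eta\hat w\acute\eta)$ by (i) the restriction of $\Gamma$ outside $\bigcup_j I_j$, ranging in a Hilbert slice $V$, and (ii) for each $j$, a pair $(\tau_j,\xi_j)\in I_j\times\RR^{\inv(\sigma_j)-1}$, where $\xi_j$ parametrizes the transverse offset of the curve from $\Bru_{\eta\sigma_j}$ after quotienting out the time direction. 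This gives a homeomorphism $U\cong V\times\prod_j\bigl(I_j\times\RR^{\dim(\sigma_j)}\bigr)$ under which $\cL_n^{[H^1]}[w]\cap U$ corresponds to $V\times\prod_j(I_j\times\{0\})$, yielding the global collar and establishing the codimension. Contractibility then follows by a two-stage retract: first deform the convex arcs in $V$ to canonical form (using Lemma \ref{lemma:convex2}), then shrink each transit interval to its minimal configuration, yielding a canonical curve determined by $w$ alone. The main obstacle is this part (1): ensuring that the transit parametrizations around different singular points patch into a genuine topological product, while coping with the low regularity of $H^1$, where small perturbations of $\kappa_j$ can produce large variations in the geometry of $\Gamma$ near its singularities.
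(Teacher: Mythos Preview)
Your transversality argument for Part~(2) has a genuine gap at the step ``projecting out $\bft$ \ldots\ yields $\cL_n^{[H^r]}[w]$''. Theorem~\ref{theo:Hausdorff} tells you $\sing(\Gamma)$ is Hausdorff-close to $\sing(\Gamma_0)$, hence contained in $\bigcup_j I_j$; it does \emph{not} say that $\sing(\Gamma)\cap I_j$ is a singleton. A curve $\Gamma$ near $\Gamma_0$ with $\Gamma(t_j)\in\Bru_{\eta\sigma_j}$ could in principle have further singular moments in $I_j$, so that $(\Gamma,\bft)\in E^{-1}\bigl(\prod_j\Bru_{\eta\sigma_j}\bigr)$ while $\iti(\Gamma)\neq w$. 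The paper closes this gap with Lemma~\ref{lemma:singleletter}: a locally convex arc with image in $\cU_{z_0}$ that meets $\Bru_{z_0}$ does so exactly once, and that is its only singular moment. The proof uses the semigroups $\Pos_\eta,\Neg_\eta$ and a projective rescaling; it is not a consequence of Hausdorff continuity. With that lemma in hand the paper bypasses the product $\cL_n\times\Delta$ altogether: it first solves $f_{i,k_i}(\Gamma(\tilde t_i))=0$ for $\tilde t_i$ (uniquely, by item~\ref{item:transverse} of Remark~\ref{rem:pathcoordinates}) and then takes the remaining $k_i-1$ coordinates of $f_{B(w,i)}(\Gamma(\tilde t_i))$ to build a submersion $F:\cA_w\to\RR^{\dim(w)}$ directly.

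Your sketch of contractibility and of the $H^1$ collar is where the real distance from a proof lies, and you correctly flag this. The paper's contractibility argument is global, not chart-by-chart: it factors $\cL_n[w]$ through the map $\pathiti:\Gamma\mapsto(\Gamma(\tau_1),\ldots,\Gamma(\tau_\ell))$, whose image is the set $\Pathiti(w)$ of \emph{accessible} sequences (those for which each $q_j^{-1}z_j$ lies in $\Ac_{\eta\sigma_j}(q_j^{-1}z_{j+1})$). Sections~\ref{sect:acctriangle}--\ref{sect:acc} develop this accessibility notion and show $\Pathiti(w)$ is parametrized by a quasiproduct, hence diffeomorphic to a Euclidean space (Lemma~\ref{lemma:Path}); the fibers of $\pathiti$ are products of sets $\cLjojo(\cdot\,;\cdot)$, each contractible by Lemma~\ref{lemma:cLjojo}. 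Your ``shrink each transit interval to its minimal configuration'' hides exactly this issue: the target point $z_j$ you deform to must remain reachable from $z_{j-1}$ and must reach $z_{j+1}$ by convex arcs in a single open cell, and the accessibility machinery is what makes this constraint tractable. Likewise, for the $H^1$ collar the paper constructs the projection $\Pi:\cA_w\to\cL_n^{[H^1]}[w]$ explicitly, replacing each short arc around $\tilde t_i$ by its projective transform sending $\tilde z_i$ to $\check z_i=\Pi_{B(w,i)}(\tilde z_i)$; your parametrization by $(\tau_j,\xi_j)$ does not account for the point $z_j\in\Bru_{\eta\sigma_j}$ or the arc shape in $I_j$, and it is precisely this that the projective-transformation trick handles.
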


%Theorem \ref{theo:stratification} implies that 
In particular, 
all words $w\in\Word_n$ 
are realizable as itineraries of locally convex curves,  
the empty word $(\,)\in\Word_n$ being the itinerary 
of the convex curves. 
In fact, it follows from Lemma \ref{lemma:convex2} that  
$\cL_{n,\conv}=\cL_n[(\,)]$ is a contractible connected 
component of $\cL_n$ contained in $\cL_n(\hat\eta)$, 
where $\hat\eta=\acute\eta^2$,
consistently with known results \cite{Anisov, Shapiro-Shapiro, Shapiro}. 
%We have therefore $q_{(\,)}=\acute\eta^2$. 
%The map $w\mapsto q_w$ is given a simple formula  
%in Lemma \ref{lemma:cLnw}. 
%We stated the previous theorems 
%without distinctive superscripts $\ast$, 
%since they are valid in each of the versions $\cL_n^\ast$ defined in 
%Section \ref{subsect:Hilbert}. 
The proof of Theorem \ref{theo:stratification} %, though, 
%do depend on which version one actually 
%has in mind, and 
is presented %as a sequence of lemmas 
in Section \ref{sect:paths}. %which are explicit in this regard 
%(see, for instance, 
%Lemmas \ref{lemma:cLnw}, \ref{lemma:cLnwHk} 
%and \ref{lemma:submanifold}). 
%Notice that the submanifolds $\cL_n^{[H^r]}[w]$ have well-defined tubular neighborhoods for sufficiently large $r$.  
Some preliminary steps are covered in 
Sections \ref{sect:acctriangle} and \ref{sect:acc}. 

We have thus defined the \emph{itinerary stratification}  
that gives this paper its title: 
\begin{equation}
\label{equation:stratification} 
\cL_n = \bigsqcup_{w \in \Word_n} \cL_n[w];
\qquad
\cL_n[w] = 
\{ \Gamma \in \cL_n \;|\; \iti(\Gamma) = w \}.
\end{equation} 

%After the proof of Theorem \ref{theo:stratification} 
%in Section \ref{sect:paths}, 
We devote the last part of this paper 
%the remaining of the paper is devoted 
to investigate how %exactly 
these strata fit together. 
Explicit parameterizations of transversal sections 
of $\cL_n[w]$ are constructed in Section~\ref{sect:transversal} 
with this goal in mind.
Unlike the homotopy type of the spaces $\cL_n(z_0;z_1)$,
this turns out to be sensitive to the regularity class, i.e.,
on which version $\cL_n^\ast$ we are actually using 
(see Section~\ref{sect:Hk}).

We produce a simple, visual example below. 
For $n\leq4$, we use the simplified notation 
$a=a_1$, $b=a_2$, $c=a_3$, $d=a_4$ for the 
Coxeter generators of $S_{n+1}$. 
We also write a word in $\Word_n$ as a string of letters, 
as in, say, $ab[ab]abb[aba][ab]=(a,b,ab,a,b,b,aba,ab)$. 
Square brackets are used to avoid confusion between, say, 
$a[ba]=(a,ba)$, $[aba]=(aba)$ and $aba=(a,b,a)$, 
of respective lengths $2$, $1$ and $3$. 

\begin{figure}[ht]
\def\svgwidth{8cm}
%\centerline{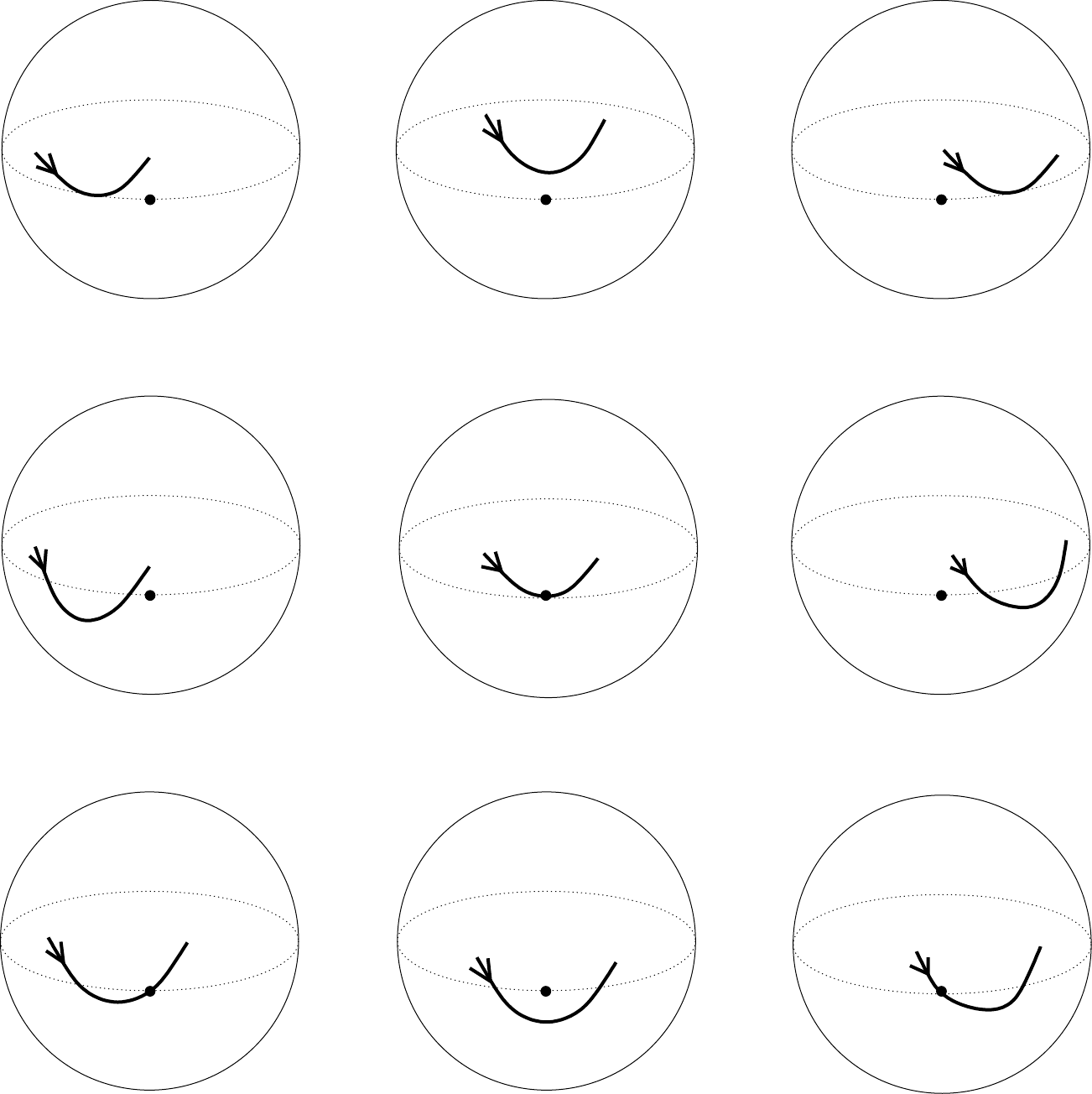}
\centerline{%
%% Creator: Inkscape inkscape 0.91, www.inkscape.org
%% PDF/EPS/PS + LaTeX output extension by Johan Engelen, 2010
%% Accompanies image file 'aba.pdf' (pdf, eps, ps)
%%
%% To include the image in your LaTeX document, write
%%   \input{<filename>.pdf_tex}
%%  instead of
%%   \includegraphics{<filename>.pdf}
%% To scale the image, write
%%   \def\svgwidth{<desired width>}
%%   \input{<filename>.pdf_tex}
%%  instead of
%%   \includegraphics[width=<desired width>]{<filename>.pdf}
%%
%% Images with a different path to the parent latex file can
%% be accessed with the `import' package (which may need to be
%% installed) using
%%   \usepackage{import}
%% in the preamble, and then including the image with
%%   \import{<path to file>}{<filename>.pdf_tex}
%% Alternatively, one can specify
%%   \graphicspath{{<path to file>/}}
%% 
%% For more information, please see info/svg-inkscape on CTAN:
%%   http://tug.ctan.org/tex-archive/info/svg-inkscape
%%
\begingroup%
  \makeatletter%
  \providecommand\color[2][]{%
    \errmessage{(Inkscape) Color is used for the text in Inkscape, but the package 'color.sty' is not loaded}%
    \renewcommand\color[2][]{}%
  }%
  \providecommand\transparent[1]{%
    \errmessage{(Inkscape) Transparency is used (non-zero) for the text in Inkscape, but the package 'transparent.sty' is not loaded}%
    \renewcommand\transparent[1]{}%
  }%
  \providecommand\rotatebox[2]{#2}%
  \ifx\svgwidth\undefined%
    \setlength{\unitlength}{624.78230975bp}%
    \ifx\svgscale\undefined%
      \relax%
    \else%
      \setlength{\unitlength}{\unitlength * \real{\svgscale}}%
    \fi%
  \else%
    \setlength{\unitlength}{\svgwidth}%
  \fi%
  \global\let\svgwidth\undefined%
  \global\let\svgscale\undefined%
  \makeatother%
  \begin{picture}(1,1.00163023)%
    \put(0,0){\includegraphics[width=\unitlength,page=1]{aba.pdf}}%
    \put(0.12332229,0.94577198){\color[rgb]{0,0,0}\makebox(0,0)[lb]{\smash{}}}%
    \put(0.10680052,0.93225411){\color[rgb]{0,0,0}\makebox(0,0)[lb]{\smash{}}}%
    \put(0.09628655,0.93676007){\color[rgb]{0,0,0}\makebox(0,0)[lb]{\smash{$[ab]b$}}}%
    \put(0.47779116,0.9337561){\color[rgb]{0,0,0}\makebox(0,0)[lb]{\smash{$bb$}}}%
    \put(0.82775397,0.93676007){\color[rgb]{0,0,0}\makebox(0,0)[lb]{\smash{$b[ab]$}}}%
    \put(0.09478453,0.57027542){\color[rgb]{0,0,0}\makebox(0,0)[lb]{\smash{$abab$}}}%
    \put(0.46577517,0.56877339){\color[rgb]{0,0,0}\makebox(0,0)[lb]{\smash{$[aba]$}}}%
    \put(0.82625185,0.57177734){\color[rgb]{0,0,0}\makebox(0,0)[lb]{\smash{$baba$}}}%
    \put(0.10229445,0.20529258){\color[rgb]{0,0,0}\makebox(0,0)[lb]{\smash{$a[ba]$}}}%
    \put(0.47478712,0.20529267){\color[rgb]{0,0,0}\makebox(0,0)[lb]{\smash{$aa$}}}%
    \put(0.82475011,0.2067946){\color[rgb]{0,0,0}\makebox(0,0)[lb]{\smash{$[ba]a$}}}%
  \end{picture}%
\endgroup%
}
%\centerline{\includegraphics[width =10cm]{aba.pdf}}
\caption{A family of curves in $\cL_2$.
The equator is dashed and the fat dot indicates $e_1$.
The vector $e_2$ is pointing to the right.}
\label{fig:aba}
\end{figure}

\begin{example}
\label{example:aba}
Let $n = 2$.
In Figure \ref{fig:aba},
we draw the nondegenerate curve $\gamma: [t_0,t_1] \to \Ss^2$, 
$\gamma(t)=\Pi(\Gamma(t))e_1$, 
as a visual representation of the corresponding locally convex curve
$\Gamma = \Frenet_{\gamma}: [t_0,t_1] \to \Spin_3$.
A letter $a = a_1$ in $\iti(\Gamma)$ 
corresponds to the curve $\gamma$ 
transversally crossing the equator 
(i.e., the great circle $x_3 = 0$)
at a point different from $\pm e_1$.
A letter $b = a_2$ occurs when the tangent geodesic (great circle)
to $\gamma$ at $t$ includes the points $\pm e_1$
but the $x_3$-coordinate of $\gamma(t)$ is non-zero.
A letter $[ab]$ indicates that the curve is tangent to the equator,
but not at $\pm e_1$.
A letter $[ba]$ declares that the curve crosses the equator transversally
at $\pm e_1$.
Finally, $[aba]$ proclaims that the curve is tangent to the equator
at $\pm e_1$.
Figure \ref{fig:aba} shows a two-parameter family
of (arcs of) curves in $\cL_2$ illustrating all these cases. 
The reader may want to compare this with the explicit 
parameterization of a tranversal section of $\cL_2[[aba]]$ 
obtained in Example~\ref{example:transversalsectionaba}. 
%Notice that we also write a word as a string of letters. 
%For instance, $baba=(b,a,b,a)$ and $b[ab]=(b,ab)$. 
%Square brackets are used to avoid confusion between, say, 
%$a[ba]=(a,ba)$, $[aba]=(aba)$ and $aba=(a,b,a)$, 
%of respective lengths $2$, $1$ and $3$. 
\end{example}

\begin{figure}[ht]
\def\svgwidth{10cm}
%\centerline{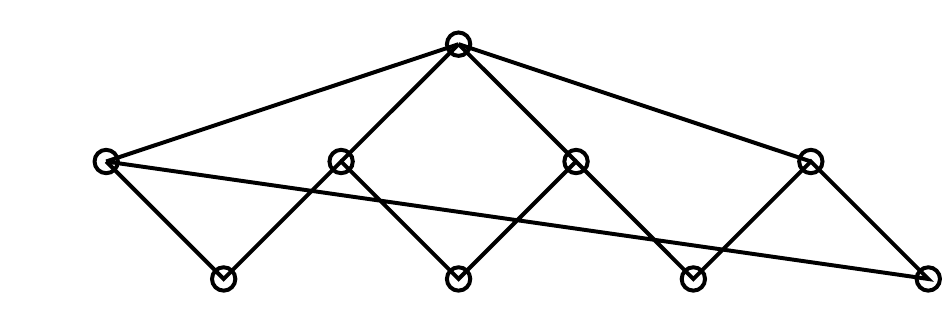}
\centerline{%
%% Creator: Inkscape inkscape 0.91, www.inkscape.org
%% PDF/EPS/PS + LaTeX output extension by Johan Engelen, 2010
%% Accompanies image file 'subaba.pdf' (pdf, eps, ps)
%%
%% To include the image in your LaTeX document, write
%%   \input{<filename>.pdf_tex}
%%  instead of
%%   \includegraphics{<filename>.pdf}
%% To scale the image, write
%%   \def\svgwidth{<desired width>}
%%   \input{<filename>.pdf_tex}
%%  instead of
%%   \includegraphics[width=<desired width>]{<filename>.pdf}
%%
%% Images with a different path to the parent latex file can
%% be accessed with the `import' package (which may need to be
%% installed) using
%%   \usepackage{import}
%% in the preamble, and then including the image with
%%   \import{<path to file>}{<filename>.pdf_tex}
%% Alternatively, one can specify
%%   \graphicspath{{<path to file>/}}
%% 
%% For more information, please see info/svg-inkscape on CTAN:
%%   http://tug.ctan.org/tex-archive/info/svg-inkscape
%%
\begingroup%
  \makeatletter%
  \providecommand\color[2][]{%
    \errmessage{(Inkscape) Color is used for the text in Inkscape, but the package 'color.sty' is not loaded}%
    \renewcommand\color[2][]{}%
  }%
  \providecommand\transparent[1]{%
    \errmessage{(Inkscape) Transparency is used (non-zero) for the text in Inkscape, but the package 'transparent.sty' is not loaded}%
    \renewcommand\transparent[1]{}%
  }%
  \providecommand\rotatebox[2]{#2}%
  \ifx\svgwidth\undefined%
    \setlength{\unitlength}{452.14337261bp}%
    \ifx\svgscale\undefined%
      \relax%
    \else%
      \setlength{\unitlength}{\unitlength * \real{\svgscale}}%
    \fi%
  \else%
    \setlength{\unitlength}{\svgwidth}%
  \fi%
  \global\let\svgwidth\undefined%
  \global\let\svgscale\undefined%
  \makeatother%
  \begin{picture}(1,0.34282213)%
    \put(0,0){\includegraphics[width=\unitlength,page=1]{subaba.pdf}}%
    \put(0.37454972,0.32086658){\color[rgb]{0,0,0}\makebox(0,0)[lb]{\smash{\textbf{\textit{$[aba]$}}}}}%
    \put(0.00059906,0.18376892){\color[rgb]{0,0,0}\makebox(0,0)[lb]{\smash{\textbf{\textit{$[ba]a$}}}}}%
    \put(0.25618919,0.18376892){\color[rgb]{0,0,0}\makebox(0,0)[lb]{\smash{\textbf{\textit{$a[ba]$}}}}}%
    \put(0.87952825,0.18997065){\color[rgb]{0,0,0}\makebox(0,0)[lb]{\smash{\textbf{\textit{$b[ab]$}}}}}%
    \put(0.63013984,0.18376892){\color[rgb]{0,0,0}\makebox(0,0)[lb]{\smash{\textbf{\textit{$[ab]b$}}}}}%
    \put(0.18757439,0.00919702){\color[rgb]{0,0,0}\makebox(0,0)[lb]{\smash{\textbf{\textit{$aa$}}}}}%
    \put(0.38708511,0.00919702){\color[rgb]{0,0,0}\makebox(0,0)[lb]{\smash{\textbf{\textit{$abab$}}}}}%
    \put(0.68001752,0.0029953){\color[rgb]{0,0,0}\makebox(0,0)[lb]{\smash{\textbf{\textit{$bb$}}}}}%
    \put(0.88572997,0.00919702){\color[rgb]{0,0,0}\makebox(0,0)[lb]{\smash{\textbf{\textit{$baba$}}}}}%
  \end{picture}%
\endgroup%
}
%\centerline{\includegraphics[width =08cm]{subaba.pdf}}
\caption{The adjacency relation between the itineraries 
below $[aba]$.}
\label{fig:subaba}
\end{figure}

We define a partial order in $\Word_n$ by
\begin{equation} 
\label{equation:poset}
w_0\tok w_1 \quad \Leftrightarrow \quad
%w_0\tok w_1\,\ast\leftrightarrow
\cL_n^{[H^1]}[w_1]\subseteq\overline{\cL_n^{[H^1]}[w_0]}.
\end{equation}
The Hasse diagram in Figure \ref{fig:subaba}  
represents the above partial order restricted to
$\{ w \in \Word_2 \;|\; w \tok [aba] \} =
\{[aba], a[ba], [ba]a, b[ab], [ab]b, aa, abab, baba, bb\}$.

Equation \eqref{equation:poset} 
defines a poset structure in $\Word_n$ that 
inherits (so to speak) some features from 
the strong Bruhat order $\leq$ in $S_{n+1}$:  
recall that 
$\sigma_0\leq\sigma_1$ in $S_{n+1}$ 
if and only if 
$\Bru_{\sigma_0}\subseteq\overline{\Bru_{\sigma_1}}$ 
in $\Spin_{n+1}$ 
(see, for instance, Corollary 1.1 
of \cite{Goulart-Saldanha0}; 
notice the reversion of the indices
in relation to Equation \eqref{equation:poset}). 

% Given the topological content of 
% Equation \eqref{equation:poset}, 
% this poset structure
% %$(\Word_n,\stackrel{\ast}{\tok})$ 
% does depend on the 
% distinctive superscript $\ast$. 
% The case $\ast=[H^1]$ 
% has the following desirable property.
%is particularly interesting. 
%well-behaved, 
%in the following precise sense:
%(we write $w_0\tok w_1\, [H^1]$ instead of 
%$w_0\stackrel{[H^1]}{\tok} w_1$): 

\begin{theo}
\label{theo:poset}
For $w_0,w_1=(\sigma_1,\ldots,\sigma_\ell)\in\Word_n$, 
%The following conditions are equivalent:
$w_0\tok w_1$ is equivalent to each one 
of the following conditions:
\begin{enumerate}[label=(\roman*)]
\item\label{item:inclusion}{$\cL_n^{[H^1]}[w_1]\subseteq
\overline{\cL_n^{[H^1]}[w_0]}$;}
\item\label{item:intersection}{$\cL_n^{[H^1]}[w_1]\cap
\overline{\cL_n^{[H^1]}[w_0]}\neq\emptyset$;}
%\item{$\cL_n^{[H^1]}[w_1]\subseteq\overline{\cL_n^{[H^1]}[w_0]}$;}
%\item{given $\Gamma_1 \in \cL_n^{[H^1]}[(\sigma_0)]$
%and an open neighborhood  $U \subset \cL_n^{[H^1]}$ of $\Gamma_1$
%there exists $\Gamma \in U \cap \cL_n^{[H^1]}[w]$;}
\item\label{item:open}{given $\Gamma_1 \in \cL_n^{[H^1]}[w_1]$, $\epsilon > 0$,  
$\sing(\Gamma_1) = \{t_1<\cdots<t_\ell\}$
% $t_1 \in (0,1)$ with $\sigma(\gamma_1;t_1) = \alpha_0$
and an open neighborhood $U \subset \cL_n^{[H^1]}$ of $\Gamma_1$ there exists
$\Gamma \in U \cap \cL_n^{[H^1]}[w_0]$
with $\Gamma$ and $\Gamma_1$
coinciding outside 
$\cup_{i\in\llbracket\ell\rrbracket}(t_i-\epsilon,t_i+\epsilon)$;}
\item\label{item:subwords}{there exist nonempty words 
$\tilde w_1,\ldots,\tilde w_\ell\in\Word_n$ such that 
$w_0=\tilde w_1\cdots\tilde w_\ell$ and,  
for all $i\in\llbracket\ell\rrbracket$, 
$\tilde w_i\tok (\sigma_i)$.}
\end{enumerate}
\end{theo}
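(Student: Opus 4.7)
The plan is to establish the cyclic chain
(i) $\Rightarrow$ (ii) $\Rightarrow$ (iv) $\Rightarrow$ (iii) $\Rightarrow$ (i);
recall that the definition of $w_0\tok w_1$ is precisely (i).
The implication (i) $\Rightarrow$ (ii) is immediate once one observes that
$\cL_n^{[H^1]}[w_1]\ne\emptyset$ by Theorem \ref{theo:stratification},
and (iii) $\Rightarrow$ (i) follows by applying (iii) along a countable
neighborhood base at any $\Gamma_1\in\cL_n^{[H^1]}[w_1]$ with $\epsilon\to0$.

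For (iv) $\Rightarrow$ (iii), I argue locally and constructively.
Given $\Gamma_1\in\cL_n^{[H^1]}[w_1]$, $\epsilon>0$, and a neighborhood $U$,
first shrink $\epsilon$ so that the intervals
$J_i=(t_i-\epsilon,t_i+\epsilon)$ are pairwise disjoint and
$\Gamma_1(J_i\smallsetminus\{t_i\})\subset\Bru_\eta$.
After left-translation and affine reparameterization, each arc
$\Gamma_1|_{J_i}$ becomes an element of $\cL_n^{[H^1]}[(\sigma_i)]$,
which by Theorem \ref{theo:stratification} lies in the component
$\cL_n^{[H^1]}(\acute\eta\hat\sigma_i\acute\eta)$.
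The hypothesis $\tilde w_i\tok(\sigma_i)$ means precisely that this
renormalized arc is an $H^1$-limit of arcs of itinerary $\tilde w_i$;
by Theorem \ref{theo:stratification} applied to $\tilde w_i$, such
approximating arcs share the same endpoints, because
$\acute\eta\hat{\tilde w_i}\acute\eta=\acute\eta\hat\sigma_i\acute\eta$.
They may therefore be glued back into $\Gamma_1$ outside $\bigcup_iJ_i$;
no regularity matching is required because we work in $H^1$.
Choosing the approximations sufficiently close yields
$\Gamma\in U\cap\cL_n^{[H^1]}[w_0]$ with
$\iti(\Gamma)=\tilde w_1\cdots\tilde w_\ell=w_0$.

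The main obstacle is (ii) $\Rightarrow$ (iv).
Fix $\Gamma_1\in\cL_n^{[H^1]}[w_1]\cap\overline{\cL_n^{[H^1]}[w_0]}$
and a sequence $\Gamma_k\in\cL_n^{[H^1]}[w_0]$ with $\Gamma_k\to\Gamma_1$.
By Theorem \ref{theo:Hausdorff}, $\sing(\Gamma_k)$ converges to
$\sing(\Gamma_1)=\{t_1<\cdots<t_\ell\}$ in the Hausdorff distance,
so for small $\epsilon$ and large $k$ the set $\sing(\Gamma_k)$
decomposes as $\bigsqcup_i\Theta_{k,i}$ with each $\Theta_{k,i}\subset J_i$
nonempty. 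Reading the letters of $\iti(\Gamma_k)=w_0$ in order
produces a factorization $w_0=\tilde w_{k,1}\cdots\tilde w_{k,\ell}$
with every $\tilde w_{k,i}$ nonempty; since $w_0$ admits only finitely
many ordered factorizations, I pass to a subsequence on which this
factorization is constant, $\tilde w_i:=\tilde w_{k,i}$.
Restricting and renormalizing $\Gamma_k$ and $\Gamma_1$ on $J_i$ produces
a sequence in $\cL_n^{[H^1]}[\tilde w_i]$ whose $H^1$-limit is the
renormalized $\Gamma_1|_{J_i}\in\cL_n^{[H^1]}[(\sigma_i)]$; matching of
the renormalized endpoints (in a single component $\cL_n^{[H^1]}(\acute\eta\hat\sigma_i\acute\eta)$) is automatic from Theorem \ref{theo:stratification} applied to both $\tilde w_i$ and $(\sigma_i)$. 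This establishes condition (ii) for the pair $(\tilde w_i,(\sigma_i))$.

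The final and hardest ingredient is the single-letter base case:
upgrading the existence of one convergent sequence to the closure
inclusion $\cL_n^{[H^1]}[(\sigma_i)]\subseteq
\overline{\cL_n^{[H^1]}[\tilde w_i]}$, i.e.\ $\tilde w_i\tok(\sigma_i)$.
The plan is to use the explicit transversal-section parameterizations
constructed in Section \ref{sect:transversal}: these sections are
compatible with translation along the stratum $\cL_n[(\sigma_i)]$, so the
subset of $\cL_n[(\sigma_i)]$ lying in $\overline{\cL_n[\tilde w_i]}$ is
both closed and open; by connectedness of $\cL_n[(\sigma_i)]$, which
follows from its contractibility (Theorem \ref{theo:stratification}), this
set is then the whole stratum. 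This closes the cycle.
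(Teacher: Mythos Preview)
Your cycle (i)$\Rightarrow$(ii)$\Rightarrow$(iv)$\Rightarrow$(iii)$\Rightarrow$(i) and the reduction of (ii)$\Rightarrow$(iv) to the single-letter case via Hausdorff convergence and a pigeonhole on factorizations match the paper's argument. The gap is in the single-letter base case, where you claim that the transversal sections of Section~\ref{sect:transversal} are ``compatible with translation along the stratum,'' making $\cL_n[(\sigma_i)]\cap\overline{\cL_n[\tilde w_i]}$ open in $\cL_n[(\sigma_i)]$. Nothing in Theorem~\ref{theo:stratification} or Section~\ref{sect:transversal} establishes this: the tubular neighborhood gives a local product $\cL_n[(\sigma)]\times B^d$, but the homeomorphism is not claimed to make the itinerary of $(\Gamma_0,b)$ depend only on $b$, and the explicit sections at different basepoints are genuinely different maps. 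Your clopen argument has no engine.

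The paper supplies that engine via projective transformations (Lemmas~\ref{lemma:tok0}--\ref{lemma:preclosure}). One introduces a curve-independent relation $w\dashv\sigma$ (existence of a short convex arc with prescribed standard endpoints $z_1\exp(\pm\theta_0\fh)$ and itinerary $w$), shows by diagonal projective scaling that it does not depend on $\theta_0$ (Lemma~\ref{lemma:tok0}), and then proves that the existence of \emph{any} sequence $\Gamma_k\to\tilde\Gamma$ with $\iti(\Gamma_k)=w$ is equivalent to $w\dashv\sigma$ (Lemma~\ref{lemma:tok1}), again by projective transformations that preserve itineraries while moving the relevant arc into a standard window. This simultaneously upgrades (ii) to (i) for single letters and yields the strong form needed in your (iv)$\Rightarrow$(iii): approximations that coincide with $\Gamma_1$ outside a small interval. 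A related wrinkle in your (iv)$\Rightarrow$(iii): left-translation and reparametrization of $\Gamma_1|_{J_i}$ does not land in $\cL_n[(\sigma_i)]$, since the endpoints $\Gamma_1(t_i\pm\epsilon)$ lie in open Bruhat cells rather than at $1$ and $q$; the paper circumvents this by working with arcs with free endpoints (Remark~\ref{rem:localarcs}), which is again justified by the same projective-transformation toolkit.
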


The empty word $(\,)\in\Word_n$ is an isolated point.
We prove Theorem \ref{theo:poset} in Section \ref{sect:poset}. 
The corresponding statement is false for $\ast = [H^r]$, 
$r$ large;
indeed, we shall see in Section \ref{sect:Hk} that 
the Whitney condition fails:
\begin{equation}
\label{equation:acbHk}
\begin{gathered}
\cL^{[H^1]}_3[[acb]] \subset \overline{\cL^{[H^1]}_3[cabca]}, \\
\cL^{[H^r]}_3[[acb]] \not\subset \overline{\cL^{[H^r]}_3[cabca]}, \qquad
\cL^{[H^r]}_3[[acb]] \cap \overline{\cL^{[H^r]}_3[cabca]} \ne \emptyset, \quad r\geq 3.
\end{gathered}
\end{equation}
Thus, in the equivalent statement to Theorem \ref{theo:poset} 
for $r\geq 3$, the equivalence between conditions 
\ref{item:inclusion} and \ref{item:intersection} 
does not hold; 
likewise, condition \ref{item:intersection} and 
\ref{item:open} are not equivalent in the $r\geq 3$ case. 
In Lemma \ref{lemma:subwordsHk} we state and prove the 
equivalence between \ref{item:intersection} and 
a version of \ref{item:subwords} for arbitrary large $r$.
%% It is worth pointing out that implication 
%% \ref{item:intersection}$\to$\ref{item:inclusion} 
%% is not valid in general for $\ast=[H^r]$, $r>1$ 
%% (see Section \ref{sect:Hk}).  

As we write this paper, 
some natural and rather basic questions concerning the partial order
$\tok$ are still open; 
%These could be regarded as counterparts to known 
%results relating the strong Bruhat order in $S_{n+1}$ 
%and the Bruhat stratification of $\Spin_{n+1}$. 
Conjecture \ref{conj:multtok} below
is essentially equivalent to Conjecture 2.4 
in \cite{Shapiro-Shapiro3}. 
For $\sigma \in S_{n+1}$, set 
$\mult(\sigma) = (\mult_1(\sigma), \ldots, \mult_n(\sigma)) \in \NN^n$,
where, for each $j\in\nmesmo$, we have 
\begin{equation}
\label{equation:mult}
\mult_j(\sigma) = (1^\sigma + \cdots + j^\sigma) - (1 + \cdots + j)
\in\NN=\{0,1,2,\ldots\}, 
\end{equation} 
as in Theorem 4 of \cite{Goulart-Saldanha0}.
For $w = (\sigma_1, \ldots, \sigma_\ell) \in \Word_n$, define
\[ \mult(w) = \mult(\sigma_1) + \cdots + \mult(\sigma_\ell) \in \NN^n. \]
For $u, v \in \NN^n$, we write $u \le v$ if and only if 
$u_j \le v_j$ for all $j \in \nmesmo$.

\begin{conj}
\label{conj:multtok}
%If $\sigma \in S_{n+1}$,
%$w \in \Word_n$, $w \tok \sigma$
%then $\mult(w) \le \mult(\sigma)$.
Given $w_0, w_1 \in \Word_n$, 
if $w_0 \tok w_1$
then $\mult(w_0) \le \mult(w_1)$.
\end{conj}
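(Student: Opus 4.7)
The plan is to reduce the problem to the case where $w_1$ is a single letter, then to reinterpret each component $\mult_j(\sigma)$ as the order of vanishing of a Plücker-type minor of $\Gamma$ at its singular point, and finally to invoke a semicontinuity-of-zeros argument for the collision of singular points as curves degenerate inside $\cL_n^{[H^1]}$.

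First, I would invoke Theorem~\ref{theo:poset}\ref{item:subwords} to reduce to $w_1 = (\sigma)$. Indeed, if $w_0 \tok w_1 = (\sigma_1, \ldots, \sigma_\ell)$ factors as $w_0 = \tilde w_1 \cdots \tilde w_\ell$ with each $\tilde w_i$ nonempty and $\tilde w_i \tok (\sigma_i)$, then additivity of $\mult$ under concatenation (immediate from its definition) reduces the conjecture to showing $\mult(\tilde w) \le \mult(\sigma)$ componentwise whenever $\tilde w \tok (\sigma)$.

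Next, I would invoke Theorem~\ref{theo:mult} of \cite{Goulart-Saldanha0} to give $\mult_j(\sigma)$ an analytic meaning: for a sufficiently smooth locally convex $\Gamma$ with an isolated singular point $t_*$ in $\Bru_{\eta\sigma}$ and each $j \in \nmesmo$, one constructs a scalar function $f_j(t)$---an appropriate $j \times j$ minor encoding the intersection of the osculating $j$-flag of $\gamma = (\Pi \circ \Gamma) e_1$ with the standard $(n+1-j)$-flag---whose order of vanishing at $t_*$ equals $\mult_j(\sigma)$. The collision argument then runs as follows: take $\Gamma^{(m)} \in \cL_n^{[H^1]}[\tilde w]$ with $\tilde w = (\tau_1, \ldots, \tau_k)$ converging to some $\Gamma^\infty \in \cL_n^{[H^1]}[(\sigma)]$; by Theorem~\ref{theo:Hausdorff} the $k$ singular points $t_1^{(m)} < \cdots < t_k^{(m)}$ of $\Gamma^{(m)}$ all coalesce onto $t_*$. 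Assuming enough regularity for $f_j^{(m)} \to f_j^\infty$ together with sufficiently many derivatives near $t_*$, a real analogue of Hurwitz's theorem (Rouché-style counting of zeros for the $f_j$) then delivers
\[ \mult_j(\sigma) \;=\; \qord_{t_*}(f_j^\infty) \;\ge\; \sum_{i=1}^k \qord_{t_i^{(m)}}(f_j^{(m)}) \;=\; \mult_j(\tilde w), \]
which is the desired componentwise bound.

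The hard part will be reconciling this analytic picture with the $H^1$ framework. The zero-order statement demands convergence in a strong topology, whereas curves in $\cL_n^{[H^1]}$ carry almost no regularity; yet passage to a larger $H^r$ is not automatic, since the failure of Whitney conditions exhibited in \eqref{equation:acbHk} shows that an $H^1$-collision witnessing $\tilde w \tok (\sigma)$ need not be realizable by $H^r$ curves. The plan would be to engineer from any $H^1$-witness a smooth (or even analytic) realization preserving the itineraries of both the sequence and the limit, presumably by surgery inside the transversal sections constructed in Section~\ref{sect:transversal}, so that the analytic argument above can be applied and the resulting inequality transferred back to $H^1$. Since Conjecture~\ref{conj:multtok} is essentially equivalent to the still-open Conjecture~2.4 of \cite{Shapiro-Shapiro3}, I expect this regularity-transfer step, rather than the zero-order semicontinuity itself, to be the real substance of the problem.
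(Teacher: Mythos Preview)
The statement is a \emph{conjecture} in the paper, not a theorem; the paper offers no proof. Your proposal is therefore a strategy toward an open problem, and you are aware of this.

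Your analytic core---reduce to $w_1=(\sigma)$ via Theorem~\ref{theo:poset}\ref{item:subwords}, interpret $\mult_j(\sigma)$ as the vanishing order of the southwest $j\times j$ minor $m_j$, then bound zeros under a limit---is exactly the argument the paper carries out in the proof of Theorem~\ref{theo:multHk}. That theorem establishes the conclusion under the stronger hypothesis that the approximation $\cL_n[w_1]\cap\overline{\cL_n[w_0]}\ne\emptyset$ holds in the $H^r$ topology for $r>r_\bullet(n)$; the derivative control needed for the zero-counting step is precisely why that threshold appears.

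You have correctly isolated the genuine obstruction: the $H^1$ relation $\tok$ need not be witnessed in $H^r$, as Equation~\eqref{equation:acbHk} and Proposition~\ref{prop:acbHk} demonstrate. Your proposed surgery via the transversal sections of Section~\ref{sect:transversal} is natural, but those same sections are used in Section~\ref{sect:Hk} to show that the $H^r$ stratification near $\cL_3[[acb]]$ has \emph{different} combinatorics depending on an auxiliary invariant $u(\Gamma)$; any smoothing that preserves the itineraries of both the sequence and the limit must navigate this. The paper records no progress on this step beyond Theorem~\ref{theo:multHk} and leaves the conjecture open (noting it is essentially Conjecture~2.4 of \cite{Shapiro-Shapiro3}, known only for $n\le 4$).
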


Given $n\in\NN$, $n\geq2$, let
\[ r_{\bullet}(n) = \left\lfloor \left(\frac{n+1}{2}\right)^2 \right\rfloor =
\max\{ \mult_j(\sigma); \sigma \in S_{n+1}, j \in \nmesmo \}. \]

\begin{theo}
\label{theo:multHk}
For $w_0,w_1\in\Word_n$,
if $r > r_{\bullet}(n)$ and
$\cL_n^{[H^{r}]}[w_1]\cap
\overline{\cL_n^{[H^{r}]}[w_0]}\neq\emptyset$, 
then $\mult(w_0) \le \mult(w_1)$.
\end{theo}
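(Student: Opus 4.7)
The plan is to localize at each singular point of a limit curve and then apply a Rolle-type bound to natural principal-minor functions along the curve. Specifically, I would fix any $\Gamma_1 \in \cL_n^{[H^r]}[w_1] \cap \overline{\cL_n^{[H^r]}[w_0]}$, write $w_1=(\sigma_1,\ldots,\sigma_\ell)$ with $\sing(\Gamma_1)=\{t_1<\cdots<t_\ell\}$ and $\Gamma_1(t_i)\in\Bru_{\eta\sigma_i}$, and choose a sequence $\Gamma_k\in \cL_n^{[H^r]}[w_0]$ with $\Gamma_k\to\Gamma_1$ in the $H^r$ topology. By Theorem \ref{theo:Hausdorff}, I can pick pairwise disjoint open intervals $I_i=(t_i-\epsilon,t_i+\epsilon)$ such that $\sing(\Gamma_k)\subset\bigsqcup_i I_i$ for all $k$ large, whereupon $w_0$ factors as $w_0=\tilde w_1\cdots\tilde w_\ell$, with $\tilde w_i$ listing the ordered Bruhat types of $\sing(\Gamma_k)\cap I_i$. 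It will then suffice to prove $\mult(\tilde w_i)\le\mult(\sigma_i)$ componentwise for each $i$ and sum.

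For each $j\in\nmesmo$ I would use the $j\times j$ principal-minor function $\phi_j:\Spin_{n+1}\to\RR$ that is nonvanishing on $\Bru_\eta$ and is adapted to the Bruhat stratification: the content of Theorem \ref{theo:mult} of \cite{Goulart-Saldanha0}, applied pointwise, is that for any locally convex curve $\Gamma$ and any $\tau\in\sing(\Gamma)$ with $\Gamma(\tau)\in\Bru_{\eta\sigma}$, the composition $F_j^\Gamma=\phi_j\circ\Gamma:[0,1]\to\RR$ has a zero of order \emph{exactly} $\mult_j(\sigma)$ at $\tau$, and $F_j^\Gamma$ does not vanish off $\sing(\Gamma)$. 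Since $\Gamma\in H^r$ implies $F_j^\Gamma\in H^r([0,1])\hookrightarrow C^{r-1}([0,1])$, the map $\Gamma\mapsto F_j^\Gamma$ is continuous into $C^{r-1}([0,1])$.

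To finish, I would invoke iterated Rolle: since $r>r_\bullet(n)\ge\mult_j(\sigma_i)$, we have $r-1\ge\mult_j(\sigma_i)$, so the derivative $(F_j^{\Gamma_1})^{(\mult_j(\sigma_i))}$ is well-defined, continuous, and nonzero at $t_i$. After shrinking $\epsilon$, it is nonvanishing throughout $I_i$, and by $C^{r-1}$ convergence the analogous derivative of $F_j^{\Gamma_k}$ is nonvanishing on $I_i$ for all $k$ large. A standard iterated Rolle argument then bounds the total order of zeros of $F_j^{\Gamma_k}$ on $I_i$ by $\mult_j(\sigma_i)$; by the order-of-vanishing property of the preceding paragraph, that total equals $\mult_j(\tilde w_i)$. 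Hence $\mult_j(\tilde w_i)\le\mult_j(\sigma_i)$ for each $j$, giving $\mult(\tilde w_i)\le\mult(\sigma_i)$, and summation finishes the proof.

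The main obstacle is securing the \emph{exact} (not just lower) order of vanishing $\mult_j(\sigma)$ of $F_j^\Gamma$ at a singular point. An order $\ge\mult_j(\sigma)$ is generic from the Bruhat cell's defining equations, but the equality relies on the positivity of the logarithmic derivative \eqref{equation:locallyconvex}, which prevents the curve from lingering in any proper Bruhat substratum --- this is exactly the role of Theorem \ref{theo:mult} of \cite{Goulart-Saldanha0}. With that input, the Rolle counting is standard, and the threshold $r>r_\bullet(n)$ is precisely what is needed to carry enough classical derivatives through the $H^r\hookrightarrow C^{r-1}$ embedding.
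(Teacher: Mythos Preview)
Your proposal is correct and follows essentially the same approach as the paper: both localize around the singular set via Theorem~\ref{theo:Hausdorff}, use the southwest $j\times j$ minor functions together with Theorem~\ref{theo:mult} of \cite{Goulart-Saldanha0} to identify the exact order of vanishing as $\mult_j(\sigma_i)$, invoke $H^r\hookrightarrow C^{r-1}$ (which is where $r>r_\bullet(n)$ enters) to guarantee continuity of the relevant derivative, and then apply the iterated Rolle bound. The only cosmetic differences are that the paper phrases things in terms of an open neighborhood rather than a sequence, and that the minors in question are the \emph{southwest} (lower-left) minors rather than principal minors.
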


Notice that these conditions imply $w_0 \tok w_1$. 
We prove Theorem \ref{theo:multHk} in Section \ref{sect:Hk}. 
In \cite{Goulart-Saldanha-cw} 
we apply an argument similar to Poincar\'e duality  
to obtain from the itinerary stratification 
(Equation \eqref{equation:stratification})  
a CW complex $\cD_n$
with a cell of dimension $\dim(w)$ for each $w \in \Word_n$
and a weak homotopy equivalence 
$c:\cD_n\to\cL_n$
(see also \cite{Goulart-Saldanha}
for an older version of this construction).
A similar finite dimensional construction is presented in 
\cite{Alves-Saldanha2} with the aim of computing  
the homotopy type of the intersections of two real Bruhat cells. 
In both cases, the Whitney condition is violated 
(see Equation \eqref{equation:acbHk}).
Theorem \ref{theo:multHk} allows us to circumvent,
in the construction of $\cD_n$,
the inconvenient fact that Conjecture \ref{conj:multtok}
remains an open problem (for $n>4$;
but see \cite{Saldanha-Shapiro-Shapiro}
and \cite{Saldanha-Shapiro-Shapiro1}
for some recent partial results).

\goodbreak
%\subsection*{History of the subject}

The space $\cL\Ss^2(I)\approx\cL_{2}(-1)\sqcup\cL_{2}(1)$ 
of closed nondegenerate curves in $\Ss^2$ 
was originally studied by J. Little in the 
seventies \cite{Little}, and shown to have three 
connected components. 
These are, in our notation: 
$\cL_{2}(+1)$, containing 
curves with an odd number of self-intersections 
(counted with multiplicity); 
$\cL_{2,\conv}(-1)$, %\subset\mathcal{L}_{2,\mathbf{-1}}$
the subspace of simple curves; and 
$\cL_{2,\nconv}(-1)$, %,\text{non-convex}}:=\mathcal{L}_{2,\mathbf{-1}}\smallsetminus\mathcal{L}_{2,\mathbf{-1},\text{convex}}$
containing curves with positive even number 
of self-intersections (again with multiplicity). 
The works of B. Khesin, B. Shapiro and M. Shapiro in the nineties 
$\cite{Khesin-Shapiro2, Shapiro-Shapiro, Shapiro}$ 
extended this result for $n$ and $z\in\Spin_{n+1}$ 
arbitrary, showing that $\cL_{n}(z)$ has one or 
two connected components: one if and only if it does not 
contain convex curves 
%(defined in Subsection \ref{subsect:convex}) 
and two otherwise, one of 
them being the contractible subspace $\cL_{n,\conv}(z)$.
% of the said convex curves. 
%We reobtain these results.  
%In \cite{Saldanha-Shapiro} it is shown that each space 
%$\cL_n(z)$ is homeomorphic to one of the spaces 
%$\cL_n(q)$, $q\in\Quat_{n+1}$; 
%also, several among the latter are homeomorphic.
In \cite{Saldanha3} the spaces $\cL_{2}(z)$ were 
completely classified into three homotopy types explicitly described. 
Our approach via the CW complex $\cD_n$ 
has already allowed further progress on 
the problem of describing the homotopy 
types of the spaces $\cL_{n}(z)$ for $n>2$.  
We expect to present our new results,  
in particular solving the problem for $n=3$. 
% in forthcoming joint work with 
% E. Alves and B. Shapiro. 
For more information, see our final remarks in Section \ref{section:final}.

Our problem is related to the study of 
linear ordinary differential operators.
This point of view was the original motivation of V. Arnold, B. Khesin, 
V. Ovsienko, B. Shapiro and M. Shapiro for considering this class of questions 
in the early nineties 
\cite{Khesin-Ovsienko, Khesin-Shapiro1, Khesin-Shapiro2, Shapiro-Shapiro3}.
Conjectures 2.4 and 2.6 of \cite{Shapiro-Shapiro3} 
(mentioned earlier in this introduction) are related to 
an attempt at a generalized (multiplicative) Sturm 
Theory for linear ordinary differential equations of order $n+1>2$, 
the case $n=1$ standing for the classical (additive) one. 
%Sturm separation Theorem. 
The first of these conjectures has been proved 
for $n\leq4$ in \cite{Saldanha-Shapiro-Shapiro, Shapiro-Shapiro4}, 
but the general case remains open;  
the second one is essentially our Lemma \ref{lemma:conjecture}. 
The second author was first led to consider  
this subject while studying the critical 
sets of nonlinear differential operators with periodic 
coefficients, in a series of works with D. Burghelea 
and C. Tomei 
\cite{Burghelea-Saldanha-Tomei1,Burghelea-Saldanha-Tomei2,Burghelea-Saldanha-Tomei3,Saldanha-Tomei}. 

In Section \ref{sect:gso} we review notation and results about
Bruhat cells and related topics (Subsection \ref{subsect:Bruhat}),
Hilbert and Banach manifolds of curves (Subsection \ref{subsect:Hilbert}),
and convex curves (Subsection \ref{subsect:convex}).
Section \ref{sect:sing} is dedicated to the proof of
Theorem \ref{theo:Hausdorff}.
The concept of accessibility
is discussed in Section \ref{sect:acctriangle}
for the lower triangular group and
in Section \ref{sect:acc} for the spin group.
Section \ref{sect:paths} is dedicated to the proof of
Theorem \ref{theo:stratification}.
In Section \ref{sect:transversal} we present
transversal sections to the strata.
Section \ref{sect:poset} is dedicated to the proof of
Theorem \ref{theo:poset}.
Section \ref{sect:Hk} begins by proving Theorem \ref{theo:multHk};
we then discuss in detail the neighborhood of the stratum
defined by the permutation $acb = [3142] \in S_4$.
Some final remarks % (mostly about work in progress) 
are given in Section \ref{section:final}:
Theorem \ref{theo:L3} (which is proved in \cite{Alves-Goulart-Saldanha})
describes the homotopy type of
the spaces $\cL_3(q)$, $q \in \Quat_4$
(locally convex curves in $\Ss^3$).

We would like to thank:  
Em\'ilia Alves, 
Boris Khesin, 
Ricardo Leite, 
Carlos Gustavo Moreira, 
Paul Schweitzer, 
Boris Shapiro, 
Michael Shapiro, 
Carlos Tomei, 
David Torres, 
Cong Zhou 
and 
Pedro Z\"{u}lkhe 
for helpful conversations
and the referee for a careful review of the text. 
We also thank
the University of Toronto and the University of Stockholm 
for the hospitality during our visits. 
Both authors thank CAPES, CNPq and FAPERJ (Brazil) for financial support.
More specifically, the first author benefited from
CAPES-PDSE grant 99999.014505/2013-04 
during his Ph. D. and also 
CAPES-PNPD post-doc grant 88882.315311/2019-01.

\section{Notations and facts} 
\label{sect:gso}

\subsection{Bruhat cells}
\label{subsect:Bruhat}

We briefly recall some definitions from \cite{Goulart-Saldanha0}. 
For $j\in\nmesmo$, set 
$\alpha_j:\RR\to\Spin_{n+1}$, 
\[\alpha_j(\theta)=\exp(\theta\fa_j), \qquad 
\fa_j=e_{j+1}e_j^\top-e_je_{j+1}^\top\in\so_{n+1}=\spin_{n+1}.\] 
The spin group $\Spin_{n+1}$ is the universal double cover
of $\SO_{n+1}$ and is contained in the Clifford algebra $\Cliff_{n+1}^{0}$.
%where the $\fa_j\in\spin_{n+1}$ 
%are the Lie algebra elements 
%in Equation \eqref{equation:locallyconvex}. 
Also, set $\acute a_j=\alpha_j(\pi/2), 
\grave a_j=(\acute a_j)^{-1}\in
\widetilde\B^+_{n+1}\subset\Spin_{n+1}$. 
Recall that the group $\widetilde\B^+_{n+1}$ is 
the lift to the spin group of the group 
$\B^+_{n+1}\subset\SO_{n+1}$ of signed permutation 
matrices with positive determinant; 
the elements $\acute a_j$ are generators 
of $\widetilde\B^+_{n+1}$.    

A \emph{reduced word} for a permutation $\sigma\in S_{n+1}$ 
is an expression $\sigma=a_{i_1}\cdots a_{i_k}$ 
of minimum length $k=\inv(\sigma)$; 
here, $a_j=(j,j+1)\in S_{n+1}$, 
$j\in\nmesmo$, are the Coxeter generators of the symmetric group. 
Given a reduced word as above, set   
\begin{equation}
\label{equation:acutegravei}
\begin{gathered}
\acute \sigma= \longacute(\sigma) = \acute a_{i_1}\cdots \acute a_{i_k},
\quad
%\in\widetilde\B^+_{n+1}, \quad
\grave \sigma= \longgrave(\sigma) =
%(\acute a_{i_1})^{-1}\cdots (\acute a_{i_k})^{-1}\in\widetilde\B^+_{n+1}, 
\grave a_{i_1}\cdots \grave a_{i_k}\in\widetilde\B^+_{n+1}, \\
\hat \sigma=\longhat(\sigma) = \acute\sigma(\grave\sigma)^{-1}\in\Quat_{n+1}\subset\widetilde\B^+_{n+1}.
\end{gathered}
\end{equation}
% (\longacute(\sigma^{-1}))^{-1} =
% also, $\grave a_i = (\acute a_i)^{-1}$.
Let $\vartriangleleft$ be the covering relation for the Bruhat order; 
thus, $\sigma_0\vartriangleleft\sigma_1$ implies 
$\inv(\sigma_1)=1+\inv(\sigma_0)$. 
An important special case is when 
$\sigma_{j-1}\vartriangleleft\sigma_j=\sigma_{j-1} a_{i_j}$: 
in this case, there exists reduced words 
$\sigma_{j-1}=a_{i_1}\cdots a_{i_{j-1}}$ and 
$\sigma_j=a_{i_1}\cdots a_{i_{j-1}}a_{i_j}$.
%Given two permutations $\sigma_0, \sigma_1\in S_{n+1}$, 
%we write $\sigma_0\vartriangleleft\sigma_1$ if and only if 
%are a reduced word $\sigma_0=a_{j_1}\cdots a_{j_\ell}$ 
%and $j_{\ell +1}\in\nmesmo$ such that 
%$\sigma_1=a_{j_1}\cdots a_{j_\ell}a_{j_{\ell+1}}$ 
%is a reduced word. 
%This is the covering relation of the 
%right weak Bruhat order in $S_{n+1}$. 
%%but no familiarity with this subject is assumed.  
%%except for Theorem \ref{theo:Bruhat} 
%%of \cite{Goulart-Saldanha0} and its corollaries. 

Let $\Lo^1_{n+1}$ be the group of lower triangular matrices with 
unit diagonal entries and $\lo^1_{n+1}$ be its Lie algebra. 
Notice that $\Lo^1_{n+1}$ is nilpotent and contractible.
Let $\cU_I\subset\SO_{n+1}$ be the open 
contractible set of orthogonal matrices %the identity matrix 
%in the subset of the orthogonal matrices 
$Q\in\SO_{n+1}$ that 
admit an $LU$ decomposition $Q=LU$ with 
$L\in\Lo^1_{n+1}$ and $U\in\Up^+_{n+1}$. 
Here, $\Up^+_{n+1}$ is the group of upper triangular 
matrices with positive diagonal entries. 
Also, let $\cU_1\subset\Spin_{n+1}$ be 
the connected component of the identity 
in the subset $\Pi^{-1}[\cU_I]\subset\Spin_{n+1}$. 
The diffeomorphism $\bL:\cU_1\to\Lo^1_{n+1}$ 
is defined by taking the $L$-part $\bL(z)$ in the 
$LU$ decomposition of the matrix $\Pi(z)$.
Each $z_0\in\Spin_{n+1}$ has an open neighborhood 
$\cU_{z_0}=z_0\,\cU_1\subset\Spin_{n+1}$ diffeomorphic to $\Lo^1_{n+1}$. 
We also consider the set of matrices 
$z_0\Lo^1_{n+1}=\Pi(z_0)\Lo^1_{n+1}\subset\GL^+_{n+1}$  
and the diffeomorphism 
$\bL_{z_0}:\cU_{z_0}\to z_0\Lo^1_{n+1}$, 
$\bL_{z_0}(z)=z_0\bL(z_0^{-1}z)$, with inverse 
$\bQ_{z_0} = \bL_{z_0}^{-1}:z_0\Lo^1_{n+1}\to\cU_{z_0}$, 
where $\bQ_{z_0}(M)\in\cU_{z_0}$ is the lift to $\Spin_{n+1}$ 
of the $Q$-part of the $QR$ decomposition of $M$. 
We are particularly interested in the case 
$z_0\in\widetilde\B^+_{n+1}$. 
In this situation, 
the matrices in $z_0\Lo^1_{n+1}$ are, up to signs, triangular 
matrices with rows shuffled by the underlying permutation 
of $z_0$. 
We call $(\cU_{z_0},\bL_{z_0})$ 
a \emph{triangular system of coordinates}. 
When $z_0=1$, we write simply $\bQ=\bQ_1$, 
in accordance with $\bL=\bL_1$. 

For each $j\in\nmesmo$, the Lie algebra element 
$\fa_j\in\spin_{n+1}$ is taken to a positive multiple of 
$\fl_j=e_{j+1}e_j^\top\in\lo^1_{n+1}$ 
by the derivative of the map $\bL$ 
at the identity (Lemma 4.1 of \cite{Goulart-Saldanha0}). 
Moreover, the arcs of the curves $\alpha_j$ contained in $\cU_1$ 
are taken by $\bL$ into (orientation-preserving) 
reparameterizations of $\jacobi_j(t)=\exp(t\fl_j)$, $t\in\RR$. 
We say that an absolutely continuous 
map $\Gamma:J\to\Lo^1_{n+1}$, 
defined on an interval $J\subseteq\RR$, 
is a \emph{convex curve} if and only if 
its logarithmic derivative is given almost everywhere by 
\[(\Gamma(t))^{-1}\Gamma'(t)=\sum_{j\in\nmesmo}\beta_j(t)\fl_j, \]
for positive functions $\beta_1,\ldots,\beta_n:J\to(0,+\infty)$. 
In other words, $\Gamma:J\to\Lo^1_{n+1}$ is a convex curve 
if and only if $\bQ\circ\Gamma:J\to\Spin_{n+1}$ is 
a locally convex curve. 
Notice that $(\bQ\circ\Gamma)[J]\subset\cU_1$; 
in Subsection \ref{subsect:convex} we review the fact that a 
locally convex curve $\Gamma:J\to\Spin_{n+1}$ is 
strictly convex if and only if 
$\Gamma[J]\subset\cU_{z_0}$ for some $z_0\in\Spin_{n+1}$. 
Notice that if $\Gamma:[t_0,t_1]\to\Spin_{n+1}$ is strictly convex 
then $\Gamma$ is globally convex, i.e., 
$\sing((\Gamma(t_0))^{-1}\Gamma)=\emptyset$; 
the reciprocal is not quite true.

Some distinguished Lie algebra elements are 
\begin{equation}
\label{equation:nfhLfh}
\begin{gathered}
\fn=\sum_{j\in\nmesmo}\fl_j,\quad
\fh_L=\sum_{j\in\nmesmo}\sqrt{j(n+1-j)}\;\fl_j\in\lo^1_{n+1}, \\
\fh=\sum_{j\in\nmesmo}\sqrt{j(n+1-j)}\;\fa_j\in\spin^1_{n+1}. 
\end{gathered}
\end{equation}
For arbitrary elements $g_0\in G$,  
$\mathfrak{v}\in\mathfrak{g}$ of 
a Lie group and its Lie algebra, denote by 
$\Gamma_{g_0;\mathfrak{v}}:\RR\to G$ 
the smooth parametric curve 
$\Gamma_{g_0;\mathfrak{v}}(t)=g_0\exp(t\mathfrak{v})$. 
The smooth curves 
$\Gamma_{L_0;\fn}$, $\Gamma_{L_0;\fh_L}$ 
and $\Gamma_{z_0;\fh}$,  
studied in Example 4.2 of 
\cite{Goulart-Saldanha0}, are particularly useful. 
The first two are convex and the third one is locally convex. 

%Given $k\in\nmaisum$, let $\nmaisum^{(k)}$ be 
%the collection of subsets $\bi\subseteq\nmaisum$ with 
%$k$ elements. 
%Given a square matrix $M$ of size $n+1$ 
%and $\bi,\bj\in\nmaisum^{(k)}$,  
%let $M_{\bi,\bj}$ be the $k\times k$ submatrix 
%obtained by supressing the entries $M_{ij}$ 
%such that $(i,j)\notin\bi\times\bj$. 
%We write $\bi\leq\bj$ if and only if 
%$i_1\leq j_1, \ldots, i_k\leq j_k$, where 
%$\bi=\{i_1<\cdots<i_k\}, \bj=\{j_1<\cdots<j_k\}\in\nmaisum^{(k)}$. 
%We say that a matrix $L\in\Lo^1_{n+1}$ is \emph{totally positive} 
%if and only if, for all $k\in\nmesmo$ and all 
%$\bi,\bj\in\nmaisum^{(k)}$, we have 
%$\det L_{\bi,\bj}>0$ if $\bi\leq\bj$. 
We denote by $\Pos_\eta\subset\Lo^1_{n+1}$ the 
open subset of totally positive matrices 
\cite{Berenstein-Fomin-Zelevinsky}. 
For a reduced word $\eta=a_{i_1}\cdots a_{i_m}$ 
($m=n(n+1)/2$) for the Coxeter element of $S_{n+1}$, 
the map 
$(0,+\infty)^m \to \Pos_\eta$,
$(t_1,\ldots,t_m)
\mapsto\jacobi_{i_1}(t_1)\cdots\jacobi_{i_m}(t_m)$, 
is a diffeomorphism. 
More generally, there are embedded submanifolds 
$\Pos_\sigma, \Neg_\sigma\subset\Lo^1_{n+1}$, 
$\sigma\in S_{n+1}$, 
such that, given a reduced word $\sigma=a_{i_1}\cdots a_{i_k}$, 
$k=\inv(\sigma)$, the maps 
$(0,+\infty)^k \to \Pos_\sigma$,
$(t_1,\ldots,t_k)
\mapsto\jacobi_{i_1}(t_1)\cdots\jacobi_{i_k}(t_k)$, 
and 
$(-\infty,0)^k \to \Neg_\sigma$,
$(t_1,\ldots,t_k)
\mapsto\jacobi_{i_1}(t_1)\cdots\jacobi_{i_k}(t_k)$, 
are diffeomorphisms. 
We have 
\[
\overline\Pos_\eta=\bigsqcup_{\sigma\in S_{n+1}}\Pos_\sigma, 
\quad
\overline\Neg_\eta=\bigsqcup_{\sigma\in S_{n+1}}\Neg_\sigma, 
\quad \overline\Pos_\eta\cap\overline\Neg_\eta=\{I\} = \Pos_{e} = \Neg_{e}. 
\]
This is closely related to the Bruhat stratifications:
\[
\Spin_{n+1}=\bigsqcup_{\sigma\in S_{n+1}}\Bru_\sigma, \qquad
\Bru_\sigma 
=\bigsqcup_{q\in\Quat_{n+1}}\Bru_{q\acute\sigma}.
%=\bigsqcup_{q\in\Quat_{n+1}}\Bru_{q\grave\sigma}.
\]
Recall that $z\in\Bru_\sigma$ if and only if there exist upper triangular 
matrices $U_0,U_1$ %with positive diagonal entries 
such that $\Pi(z)=U_0\Pi(\acute\sigma)U_1$. 
We have 
$\Bru_{q\acute\sigma}=\cU_{q\acute\sigma}\cap\Bru_\sigma$.
Also, given a reduced word $\sigma=a_{i_1}\cdots a_{i_k}$, 
$k=\inv(\sigma)$, the map 
$(0,\pi)^k \to \Bru_{q\acute\sigma}$,
$(\theta_1,\ldots,\theta_k) \mapsto
q\alpha_{i_1}(\theta_1)\cdots\alpha_{i_k}(\theta_k)$, 
is a diffeomorphism 
(Corollary 1.2 of \cite{Goulart-Saldanha0}). 
For all $\sigma\in S_{n+1}$ and $q\in\Quat_{n+1}$,
the set $q\bQ[\Pos_\sigma]$
is a contractible connected component of the submanifold
$\cU_q\cap\Bru_{q\acute\sigma}$.
Similarly, $q\bQ[\Neg_\sigma]$
is a contractible connected component of 
$\cU_q\cap\Bru_{q\grave\sigma}$.
We have $q\grave\sigma=\tilde q\acute\sigma
\in\widetilde\B^+_{n+1}$, $\tilde q=q\hat\sigma^{-1}\in\Quat_{n+1}$. 

For $L_0,L_1\in\Lo^1_{n+1}$, we write $L_0\ll L_1$ if and only if 
$L_0^{-1}L_1\in\Pos_\eta$ 
(equivalently, $L_1^{-1}L_0\in\Neg_\eta$) and 
$L_0\leq L_1$ if and only if $L_0^{-1}L_1\in\overline\Pos_\eta$ 
(equiv., $L_1^{-1}L_0\in\overline\Neg_\eta$). 
These are partial orders in $\Lo_{n+1}^1$ 
(Lemma 5.2 of \cite{Goulart-Saldanha0}). 
We have $L_0\ll L_1$ if and only if 
there is a convex curve $\Gamma:[0,1]\to\Lo^1_{n+1}$ 
satisfying $\Gamma(0)=L_0$ and $\Gamma(1)=L_1$ 
(Lemma 5.3 of \cite{Goulart-Saldanha0}). 
Convex curves $\Gamma:J\to\Lo^1_{n+1}$ are such that, for $t_0<t<t_1$ in $J$, we have 
$\Gamma(t)\in(\Gamma(t_0)\Pos_\eta)\cap(\Gamma(t_1)\Neg_\eta)$ 
(Lemma 5.7 of \cite{Goulart-Saldanha0}). 

\emph{Projective transformations} are 1-1
correspondences between (locally) convex curves that 
preserve itineraries and singular sets. 
We consider two types of them: 
\begin{enumerate}
\item\label{item:projUp}
{Given an upper triangular matrix $U$ 
with positive diagonal entries, we assign to each  
locally convex curve $\Gamma:[t_0,t_1]\to\Spin_{n+1}$ its 
projective transform $\Gamma^U:[t_0,t_1]\to\Spin_{n+1}$ 
given by $\Gamma^U(t)=
\bQ(U^{-1}\Gamma(t))$;}
\item\label{item:projDiag}
{Given $\lambda>0$, consider the diagonal matrix 
$E_\lambda=\diag(1,\lambda,\ldots,\lambda^n)$. 
We assign to each convex curve $\Gamma:[t_0,t_1]\to\Lo^1_{n+1}$ 
its projective transform $\Gamma^\lambda:[t_0,t_1]\to\Lo^1_{n+1}$  given by 
$\Gamma^\lambda(t)=E_\lambda^{-1}\Gamma(t)E_\lambda$.}
\end{enumerate}

Projective transformations come from the 
smooth actions of Lie groups: 
\begin{align*}
\Spin_{n+1}\times\Up^+_{n+1}\to\Spin_{n+1}, &\quad
(z,U)\mapsto z^U=\bQ(U^{-1}z), \\ 
\Lo_{n+1}^{1}\times(0,+\infty)\to\Lo^1_{n+1}, &\quad
(L,\lambda)\mapsto L^\lambda=E_\lambda^{-1}LE_\lambda, 
\end{align*}
We abuse the distinction between $z\in\Spin_{n+1}$ and 
$\Pi(z)\in\SO_{n+1}$ in the first formula, so that 
$\bQ(U^{-1}z)$ is the lift to the spin group of the $Q$-part in the 
$QR$ factorization of the invertible matrix $U^{-1}\Pi(z)$. 
Both these actions preserve signed Bruhat cells 
$\Bru_{q\acute\sigma}$ 
(we consider $\bL[\cU_1\cap\Bru_{q\acute\sigma}]$ 
as the corresponding signed Bruhat cell in $\Lo^1_{n+1}$); 
the subgroup $\Up^1_{n+1}\subset\Up^+_{n+1}$ 
of matrices with unit diagonal entries 
acts transitively on each signed Bruhat cell. 
See Section 6 in \cite{Goulart-Saldanha0}. 

In projective transformations of type \ref{item:projUp}, 
the lift to $\Spin_{n+1}$ is made in such a way that,
for each $t$, $\Gamma(t)$ and $\Gamma^U(t)$ 
are in the same signed Bruhat cell $\Bru_{q\acute\sigma}$. 
Also notice that if $\Gamma(t_\ast)\in\Bru_{q\acute\sigma}$, 
then, for each $z\in\Bru_{q\acute\sigma}$, there is a 
projective transformation of type \ref{item:projUp} such that 
$\Gamma^U(t_\ast)=z$. 
Moreover, the matrix $U$ can always be taken in the 
subgroup $\Up^1_{n+1}\subset\Up^+_{n+1}$ 
of upper triangular matrices with unit diagonal entries; 
in type \ref{item:projDiag}, notice that, for all $t$, we have 
$\lim_{\lambda\to+\infty}\Gamma^\lambda(t)=I$. 

The maps $\chop, \adv: \Spin_{n+1} \to \acute\eta \Quat_{n+1} \subset
\widetilde \B_{n+1}^{+}$
are defined by 
\begin{equation}
\label{equation:chopadvancei}
\adv(z)=q_a \acute\eta, \quad
\chop(z) = q_c \grave\eta, \quad
z\in\Bru_{z_0}\subset\Bru_{\sigma_0}, \quad 
z_0=q_a \acute\sigma_0=q_c \grave\sigma_0,
\end{equation}
where $z_0\in\widetilde\B^+_{n+1}$, 
$\sigma_0 \in S_{n+1}$ and $q_a,q_c\in\Quat_{n+1}$.  
%$\acute\eta=\acute a_1 \acute a_2 \acute a_1 \acute a_3 \acute a_2 \acute a_1 
%\cdots \acute a_n \acute a_{n-1}\cdots \acute a_2 \acute a_1
%\in\widetilde\B^+_{n+1}$.
%where $z\in\Bru_{z_0}\subset\Bru_{\sigma_0}$ 
%and $\eta\sigma_0=\sigma_1$.
For  $\rho_0=\eta\sigma_0$, we have
%$\adv(z)=z_0 \longacute(\rho_0^{-1})=z_0 (\grave\rho_0)^{-1}$ and $\chop(z)\acute\rho_0=z_0$. 
%In particular,
$\adv(z)=\chop(z)\hat\rho_0$.
Given a locally convex curve $\Gamma:J\to\Spin_{n+1}$, 
for each $t\in J$, there is $\epsilon>0$ such that 
$\Gamma[(t-\epsilon,t)]\in\Bru_{\chop(\Gamma(t))}$ and 
$\Gamma[(t,t+\epsilon)]\in\Bru_{\adv(\Gamma(t))}$ 
(Theorem 3 of \cite{Goulart-Saldanha0});  
notice that these are open signed Bruhat cells.

\subsection{Hilbert and Banach manifolds of curves}
\label{subsect:Hilbert}

There are many possible choices for the exact definition 
and topology of our spaces of locally convex curves. 
Notice that there are sections with a similar purpose in 
\cite{Saldanha3, Saldanha-Shapiro, Saldanha-Zuhlke1}. 
We also plan to discuss this subject 
in greater detail and generality in \cite{gsie}.

% As a small example, one can admit only smooth curves and 
% consider the (Fr\'echet) topology induced by a family of 
% $C^r$ seminorms, $r\in\NN$. 
% %in $C^{\infty}([0,1],\RR^{(n+1)\times(n+1)})$. 
% As a large example, one can admit all the absolutely continuous 
% curves satisfying Equation \eqref{equation:locallyconvex} 
% and consider the compact-open topology or the $C^0$ norm.  
% In this section, we try and achieve a reasonable compromise 
% between these options. 
% For many arguments and constructions, 
% the exact choice is immaterial. 

Given $r\in\NN=\{0,1,2,\ldots\}$ 
and a finite dimensional real vector space $V$,
we are interested in the Banach spaces $C^r(V) = C^r([0,1];V)$
and the Hilbert space $H^r(V) = H^r([0,1];V)$.
The space $H^r(V)$ consists of functions
$f:[0,1]\to V$ of Sobolev class $H^r=W^{r,2}$. 
In more detail, for $r\geq1$, we have $f\in H^r(V)$ if 
$f:[0,1]\to V$ is of class $C^{(r-1)}$, 
its $(r-1)$-th derivative $f^{(r-1)}$ is absolutely continuous 
and its $r$-th derivative $f^{(r)}$ (defined a.e.) 
is a function of class $L^2$. 
We follow the convention that $H^0(V)=L^2([0,1],V)$. 
%In the realm of PDEs (see \cite{Evans}),
%Sobolev spaces $W^{k,p}(U,\RR)$ are usually defined in terms of 
%weak derivatives when $U\subseteq\RR^N$ is a 
%connected subset of an Euclidean space.
%We are interested in the special case $N=1$,
%where the space $H^{k-1}(V)$ admits the simple description above
%in terms of classical derivatives up to order $k-1$ plus a 
%condition of square-summability on the (a.e. defined) $k$-th derivative  
%$\Lambda^{k}$.

% We may obtain a differentiable structure as follows. 
% The linear distribution of vector subspaces $g\mapsto gV$ is locally integrable (?). 
% Given $g\in G$, denote by $G(g,V)\subset G$ the local patch 

Consider a compact Lie group $G$ contained
in a finite dimensional associative algebra $A$
(such as $G = \SO_{n+1} \subset A = \RR^{(n+1)\times(n+1)}$
or $G = \Spin_{n+1} \subset A = \Cliff_{n+1}^{0}$).
Define $C^r(A)$ and $H^r(A)$ as above.
The spaces
$C^r(G) = C^r([0,1];G) \subset C^r(A)$ and
$H^r(G) = H^r([0,1];G) \subset H^r(A)$ 
of functions whose images are contained in $G$ 
are Banach and Hilbert submanifolds, respectively.
% We also consider the Hilbert manifold $H^r(G)$, $r\geq 1$, 
% of curves $\Gamma:[0,1]\to G$ of class $H^r$. 
% In the case where $G$ is contained in a finite dimensional
% associative algebra $A$ (perhaps $A = \RR^{N\times N}$),
% $H^r(G)$ is interpreted as a subset (and Hilbert smooth submanifold)
% of the Hilbert space $H^r(A)$. 
% This is the case for $G=\Spin_{n+1}$, a subset of the Clifford algebra. 
% %In the general case, we explore the fact that $G$ is locally 
% %isomorphic to a matrix Lie group;
% %details are given in Section \ref{sect:HkGV}. 
% An alternative would be to adapt the construction 
% in \cite{Klingenberg2} 
% of a Hilbert manifold atlas for the space $H^1([0,1],M)$
% (where $M$ is a compact Riemannian manifold).
Notice that
% the initial and final points are free and that
the maps $\Gamma \mapsto \Gamma(t_0)$, $t_0 \in [0,1]$,
are smooth surjective submersions onto $G$.
%We shall soon consider restrictions of these maps
%and it will then not be clear whether the restrictions
%are still submersions or surjective.

Let $T\subset\fg$ be a vector subspace 
which generates $\fg$ (as a Lie algebra).
% spanned by 
% $\fa_1,\ldots,\fa_n$. 
% It projects onto the set of (skew-symmetric) 
% tridiagonal matrices of $\so_{n+1}$. 
% We say that
An absolutely continuous 
curve $\Gamma:[0,1]\to G$ % \Spin_{n+1}$
is \emph{$T$-holonomic} 
if $(\Gamma(t))^{-1}\Gamma'(t)\in T$ whenever
$\Gamma'(t)$ is defined
(which is almost always).
% for almost every $t\in[0,1]$.
For $r \ge 1$,
the subsets $C^r(G;T) \subset C^r(G)$
and $H^r(G;T) \subset H^r(G)$ 
of $T$-holonomic curves are smooth 
Banach and Hilbert submanifolds, respectively.
In our example, $G = \Spin_{n+1}$ and
$T$ is spanned by $\fa_1,\ldots,\fa_n$. 
Let $\cJ \subset T$ be the open cone
of linear combinations of  $\fa_1,\ldots,\fa_n$ 
with positive coefficients. 
A holonomic curve is \emph{locally convex}
if $(\Gamma(t))^{-1}\Gamma'(t)\in \cJ$ whenever defined.
This defines open subsets
$\cL_n^{[C^r]}(\cdot\,;\cdot) \subset C^r(G;T)$ for $r \ge 1$
and
$\cL_n^{[H^r]}(\cdot\,;\cdot) \subset H^r(G;T)$ for $r \ge 2$
of locally convex curves.
These are Banach and Hilbert manifolds, respectively.
The case $H^1$ requires a more delicate discussion and is postponed.

If we fix the initial point, we have submanifolds
$\cL_n^{[C^r]}(z_0;\cdot) \subset \cL_n^{[C^r]}(\cdot\,;\cdot)$ and
$\cL_n^{[H^r]}(z_0;\cdot) \subset \cL_n^{[H^r]}(\cdot\,;\cdot)$.
Indeed, multiplication by $z \in \Spin_{n+1}$ shows that $\Gamma(0)$
as a function of $\Gamma \in \cL_n(\cdot\,;\cdot)$ is a submersion.
In these submanifolds the map $\mu$, $\mu(\Gamma) = \Gamma(1)$,
is called {\em monodromy}.
Lemma \ref{lemma:submersion} below shows that likewise
$\cL_n(z_0;z_1) \subset \cL_n(z_0;\cdot)$
is a submanifold.

% For $Q_0 \in G$,
% we also consider the Banach submanifold $C^r_{Q_0}(G;T) \subset C^r(G;T)$
% and the Hilbert submanifold $H^r_{Q_0}(G;T)\subset H^r(G;T)$ 
% of $T$-holonomic curves
% % (of the appropriate differentiability class)
% $\Gamma:[0,1]\to G$
% % of class $H^r$ 
% satisfying $\Gamma(0)= Q_0$.

Recall from Equation \eqref{equation:locallyconvex}
that a locally convex curve is characterized
by its initial point $z_0 = \Gamma(0)$
and by the positive functions $\kappa_j$, $j \in \nmesmo$.
If $\Gamma$ is of class $H^r$, $r > 1$,
the functions $\kappa_j$ are in $H^{r-1}([0,1];\RR)$.
We could use this system of coordinates to produce an alternative
description of the Hilbert manifold structure
of the space $\cL_n^{[H^r]}(\cdot\,;\cdot)$.
We follow a similar method to define $\cL_n^{[H^1]}(\cdot\,;\cdot)$.
The functions $\kappa_j$ would then be in $H^0 = L^2$:
the difficulty is that the set of positive functions
is not open in $L^2([0,1];\RR)$.
We circumvent this difficulty by defining functions
$\xi_j\in H^0$ by
\begin{equation}
\label{equation:xis}
\xi_j=\kappa_j-\dfrac{1}{\kappa_j},\qquad 
\kappa_j=\dfrac{\xi_j+\sqrt{\xi_j^{2}+4}}{2}.
\end{equation}
By definition, a locally convex curve $\Gamma$
(assumed to be absolutely continuous)
is in $\cL_n^{[H_1]}(\cdot\,;\cdot)$
if the corresponding functions $\xi_j$
(through Equations \eqref{equation:locallyconvex}
and \eqref{equation:xis})
are in $H^0 = L^2([0,1])$.
Conversely, given functions $\xi_j\in H^0$, 
Equation \eqref{equation:xis} above yields positive functions $\kappa_j$:
Equation \eqref{equation:locallyconvex} is then an ODE,
defining $\Gamma$ (see \cite{gsie} for details). 
Thus, the initial point $z_0$ and the functions $\xi_j$ 
give a smooth Hilbert manifold structure to the space $\cL_n^{[H^1]}(\cdot\,;\cdot)$. 
As in the case $r\geq 2$, 
$\cL_n^{[H^1]}(z_0;\cdot) \subset \cL_n^{[H^1]}(\cdot\,;\cdot)$ 
is clearly a submanifold for each $z_0\in\Spin_{n+1}$. 

\begin{lemma}
\label{lemma:submersion}
Consider a topology $C^r$, $r > 1$, $H^1$ or $H^r$, $r > 2$, and 
the monodromy map $\mu: \cL_n(z_0;\cdot) \to \Spin_{n+1}$,
$\mu(\Gamma) = \Gamma(1)$, for a fixed $z_0\in\Spin_{n+1}$.
The map $\mu$ is a surjective submersion.
In particular, $\cL_n(z_0;z_1) \subset \cL_n(z_0;\cdot)$
is a nonempty submanifold for each $z_1\in\Spin_{n+1}$.
\end{lemma}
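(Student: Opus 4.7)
The plan is to compute $d\mu$ via variation of parameters and then reduce surjectivity of the derivative to the Lie-algebraic fact that $\fa_1,\ldots,\fa_n$ generate $\spin_{n+1}$ as a Lie algebra.

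I first address surjectivity of $\mu$ itself: $\cL_n(z_0;z_1)$ is nonempty for every $z_0,z_1\in\Spin_{n+1}$ by a classical construction which here can be obtained by moving to a triangular chart via $\bQ$, building a convex arc between suitable basepoints using Lemma~\ref{lemma:totallypositive}, and concatenating with short standard locally convex arcs to reach the prescribed endpoint.

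Next, I fix $\Gamma_0\in\cL_n(z_0;\cdot)$ and compute $d\mu_{\Gamma_0}$. A tangent vector at $\Gamma_0$ is encoded by a variation $\delta X(t)=\sum_j\delta\kappa_j(t)\fa_j$ of the logarithmic derivative (with $\delta\xi$ playing the analogous role in the $H^1$ setting via Equation~\eqref{equation:xis}). Writing $V=\Gamma_0^{-1}\partial_s\Gamma_s|_{s=0}$ and differentiating $\Gamma_s'=\Gamma_s X_s$ at $s=0$ gives $V'=-\operatorname{ad}(X_0)V+\delta X$, $V(0)=0$; the substitution $\tilde V=\operatorname{Ad}(\Gamma_0)V$ converts this to $\tilde V'=\operatorname{Ad}(\Gamma_0)(\delta X)$ and yields
\[
d\mu_{\Gamma_0}(\delta X)\cdot\Gamma_0(1)^{-1}=\tilde V(1)=\int_0^1\operatorname{Ad}(\Gamma_0(s))(\delta X(s))\,ds\in\spin_{n+1}
\]
under the right-translation identification $T_{\Gamma_0(1)}\Spin_{n+1}\cong\spin_{n+1}$. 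Hence $d\mu_{\Gamma_0}$ is surjective iff the closed linear span $W:=\overline{\operatorname{span}}\{\operatorname{Ad}(\Gamma_0(s))(\fa_j):s\in[0,1],\,j\in\nmesmo\}$ equals $\spin_{n+1}$.

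To prove $W=\spin_{n+1}$, I suppose $\xi\in\spin_{n+1}^{\ast}$ annihilates $W$ and set $\eta(s):=\xi\circ\operatorname{Ad}(\Gamma_0(s))$. Then $\eta$ is absolutely continuous, $\eta(s)|_T\equiv 0$, and $\eta'(s)=\eta(s)\circ\operatorname{ad}(X_0(s))$ a.e. Differentiating $\eta(s)(\fa_j)\equiv 0$ produces, for each $s$, a triangular linear system in the unknowns $\eta(s)([\fa_k,\fa_{k+1}])$ whose coefficients are the positive $\kappa_j(s)$; this forces $\eta(s)|_{[T,T]}\equiv 0$, first a.e.\ and then everywhere by continuity. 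Iterating the same reasoning forces $\eta$ to annihilate every iterated bracket $\operatorname{ad}(X_0(s))^m\fa_j$; since the $\fa_j$ generate $\spin_{n+1}$ as a Lie algebra and $X_0(s)\in\cJ$, these brackets span $\spin_{n+1}$, so $\eta\equiv 0$ and $\xi=0$. The main obstacle is this iteration in the low-regularity $H^1$ setting, where $X_0\in L^2$ precludes classical higher differentiation; I would handle it by first proving surjectivity for smooth $\Gamma_0$ and then approximating, using the density of smooth locally convex curves together with the openness of the surjective-derivative condition for bounded linear operators into the finite-dimensional $\spin_{n+1}$. Once $d\mu_{\Gamma_0}$ is surjective everywhere, the Hilbert cases give submersion immediately, and in the Banach $C^r$ case $\ker d\mu_{\Gamma_0}$ has finite codimension and hence splits, so the nonempty fibers $\cL_n(z_0;z_1)=\mu^{-1}(z_1)$ are smooth submanifolds.
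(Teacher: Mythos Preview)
Your approach is correct and genuinely different from the paper's. The paper does not compute $d\mu$ at all: it passes to triangular coordinates $\cU_{z_1}\cong\Lo_{n+1}^1$ on a terminal arc $\Gamma|_{[t_{1/2},1]}$ and invokes an explicit perturbation formula for convex curves in $\Lo_{n+1}^1$ (from the companion paper \cite{Goulart-Saldanha0}) that moves the endpoint in any prescribed direction while keeping the first half fixed; surjectivity of $\mu$ is obtained separately by the add-loop construction of \cite{Saldanha-Shapiro}. Your route via variation of parameters and bracket generation is more conceptual and would apply to any bracket-generating cone in a compact Lie group, whereas the paper's argument is shorter but tied to the explicit $\Lo_{n+1}^1$ model. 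One point you should make explicit is that the ``triangular'' structure at higher levels really does hold: writing $e_{p,q}=e_pe_q^\top-e_qe_p^\top$, the only level-$(k{+}1)$ terms in $\sum_l\kappa_l\,\eta([\fa_l,e_{p,q}])=0$ are $e_{p,q-1}$ and $e_{p+1,q}$, and starting from the boundary equation at $(p,q)=(k{+}1,1)$ (where the first of these is absent) the system cascades.

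There is, however, a genuine wobble in your handling of the $H^1$ case. Your proposed fix---prove surjectivity for smooth $\Gamma_0$ and then use density plus openness of the surjective-derivative condition---does not work as stated: open plus dense does not imply everywhere (e.g.\ $L_k(x)=x_1/k$ on $\ell^2\to\RR$ are all surjective, converge in norm to $0$, which is not). Fortunately no approximation is needed. Organize the induction on the filtration $T^{(1)}=T\subset T^{(2)}=T+[T,T]\subset\cdots$: having shown $\eta(s)|_{T^{(k)}}\equiv 0$ for \emph{all} $s$, each function $s\mapsto\eta(s)(v)$, $v\in T^{(k)}$, is identically zero, hence its a.e.\ derivative $\eta(s)([X_0(s),v])$ vanishes a.e.; your triangular system then forces $\eta(s)|_{T^{(k+1)}}=0$ a.e., hence everywhere by continuity of $\eta$. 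This uses only absolute continuity of $\eta$ (i.e.\ $\Gamma_0\in H^1$) and positivity of the $\kappa_j$, never higher derivatives of $X_0$. The confusion comes from phrasing the conclusion as ``$\eta$ annihilates $\operatorname{ad}(X_0(s))^m\fa_j$'', which suggests repeatedly differentiating the \emph{same} relation $\eta(\fa_j)=0$; the correct bookkeeping differentiates the newly obtained relations at each step.
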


The case $n = 2$ is discussed in \cite{Saldanha3}
and that proof can be adapted by using some basic facts
about total positivity.

\begin{proof}
We use spaces of convex curves
in the triangular group $\Lo_{n+1}^1$.
If the image of a locally convex arc $\Gamma$
is contained in $\cU_{z_1}$, $z_1 \in \Spin_{n+1}$,
then $t \mapsto \bL(z_1^{-1} \Gamma(t))$ is a convex curve.

Convex curves if $\Lo_{n+1}^1$ follow explicit formulae.
In particular, it follows from Equation (10)
in \cite{Goulart-Saldanha0} that,
given a convex curve in $\Lo_{n+1}^1$,
there exist perturbations keeping
the first half of the arc fixed
and moving the end point in any prescribed direction.

Given $\Gamma \in \cL_n(z_0;\cdot)$, set $z_1 = \Gamma(1)$.
Set $t_{\frac12} \in (0,1)$ such that the image of the arc
$\Gamma_{[t_{\frac12},1]}$ is contained in $\cU_{z_1}$.
Identify $\cU_{z_1}$ with $\Lo_{n+1}^1$, as above.
The perturbation in $\Lo_{n+1}^1$ can be brought back to $\Gamma$,
proving that $\mu$ is a submersion.
Surjectivity follows by adding loops,
as in \cite{Saldanha-Shapiro}.
\end{proof}

The inclusion 
$\cL_n^{[H^{r+1}]}(z_0;\cdot)\hookrightarrow\cL_n^{[H^r]}(z_0;\cdot)$ 
is continuous with dense image for all $r\geq1$. 
The same happens for 
$\cL_n^{[C^{r+1}]}(z_0;\cdot)\hookrightarrow\cL_n^{[C^r]}(z_0;\cdot)$ 
and
$\cL_n^{[C^r]}(z_0;\cdot)\hookrightarrow\cL_n^{[H^r]}(z_0;\cdot)$.
% (i.e., the inclusions 
% $\mathbf{B}^{k+1,n}\hookrightarrow\mathbf{B}^{k,n}$,  
% $\mathbf{B}^{k,n}\hookrightarrow\mathbf{H}^{k,n}$ 
% and $\mathbf{H}^{k+1,n}\hookrightarrow\mathbf{H}^{k,n}$) 
The following facts imply that these are also homotopy equivalences. 

% For future reference, we now quote the following two general results from the
% homotopy theory of infinite dimensional manifolds.

\begin{fact}[Theorem 2 of \cite{Burghelea-Saldanha-Tomei1}]
\label{fact:BST}
Let $\mathbf{B}_1$ and $\mathbf{B}_2$ be infinite dimensional separable Banach
spaces. Suppose $i:\mathbf{B}_1\to \mathbf{B}_2$ is a bounded, injective linear
map with dense image and $M_2\subset \mathbf{B}_2$ is a smooth closed Banach
submanifold of finite codimension. Then, $M_1=i^{-1}[M_2]$ is a smooth closed
Banach submanifold of $\mathbf{B}_1$ and $i:(\mathbf{B}_1,M_1)\to
(\mathbf{B}_2,M_2)$ is a homotopy equivalence of pairs.
\end{fact}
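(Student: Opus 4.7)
The plan is to establish the two conclusions in order. The submanifold statement will follow from a direct local calculation; the homotopy equivalence of pairs will reduce, via contractibility of the ambient Banach spaces, to showing that $i$ restricts to a weak homotopy equivalence between $M_1$ and $M_2$, which is then upgraded to a genuine homotopy equivalence by Palais--Henderson theory for Banach manifolds modeled on infinite-dimensional separable spaces.

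For the submanifold assertion, I would fix $p \in M_2$ and use that $M_2$ has finite codimension $k$ to choose an open neighborhood $U \subset \mathbf{B}_2$ of $p$ together with a smooth submersion $f : U \to \RR^k$ cutting out $M_2 \cap U = f^{-1}(0)$. The composition $F = f \circ i$ is smooth on $i^{-1}[U]$ and $M_1 \cap i^{-1}[U] = F^{-1}(0)$. The derivative $DF_q = Df_{i(q)} \circ i$ is surjective, because its image is a linear subspace of the finite-dimensional space $\RR^k$ that is dense (since $Df_{i(q)}$ is a continuous surjection onto $\RR^k$ and $i[\mathbf{B}_1]$ is dense in $\mathbf{B}_2$) and therefore equal to $\RR^k$. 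The kernel of $DF_q$ has finite codimension $k$ and hence is automatically complemented. So $F$ is a smooth submersion and $M_1$ inherits the structure of a smooth closed Banach submanifold of codimension $k$.

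For the homotopy equivalence of pairs, contractibility of both $\mathbf{B}_j$ gives $\pi_k(\mathbf{B}_j, M_j) \cong \pi_{k-1}(M_j)$ from the long exact sequences of the pairs, so a standard five-lemma/mapping cylinder argument reduces the problem to proving that the restriction $i|_{M_1} : M_1 \to M_2$ is a homotopy equivalence. Both $M_j$ are smooth Banach manifolds modeled on infinite-dimensional separable Banach spaces (a closed, finite-codimension, complemented subspace of such a space is again infinite-dimensional and separable), and by general theory any weak homotopy equivalence between such manifolds is a genuine homotopy equivalence. Thus it suffices to verify weak equivalence. Given a continuous map $g : (D^m, S^{m-1}) \to (M_2, i[M_1])$, I would cover the compact image $g(D^m)$ by finitely many tubular charts in $\mathbf{B}_2$ on which $M_2$ is the zero set of a submersion and admits a smooth local retraction from $\mathbf{B}_2$; density of $i[\mathbf{B}_1]$ permits a small uniform perturbation of $g$ into $i[\mathbf{B}_1]$ via a smooth partition of unity, after which composition with the local retractions yields a map homotopic to $g$ landing in $M_2 \cap i[\mathbf{B}_1] = i[M_1]$. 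Performing the same construction on $D^m \times [0,1]$ handles injectivity of the induced map on homotopy groups.

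The main obstacle is the technical gluing in the last step: patching with partitions of unity only guarantees closeness in the ambient Banach space $\mathbf{B}_2$ and not that the patched map stays on $M_2$. Overcoming this requires working inside tubular neighborhoods of $M_2$ (which exist because $M_2$ is a smooth closed submanifold of finite codimension) and composing the perturbed map with the smooth retractions onto $M_2$; one must then check that this composition lands in $i[M_1]$ rather than merely in $M_2$, which uses density of $i$ together with the local product structure of the tubular neighborhood. An essentially equivalent, and cleaner, route is to reduce via charts on $M_2$ to the analogous statement that a dense continuous injection between contractible Banach spaces is a weak homotopy equivalence, which is elementary.
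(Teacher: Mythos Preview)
The paper does not give a proof of this statement: it is recorded as a Fact and attributed to Theorem~2 of \cite{Burghelea-Saldanha-Tomei1}, with no argument supplied here. So there is nothing in the present paper to compare your proposal against.

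On the merits of your outline: the submanifold part is correct and standard. For the homotopy-equivalence part, your reduction to showing that $i|_{M_1}\colon M_1 \to M_2$ is a weak equivalence (and then invoking Palais to upgrade) is the right shape, but the step you flag as the main obstacle is a genuine gap as written. Perturbing a map $g\colon D^m \to M_2$ into $i[\mathbf{B}_1]$ inside the ambient space and then retracting back to $M_2$ via a tubular neighborhood does \emph{not} land you in $i[M_1]$: the retraction has no reason to preserve the subspace $i[\mathbf{B}_1]$. Your ``cleaner route'' via charts is the right fix, but it too needs a gluing argument you have not supplied. Concretely: cover $g(D^m)$ by finitely many submanifold charts of $M_2$, each modeled on a closed codimension-$k$ subspace $E_2 \subset \mathbf{B}_2$; check that $i$ restricts to a dense inclusion $E_1 = i^{-1}[E_2] \hookrightarrow E_2$ (this uses that $i$ maps a complement of $E_1$ isomorphically onto a complement of $E_2$, which you have from part one); subdivide $D^m$ so each simplex lands in one chart; and then approximate inductively over skeleta by straight-line homotopies within each chart. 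This is routine once stated, but it is not the one-step perturb-then-retract you first describe.
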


\begin{fact}[from Theorem 0.1 of \cite{Burghelea-Henderson} and
Corollary 3 of \cite{Henderson}]
\label{fact:BH}
Let $M_1$ and $M_2$ be topological (respectively, smooth)
manifolds modeled on infinite dimensional separable Banach (resp. Hilbert)
spaces.  Any homotopy equivalence $i:M_1\to M_2$ is homotopic to a
homeomorphism (resp., diffeomorphism).
\end{fact}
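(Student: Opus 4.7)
The plan is to treat Fact \ref{fact:BH} essentially as a citation: the statement is one of the foundational classification results of infinite-dimensional topology (going back to Burghelea-Henderson, Henderson, Moulis, and ultimately the Anderson-Kadec phenomenon that all separable infinite-dimensional Fr\'echet spaces are homeomorphic). I would reproduce the structure of the classical proofs, whose backbone is an ``infinite-dimensional swindle'' that promotes homotopy information into genuine (smooth) topological information.

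First I would establish the \emph{stability} step: for any topological (resp.\ smooth) manifold $M$ modeled on an infinite-dimensional separable Banach (resp.\ Hilbert) space $\mathbf{B}$, there is a natural homeomorphism (resp.\ diffeomorphism) $M \times \mathbf{B} \cong M$. In the Hilbert category this is the Eells--Elworthy theorem, whose proof uses a partition-of-unity argument together with the fact that the unit sphere in $\mathbf{B}$ is diffeomorphic to $\mathbf{B}$ itself (Kuiper's theorem on contractibility of the unitary group is used to trivialize the tangent bundle). In the Banach category, stability is the analogous topological result obtained by combining Anderson-Kadec with a handle-by-handle absorption procedure.

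With stability in hand, I would follow the mapping cylinder argument. Replace the homotopy equivalence $i:M_1\to M_2$ by its mapping cylinder $\mathcal{M}_i$, which deformation retracts onto $M_2$ and into which $M_1$ embeds as a closed subset. Using stability $M_2 \cong M_2 \times \mathbf{B}$, one can push $M_1$ off itself and perform the ``Mazur swindle'': the countable telescope
\begin{equation*}
M_2 \;\cong\; (M_1 \sqcup_{i} M_2) \sqcup_{i} (M_1 \sqcup_{i} M_2) \sqcup_{i} \cdots
\;\cong\; M_1 \sqcup_{i}\bigl((M_2 \sqcup_{i} M_1) \sqcup \cdots\bigr) \;\cong\; M_1,
\end{equation*}
where each gluing is along a collar produced by stability, yields a homeomorphism $h:M_1\to M_2$. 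A second application of the swindle, interleaving $h$ with $i$ along a telescope indexed by $[0,1]$, then provides a homotopy from $h$ to $i$. In the Hilbert case one finally invokes Moulis' smoothing theorem (or the Burghelea-Kuiper smoothing of homeomorphisms between Hilbert manifolds) to promote $h$ to a diffeomorphism while preserving the homotopy class.

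The main obstacle, and the reason the argument is nontrivial rather than formal, is precisely the smoothing/absorption step: the swindle produces a homeomorphism, but in the Hilbert setting one needs to control differentiability of the gluings, and in the Banach setting one needs the global collaring of the embedded $M_1$ to interact well with the (purely topological) decomposition. I would handle this by building the collars functorially from a tubular neighborhood produced by Fact \ref{fact:BST} applied to $M_1 \hookrightarrow \mathcal{M}_i$, and by performing the swindle ``fiberwise'' so that smoothness of each stage is automatic. Since the result is already in the literature in full generality, my actual write-up would simply assemble these ingredients and cite \cite{Burghelea-Henderson, Henderson} for the technical estimates.
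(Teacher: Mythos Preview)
The paper does not prove this statement at all: it is stated as a \emph{Fact} with an explicit citation to \cite{Burghelea-Henderson} and \cite{Henderson}, and no proof (or even sketch) is given. Your proposal correctly recognizes this and then goes further by outlining the classical stability-plus-swindle argument from the literature; that outline is broadly accurate, but it is strictly more than what the paper itself provides.
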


\begin{rem}
\label{rem:nofibration}
A natural question at this point would be whether 
$\mu$ qualifies as some sort of fibration. 
The reader of course knows that the spaces $\cL_n(z)$ exhibit 
different homotopy types as $z$ ranges over $\Spin_{n+1}$
\cite{Little, Khesin-Shapiro2, Saldanha3, Saldanha-Shapiro,
Shapiro-Shapiro, Shapiro}. 
In fact, $\mu$ is not even a Serre fibration, 
since it lacks the homotopy lifting property 
for polyhedra (see \cite{Khesin-Shapiro2, Saldanha1}).
\end{rem}

\begin{lemma}
\label{lemma:spaces}
For all $z_0,z_1\in\Spin_{n+1}$, we have that:
\begin{enumerate}
%\item\label{item:lemma:spaces:submanifold}
%{the monodromy subspace $\cL^{[H^1]}_n(z_0;z_1)$ 
%is a closed embedded smooth 
%submanifold of $\cL^{[H^1]}_n(z_0;\cdot)$ 
%of codimension $m=n(n+1)/2$;}
\item\label{item:lemma:spaces:submanifold}
{for all $r,r'\in\NN^\ast$, 
$r\neq 2$, $r'\neq 1$, 
the subspaces $\cL^{[H^r]}_n(z_0;z_1)$ 
and $\cL^{[C^{r'}]}_n(z_0;z_1)$ are closed embedded smooth 
submanifolds of codimension $m=n(n+1)/2$ 
of $\cL^{[H^r]}_n(z_0;\cdot)$ and 
$\cL^{[C^{r'}]}_n(z_0;\cdot)$, respectively;}
\item\label{item:lemma:spaces:homotopyequivalence}
{for all $r,\tilde r\in\NN$, $r\geq 1$, the natural inclusion maps  
\begin{gather*}
i_{r,\tilde r}:
(\cL^{[H^{r+\tilde r}]}_n(z_0;\cdot),\cL^{[H^{r+\tilde r}]}_n(z_0;z_1))
\hookrightarrow
(\cL^{[H^r]}_n(z_0;\cdot),\cL^{[H^r]}_n(z_0;z_1)), \\  
j_{r,\tilde r}:
(\cL^{[C^{r+\tilde r}]}_n(z_0;\cdot),\cL^{[C^{r+\tilde r}]}_n(z_0;z_1))
\hookrightarrow(\cL^{[H^r]}_n(z_0;\cdot),\cL^{[H^r]}_n(z_0;z_1)), \\
\ell_{r,\tilde r}:
(\cL^{[C^{r+\tilde r}]}_n(z_0;\cdot),\cL^{[C^{r+\tilde r}]}_n(z_0;z_1))
\hookrightarrow(\cL^{[C^r]}_n(z_0;\cdot),\cL^{[C^r]}_n(z_0;z_1))
\end{gather*} 
are homotopy equivalences of pairs.}
\item\label{item:lemma:spaces:homeomorphism}
{each of the natural inclusions $i_{r,\tilde r}$,
$j_{r,\tilde r}$, $\ell_{r,\tilde r}$ 
of item \ref{item:lemma:spaces:homotopyequivalence} 
is homotopic to a homeomorphism between the respective pairs 
(a diffeomorphism for $i_{r,\tilde r}$).   
%For each $k\geq1$, there are diffeomorphisms 
%$\tilde i_k:\cL^{[H^r]}_n(z_0;\cdot)\to\cL^{[H^1]}_n(z_0;\cdot)$ and 
%$\tilde j_k:\cL^{[C^k]}_n(z_0;\cdot)\to\cL^{[H^1]}_n(z_0;\cdot)$ 
%that restricts to diffeomorphisms 
%$\tilde i_k:\cL^{[H^r]}_n(z_0;z_1)\to\cL^{[H^1]}_n(z_0;z_1)$ and 
%$\tilde j_k:\cL^{[C^k]}_n(z_0;z_1)\to\cL^{[H^1]}_n(z_0;z_1)$ 
%and such that the maps $\tilde i_k$, $\tilde j_k$ are homotopic 
%to the respective inclusions $i_k$, $j_k$.
}
%
%$\cL^{[H^r]}_n(z_0;z_1)\approx \cL^{[H^1]}_n(z_0;z_1)$ 
%(respectively, 
%$\cL^{[C^k]}_n(z_0;\cdot)\approx \cL^{[H^1]}_n(z_0;\cdot)$ and 
%$\cL^{[C^k]}_n(z_0;z_1)\approx \cL^{[H^1]}_n(z_0;z_1)$
%), each homotopic to the corresponding inclusion map.} 
%\item\label{item:lemma:spaces:diffeomorphism}{There are diffeomorphisms 
%$\cL^{[C^n]}_n(z_0;\cdot)\approx \cL^{[L^2]}_n(z_0;\cdot)$ and 
%$\cL^{[C^n]}_n(z_0;z_1)\approx \cL^{[L^2]}_n(z_0;z_1)$} 
%each homotopic to the corresponding inclusion map. 
\end{enumerate}
\end{lemma}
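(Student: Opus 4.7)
The plan is as follows. For item \ref{item:lemma:spaces:submanifold}, the statement is an immediate consequence of Lemma \ref{lemma:submersion}: the excluded values $r=2$ (in the Hilbert case) and $r'=1$ (in the Banach case) are precisely those left out of that lemma, so for the remaining values the monodromy $\mu:\cL_n(z_0;\cdot) \to \Spin_{n+1}$ is a smooth surjective submersion and $\cL_n(z_0;z_1) = \mu^{-1}(z_1)$ is a closed embedded smooth submanifold of codimension $\dim \Spin_{n+1} = n(n+1)/2 = m$.

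For item \ref{item:lemma:spaces:homotopyequivalence}, the plan is to reduce to Fact \ref{fact:BST}. Fixing $z_0$ and using the $(\xi_1,\ldots,\xi_n)$ coordinates of Equation \eqref{equation:xis} (extended from the $H^1$ case to higher regularity by standard Nemytskii-type estimates, since the map $\xi \mapsto (\xi+\sqrt{\xi^2+4})/2$ is smooth with bounded derivatives on bounded intervals), each ambient space $\cL_n^{[H^r]}(z_0;\cdot)$ is diffeomorphic to the whole Hilbert space $(H^{r-1}([0,1];\RR))^n$ (with the convention $H^0=L^2$), and each $\cL_n^{[C^r]}(z_0;\cdot)$ is diffeomorphic to the whole Banach space $(C^{r-1}([0,1];\RR))^n$. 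Under these identifications the three natural inclusions $i_{r,\tilde r}$, $j_{r,\tilde r}$, $\ell_{r,\tilde r}$ become bounded linear injections with dense image between separable Banach spaces (both Hilbert in the case of $i_{r,\tilde r}$). Whenever the target of the inclusion satisfies the regularity assumptions of item \ref{item:lemma:spaces:submanifold}, so that $\cL_n(z_0;z_1)$ is a smooth closed submanifold of codimension $m$ in the target, Fact \ref{fact:BST} immediately yields the homotopy equivalence of pairs and simultaneously produces the submanifold structure on the source. The remaining cases, where the target is $\cL_n^{[H^2]}(z_0;\cdot)$ or $\cL_n^{[C^1]}(z_0;\cdot)$, are handled by factoring through a coarser regularity (for instance $H^1$) for which the target submanifold hypothesis \emph{is} satisfied, and then invoking the two-out-of-three property of homotopy equivalences of pairs in the resulting commutative triangle.

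For item \ref{item:lemma:spaces:homeomorphism}, the plan is to invoke Fact \ref{fact:BH}. Each of $i_{r,\tilde r}$, $j_{r,\tilde r}$, $\ell_{r,\tilde r}$ is a homotopy equivalence between manifolds modeled on separable infinite-dimensional Banach (and, in the case of $i_{r,\tilde r}$, Hilbert) spaces, so at the ambient level it is homotopic to a homeomorphism (respectively, diffeomorphism). To upgrade this to a homeomorphism of pairs, the plan is to apply Fact \ref{fact:BH} separately to the ambient manifolds and to the codimension-$m$ submanifolds, which are themselves separable infinite-dimensional Banach/Hilbert manifolds, and to splice the two resulting maps via the collar provided by a tubular neighborhood of $\cL_n(z_0;z_1)$, whose existence follows from item \ref{item:lemma:spaces:submanifold}.

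The main obstacle to anticipate is the treatment of the borderline regularity cases $r=2$ (Hilbert) and $r'=1$ (Banach) in item \ref{item:lemma:spaces:homotopyequivalence}, where the submanifold hypothesis of Fact \ref{fact:BST} fails on one side and the commuting-triangle workaround must be verified carefully at the level of pairs rather than just at the level of ambient spaces. A secondary point of care is the pair version of Fact \ref{fact:BH} needed in item \ref{item:lemma:spaces:homeomorphism}: it is not explicitly stated in the cited references, so the tubular-neighborhood gluing argument needs to be written out to produce a single map that is simultaneously a homeomorphism on the ambient spaces and on the codimension-$m$ submanifolds.
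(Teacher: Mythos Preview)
Your approach is essentially the same as the paper's: item~\ref{item:lemma:spaces:submanifold} from Lemma~\ref{lemma:submersion}; item~\ref{item:lemma:spaces:homotopyequivalence} by identifying $\cL_n^{[H^r]}(z_0;\cdot)$ and $\cL_n^{[C^r]}(z_0;\cdot)$ with the full Banach spaces $(H^{r-1})^n$ and $(C^{r-1})^n$ via the $\xi$-coordinates of Equation~\eqref{equation:xis} and then invoking Fact~\ref{fact:BST}; item~\ref{item:lemma:spaces:homeomorphism} from item~\ref{item:lemma:spaces:homotopyequivalence} and Fact~\ref{fact:BH}. The paper's proof is considerably terser and does not spell out the two points you flag (the borderline targets $H^2$, $C^1$ in item~\ref{item:lemma:spaces:homotopyequivalence}, and the upgrade of Fact~\ref{fact:BH} to a statement about pairs in item~\ref{item:lemma:spaces:homeomorphism}); your triangle-factoring and tubular-neighborhood gluing are reasonable ways to fill these in, and the paper simply leaves them implicit.
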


\begin{proof}
%of Subsection  
%\ref{subsect:Hilbert}. 
Item \ref{item:lemma:spaces:submanifold} follows 
directly from Lemma \ref{lemma:submersion}. 
%and the Regular Value Theorem applied to $\mu^\ast_{z_0}$.  
%The map $\varphi_{z_0,z_1}:N\cL^{[L^2]}_n(z_0;z_1)\to\cL^{[L^2]}_n(z_0;\cdot)\times\so_{n+1}$ 
%defined by $\varphi_{z_0,z_1}(\Gamma,\nu)=(\Gamma,z_1^{-1}(D\mu_{z_0}(\Gamma)\cdot\nu))$ 
%for all $\nu\in N_{\Gamma}\cL^{[L^2]}_n(z_0;z_1)
%=\left(T_{\Gamma}\cL^{[L^2]}_n(z_0;z_1)\right)^{\perp}$ is a global trivialization 
%of the normal bundle $N\cL^{[L^2]}_n(z_0;z_1)$.
For $r,n\in\NN^\ast$, consider the Banach spaces 
$\mathbf{H}^{r,n} = (H^{r-1}([0,1]);\RR))^n$ and  
$\mathbf{C}^{r,n} = (C^{r-1}([0,1]);\RR))^n$. 
Regard $\cL_n^{[H^r]}(z_0;z_1)\subset\cL_n^{[H^r]}(z_0;\cdot)$ 
as submanifolds of $\mathbf{H}^{r,n}$ and 
$\cL_n^{[C^r]}(z_0;z_1)\subset\cL_n^{[C^r]}(z_0;\cdot)$ 
as submanifolds of $\mathbf{C}^{r,n}$, 
given by the functions $\xi_1,\ldots,\xi_n$ of Equation 
\eqref{equation:xis}. 
Item %s \ref{item:lemma:spaces:submanifoldCkHk} and 
\ref{item:lemma:spaces:homotopyequivalence} now follows from 
%is obtained combining Item \ref{item:lemma:spaces:submanifold} and 
Fact \ref{fact:BST}, 
by choosing $\mathbf{B}_1$ and $\mathbf{B}_2$ 
amongst the Banach spaces above. 
Item \ref{item:lemma:spaces:homeomorphism} 
%\ref{item:lemma:spaces:diffeomorphism} 
follows from Item \ref{item:lemma:spaces:homotopyequivalence} and Fact \ref{fact:BH}.
\end{proof}

In particular, we see that, given $z_0,z_1\in\Spin_{n+1}$, 
all spaces $\cL_n^\ast(z_0;z_1)$ are homeomorphic. 
In some situations, this warrants us the right to 
drop the superscripts altogether 
and to adopt a definition of $\cL_n(z_0;z_1)$ 
that is well-suited to the purpose at hand.
Throughout this paper, 
the spaces of locally convex curves of class $H^r$ 
take precedence over their $C^r$ counterparts  
for being Hilbert manifolds. 
We are particularly interested in
$\cL_n^{[H^1]}(z_0;z_1)$ and in
$\cL_n^{[H^r]}(z_0;z_1)$ for large $r$.

%\pagebreak
\subsection{Convex curves}
\label{subsect:convex}

%\begin{definition}
%\label{definition:convex}
A smooth parametric curve $\gamma:J\to\Ss^{n}$ 
defined on a compact interval $J\subset\RR$ is said 
to be \emph{strictly convex} if for each nonzero linear functional 
$\omega\in(\RR^{n+1})^{\ast}\smallsetminus\{0\}$ 
the function $\omega\gamma:J\to\RR$,
$(\omega\gamma)(t) = \omega(\gamma(t))$,
has at most $n$ zeroes counted with multiplicities 
(zeroes at endpoints taken into account).  
It is said to be \emph{convex} if its restriction to any proper compact 
subinterval of $J$ is strictly convex.
%\openbox \end{definition}

In other words, a convex curve is one that 
(possibly neglecting one endpoint at a time) 
intersects each $n$-dimensional vector subspace 
$V\subset\RR^{n+1}$ at most $n$ times 
with multiplicities taken into account. 
Thus, for instance,
a transversal intersection counts as $1$; a generic tangency counts as $2$; 
a generic osculation counts as $3$.
Other terms used for the same or closely related 
concepts are \emph{non-oscillatory curves} \cite{Novikov-Yakovenko, Shapiro} 
and \emph{disconjugate curves}
\cite{Khesin-Shapiro2, Shapiro-Shapiro, Shapiro-Shapiro3}.

In Appendix A in \cite{Goulart-Saldanha}
%The goal of this appendix %\ref{appendix:convex} 
we show that a smooth nondegenerate curve 
$\gamma:[0,1]\to\Ss^n$ with initial frame 
$\Frenet_{\gamma}(0)=1$ is convex 
if and only if its itinerary is the empty word, \textit{i.e.}, 
that the notion of convexity introduced in 
Section \ref{sect:sing} and given in terms of the
singular set of $\Frenet_{\gamma}$ coincides with this
geometric definition.
These results are essentially present in \cite{Shapiro}.

%As for spherical curves, we say that a smooth space curve 
%$\gamma:J\to\RR^{n+1}$ is \emph{nondegenerate} when 
%$\det\left(\gamma(t),\gamma'(t),\cdots,\gamma^{(n)}(t)\right)>0$ 
%for all $t\in J$. Its \emph{Frenet frame} is defined likewise:
%\[(\gamma(t),\cdots,\gamma^{(n)}(t)\right)=\Frenet_{\gamma}(t)R(t),
%\quad \Frenet_{\gamma}(t)\in\SO_{n+1}, 
%\quad R(t)\in\Up^{+}_{n+1}.\]

Clearly, convexity implies nondegeneracy. 
Conversely, as we shall see in Lemma \ref{lemma:convex},
(smooth) nondegeneracy implies local convexity:
this is why the terms \emph{nondegenerate} and 
\emph{locally convex} are used interchangeably. 

% given a smooth nondegenerate curve $\gamma:J\to\Ss^{n}$ 
% defined on a compact interval $J$ and $t_{0}\in J$, 
% there exists $\epsilon>0$ such that the restriction 
% $\gamma|_{J\cap [t_{0}-\epsilon,t_{0}+\epsilon]}$ 
% is strictly convex. %\qed
% This fact is also a direct consequence 
% of the main result of this Subsection \ref{subsect:convex},  
% Lemma \ref{lemma:convex} below. 
%The former nomenclature is used in 
%\cite{Khesin-Ovsienko, Khesin-Shapiro1, Khesin-Shapiro2, 
%Little, Shapiro-Shapiro, Shapiro} 
%with the latter being preferred in 
%\cite{Alves-Saldanha, Saldanha1, Saldanha2, Saldanha3, Saldanha-Shapiro}. 

We quote below the main result of Appendix A of \cite{Goulart-Saldanha}.
For $J$ a compact interval,
we say that a locally convex curve $\Gamma:J\to\Spin_{n+1}$ 
is \emph{short} if there exists $z \in \Spin_{n+1}$ such that
$\Gamma[J]\subset\cU_z$. 
Recall that $\cU_z \subset \Spin_{n+1}$ is the domain
of a triangular system of coordinates 
(see Subsection \ref{subsect:Bruhat} or Section 4 of \cite{Goulart-Saldanha0}).
% % defined on a compact interval $J\subset\RR$
% when there exists $Q\in\SO_{n+1}$ such that 
% for all $t\in J$ the matrix $Q^{\transpose}\Gamma(t)$ 
% admits an $LU$ decomposition; 
% this is the same as, in the notation of Section \ref{sect:triangle}, 
% $\Gamma[J]\subset\cU_Q$. 

\begin{lemma}
\label{lemma:convex}
%Let $\g:[t_{0},t_{1}]\to\SS$ be an $\nth$ order nondegenerate curve. The following conditions are equivalent:
%\begin{enumerate}[(a)]
%\item\label{item:convex}{ $\g$ is convex;}
%\item\label{item:BruhatAT}{$\Frenet_{\g}(t_{0};t)\in\BruhatAT$ for $t_{0}<t<t_{1}$;}
%\item\label{item:leftsubBruhatAT}{for all $t_{a}<t_{b}$ in $[t_{0},t_{1})$, $\Frenet_{\g}(t_{a};t_{b})\in\BruhatAT$;}
%\item\label{item:rightsubBruhatAT}{for all $t_{a}<t_{b}$ in $(t_{0},t_{1}]$, $\Frenet_{\g}(t_{b};t_{a})\in\BruhatA$;}
%\end{enumerate}
Let $\gamma:J\to\Ss^n$ be a smooth nondegenerate curve 
defined on a compact interval $J\subset\RR$. 
The following conditions are equivalent:
\begin{enumerate}
\item\label{item:strictlyconvex}{$\gamma$ is strictly convex;}
\item\label{item:short}{$\Frenet_{\gamma}$ is short;}
\item\label{item:BruhatAT}{$\forall t_{0},t_{+}\in J 
\left((t_{0}<t_{+})\rightarrow 
(\Frenet_{\gamma}(t_{+})\in \Frenet_{\gamma}(t_{0}) \Bru_{\acute\eta})\right)$;}
\item\label{item:BruhatA}{$\forall t_{0},t_{-}\in J 
\left((t_{-}<t_{0})\rightarrow 
(\Frenet_{\gamma}(t_{-})\in \Frenet_{\gamma}(t_{0}) \Bru_{\grave\eta})\right)$.}
\end{enumerate}
\end{lemma}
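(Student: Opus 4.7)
My plan is to prove the four-way equivalence by closing two cycles: (2) $\Leftrightarrow$ (3) and (2) $\Leftrightarrow$ (4), together with (1) $\Leftrightarrow$ (2). The first two equivalences are algebraic and reduce to the total-positivity theory of convex curves in the triangular group $\Lo^1_{n+1}$ developed in \cite{Goulart-Saldanha0}, while (1) $\Leftrightarrow$ (2) is the classical projective-geometric interpretation of shortness.

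For (2) $\Rightarrow$ (3): assume $\Frenet_\gamma[J]\subset\cU_z$ and form $\tilde\Lambda=\bL(z^{-1}\Frenet_\gamma(\cdot)):J\to\Lo^1_{n+1}$. Since $\bQ\circ\tilde\Lambda=z^{-1}\Frenet_\gamma$ is locally convex, $\tilde\Lambda$ is a convex curve in the triangular group, so by the transition lemma of \cite{Goulart-Saldanha0} we have $\tilde\Lambda(t_0)^{-1}\tilde\Lambda(t_+)\in\Pos_\eta$ for $t_0<t_+$ in $J$. Writing $\Pi(z^{-1}\Frenet_\gamma(t))=\tilde\Lambda(t)U(t)$ with $U(t)\in\Up^+_{n+1}$, the product $\Frenet_\gamma(t_0)^{-1}\Frenet_\gamma(t_+)$ has the form $U(t_0)^{-1}PU(t_+)$ with $P\in\Pos_\eta$; since unsigned Bruhat cells in $\SO_{n+1}$ are preserved by left and right multiplication by $\Up^+_{n+1}$ (reviewed in Subsection \ref{subsect:Bruhat}), this puts $\Frenet_\gamma(t_0)^{-1}\Frenet_\gamma(t_+)$ in $\Bru_\eta$, and continuity as $t_+\to t_0^+$ forces the signed component $\Bru_{\acute\eta}$. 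The proof of (2) $\Rightarrow$ (4) is identical with $\Pos_\eta,\acute\eta$ replaced by $\Neg_\eta,\grave\eta$.

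For (3) $\Rightarrow$ (2) (and (4) $\Rightarrow$ (2) by the same argument), the naive choice $t_0=\min J$ does not work because $\Frenet_\gamma(\min J)$ itself need not lie in any chart constructed from its own image. My workaround is to extend $\gamma$ smoothly past $\min J$ to a slightly larger interval on which it remains nondegenerate (a local extension always exists), pick $t_\ast<\min J$ in the extension, and combine the already-proved (2) $\Rightarrow$ (3) applied on the small arc $[t_\ast,\min J]$ (where shortness holds by compactness and continuity) with (3) itself on $J$. This yields $\Frenet_\gamma[J]\subset\Frenet_\gamma(t_\ast)\Bru_{\acute\eta}\subset\cU_{\Frenet_\gamma(t_\ast)\acute\eta}$, establishing (2).

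For (1) $\Leftrightarrow$ (2), the decisive observation is that $\Frenet_\gamma(t)\in\cU_z$ if and only if the full osculating flag of $\gamma$ at $t$ is transversal (in the top Bruhat sense) to the reference flag attached to $z$; hence (2) is the existence of a single flag simultaneously transversal to all osculating flags along $\gamma$. For (1) $\Rightarrow$ (2) one builds such a flag inductively, starting from a hyperplane missed by $\gamma[J]$ (which exists because strict convexity bounds the zeroes of any linear functional on $\gamma$ by $n$), then a generic codimension-two subspace in it missed by the osculating lines, and so on. Conversely, shortness realizes $\Frenet_\gamma$ in a triangular chart as a convex curve in $\Lo^1_{n+1}$, and the $\ll$ order together with the total-positivity structure of $\Pos_\eta$ directly bounds the multiplicity of zeroes of $\omega\gamma$ by $n$ for every linear functional $\omega$. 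The main obstacle is the flag-construction step in (1) $\Rightarrow$ (2), which requires a careful projective-duality argument ensuring that the osculating subspaces of each dimension do not cover the relevant Grassmannian; this is precisely the content of Appendix A of \cite{Goulart-Saldanha}, which I would invoke (or reproduce in self-contained form) to complete this direction.
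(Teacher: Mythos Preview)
The paper's own proof is nothing more than a pointer to Appendix~A of \cite{Goulart-Saldanha}, so there is no in-text argument to compare against; your sketch is already more explicit than what appears here.  Your proofs of (2)$\Rightarrow$(3) and (2)$\Rightarrow$(4) via triangular coordinates and the transition lemma of \cite{Goulart-Saldanha0} are correct, and for (1)$\Leftrightarrow$(2) you ultimately invoke the same external appendix the paper does.

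There is, however, a real gap in your (3)$\Rightarrow$(2).  After extending past $\min J$ and choosing $t_\ast<\min J$, you get $\Frenet_\gamma(\min J)\in\Frenet_\gamma(t_\ast)\Bru_{\acute\eta}$ from shortness on $[t_\ast,\min J]$, and $\Frenet_\gamma(t)\in\Frenet_\gamma(\min J)\Bru_{\acute\eta}$ for $t>\min J$ from hypothesis (3).  But ``combining'' these two facts does \emph{not} give $\Frenet_\gamma(t)\in\Frenet_\gamma(t_\ast)\Bru_{\acute\eta}$, because $\Bru_{\acute\eta}$ is not closed under multiplication: already $\acute\eta\cdot\acute\eta=\hat\eta\in\Quat_{n+1}\subset\Bru_e$.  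The salvage is a compactness argument, not an algebraic one.  Fix $\delta>0$ small enough that (after extension) the arc on $[\min J-\delta,\min J+\delta]$ is short.  The set $K=\Frenet_\gamma[[\min J+\delta,\max J]]$ is compact and, by (3), lies in the open set $\Frenet_\gamma(\min J)\Bru_{\acute\eta}$; hence there is a neighborhood $V$ of $\Frenet_\gamma(\min J)$ with $K\subset z\,\Bru_{\acute\eta}$ for every $z\in V$.  Now pick $t_\ast\in(\min J-\delta,\min J)$ close enough that $\Frenet_\gamma(t_\ast)\in V$.  Then $K\subset\Frenet_\gamma(t_\ast)\Bru_{\acute\eta}$, while shortness of $[t_\ast,\min J+\delta]$ together with the already-proved (2)$\Rightarrow$(3) handles $\Frenet_\gamma[[\min J,\min J+\delta]]$.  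This yields $\Frenet_\gamma[J]\subset\Frenet_\gamma(t_\ast)\Bru_{\acute\eta}\subset\cU_{\Frenet_\gamma(t_\ast)\acute\eta}$ as you claimed, but the compactness step must be made explicit.
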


\begin{proof}
See Appendix A in \cite{Goulart-Saldanha};
closely related sufficient conditions for convexity may be 
found in \cite{Novikov-Yakovenko, Shapiro}. 
\end{proof}

\begin{example}
\label{example:convex}
Given $z\in\Bru_{\acute\eta}$, consider the curve 
$\Gamma=\Gamma_z:[0,1]\to\Spin_{n+1}$ 
passing through $\Gamma_z(\frac12)=z$ given by 
Lemma 6.1 of \cite{Goulart-Saldanha0}. 
It follows immediately from Lemma \ref{lemma:convex} 
that $\gamma_z=\Gamma_z e_1$ is a convex curve
(though not strictly convex). 
For an alternative proof, recall that $\Gamma_z$ 
is obtained from $\Gamma_{\acute\eta}(t)=\exp(\pi t\fh)$ 
through a projective transformation and see Lemma 2.2 of 
\cite{Saldanha-Shapiro} for a direct proof of the 
convexity of $\Gamma_{\acute\eta}$.
For $n=2$, 
$\gamma_{\acute\eta}(t)=
\frac{1}{2}(1+\cos(2\pi t),\sqrt{2}\sin(2\pi t),1-\cos(2\pi t))$ 
is the circle of diameter $e_{1}e_{3}$ in $\Ss^2$. 
Notice that $\gamma_{\acute\eta}$ is closed 
if and only if $n$ is even
(as usual, a curve
$\gamma:[0,1]\to\Ss^n$ is closed if
$\Frenet_{\gamma}(0)=\Frenet_{\gamma}(1)$).
\end{example}

\section{Proof of Theorem \ref{theo:Hausdorff}}
\label{sect:sing}

Before proving Theorem \ref{theo:Hausdorff}, 
we state and prove two related results. 
The first lemma is essentially equivalent to 
Conjecture 2.6 in \cite{Shapiro-Shapiro3}. 
We thank B. Shapiro and M. Shapiro for insightful 
conversations on this subject.

\begin{lemma}
\label{lemma:conjecture}
Let $\Gamma \in \cL_n(z_0;z_1)$ be a locally convex curve.
Let $t_\ast \in \sing(\Gamma) \subset (0,1)$, $z_\ast = \Gamma(t_\ast) \in \Sing_{n+1}$.
Then there exists an open set $\cU_\ast \subset \Spin_{n+1}$, $z_\ast \in \cU_\ast$
with the following properties.
There exists $\epsilon > 0$ such that $[t_\ast - \epsilon, t_\ast + \epsilon] \subset (0,1)$,
$[t_\ast - \epsilon, t_\ast + \epsilon]  \cap \sing(\Gamma) = \{t_\ast\}$
and $\Gamma[[t_\ast-\epsilon,t_\ast+\epsilon]] \subset \cU_\ast$.
There exist distinct open connected components 
$\cU_\ast^{-}$ and $\cU_\ast^{+}$
of $\cU_\ast \smallsetminus \Sing_{n+1}$ such that 
$\Gamma[[t_\ast - \epsilon,t_\ast)] \subset \cU_\ast^{-}$ and
$\Gamma[(t_\ast,t_\ast+\epsilon]] \subset \cU_\ast^{+}$.
\end{lemma}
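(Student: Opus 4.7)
The plan is to translate the lemma into statements about Bruhat cells by leveraging Theorem \ref{theo:chopadvance} of \cite{Goulart-Saldanha0}, which already describes the local behavior of a locally convex curve through a singular point.

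For the set-up, I would first use the finiteness of $\sing(\Gamma)\subset(0,1)$ (which is part of the excerpt's discussion preceding Theorem \ref{theo:Hausdorff}) to choose some $\epsilon_0>0$ so that $[t_\ast-\epsilon_0,t_\ast+\epsilon_0]\subset(0,1)$ and $\sing(\Gamma)\cap[t_\ast-\epsilon_0,t_\ast+\epsilon_0]=\{t_\ast\}$. Then I would take $\cU_\ast$ to be a triangular coordinate chart $\cU_{z_\ast}$ around $z_\ast$ (as introduced in Subsection \ref{subsect:Bruhat}), shrinking $\epsilon_0$ by continuity of $\Gamma$ to ensure $\Gamma[[t_\ast-\epsilon,t_\ast+\epsilon]]\subset\cU_\ast$. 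Shrinking $\epsilon$ once more, Theorem \ref{theo:chopadvance} of \cite{Goulart-Saldanha0} produces the identifications
\[ \Gamma[[t_\ast-\epsilon,t_\ast)]\subset\Bru_{\chop(z_\ast)}, \qquad \Gamma[(t_\ast,t_\ast+\epsilon]]\subset\Bru_{\adv(z_\ast)}, \]
with both $\chop(z_\ast)$ and $\adv(z_\ast)$ in $\acute\eta\Quat_{n+1}$, so that the two arcs lie in \emph{open} signed Bruhat cells and in particular inside $\Bru_\eta=\Spin_{n+1}\smallsetminus\Sing_{n+1}$.

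I would then define $\cU_\ast^-$ (resp.\ $\cU_\ast^+$) to be the connected component of the open set $\cU_\ast\smallsetminus\Sing_{n+1}$ containing the connected arc $\Gamma[[t_\ast-\epsilon,t_\ast)]$ (resp.\ $\Gamma[(t_\ast,t_\ast+\epsilon]]$). These are automatically open in $\cU_\ast$ and contain the desired pieces of $\Gamma$ by construction. The crucial remaining point is to prove $\cU_\ast^-\neq\cU_\ast^+$. Because $\Bru_\eta$ is the disjoint union of the open cells $\Bru_{q\acute\eta}$ over $q\in\Quat_{n+1}$, it is enough to show $\chop(z_\ast)\neq\adv(z_\ast)$: then the two arcs lie in disjoint subsets of $\Bru_\eta$ and hence \emph{a fortiori} in distinct components of the open neighborhood $\cU_\ast\smallsetminus\Sing_{n+1}$.

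To establish $\chop(z_\ast)\neq\adv(z_\ast)$, I would appeal to Equation \eqref{equation:chopadvancei}: writing $z_\ast\in\Bru_{\sigma_0}$ and $\rho_0=\eta\sigma_0$, one has $\adv(z_\ast)=\chop(z_\ast)\hat\rho_0$. Since $z_\ast\in\Sing_{n+1}$ means $\sigma_0\neq\eta$, we obtain $\rho_0\neq e$ in $S_{n+1}$; and the $\longhat$ map from Equation \eqref{equation:acutegravei} satisfies $\hat\rho\neq 1$ in $\Quat_{n+1}\subset\Spin_{n+1}$ for every $\rho\neq e$ (which, if not taken as an ingredient already proved in \cite{Goulart-Saldanha0}, can be verified directly from a reduced expression, as $\hat a_j=\alpha_j(\pi)$ is a nontrivial element of $\Quat_{n+1}$). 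I expect the main obstacle to be this verification that the before/after open cells are genuinely distinct; the rest is standard continuity-and-finiteness bookkeeping.
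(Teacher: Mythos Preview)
Your argument has a genuine gap at the final step. You reduce the distinctness of $\cU_\ast^-$ and $\cU_\ast^+$ to the claim that $\hat\rho_0\neq 1$ in $\Quat_{n+1}$ whenever $\rho_0\neq e$. This claim is false. The paper itself points out (see Example~\ref{example:inout}) that for $n=6$ one has $\hat\eta=\exp(\pi\fh)=1$; so if $z_\ast\in\Bru_e=\Quat_{n+1}\subset\Sing_{n+1}$, then $\sigma_0=e$, $\rho_0=\eta\neq e$, yet $\hat\rho_0=\hat\eta=1$, giving $\chop(z_\ast)=\adv(z_\ast)$. Your suggested verification ``from a reduced expression, as $\hat a_j=\alpha_j(\pi)$ is nontrivial'' does not help: a product of nontrivial elements of $\Quat_{n+1}$ can certainly be trivial. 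In this situation the two arcs lie in the \emph{same} open signed Bruhat cell, so the global decomposition $\Bru_\eta=\bigsqcup_q\Bru_{q\acute\eta}$ cannot separate them, and your argument gives no reason why they land in distinct components of $\cU_{z_\ast}\smallsetminus\Sing_{n+1}$.

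The paper's proof circumvents this by working in a different chart. After a projective transformation it places $z_\ast$ in $q_\ast\bQ[\Pos_\rho]\subset\cU_{q_\ast}$ and takes $\cU_\ast=\cU_{q_\ast}$. The set $q_\ast\bQ[\Pos_\eta]$ is a specific connected component of $\cU_{q_\ast}\smallsetminus\Sing_{n+1}$, and Lemma~\ref{lemma:transition} of \cite{Goulart-Saldanha0} (the $\Pos$/$\Neg$ transition lemma for convex curves in $\Lo_{n+1}^1$) shows that the forward arc enters $q_\ast\bQ[\Pos_\eta]$ while the backward arc stays in $q_\ast\bQ[\Neg_\eta]$, hence is disjoint from $q_\ast\bQ[\Pos_\eta]$. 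This positivity argument separates the two components \emph{locally}, even when the global signed cells $\Bru_{\chop(z_\ast)}$ and $\Bru_{\adv(z_\ast)}$ coincide. To repair your proof you need an ingredient of this kind; Theorem~\ref{theo:chopadvance} alone is not enough.
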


\begin{proof}
Assume that $z_{\ast} \in \Bru_\rho$, $\rho \ne \eta$. 
After applying a projective transformation, we may assume
that $z_{\ast} \in q_{\ast} \bQ[\Pos_\rho] \subset \cU_{q_\ast}$, 
$q_\ast\in\Quat_{n+1}$.
We shall take $\cU_\ast = \cU_{q_\ast}$
and $\cU_\ast^{+} = q_{\ast} \bQ[\Pos_\eta]$,
which is a connected component of 
$\cU_\ast \smallsetminus \Sing_{n+1}$. 
The number $\epsilon > 0$ can easily be chosen so as to satisfy the conditions in the statement.
It follows from Lemma 5.7 
of \cite{Goulart-Saldanha0} 
that $\Gamma[(t_\ast,t_\ast+\epsilon]] \subset \cU_\ast^{+}$
and that $\Gamma[[t_\ast - \epsilon,t_\ast)]$ is disjoint from $\cU_\ast^{+}$:
let $\cU_\ast^{-}$ be the connected component of $\cU_\ast \smallsetminus \Sing_{n+1}$
containing $\Gamma[[t_\ast - \epsilon,t_\ast)]$.
\end{proof}

%The following lemma is closely related to the known fact
%that convex curves form a connected component of $\cL_n(\hat\eta)$:
%it shows more generally that, when a curve is deformed,
%points in the singular set
%may join or split but never vanish or appear out of nowhere.

\begin{lemma}
\label{lemma:novanishingletter}
Let $z_0,z_1\in\Spin_{n+1}$. 
Let $K$ be a compact set and 
$H: K \to \cL_n(z_0;z_1)$ be a continuous function.
Let
\[ K_1 = \bigsqcup_{s \in K} (\{s\} \times \sing(H(s))) =
\{(s,t) \in K \times (0,1) \;|\; H(s)(t) \in \Sing_{n+1} \}. \]
Then $K_1$ is a compact set and satisfies the following condition:
\begin{gather*}
\forall (s_0,t_0) \in K_1, \; \forall \epsilon > 0, \;
\exists \delta > 0, \; \forall s \in K, \\
|s-s_0| < \delta \quad\to\quad
(\exists t \in (0,1), \; (s,t) \in K_1, \; |t-t_0| < \epsilon). 
\end{gather*}
\end{lemma}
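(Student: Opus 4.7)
The plan is to prove the two assertions separately, using continuity of evaluation throughout, the structure of convex curves in the triangular chart to handle the endpoints, and Lemma \ref{lemma:conjecture} to handle interior singular points.

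First, since $H:K\to \cL_n(z_0;z_1)$ is continuous in one of our working topologies (which embeds continuously in $C^0([0,1];\Spin_{n+1})$), the evaluation $\mathrm{ev}:K\times[0,1]\to\Spin_{n+1}$, $(s,t)\mapsto H(s)(t)$, is continuous. Hence $\tilde K_1 := \mathrm{ev}^{-1}[\Sing_{n+1}]$ is closed and therefore compact in $K\times[0,1]$, and $K_1 = \tilde K_1 \cap (K\times(0,1))$. To deduce compactness of $K_1$ it suffices to show that $\tilde K_1$ is bounded away from $K\times\{0\}$ and $K\times\{1\}$. I handle $t=0$ (the case $t=1$ being symmetric). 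Fix $s_{\bullet}\in K$ and apply the triangular chart $\bL_{z_0}:\cU_{z_0}\to z_0\Lo^1_{n+1}$ to the curve $H(s_{\bullet})$: there is $\epsilon_0>0$ with $H(s_{\bullet})[[0,\epsilon_0]]\subset\cU_{z_0}$, and the map $L_{s_{\bullet}}(t)=\bL(z_0^{-1}H(s_{\bullet})(t))$ is a convex curve in $\Lo^1_{n+1}$ with $L_{s_{\bullet}}(0)=I$. By uniform convergence on $[0,\epsilon_0]$, we may shrink $\epsilon_0$ and pass to an open neighborhood $V_{s_{\bullet}}\subset K$ of $s_{\bullet}$ so that $H(s)[[0,\epsilon_0]]\subset\cU_{z_0}$ for every $s\in V_{s_{\bullet}}$; then each $L_s$ is convex with $L_s(0)=I$, so by Lemma \ref{lemma:transition} of \cite{Goulart-Saldanha0} we have $L_s(t)\in\Pos_\eta$ for all $t\in(0,\epsilon_0)$, i.e.\ $H(s)(t)\in\Bru_{\adv(z_0)}\subset\Spin_{n+1}\smallsetminus\Sing_{n+1}$. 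Finitely many such neighborhoods cover $K$, yielding a uniform $\epsilon^{\star}>0$ such that $\tilde K_1\cap(K\times(0,\epsilon^{\star}))=\emptyset$. Repeating at $t=1$ and combining, $K_1\subset K\times[\epsilon^{\star},1-\epsilon^{\star\star}]$, a compact subspace in which $K_1$ is closed, hence $K_1$ is compact.

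For the second assertion, fix $(s_0,t_0)\in K_1$ and $\epsilon>0$. Apply Lemma \ref{lemma:conjecture} to $H(s_0)$ at $t_{\ast}=t_0$: we obtain an open set $\cU_{\ast}\subset\Spin_{n+1}$ with $z_{\ast}=H(s_0)(t_0)\in\cU_{\ast}$, some $\epsilon'\in(0,\epsilon)$ with $[t_0-\epsilon',t_0+\epsilon']\subset(0,1)$, $\sing(H(s_0))\cap[t_0-\epsilon',t_0+\epsilon']=\{t_0\}$, and $H(s_0)[[t_0-\epsilon',t_0+\epsilon']]\subset\cU_{\ast}$, together with distinct connected components $\cU_{\ast}^{-},\cU_{\ast}^{+}$ of $\cU_{\ast}\smallsetminus\Sing_{n+1}$ containing $H(s_0)[[t_0-\epsilon',t_0)]$ and $H(s_0)[(t_0,t_0+\epsilon']]$, respectively. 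By continuity of $\mathrm{ev}$ (uniform continuity on the compact interval $\{s\}\times[t_0-\epsilon',t_0+\epsilon']$, plus continuity at the two endpoint pairs $(s_0,t_0\mp\epsilon')$), there exists $\delta>0$ such that, for every $s\in K$ with $|s-s_0|<\delta$, we have $H(s)[[t_0-\epsilon',t_0+\epsilon']]\subset\cU_{\ast}$, $H(s)(t_0-\epsilon')\in\cU_{\ast}^{-}$ and $H(s)(t_0+\epsilon')\in\cU_{\ast}^{+}$. Since $\cU_{\ast}^{-}$ and $\cU_{\ast}^{+}$ lie in distinct components of $\cU_{\ast}\smallsetminus\Sing_{n+1}$, the connected image $H(s)[[t_0-\epsilon',t_0+\epsilon']]\subset\cU_{\ast}$ must meet $\Sing_{n+1}$, producing some $t\in(t_0-\epsilon',t_0+\epsilon')$ with $(s,t)\in K_1$ and $|t-t_0|<\epsilon'<\epsilon$.

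The only delicate step is the uniform bound away from $t=0$ (and $t=1$) in the first part, because at those endpoints the value $z_0$ (resp. $z_1$) may lie in $\Sing_{n+1}$, so naive continuity is not enough to preclude singular times from accumulating at $0$. The key observation is that, after passing to the triangular chart, every nearby curve becomes convex starting at $I$, and Lemma \ref{lemma:transition} of \cite{Goulart-Saldanha0} guarantees instantaneous entry into the open totally positive cell $\Pos_\eta$; this gives the desired uniform gap. Everything else reduces to straightforward applications of continuity and the separation provided by Lemma \ref{lemma:conjecture}.
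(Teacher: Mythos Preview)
Your treatment of the second assertion via Lemma~\ref{lemma:conjecture} is correct and is essentially the paper's one-line appeal to that lemma.

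In the first part, however, the step ``$L_s(t)\in\Pos_\eta$, i.e.\ $H(s)(t)\in\Bru_{\adv(z_0)}$'' is not valid. The chart $\bL_{z_0}$ records only the \emph{relative} position $z_0^{-1}H(s)(t)$: the conclusion $L_s(t)\in\Pos_\eta$ says $z_0^{-1}H(s)(t)\in\bQ[\Pos_\eta]\subset\Bru_{\acute\eta}$, hence $H(s)(t)\in z_0\Bru_{\acute\eta}$. Left translation by $z_0$ does not respect the Bruhat decomposition, so this does not force $H(s)(t)\in\Bru_\eta$. The paper itself supplies a counterexample in Remark~\ref{rem:convex}: for $\sigma\ne\eta$ and small $v>0$, the curve $\Gamma(t)=\acute\sigma\exp\bigl(v(t-\tfrac12)\fh\bigr)$ is short, so in the chart $\cU_{\Gamma(0)}$ one has $L_\Gamma(t)=\exp(\tan(vt)\fh_L)\in\Pos_\eta$ for every $t\in(0,1)$, yet $\Gamma(\tfrac12)=\acute\sigma\in\Sing_{n+1}$. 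Thus your argument, taken literally, would bar singular times on the whole interval $(0,\epsilon_0)$ for this curve, which is false.

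Your covering scheme is easily repaired: replace $\cU_{z_0}$ by a chart $\cU_q$ with $q\in\Quat_{n+1}$, after a projective transformation placing $z_0$ in $q\bQ[\Pos_{\sigma_0}]$ (exactly as in the proof of Lemma~\ref{lemma:conjecture}). Then $L_s(0)\in\Pos_{\sigma_0}\subset\overline{\Pos_\eta}$, so $L_s(t)\in\Pos_\eta$ for $t>0$, and now $q\bQ[\Pos_\eta]\subset\Bru_{q\acute\eta}\subset\Bru_\eta$ legitimately excludes $\Sing_{n+1}$; the tube lemma applied to the open set $\mathrm{ev}^{-1}[\cU_q]\supset K\times\{0\}$ gives the uniform $\epsilon^\star$. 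The paper's proof instead invokes Theorem~\ref{theo:chopadvance} of \cite{Goulart-Saldanha0} for each curve to get $\{s\}\times(0,\epsilon_s)\subset\tilde H^{-1}[\Bru_{\adv(z_0)}]$ and then passes to a uniform $\epsilon_\ast$ by compactness of $K$.
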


\begin{proof}%[Proof of Lemma \ref{lemma:novanishingletter}]
Write $\tilde H(s,t) = H(s)(t) \in \Spin_{n+1}$
so that $\tilde H: K \times [0,1] \to \Spin_{n+1}$ is continuous.
%We assume without loss of generality that
%$\tilde H(s,1) = z_1 \in \Quat_{n+1}$ is fixed for all $s \in K$.
Notice that $K_2 \subseteq K \times [0,1]$ defined by
\[ K_2 = \tilde H^{-1}[\Sing_{n+1}]\subseteq K_1 \cup (K \times \{0,1\}), 
\qquad
\Sing_{n+1} = \sqcup_{\sigma \ne \eta} \Bru_{\sigma}, \]
is closed and therefore compact.
Furthermore, the sets $A_0 = \tilde H^{-1}[\Bru_{\adv(z_0)}]$ and
$A_1 = \tilde H^{-1}[\Bru_{\chop(z_1)}]$ are open and disjoint from $K_2$.
From Theorem 3 of \cite{Goulart-Saldanha0},
for each $s \in K$ there exists $\epsilon_s > 0$ such that
$\{s\} \times (0,\epsilon_s) \subset A_0$
% \tilde H^{-1}(\Bru_{\adv(1)})$
and
$\{s\} \times (1-\epsilon_s,1) \subset A_1$.
% \tilde H^{-1}(\Bru_{\chop(z_1)})$.
By compactness of $K$ there exists $\epsilon_{\ast} > 0$ such that
$K \times (0,\epsilon_{\ast}) \subset A_0$ and
$K \times (1-\epsilon_{\ast},1) \subset A_1$,
%$$ K \times (0,\epsilon_{\ast}) \subset A_0,
% \tilde H^{-1}(\Bru_{\adv(1)}),
%\qquad
%K \times (1-\epsilon_{\ast},1) \subset A_1,
%\tilde H^{-1}(\Bru_{\chop(z_1)}),
%$$
implying the compactness of $K_1 = K_2 \smallsetminus (K \times \{0,1\})$.

The remaining claim follows from Lemma \ref{lemma:conjecture}. 
\end{proof}

\begin{proof}[Proof of Theorem \ref{theo:Hausdorff}]
It follows from the condition in 
Lemma \ref{lemma:novanishingletter}, 
that the composite map $\sing \circ H$ is continuous. 
Since $\cL_n(z_0;z_1)$ is metrizable 
and $K$ is arbitrary, this implies the continuity  
of the map $\sing:\cL_n(z_0;z_1)\to\mathcal{H}([0,1])$. 
\end{proof}

% \begin{coro}
% \label{coro:hausdorff2}
% Let $K$ be a compact set and
% $H: K \times [0,1] \to \Spin_{n+1}$ be a continuous map
% such that, for all $s \in K$, the curve $\Gamma_s: [0,1] \to \Spin_{n+1}$
% defined by $\Gamma_s(t) = H(s,t)$ is locally convex.
% Assume that there exist $q_0, q_1 \in \Quat_{n+1}$ such that
% for all $s \in K$ we have
% %$H(s,0) \in \Bru^0_{q_0\acute\eta}$, 
% $\adv(\Gamma_s(0))=q_0\acute\eta$ and 
% %$H(s,1) \in \Bru^1_{q_1\acute\eta}$.
% $\chop(\Gamma_s(1))=q_1\acute\eta$. 
% Define $h: K \to \cH((0,1))$,
% $h(s) = \sing(\Gamma_s)$. 
% %\{ t \in (0,1) \;|\; H(s,t) \in \Sing_{n+1} \}$.
% Then $h$ is continuous.
% \end{coro}

% \begin{proof}
% The fact that the endpoints change with the 
% parameter $s$ need not concern us:
% just attach locally convex curves at either endpoint.
% \end{proof}

%Recall that a curve $\Gamma \in \cL_n(z)$ is \emph{convex}
%if $\sing(\Gamma) = \emptyset$;

Recall from the introduction that $\Gamma\in\cL_{n,\conv}(z)$ 
if and only if $\Gamma\in\cL_n(1;z)$ and 
$\sing(\Gamma)=\emptyset$.  
The following result is well known 
\cite{Anisov, Shapiro-Shapiro, Shapiro} and is presented here
for completeness and as an example of an application.

\begin{lemma}
\label{lemma:convex2}
If $z \in \widetilde\B^+_{n+1}$ then the subset
$\cL_{n,\conv}(z) \subset \cL_n(z)$
is either empty or a contractible connected component.
It is nonempty if and only if
$\chop(z)=\acute\eta$.
\end{lemma}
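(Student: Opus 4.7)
The plan is to deduce the open--closed claim from Theorem~\ref{theo:Hausdorff}, characterize non-emptiness via Lemma~\ref{lemma:convex}, and handle contractibility by reducing to the classical contractibility of the space of convex curves between $\ll$-comparable points in $\Lo^1_{n+1}$.

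First, I would note that $\cL_{n,\conv}(z)=\sing^{-1}[\{\emptyset\}]$: it is the preimage, under the continuous map $\sing$ provided by Theorem~\ref{theo:Hausdorff}, of the isolated point $\emptyset\in\cH([0,1])$. Hence $\cL_{n,\conv}(z)$ is clopen in $\cL_n(z)$, that is, a (possibly empty) union of connected components.

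Next, for the non-emptiness characterization, the ``only if'' direction uses Lemma~\ref{lemma:convex}: for any $\Gamma\in\cL_{n,\conv}(z)$, condition~\ref{item:BruhatAT} with $t_0=0$ (and $\Gamma(0)=1$) yields $\Gamma((0,1])\subset\Bru_{\acute\eta}$, and taking $t\nearrow 1$, the defining formula for $\chop$ in Equation~\eqref{equation:chopadvancei} forces $\chop(z)=\acute\eta$. For the converse, assuming $\chop(z)=\acute\eta$, I would build a convex curve from $1$ to $z$ starting from the standard model $t\mapsto\exp(\pi t\fh)$ of Example~\ref{example:convex} and adjusting by a projective transformation of type~\ref{item:projUp}; such transformations preserve signed Bruhat cells and hence, via Lemma~\ref{lemma:convex}, the notion of convexity, while acting transitively enough on the relevant boundary cells to reach any prescribed endpoint with $\chop(z)=\acute\eta$.

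Contractibility is the main substantive step. By Lemma~\ref{lemma:convex}, every $\Gamma\in\cL_{n,\conv}(z)$ is short, so its image lies in some $\cU_{z_\ast}$; via the triangular chart $\bL_{z_\ast}$, $\Gamma$ pulls back to a convex curve in $\Lo^1_{n+1}$ from $L_0$ to $L_1$, with $L_0\ll L_1$ by Lemma~\ref{lemma:totallypositive} of~\cite{Goulart-Saldanha0}. The space of such convex curves is known to be contractible by the classical arguments of~\cite{Anisov,Shapiro-Shapiro,Shapiro}, for instance by deforming any convex curve to a canonical one of constant log-derivative while correcting the monodromy by a reparametrization. The main obstacle I expect is that the chart $\cU_{z_\ast}$ depends on the curve, so a single coordinate system cannot accommodate all of $\cL_{n,\conv}(z)$ simultaneously; I would address this by observing that the set of $z_\ast$ for which a given short $\Gamma$ satisfies $\Gamma[[0,1]]\subset\cU_{z_\ast}$ is open, transporting local deformations across overlapping charts, and gluing by a compactness argument along $[0,1]$.
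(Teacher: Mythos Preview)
Your clopen argument via Theorem~\ref{theo:Hausdorff} matches the paper, and your non-emptiness sketch is reasonable (though note a small slip: Lemma~\ref{lemma:convex} is about \emph{strictly} convex curves, whereas $\Gamma\in\cL_{n,\conv}(z)$ with $\Gamma(0)=1$ and $\Gamma(1)=z\in\widetilde\B^+_{n+1}$ need not be strictly convex; the inclusion $\Gamma((0,1))\subset\Bru_{\acute\eta}$ is more directly read off from $\sing(\Gamma)=\emptyset$ and Theorem~\ref{theo:chopadvance} of~\cite{Goulart-Saldanha0}).

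The genuine gap is in your contractibility argument. First, invoking~\cite{Anisov,Shapiro-Shapiro,Shapiro} for ``the space of convex curves in $\Lo^1_{n+1}$ from $L_0$ to $L_1$ is contractible'' is essentially circular: those references establish contractibility of $\cL_{n,\conv}(z)$ itself, which is what you are proving. Second, the chart-dependence obstacle you flag is real and your proposed fix does not close it: ``gluing by compactness'' across overlapping charts $\cU_{z_\ast}$ does not assemble into a single global contraction of $\cL_{n,\conv}(z)$, and in fact for $z=\acute\eta$ no single chart $\cU_{z_\ast}$ contains both endpoints $1$ and $\acute\eta$ (e.g.\ $\acute\eta\notin\cU_1$). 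The paper avoids this entirely. After reducing to $z=\acute\eta$ by a projective transformation, it writes down an explicit one-parameter contraction: with $\Gamma_0(t)=\exp(\tfrac{\pi}{2}t\fh)$ and $U_s\in\Up^1_{n+1}$ chosen so that $\acute\eta^{U_s}=\Gamma_0(s)$, set $\Gamma_s(t)=(\Gamma_1(t/s))^{U_s}$ for $t\in[0,s]$ and $\Gamma_s(t)=\Gamma_0(t)$ for $t\in[s,1]$. Projective transformations preserve each $\Bru_{q\acute\sigma}$, so $\Gamma_s\in\cL_{n,\conv}(\acute\eta)$ for all $s$, and $s\mapsto\Gamma_s$ is continuous (in $H^1$) even at $s=0$; no triangular charts are needed. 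The general case $\chop(z)=\acute\eta$ with $z\notin\Bru_{\acute\eta}$ is then handled by citing Proposition~6.4 of~\cite{Saldanha-Shapiro} (an alternative explicit contraction appears later in Remark~\ref{rem:explicitcontraction}).
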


\begin{proof}
From Theorem \ref{theo:Hausdorff}
and the fact that $\emptyset \in \cH([0,1])$ is an isolated point
it follows that $\cL_{n,\conv}(z)$ is a union of connected components.
By item \ref{item:lemma:spaces:homotopyequivalence} 
%and \textbf{??}
of Lemma \ref{lemma:spaces}, 
it suffices to show that $\cL^{[H^1]}_{n,\conv}(z)$ is contractible. 

Consider first the case $z \in \Bru_{\acute\eta}$.
By applying a projective transformation we may assume
$z = \acute\eta = \exp(\frac{\pi}{2}\fh)$.
Take $\Gamma_0 \in \cL_{n,\conv}(\acute\eta)$,
$\Gamma_0(t) = \exp(\frac{\pi}{2}t\fh)$. 
Notice that Equation (9) of 
\cite{Goulart-Saldanha0} implies that 
$\Gamma_0(t)\in\Bru_{\acute\eta}$ for $0<t\leq1$, 
since $\Gamma_0(t)=U_0(t)\Pi(\acute\eta)U_1(t)$, 
where 
\begin{gather*}
U_1(t)=\exp\left(-\log\left(\cos\left(\pi t/2 \right)[\fh_L,\fh_L^\top]\right)\right)
\exp\left(-\tan\left(\pi t/2 \right)\fh_L^\top\right), \\
U_0(t)=\Pi(\acute\eta)
\exp\left(\tan\left(\pi t/2 \right)\fh_L\right)
\Pi(\acute\eta)^\top \in\Up^+_{n+1} 
\end{gather*}
(recall that the comutator 
$[\fh_L,\fh_L^\top]=\sum_{k=0}^n(2k-n)e_{k+1}e_{k+1}^\top$ 
is a diagonal matrix).
For $s \in (0,1]$, let $U_s \in \Up_{n+1}^{1}$ be such that
$\acute\eta^{U_s}=\Gamma_0(s)$.
%$\bQ(U_s\Pi(\acute\eta))=\Gamma_0(s)$. 
%$\Gamma_0^{U_s^{-1}}(s)=\acute\eta$. 
For $\Gamma_1 \in \cL_{n,\conv}^{[H^1]}(\acute\eta)$ 
and $s \in (0,1]$
define $\Gamma_s  \in \cL_{n,\conv}^{[H^1]}(\acute\eta)$ by:
\[ \Gamma_s(t) = \begin{cases}
(\Gamma_1(\frac{t}{s}))^{U_s}, & t \in [0,s], \\
\Gamma_0(t), & t \in [s,1]. \end{cases} \]
The map $[0,1] \to  \cL_{n,\conv}^{[H^1]}(\acute\eta)$,
$s \mapsto \Gamma_s$ is continuous (even at $s = 0$).

The general case follows from 
Proposition 6.4 of \cite{Saldanha-Shapiro};
see also Remark \ref{rem:explicitcontraction}.
\end{proof}

\begin{rem}
\label{rem:convex}
It follows from Lemma \ref{lemma:convex} 
that $\sing(\Gamma)=\emptyset$ implies 
that $\Gamma$ is convex.
% (regardless of its initial frame $z_0$). 
The reciprocal is not true. Indeed, 
for any $\sigma\in S_{n+1}$, $\sigma\ne\eta$, take 
$\Gamma:[0,1]\to\Spin_{n+1}$, 
$\Gamma(t)=\acute\sigma\exp(v(t-\frac12)\fh)$. 
For small $v>0$, $\Gamma$ is convex (short) but 
$\frac12\in\sing(\Gamma)$ (see also Example \ref{example:inout} below). 
\end{rem}

%\section{Accessibility sets}
%\label{sect:acc}

\section{Accessibility in triangular coordinates}
\label{sect:acctriangle}

For $L_{\bfx} \in \Pos_{\eta} \subset \Lo_{n+1}^{1}$, 
we shall be interested in the \emph{interval}
\[ [I,L_{\bfx}) = 
\overline{\Pos_{\eta}} \cap (L_{\bfx} \Neg_{\eta})
= \{ L \in \Lo_{n+1}^{1} \;|\; I \le L \ll L_{\bfx} \}
= \bigsqcup_{\sigma\in S_{n+1}} \Ac_{\sigma}(L_{\bfx}) \]
where the strata $\Ac_\sigma(L_{\bfx}) \subset \Pos_\sigma$ 
are given by 
\[ \Ac_{\sigma}(L_{\bfx}) = [I,L_{\bfx}) \cap \Pos_\sigma =
\{ L \in \Pos_\sigma \;|\; L \ll L_{\bfx} \}. \]
The sets $\Ac_{\sigma}(L_{\bfx})$ will be called \emph{accessibility sets},
suggesting that for $L \in \Pos_\sigma$,
$L \in \Ac_\sigma(L_{\bfx})$ if and only if there exists
a convex curve $\Gamma: [0,1] \to \Lo_{n+1}^1$
with $\Gamma(0) = L$ and $\Gamma(1) = L_{\bfx}$.

\begin{example}
\label{example:Ac}
Take $n = 2$ and, for $x,y,z\in\RR$,  write 
\begin{gather*}
L_{\bfx} = L(x,y,z) =
\begin{pmatrix} 
1 & 0 & 0 \\ 
x & 1 & 0 \\ 
z & y & 1 
\end{pmatrix} 
= \jacobi_1(c_1) \jacobi_2(c_2) \jacobi_1(c_3)
= \jacobi_2(\tilde c_1) \jacobi_1(\tilde c_2) \jacobi_2(\tilde c_3), \\
c_1 = x-\frac{z}{y}, \quad c_2 = y,  \quad c_3 = \frac{z}{y}, \qquad
\tilde c_1 = y-\frac{z}{x},\quad \tilde c_2 = x,\quad \tilde c_3 = \frac{z}{x};
\end{gather*}
%For convenience, we again write $L(x,y,z)$ as in Equation \eqref{equation:Lxyz}.
as in Section \ref{sect:gso}, $\jacobi_j(t)=\exp(t\fl_j)$. 
We describe the strata 
$\Ac_{\sigma}=\Ac_{\sigma}(L_{\bfx})$.
The first stratum is a point:
$\Ac_e = \{I\}$.
Next we have line segments:
\[ \Ac_{a} = \{ \jacobi_1(t_1) \;|\; t_1 \in (0, c_1) \}, \quad
\Ac_{b} = \{ \jacobi_2(\tilde t_1) \;|\; \tilde t_1 \in (0, \tilde c_1) \}. \]
The next strata are surfaces:
\begin{align*}
{\Ac_{ab}} &=
\left\{ \jacobi_1(t_1) \jacobi_2(t_2) \;|\;
t_1 \in (0,c_1), t_2 \in (0,g_2(t_1)) \right\},
\quad g_2(t_1) = \frac{c_2c_3}{c_1+c_3-t_1}, \\
{\Ac_{ba}} &=
\left\{ \jacobi_2(\tilde t_1) \jacobi_1(\tilde t_2) \;|\;
\tilde t_1 \in (0,\tilde c_1), \tilde t_2 \in (0,\tilde g_2(\tilde t_1))
\right\},
\quad \tilde g_2(\tilde t_1) =
\frac{\tilde c_2\tilde c_3}{\tilde c_1+\tilde c_3-\tilde t_1}.
\end{align*}
Translating this parametrization back to $(x,y,z)$ coordinates
shows that $\Ac_{ab}$ is contained in the plane $z = 0$
and $\Ac_{ba}$ is contained in the hyperbolic paraboloid $z = xy$.
% 0 < x < \frac12, 0 < y < \frac{1}{2(1-x)}
% 0 < y < \frac12, 0 < x < \frac{1}{2(1-y)}
% % [L_0,L_1] \cap (L_1 \overline{\Neg_{[ab]}})
% {B_{[ab]}} &=
% \left\{
% % L_1 \jacobi_1(x-1) \jacobi_2(y-1) =
% L(x,y,x-\frac12) \;|\;
% \frac12 < x < 1, 1-\frac{1}{2x} < y < 1 \right\}; \\
% % [L_0,L_1] \cap (L_1 \overline{\Neg_{[ba]}})
% {B_{[ba]}} &=
% \left\{
% % L_1 \jacobi_2(y-1) \jacobi_1(x-1) =
% L(x,y,xy-y+\frac12) \;|\;
% \frac12 < y < 1, 1-\frac{1}{2y} < x < 1 \right\}. \\
Finally, the open stratum $\Ac_{aba}$ can be described as
\begin{gather*}
\Ac_{aba} =
\left\{ \jacobi_1(t_1) \jacobi_2(t_2) \jacobi_1(t_3) \;|\;
t_1 \in \left(0,c_1 \right),
t_2 \in \left(0,g_2(t_1) \right),
t_3 \in \left(0,g_3(t_1,t_2) \right)\right\} \\
\phantom{\Ac_\eta} =
\left\{ \jacobi_2(\tilde t_1) \jacobi_1(\tilde t_2) \jacobi_2(\tilde t_3) \;|\;
\tilde t_1 \in \left(0,\tilde c_1 \right),
\tilde t_2 \in \left(0,\tilde g_2(\tilde t_1) \right),
\tilde t_3 \in \left(0,\tilde g_3(\tilde t_1,\tilde t_2) \right)\right\}, \\
g_3(t_1,t_2) = \frac{c_2(c_1-t_1)}{c_2-t_2}, \qquad
\tilde g_3(\tilde t_1,\tilde t_2) =
\frac{\tilde c_2(\tilde c_1-\tilde t_1)}{\tilde c_2-\tilde t_2}.
\end{gather*}
% The boundary of $[L_0,L_1]$ is formed by the closure of the strata
% $A_{[a_1a_2]}$, $A_{[a_2a_1]}$, $B_{[a_1a_2]}$ and $B_{[a_2a_1]}$.
\end{example}

A \emph{quasiproduct} is a finite sequence $(X_j)_{1 \le j \le k}$
of open sets $X_j \subset (0,+\infty)^j$ such that there exist
a constant $c_1 \in (0,+\infty)$
and continuous functions $g_j: X_{j-1} \to (0,+\infty)$
for $2 \le j \le k$ such that
$X_1 = (0,c_1)$ and
\[ X_j = \{ (t_1, \ldots, t_{j-1}, t_j) \in X_{j-1} \times (0,+\infty) \;|\;
t_{j} < g_j(t_1, \ldots, t_{j-1}) \}, \quad
2 \le j \le k. \]
Notice that $X_k$ is homeomorphic to $\RR^k$.

\begin{lemma}
\label{lemma:triangularquasiproduct}
If $L_{\bfx} \in \Pos_\eta$, each stratum $\Ac_\sigma(L_{\bfx})$
is an open, bounded and contractible subset of $\Pos_\sigma$.
Moreover, if $\sigma = \sigma_k = a_{i_1} \cdots a_{i_k}$ is a reduced word
and $\sigma_j =  a_{i_1} \cdots a_{i_j}$, $j \le k$, then
\[ \Ac_{\sigma_j}(L_{\bfx}) = 
\{ \jacobi_{i_1}(t_1) \cdots
\jacobi_{i_j}(t_{j})
\;|\;
(t_1,\ldots,t_{j}) \in X_j \} \]
where the sequence $(X_j)_{1 \le j \le k}$ is a quasiproduct.
Furthermore, the functions \mbox{$g_j: X_{j-1} \to (0,+\infty)$}
are rational and bounded.
\end{lemma}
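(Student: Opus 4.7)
The plan is to prove all stated properties simultaneously by induction on $k = \inv(\sigma)$, establishing the quasiproduct structure along the way. The base case $k = 0$ is trivial: $\Ac_e(L_{\bfx}) = \{I\}$. For the inductive step, fix a reduced word $\sigma_k = a_{i_1} \cdots a_{i_k}$, set $\sigma_{k-1} = a_{i_1} \cdots a_{i_{k-1}}$, and assume the required quasiproduct $X_{k-1}$ with rational bounded functions $g_2,\ldots,g_{k-1}$ exists. Given $(t_1,\ldots,t_{k-1}) \in X_{k-1}$, write $L = \jacobi_{i_1}(t_1)\cdots\jacobi_{i_{k-1}}(t_{k-1}) \in \Ac_{\sigma_{k-1}}(L_{\bfx})$, so $L \ll L_{\bfx}$. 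The goal is to describe the set
\[ S(t_1,\ldots,t_{k-1}) = \{ s > 0 \;|\; L\jacobi_{i_k}(s) \ll L_{\bfx} \}. \]

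The key step is to show that $S$ is an interval of the form $(0, g_k(t_1,\ldots,t_{k-1}))$, with $g_k$ a bounded positive rational function. Openness of $S$ and non-emptiness near $s = 0^{+}$ are immediate from continuity of $L \mapsto L^{-1}L_{\bfx}$ and openness of $\Pos_\eta$. The heart of the claim is monotonicity: if $s \in S$ and $0 < s' < s$, then $s' \in S$. This follows from two semigroup facts (from Lemma \ref{lemma:positivesemigroup} of \cite{Goulart-Saldanha0}): first, $\jacobi_{i_k}(s-s') \in \Pos_{a_{i_k}} \subseteq \overline{\Pos_\eta}$; second, $\overline{\Pos_\eta} \cdot \Pos_\eta \subseteq \Pos_\eta$. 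Setting $M = (L\jacobi_{i_k}(s))^{-1}L_{\bfx} \in \Pos_\eta$, one obtains $(L\jacobi_{i_k}(s'))^{-1}L_{\bfx} = \jacobi_{i_k}(s-s')\, M \in \Pos_\eta$, so $s' \in S$.

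For rationality of $g_k$, note that $(L\jacobi_{i_k}(s))^{-1}L_{\bfx} = (I - s\fl_{i_k})\, L^{-1}L_{\bfx}$ has entries affine in $s$, so the condition of lying in $\Pos_\eta$ amounts to the strict positivity of a finite collection of affine-in-$s$ minors; the first-vanishing minor determines $g_k$ as a linear-fractional expression in $s$. To obtain a single rational formula (rather than a piecewise minimum), one extends $\sigma_k$ to a reduced word for $\eta$ and exploits the Fomin--Zelevinsky/Lusztig factorization of $L_{\bfx} \in \Pos_\eta$, showing that a distinguished minor is binding throughout $X_{k-1}$. Boundedness of $g_k$ reduces to boundedness of $\Ac_{\sigma_k}(L_{\bfx})$: if $L_1 L_2 = L_{\bfx}$ with $L_1, L_2$ lower unipotent and totally non-negative, then for all $i > j$,
\[ (L_{\bfx})_{ij} = \sum_{m} (L_1)_{im}(L_2)_{mj} \;\geq\; (L_1)_{ij}(L_2)_{jj} = (L_1)_{ij}, \]
so $[I,L_{\bfx}]$ has entries bounded by those of $L_{\bfx}$ and is compact.

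Finally, openness of $\Ac_{\sigma_k}(L_{\bfx})$ in $\Pos_{\sigma_k}$ is automatic from the quasiproduct structure, because the parameterization $(t_1,\ldots,t_k) \mapsto \jacobi_{i_1}(t_1)\cdots\jacobi_{i_k}(t_k)$ is a diffeomorphism of $(0,+\infty)^k$ onto $\Pos_{\sigma_k}$ and $X_k$ is open in $(0,+\infty)^k$; contractibility follows because $X_k$ is homeomorphic to $\RR^k$, obtained from $X_{k-1} \cong \RR^{k-1}$ by fibering over it with fibers $(0,g_k(t_1,\ldots,t_{k-1}))$. The main obstacle I anticipate is the identification of the binding minor at each step, i.e., showing that a single rational expression (rather than merely a piecewise-rational one obtained as a minimum) governs the upper bound $g_k$ on $X_{k-1}$. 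This is the combinatorial/algebraic heart of the argument, and is precisely where the theory of total positivity and a judicious extension of $\sigma_k$ to a reduced word for $\eta$ enter; everything else is relatively standard once the quasiproduct structure and monotonicity are in hand.
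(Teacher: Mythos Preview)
Your approach is essentially the same as the paper's --- induction on $k$, the same entrywise boundedness argument, and a quasiproduct built by showing that at each step the admissible range of $t_k$ is an interval $(0,g_k)$. The execution of the key step differs, however, and the paper's version directly resolves the obstacle you flag.

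You argue monotonicity of $S(t_1,\ldots,t_{k-1})$ via semigroup properties and then try to identify $g_k$ as the first-vanishing minor, worrying (rightly) that this only gives a piecewise-rational minimum unless a ``binding minor'' can be pinned down. The paper sidesteps this entirely: it chooses a reduced word $\eta = a_{j_1}\cdots a_{j_m}$ with $j_1 = i_k$ (note: only the \emph{first letter} needs to be $a_{i_k}$, not an extension of all of $\sigma_k$), factors $L_{\sigma_{k-1}}^{-1}L_{\bfx} = \jacobi_{j_1}(\tau_1)\cdots\jacobi_{j_m}(\tau_m)$ in $\Pos_\eta$, and sets $g_k = \tau_1$. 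Then $\jacobi_{i_k}(t) \ll L_{\sigma_{k-1}}^{-1}L_{\bfx}$ iff $t < \tau_1$ follows from a one-parameter lemma (Lemma~\ref{lemma:posline} in \cite{Goulart-Saldanha0}), and rationality of $\tau_1$ in $(t_1,\ldots,t_{k-1})$ comes from the rational transition maps between reduced-word parametrizations of $\Pos_\eta$ (the braid move $\jacobi_i(s_1)\jacobi_{i+1}(s_2)\jacobi_i(s_3)=\jacobi_{i+1}(\cdot)\jacobi_i(\cdot)\jacobi_{i+1}(\cdot)$). This gives a single rational expression in one stroke, with no need to argue monotonicity separately or to identify which minor is binding.
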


% nonempty and contractible.
% Moreover, if $\sigma_1 \vartriangleleft \sigma_0 = \sigma_1 a_{i_k}$,
% with $\sigma_1 = a_{i_1}\cdots a_{i_{k-1}}$ a reduced word
% and
% $$ [L_0,L_1) \cap (L_0 \Pos_{\sigma_1}) = 
% \{ L_0 \jacobi_{i_1}(t_1) \cdots \jacobi_{i_{k-1}}(t_{k-1}) \;|\;
% (t_1,\ldots,t_{k-1}) \in X_1 \} $$
% then
% $$ [L_0,L_1) \cap (L_0 \Pos_{\sigma_0}) = 
% \{ L_0 \jacobi_{i_1}(t_1) \cdots
% \jacobi_{i_{k-1}}(t_{k-1}) \jacobi_{i_k}(t_{k})
% \;|\;
% (t_1,\ldots,t_{k-1},t_{k}) \in X_0 \} $$
% where
% $$ X_0 = \{ (t_1, \ldots, t_{k-1}, t_{k}) \in  X_1 \times (0,+\infty) \;|\;
% t_{k} < g(t_1, \ldots, t_{k-1}) \} $$
% and $g: X_1 \to (0,+\infty)$ is a rational function,
% bounded and continuous in $X_1$.

Example \ref{example:Ac} above illustrates this claim for $n = 2$.
% For instance, if $\sigma_1 = a$ and $\sigma_0 = [ab]$ we have
% $X_1 = (0,\frac12)$ and $g_2(x) = 1/(2(1-x))$.

\begin{proof}
Notice that $I \le L \ll L_{\bfx}$ implies
that $L \in \overline{\Pos_\eta}$ and that
there exists $\tilde L \in \Pos_\eta$ with $L \tilde L = L_{\bfx}$.
Computing $(L_{\bfx})_{ij}$ in this product yields
$0 \le (L)_{ij} \le (L_{\bfx})_{ij}$:
it follows that the interval $[I,L_{\bfx})$ is bounded.

The proof is by induction on $k = \inv(\sigma)$; the case $k = 1$ is easy.
Write
\[ X_j = \{ (t_1, \ldots, t_{j}) \in (0,+\infty)^{j} \;|\;
\jacobi_{i_1}(t_1) \cdots  \jacobi_{i_{j}}(t_{j})
\ll L_{\bfx} \}. \]
We assume by induction that $(X_j)_{1\le j\le k-1}$ is a quasiproduct;
we need to construct the function $g_k: X_{k-1} \to (0,+\infty)$
that obtains $X_k$.

Let $\eta = a_{j_1} \cdots a_{j_m}$ be a reduced word with $j_1 = i_k$. 
Given $(t_1, \ldots , t_{k-1}) \in X_{k-1}$, let
\[ L_{\sigma_{k-1}} = \jacobi_{i_1}(t_1) \cdots \jacobi_{i_{k-1}}(t_{k-1})
\in \Ac_{\sigma_{k-1}}(L_{\bfx}) \subset \Pos_{\sigma_{k-1}} \]
and write
\[ L_{\sigma_{k-1}}^{-1} L_{\bfx} =
\jacobi_{j_1}(\tau_1) \cdots \jacobi_{j_m}(\tau_m) \in \Pos_\eta \]
so that $\tau_1 > 0$ is a function of $(t_1, \ldots, t_{k-1})$:
define $g_k(t_1, \ldots, t_{k-1}) = \tau_1$.
That $g_k$ is a rational function follows from the fact that,   
for all $i\in\llbracket n-1\rrbracket$, $s_1,s_2,s_3\in\RR$,  
%one has 
%Equation \eqref{equation:ababab} 
\[\jacobi_i(s_1)\jacobi_{i+1}(s_2)\jacobi_i(s_3)
=\jacobi_{i+1}\left(\frac{s_2s_3}{s_1+s_3}\right)
\jacobi_i(s_1+s_3)
\jacobi_{i+1}\left(\frac{s_1s_2}{s_1+s_3}\right).\]
As in Lemma 5.8 
of \cite{Goulart-Saldanha0}, %we have 
$\jacobi_{i_k}(t) \ll L_{\sigma_{k-1}}^{-1} L_{\bfx}$
if and only if $t < g_k(t_1, \ldots, t_{k-1})$.%, as claimed.
\end{proof}

% \begin{lemma}
% \label{lemma:totallypositive}
% Consider $L_0, L_1 \in \Lo_{n+1}^{1}$;
% we have $L_0 \ll L_1$ if and only if
% there exists a holonomic curve $\Gamma: [0,1] \to \Lo_{n+1}^{1}$
% with $\Gamma(0) = L_0$ and $\Gamma(1) = L_1$.
% \end{lemma}

% \begin{proof}
% If a holonomic curve exists then it follows from
% Lemma \ref{lemma:explicitGamma} that $L_0^{-1} L_1$ is totally positive.
% Conversely,  ... [TO BE COMPLETED]
% \end{proof}

\section{Accessibility in the spin group}
\label{sect:acc}

For $z_{\bfx} \in \bQ[\Pos_\eta] \subseteq \cU_1 \cap \Bru_{\acute\eta}$
and $\sigma \in S_{n+1}$, we define
\[ \Ac_\sigma(z_{\bfx}) = \bQ[\Ac_\sigma(\bL(z_{\bfx}))]
\subset \bQ[\Pos_\sigma] \subset \Bru_{\acute\sigma}. \]
For each $z \in \Ac_\sigma(z_{\bfx})$, there exists a locally convex curve
$\Gamma: [0,1] \to \cU_1$ % \subset \Spin_{n+1}$
with $\Gamma(0) = z$ and $\Gamma(1) = z_{\bfx}$.
Indeed, just take a convex curve
$\Gamma_L: [0,1] \to \Lo_{n+1}^{1}$
with $\Gamma_L(0) = \bL(z)$ and $\Gamma_L(1) = \bL(z_{\bfx})$
and define $\Gamma = \bQ \circ \Gamma_L$.
Similarly, for $z \in \bQ[\Pos_\sigma] \smallsetminus \Ac_\sigma(z_{\bfx})$, 
no such curve exists.

For $z_{\bfx} \in \Bru_{\acute\eta}$, choose $U \in \Up_{n+1}^{+}$
such that $z_{\bfx} = z_0^U$, $z_0  \in \bQ[\Pos_\eta]$.
For $\sigma \in S_{n+1}$, define
$\Ac_{\sigma}(z_{\bfx}) = (\Ac_{\sigma}(z_0))^U$;
this turns out to be well-defined and the properties above still hold.
We want to define $\Ac_{\sigma}(z_{\bfx})$ for any 
$z_{\bfx} \in \chop^{-1}[\{\acute\eta\}]$.
This will require a certain detour.
We shall first present a topological construction (using curves),
then an algebraic one (using coordinates)
and then finally prove their equivalence.

For $q\in \Quat_{n+1}$, set
\begin{align*}
\Bru^{0}_{q\acute\eta} &= \adv^{-1}[\{q\acute\eta\}] 
= \bigsqcup_{\sigma \in S_{n+1}} \Bru_{q\acute\sigma}, \\
\Bru^{1}_{q\grave\eta} &= \chop^{-1}[\{q\grave\eta\}]
= \bigsqcup_{\sigma \in S_{n+1}}
\Bru_{q\grave\sigma}.
%\Bru_{q\hat\eta(\hat\sigma)^{-1}\acute\sigma}. 
\end{align*}
A locally convex curve $\Gamma: [0,1] \to \Spin_{n+1}$
satisfying $\Gamma(t) \in \Bru_{\acute\eta}$ for $t \in (0,1)$
will necessarily satisfy
$\Gamma(0) \in \Bruadv$ and $\Gamma(1) \in \Bruchop$.
Notice that $\Bru_{\acute\eta} \subseteq \Bruadv \cap \Bruchop$.
Given $\sigma \in S_{n+1}$, we have
\[ \Bru_\sigma \cap \Bruadv = \Bru_{\acute\sigma}, \qquad
\Bru_\sigma \cap \Bruchop = \Bru_{\hat\eta(\hat\sigma)^{-1}\acute\sigma} \]
and therefore
\[ \Bruadv \cap \Bruchop =
\bigsqcup_{\sigma \in S_{n+1}, \hat\sigma = \hat\eta} \Bru_{\acute\sigma}.\]
%From Lemma \ref{lemma:smalln}, 
%$\Bruadv \cap \Bruchop = \Bru_{\acute\eta}$ precisely for $n \le 3$.
It follows from Remark 3.8 
of \cite{Goulart-Saldanha0} that  
$\Bruadv \cap \Bruchop = \Bru_{\acute\eta}$ precisely for $n \le 3$.
In order to extend locally convex curves in $\Bru_{\acute\eta}$
to the boundary and not mix up entry points with exit points
we define a new larger space:
\[ \Brujojo =
( (\adv^{-1}[\{\acute\eta\}] \times \{0\}) \sqcup
(\chop^{-1}[\{\acute\eta\}] \times \{1\}) )/\sim \]
where $(z,0)\sim(z,1)$ for $z\in\Bru_{\acute\eta}$ (and only there).
We abuse notation by writing
\[ \Bruadv \subset \Brujojo, \qquad \Bruchop \subset \Brujojo; \]
in \emph{this} context, $\Bruadv \cap \Bruchop = \Bru_{\acute\eta}$.
A locally convex curve $\Gamma: [0,1] \to \Brujojo$ corresponds to 
a locally convex curve $\Gamma_1: [0,1] \to \Spin_{n+1}$
satisfying $\Gamma_1(t) \in \Bru_{\acute\eta}$ for $t \in (0,1)$
with $\Gamma(0) = (\Gamma_1(0),0)$, $\Gamma(1) = (\Gamma_1(1),1)$.

For $z_0 \in \Bruadv$ and $z_1 \in \Bruchop$,
let $\cLjojo(z_0;z_1) \subset \cL_n^{[H^1]}(z_0;z_1)$
be the set of locally convex curves $\Gamma: [0,1] \to \Spin_{n+1}$
such that $\Gamma(0) = z_0$, $\Gamma(1) = z_1$
and $\Gamma(t) \in \Bru_{\acute\eta}$ for all $t \in (0,1)$.
For $z_0, z_1 \in \Brujojo$,
write $z_0 \ll z_1$ if and only if
$z_0 \in \Bruadv$, $z_1 \in \Bruchop$,
and $\cLjojo(z_0;z_1) \ne \emptyset$
(compare with Lemma 5.3 
of \cite{Goulart-Saldanha0}).

\begin{lemma}
\label{lemma:cLjojo}
Consider $z_0 \in \Bruadv$ and $z_1 \in \Bruchop$.
The set $\cLjojo(z_0;z_1)$ is either empty or contractible.
If $z_0 \ll z_1$ then $z_0^{-1}z_1 \in \Bruchop$
and $\cLjojo(z_0;z_1) = \cL_{n,\conv}(z_0;z_1)$.
\end{lemma}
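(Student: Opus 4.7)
The plan is to reduce the problem to the study of convex curves in the triangular group $\Lo_{n+1}^{1}$, where we can invoke Lemma \ref{lemma:triangularquasiproduct} and related results, together with Lemma \ref{lemma:convex2} (on contractibility of the convex stratum) and Remark \ref{rem:convex} (linking $\sing(\Gamma)=\emptyset$ to global convexity).

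First I would establish the inclusion $\cLjojo(z_0;z_1)\subseteq\cL_{n,\conv}(z_0;z_1)$. Let $\Gamma\in\cLjojo(z_0;z_1)$. Since $\Gamma[(0,1)]\subset\Bru_{\acute\eta}$, clearly $\sing(\Gamma)=\emptyset$, so by Remark \ref{rem:convex}, $\Gamma$ is globally convex; hence $\Gamma\in\cL_{n,\conv}(z_0;z_1)$. In particular, if $z_0\ll z_1$ then such a $\Gamma$ exists, so $z_0^{-1}\Gamma$ is a convex curve from $1$ to $z_0^{-1}z_1$, and Lemma \ref{lemma:convex}, condition \ref{item:BruhatA} with $t_-=0$ and $t_0=1$ gives $1\in(z_0^{-1}z_1)\Bru_{\grave\eta}$, i.e., $z_0^{-1}z_1\in\Bru_{\acute\eta}\subseteq\Bruchop$.

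Next I would prove the reverse inclusion and the contractibility simultaneously, by parameterizing $\cLjojo(z_0;z_1)$ in triangular coordinates. The key step is to show that every $\Gamma\in\cLjojo(z_0;z_1)$ is \emph{short} in the sense of Lemma \ref{lemma:convex}. Because $z_0\in\Bruadv$, I can apply a projective transformation of type \ref{item:projUp} to reduce to $z_0=\acute\sigma_0$ for some $\sigma_0\in S_{n+1}$; the boundary conditions $z_0\in\Bruadv$ and $z_1\in\Bruchop$ then imply that the whole image $\Gamma[[0,1]]$ sits inside a single triangular chart $\cU_z$, whence $\bL(z^{-1}\Gamma)$ is a convex curve in $\Lo_{n+1}^{1}$ with prescribed endpoints in $\overline{\Pos_\eta}$. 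Conversely, any globally convex curve from $z_0$ to $z_1$ with $z_0^{-1}z_1\in\Bruchop$ satisfies, by Lemma \ref{lemma:convex}\ref{item:BruhatAT}--\ref{item:BruhatA}, the required interior condition $\Gamma(t)\in\Bru_{\acute\eta}$ for $t\in(0,1)$, so lies in $\cLjojo(z_0;z_1)$. Once the identification is in place, $\cLjojo(z_0;z_1)$ is homeomorphic to a space of convex curves in $\Lo_{n+1}^{1}$ whose endpoints lie in the accessibility set described by Lemma \ref{lemma:triangularquasiproduct}; its contractibility then follows by the same concatenation homotopy with the standard convex arc $\Gamma_{I;\fh_L}$ used in the proof of Lemma \ref{lemma:convex2}, adapted to fixed endpoints (as in Proposition 6.4 of \cite{Saldanha-Shapiro}).

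The main obstacle I expect is the shortness claim, that is, verifying that an arbitrary $\Gamma\in\cLjojo(z_0;z_1)$ really sits inside a single triangular patch $\cU_z$ rather than wandering around the open cell $\Bru_{\acute\eta}$. The conditions $z_0\in\Bruadv$ and $z_1\in\Bruchop$ are precisely what synchronizes the entry and exit data with a common chart, but turning this into a rigorous argument requires tracking the signed Bruhat components and invoking the uniqueness of the open cell encountered just after $z_0$ and just before $z_1$ (Theorem \ref{theo:chopadvance} of \cite{Goulart-Saldanha0}), together with the connectedness of $\Gamma[(0,1)]$ in $\Bru_{\acute\eta}$. Once this is done, the rest of the argument (conversion to $\Lo_{n+1}^{1}$, application of the convex-curve machinery, homotopy to the canonical arc) is essentially routine.
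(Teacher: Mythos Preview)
Your forward inclusion $\cLjojo(z_0;z_1)\subseteq\cL_{n,\conv}(z_0;z_1)$ via Remark~\ref{rem:convex} is fine, and the conclusion $z_0^{-1}z_1\in\Bruchop$ follows (though more safely from Lemma~\ref{lemma:convex2}, since Lemma~\ref{lemma:convex}\ref{item:BruhatA} requires \emph{strict} convexity, which you do not have at the endpoints; your stronger claim $z_0^{-1}z_1\in\Bru_{\acute\eta}$ is not justified).

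The real gap is the reverse inclusion. You assert that any globally convex curve from $z_0$ to $z_1$ satisfies $\Gamma(t)\in\Bru_{\acute\eta}$ for $t\in(0,1)$, invoking Lemma~\ref{lemma:convex}. But that lemma characterizes \emph{strict} convexity, and global convexity only gives $\Gamma(t)\in z_0\Bru_{\acute\eta}$, which is not the same cell. In fact Remark~\ref{rem:convex} says explicitly that the implication ``convex $\Rightarrow\sing(\Gamma)=\emptyset$'' fails, and Example~\ref{example:inout} exhibits $z_0\in\Bruadv$, $z_1\in\Bruchop$ with $\cL_{n,\conv}(z_0;z_1)\ne\emptyset$ yet $\cLjojo(z_0;z_1)=\emptyset$. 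So the reverse inclusion is simply false as a pointwise statement about convex curves; it can only hold once you know $\cLjojo(z_0;z_1)\ne\emptyset$, and even then not by a curve-by-curve argument. The shortness claim you flag as the ``main obstacle'' is a symptom of the same issue and is likewise not established.

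The paper avoids all of this by a connectedness argument: by Theorem~\ref{theo:Hausdorff}, $\cLjojo(z_0;z_1)=\sing^{-1}[\{\emptyset\}]$ is open and closed in $\cL_n(z_0;z_1)$, hence a union of connected components. Your forward inclusion (or the add-loop argument from \cite{Saldanha-Shapiro}) shows any such component lies in $\cL_{n,\conv}(z_0;z_1)$; since the latter is a \emph{single} connected component by Lemma~\ref{lemma:convex2}, one gets the dichotomy $\cLjojo(z_0;z_1)=\emptyset$ or $\cLjojo(z_0;z_1)=\cL_{n,\conv}(z_0;z_1)$, and contractibility comes for free. No triangular coordinates or shortness are needed.
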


%Here $\cL_{n,\conv}(z_0;z_1) \subset \cL_n(z_0;z_1)$
%is the subset of convex curves,
%where $\Gamma \in \cL_n(z_0;z_1)$ is convex
%if and only if
%$z_0^{-1} \Gamma \in \cL_{n,\conv}(z_0^{-1}z_1) \subset \cL_n(z_0^{-1}z_1)$.

\begin{example}
\label{example:inout}
Recall from Lemma \ref{lemma:convex2} that
$\cL_{n,\conv}(z_0;z_1) \subset \cL_n(z_0;z_1)$
is a contractible connected component if $z_0^{-1}z_1 \in \Bruchop$
and is empty otherwise.
It is entirely possible to have
$z_0 \in \Bruadv$,  $z_1 \in \Bruchop$,
$z_0^{-1}z_1 \in \Bruchop$ and $z_0 \not\ll z_1$
so that $\cLjojo(z_0;z_1) = \emptyset$.
In this case, there are convex curves in $\cL(z_0;z_1)$
but they never belong to $\cLjojo(z_0;z_1)$. 
A simple case is $n = 6$, 
%$\fh$ as in Example \ref{example:fh},
$z_0 = \exp(\frac{3\pi}{4}\fh)$ and $z_1 = \exp(\frac{\pi}{4}\fh)$.
Recall from Example 3.7 
of \cite{Goulart-Saldanha0} 
that, for $n = 6$, we have $\hat\eta = \exp(\pi\fh) = 1$.
We have $z_0^{-1}z_1 = \exp(\frac{\pi}{2}\fh) = \acute\eta$
and the curve $\Gamma(t) = z_0 \exp(\frac{\pi}{2} t\fh)$ is convex.
\end{example}
 
\begin{proof}[Proof of Lemma \ref{lemma:cLjojo}]
%By Theorem \ref{theo:Hausdorff},
%the function $\sing: \cL_n(z_0;z_1) \to \cH((0,1))$ is continuous.
By definition, $\cLjojo(z_0;z_1) = \sing^{-1}[\{\emptyset\}]$.
Since $\emptyset$ is an isolated point in $\cH([0,1])$,
the set $\cLjojo(z_0;z_1)$ is a union of
connected components of $\cL_n(z_0;z_1)$, 
by Theorem \ref{theo:Hausdorff}.

We know from \cite{Saldanha-Shapiro} that if $\Gamma$ is not convex
then $\Gamma$ is in the same connected component as
$\Gamma$ with added loops,
which clearly does not have empty singular set.
Thus the only connected component of $\cL_n(z_0;z_1)$
which may be contained in $\cLjojo(z_0;z_1)$
is $\cL_{n,\conv}(z_0;z_1)$.
\end{proof}

Given $z_{\bfx} \in \Bruchop$ and $\sigma \in S_{n+1}$,
consider $\Bru_{\acute\sigma} \subset \Bruadv$:
let
\[ \Ac_{\sigma}(z_{\bfx}) =
\{ z \in \Bru_{\acute\sigma} \subset \Bruadv \;|\; z \ll z_{\bfx} \}. \]
% In other words, for $z \in \Bru_{\acute\sigma}$ we have
% $z \in \Ac_{\sigma}(z_{\bfx})$ if and only if there exists
% a holonomic curve $\Gamma: [0,1] \to \Spin_{n+1}$
% such that $\Gamma(0) = z$, $\Gamma(1) = z_{\bfx}$
% and $\Gamma(t) \in \Bru_{\acute\eta}$ for all $t \in (0,1)$.

\begin{lemma}
\label{lemma:acstep}
Consider $z_{\bfx} \in \Bruchop$.
Consider $\sigma_{k-1} \vartriangleleft
\sigma_k = \sigma_{k-1} a_{i_k} \in S_{n+1}$,
$\inv(\sigma_k) = k$.
Consider $z_{k-1} \in \Bru_{\acute\sigma_{k-1}}$
and $z_k = z_{k-1} \alpha_{i_k}(\theta_k) \in \Bru_{\acute\sigma_k}$,
$\theta_k \in (0,\pi)$.
If $z_k \ll z_{\bfx}$ then
$z_{k-1}\alpha_{i_k}(\theta) \ll z_{\bfx}$
for all $\theta \in [0,\theta_k]$.
\end{lemma}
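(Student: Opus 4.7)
The plan is to produce, for each $\theta \in [0,\theta_k]$, an explicit element of $\cLjojo(z_{k-1}\alpha_{i_k}(\theta); z_{\bfx})$ by surgery on the given convex curve to $z_{\bfx}$ near its starting point. Transporting a short initial segment of that curve into a triangular chart around a suitable point of the Bruhat arc reduces the question to a monotonicity statement along a single $\jacobi_{i_k}$-line in $\Lo^1_{n+1}$, which in turn follows from the semigroup structure of $\overline{\Pos_\eta}$. A standard partition of $[0,\theta_k]$ into short subintervals reduces the problem to the case in which the whole arc $\{z_{k-1}\alpha_{i_k}(\theta') : \theta' \in [\theta,\theta_k]\}$ lies in the common chart $\cU_{z_{k-1}}$; the general case then follows by iteration.

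Under this reduction, pick $\Gamma \in \cL_{n,\conv}(z_k; z_{\bfx})$ with $\Gamma((0,1)) \subset \Bru_{\acute\eta}$ via Lemma~\ref{lemma:cLjojo}; choose $s_\ast \in (0,1)$ with $\Gamma([0,s_\ast]) \subset \cU_{z_{k-1}}$. Under the chart $z \mapsto \bL(z_{k-1}^{-1}z)$, the arc $\Gamma|_{[0,s_\ast]}$ becomes a convex curve $\Gamma_L: [0,s_\ast] \to \Lo^1_{n+1}$ with $\Gamma_L(0) = \jacobi_{i_k}(\tau_k)$ for a unique $\tau_k > 0$, and $\Gamma_L(s) \in \Pos_\eta$ for $s \in (0, s_\ast]$ by Lemmas~\ref{lemma:transition} and~\ref{lemma:totallypositive} of \cite{Goulart-Saldanha0}. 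For $\theta \in [0,\theta_k]$, $z_{k-1}\alpha_{i_k}(\theta)$ corresponds in the chart to $\jacobi_{i_k}(\tau)$ for a unique $\tau \in [0,\tau_k]$, increasing in $\theta$.

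The key step is $\jacobi_{i_k}(\tau) \ll \Gamma_L(s_\ast)$ for all $\tau \in [0,\tau_k]$. Setting $M := \jacobi_{i_k}(\tau_k)^{-1}\Gamma_L(s_\ast) \in \Pos_\eta$, the identity
\[ \jacobi_{i_k}(\tau)^{-1}\Gamma_L(s_\ast) = \jacobi_{i_k}(\tau_k - \tau)\,M \in \overline{\Pos_\eta}\cdot\Pos_\eta \subseteq \Pos_\eta \]
yields the claim, the last inclusion being the semigroup property recorded in Lemma~\ref{lemma:positivesemigroup} of \cite{Goulart-Saldanha0}. A second application of Lemma~\ref{lemma:totallypositive} of \cite{Goulart-Saldanha0} then furnishes a convex curve $\Gamma_L^\tau: [0,s_\ast] \to \Lo^1_{n+1}$ from $\jacobi_{i_k}(\tau)$ to $\Gamma_L(s_\ast)$ whose interior lies in $\Pos_\eta$ (Lemma~\ref{lemma:transition} of \cite{Goulart-Saldanha0}).

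Setting $\Gamma^\tau(s) := z_{k-1}\bQ(\Gamma_L^\tau(s))$ and concatenating with $\Gamma|_{[s_\ast,1]}$ produces $\tilde\Gamma \in \cLjojo(z_{k-1}\alpha_{i_k}(\theta); z_{\bfx})$, giving $z_{k-1}\alpha_{i_k}(\theta) \ll z_{\bfx}$. The step demanding the most attention is verifying $\Gamma^\tau(s) \in \Bru_{\acute\eta}$ for every $s \in (0, s_\ast]$: Theorem~\ref{theo:chopadvance} of \cite{Goulart-Saldanha0} applied at $\Gamma^\tau(0)$ (for which $\adv = \acute\eta$) forces $\Gamma^\tau$ into $\Bru_{\acute\eta}$ immediately; the curve ends at $\Gamma(s_\ast) \in \Bru_{\acute\eta}$; and Lemma~\ref{lemma:conjecture}, combined with the local convexity of $\Gamma^\tau$, rules out any intermediate re-exit through $\Sing_{n+1}$ on $(0, s_\ast)$, so connectedness keeps $\Gamma^\tau(s)$ in the single signed component $\Bru_{\acute\eta}$ throughout.
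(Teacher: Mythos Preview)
Your surgery idea is the same as the paper's, but the chart you work in creates a real gap at exactly the point you flag as ``demanding the most attention.''

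You work in the chart $\cU_{z_{k-1}}$ and set $\Gamma^\tau(s) = z_{k-1}\,\bQ(\Gamma_L^\tau(s))$. You correctly obtain $\Gamma_L^\tau(s) \in \Pos_\eta$ for $s \in (0,s_\ast]$, but this does \emph{not} yield $\Gamma^\tau(s) \in \Bru_{\acute\eta}$: we only know $\bQ[\Pos_\eta] \subset \Bru_{\acute\eta}$, and left multiplication by $z_{k-1}$ does not preserve Bruhat cells, so $z_{k-1}\,\bQ[\Pos_\eta]$ need not lie in any single $\Bru_{q\acute\eta}$. Your attempted rescue via Lemma~\ref{lemma:conjecture} does not close the gap either. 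That lemma only says that at each singular time the curve passes between two \emph{distinct local} components of $\cU_\ast \smallsetminus \Sing_{n+1}$; it does not prevent a locally convex curve from crossing $\Sing_{n+1}$ several times and returning to $\Bru_{\acute\eta}$. Short curves with nonempty singular set exist (Remark~\ref{rem:convex}), so knowing that $\Gamma^\tau$ is short, starts and ends in $\Bru_{\acute\eta}$, is not enough.

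The paper avoids this entirely by first applying a projective transformation (which preserves both signed Bruhat cells and the relation $\ll$) so that $z_k \in \bQ[\Pos_{\sigma_k}] \subset \cU_1$; Corollary~\ref{coro:zkLk} of \cite{Goulart-Saldanha0} then places the whole arc $z_{k-1}\alpha_{i_k}([0,\theta_k])$ in $\bQ[\Pos_{\sigma_{k-1}} \sqcup \Pos_{\sigma_k}] \subset \cU_1$. Now the replacement arc is $\bQ \circ \Gamma_\epsilon$ with $\Gamma_\epsilon((0,\epsilon)) \subset \Pos_\eta$, and $\bQ[\Pos_\eta] \subset \Bru_{\acute\eta}$ gives the needed containment \emph{for free}. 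This also renders your partition-of-$[0,\theta_k]$ step unnecessary (and that step is itself awkward: at the intermediate nodes your ``new $z_{k-1}$'' already lies in $\Bru_{\acute\sigma_k}$, not $\Bru_{\acute\sigma_{k-1}}$, so the induction hypothesis does not apply as stated).
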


As in Section \ref{sect:gso}, $\vartriangleleft$ 
denotes the covering relation for the Bruhat order in $S_{n+1}$. 

\begin{proof}%[Proof of Lemma \ref{lemma:acstep}]
From Corollary 6.4 of \cite{Goulart-Saldanha0},
if $z_k \in \bQ[\Pos_{\sigma_k}]$ then 
\[z_{k-1}\alpha_{i_k}(\theta) \in
\bQ[\Pos_{\sigma_{k-1}} \sqcup \Pos_{\sigma_{k}}].\]
In this case, take $L_{k} = \bL(z_{k})$ 
and $L_{\theta} = \bL(z_{k-1} \alpha_{i_k}(\theta) )$.
Consider a locally convex curve $\Gamma \in \cLjojo(z_k;z_{\bfx})$.
% with $\Gamma(0) = z_k$, $\Gamma(1) = z_{\bfx}$
% and $\Gamma(t) \in \Bru_{\acute\eta}$ for all $t \in (0,1)$.
As in the proof of Theorem 3 
of \cite{Goulart-Saldanha0}, 
take $\Gamma_L(t) = \bL(\Gamma(t))$ so that
$\Gamma_L(0) = L_k$. 
By Lemma 5.7 of \cite{Goulart-Saldanha0},
there exists $\epsilon > 0$ such that
$\Gamma_L$ is well-defined in $[0,2\epsilon]$ and 
$L_{\epsilon} = \Gamma_L(\epsilon) \in \Pos_\eta$.
% Take $L_\epsilon = \bL(z_{\epsilon})$ so that
We have $L_{\theta} \le L_k \ll L_\epsilon$
and therefore $L_{\theta} \ll L_\epsilon$
(Lemma 5.2 and
Equation (15) 
of \cite{Goulart-Saldanha0}).
By Lemma 5.3 of \cite{Goulart-Saldanha0},
there exists a locally convex curve
$\Gamma_{\epsilon}: [0,\epsilon] \to \Lo_{n+1}^1$,
$\Gamma_{\epsilon}(0) = L_{\theta}$ and
$\Gamma_{\epsilon}(\epsilon) = L_{\epsilon}$.
Define 
\[ \Gamma_1(t) = \begin{cases}
\bQ(\Gamma_{\epsilon}(t)), & t \in [0,\epsilon], \\
\Gamma(t), & t \in [\epsilon,1]. \end{cases} \]
Notice that, for $t \in (0,\epsilon)$, we have
$\Gamma_{\epsilon}(t) \in \Pos_{\eta}$ and therefore
$\Gamma_1(t) \in \Bru_{\acute\eta}$.
The curve $\Gamma_1: [0,1] \to \Spin_{n+1}$
is locally convex and satisfies
$\Gamma_1(0) = z_{k-1}\alpha_{i_k}(\theta)$, $\Gamma(1) = z_{\bfx}$
and $\Gamma(t) \in \Bru_{\acute\eta}$ for all $t \in (0,1)$.
By definition, $z_{k-1} \alpha_{i_k}(\theta) \ll  z_{\bfx}$.

In general, there is an upper triangular matrix $U \in \Up_{n+1}^{1}$
such that the correponding projective transformation
takes $z_k$ to $z_k^U = \bQ(U^{-1} z_k) \in \bQ[\Pos_{\sigma_k}]$,
reducing the situation to the previous case.
\end{proof}

We now present an algebraic definition.
Consider $z_{\bfx} \in \Bruchop$.
Consider $\rho_0 \in S_{n+1}$ such that
$z_{\bfx} \in \Bru_{\hat\eta(\acute\rho_0)^{-1}}$,
$y_0 = z_{\bfx}^{-1} \hat\eta \in \Bru_{\acute\rho_0}$.
We first define sets
$\Ac_{(i_1,\ldots,i_k)}(z_{\bfx}) \subseteq \Bru_{\acute\sigma}$
where $\sigma = a_{i_1}\cdots a_{i_k}$ is a reduced word.
When $z_{\bfx}$ is fixed (and thus so are $y_0$ and $\rho_0$)
we write for simplicity 
% $\Ac_\sigma = \Ac_\sigma(z_{\bfx})$ and 
$\Ac_{(i_1,\ldots,i_k)} = \Ac_{(i_1,\ldots,i_k)}(z_{\bfx})$.

For each $j\in\llbracket k \rrbracket$, 
set $\sigma_j = a_{i_1} \cdots a_{i_j}$, 
so that 
$\sigma_{j-1} \vartriangleleft \sigma_j = \sigma_{j-1} a_{i_j}$;
also, define recursively 
\[\rho_j=
\begin{cases}
\rho_{j-1}a_{i_j}, \quad \text{if} 
\quad \rho_{j-1}\vartriangleleft\rho_{j-1} a_{i_j}, \\
\rho_{j-1}, \quad \text{otherwise},
\end{cases}\]
%$\rho_j = \rho_0 \vee \sigma_j$ 
so that either $\rho_{j-1} = \rho_j$
or $\rho_{j-1} \vartriangleleft \rho_j = \rho_{j-1} a_{i_j}$. 
For those $j$ such that 
$\rho_{j-1} =\rho_j$, 
define auxiliary functions 
$\Theta_{i_j}:\Bru_{\acute\rho_j}\to(0,\pi)$ as follows: 
$\Theta_{i_j}(z)=\theta$ if and only 
$z\alpha_{i_j}(-\theta)\in\Bru_{\rho_{j}a_{i_j}}$. 
It is a consequence of the proof of Theorem 1 of \cite{Goulart-Saldanha0} 
that these functions are well-defined and smooth 
(%analytic indeed; 
see Remark 6.6 of \cite{Goulart-Saldanha0}). 

Set 
$\Ac_{(\,)} = \Bru_{1} = \{1\}$.
We assume $\Ac_{(i_1,\ldots,i_{j-1})}$ defined
and proceed to construct $\Ac_{(i_1,\ldots,i_j)}$:
\begin{gather*}
\Ac_{(i_1,\ldots,i_j)} =
\{ z_{j-1} \alpha_{i_j}(\theta_j) \;|\;
z_{j-1} \in  \Ac_{(i_1,\ldots,i_{j-1})}, \;
\theta_j \in (0,\vartheta_{i_j}(z_{j-1})) \}; 
\\
\vartheta_{i_j}: \Ac_{(i_1,\ldots,i_{j-1})} \to (0,\pi], \qquad
\vartheta_{i_j}(z_{j-1}) = 
\begin{cases}
\pi, & \rho_{j-1} \vartriangleleft \rho_j, \\
\pi-\Theta_{i_j}(y_0 z_{j-1}), & \rho_{j-1} = \rho_j.
\end{cases}
\end{gather*}
%notice here that
%$\Theta_{i_j}: \Bru_{\acute\rho_j} \to (0,\pi)$
%is well-defined if $\rho_{j-1} = \rho_j$
%(see Remark \ref{rem:bigtheta} for the definition of $\Theta_{i_j}$).

\begin{lemma}
\label{lemma:quasiproduct}
The sets $\Ac_{(i_1, \ldots, i_j)}$, $1 \le j \le k$,
defined above satisfy
\[ \Ac_{(i_1,\ldots,i_j)} =
\{ \alpha_{i_1}(\theta_1) \cdots \alpha_{i_j}(\theta_j) \;|\;
(\theta_1, \ldots, \theta_j) \in X_j \}
\subseteq
\Bru_{\acute\sigma_j} \cap
(y_0^{-1} \Bru_{\acute\rho_j}) \]
where $(X_j)_{1 \le j \le k}$ is a quasiproduct;
$\Ac_{(i_1,\ldots,i_j)}$ is diffeomorphic to $\RR^j$.
\end{lemma}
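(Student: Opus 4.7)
The plan is to prove the three claims---parametrization as a quasi-product, containment in $\Bru_{\acute\sigma_j}\cap(y_0^{-1}\Bru_{\acute\rho_j})$, and diffeomorphism with $\RR^j$---simultaneously, by induction on $j$. The base case $j=0$ is trivial: $\Ac_{(\,)}=\{1\}\subset\Bru_{e}\cap y_0^{-1}\Bru_{\acute\rho_0}$ (since $y_0\in\Bru_{\acute\rho_0}$ by construction), and $X_0$ is a point.

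For the inductive step, assume the statement for $j-1$. Pick $z_{j-1}\in\Ac_{(i_1,\ldots,i_{j-1})}\subseteq\Bru_{\acute\sigma_{j-1}}\cap y_0^{-1}\Bru_{\acute\rho_{j-1}}$. Since $\sigma_{j-1}\vartriangleleft\sigma_j=\sigma_{j-1}a_{i_j}$, Corollary~\ref{coro:Bruhat2} of \cite{Goulart-Saldanha0} (applied to a reduced word for $\sigma_j$ extending one for $\sigma_{j-1}$) implies $z_{j-1}\alpha_{i_j}(\theta_j)\in\Bru_{\acute\sigma_j}$ for all $\theta_j\in(0,\pi)$. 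Moreover, that same parametrization result guarantees that the concatenated map $(\theta_1,\ldots,\theta_j)\mapsto\alpha_{i_1}(\theta_1)\cdots\alpha_{i_j}(\theta_j)$ is a diffeomorphism from $(0,\pi)^j$ onto $\Bru_{\acute\sigma_j}$, so the restriction to the (yet-to-be-described) subset $X_j\subset X_{j-1}\times(0,\pi)$ is automatically a diffeomorphism onto its image.

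The crux is checking $y_0z_j\in\Bru_{\acute\rho_j}$ and identifying the precise upper bound $\vartheta_{i_j}(z_{j-1})$. I will split into two cases. If $\rho_{j-1}\vartriangleleft\rho_j=\rho_{j-1}a_{i_j}$, then the same Bruhat-parametrization argument applied to $\rho_j$ gives $(y_0z_{j-1})\alpha_{i_j}(\theta_j)\in\Bru_{\acute\rho_j}$ for every $\theta_j\in(0,\pi)$, which matches $\vartheta_{i_j}=\pi$. If instead $\rho_{j-1}=\rho_j$, then $\rho_ja_{i_j}$ has smaller length, so the one-parameter curve $\theta\mapsto(y_0z_{j-1})\alpha_{i_j}(\theta)$ meets the non-open descending cell $\Bru_{\rho_ja_{i_j}}$ transversally; by the very definition of $\Theta_{i_j}$ this first crossing on the negative side is at $\theta=-\Theta_{i_j}(y_0z_{j-1})$, and the symmetry $\alpha_{i_j}(\theta+\pi)=\alpha_{i_j}(\theta)\alpha_{i_j}(\pi)$ combined with the action of $\alpha_{i_j}(\pi)$ on Bruhat cells (it sends $\Bru_{\acute\rho_j}$ onto itself only up to a jump through $\Bru_{\rho_ja_{i_j}}$) identifies the analogous positive crossing as $\theta=\pi-\Theta_{i_j}(y_0z_{j-1})$. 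In this range $(0,\pi-\Theta_{i_j}(y_0z_{j-1}))$ the curve stays in $\Bru_{\acute\rho_j}$, and the correct \emph{signed} lift is preserved because $\alpha_{i_j}(\theta)$ is a continuous deformation of the identity. Lemma \ref{lemma:acstep} underlies the star-shapedness ensuring that the admissible interval is indeed a single open interval starting at $0$.

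To conclude: $\vartheta_{i_j}$ is smooth (via the smoothness of $\Theta_{i_j}$ established in the proof of Theorem~\ref{theo:Bruhat} of \cite{Goulart-Saldanha0} and Remark~\ref{rem:bigtheta} there), takes values in $(0,\pi]\subset(0,+\infty)$, so composing with the inductive parametrization $X_{j-1}\to\Ac_{(i_1,\ldots,i_{j-1})}$ produces the continuous positive function $g_j:X_{j-1}\to(0,+\infty)$ defining $X_j=\{(t_1,\ldots,t_j)\in X_{j-1}\times(0,+\infty)\mid t_j<g_j(t_1,\ldots,t_{j-1})\}$, confirming the quasi-product structure. Since $X_j$ is homeomorphic to $\RR^j$ by the iterated epigraph construction and the parametrization map is a diffeomorphism onto $\Ac_{(i_1,\ldots,i_j)}$, the induction closes. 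The main obstacle is the case $\rho_{j-1}=\rho_j$: pinpointing the cutoff $\pi-\Theta_{i_j}(y_0z_{j-1})$ exactly, and verifying that the signed lift is consistently $\acute\rho_j$ throughout the interval $(0,\pi-\Theta_{i_j}(y_0z_{j-1}))$, rather than switching to $\grave\rho_j$ or another signed representative.
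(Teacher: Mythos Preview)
Your inductive skeleton and case split match the paper's proof exactly. Two points deserve tightening. First, in the case $\rho_{j-1}=\rho_j$, the paper's argument is more direct than your periodicity-and-symmetry explanation: from the definition of $\Theta_{i_j}$ one writes $y_{j-1}=\tilde y\,\alpha_{i_j}(\tilde\theta)$ with $\tilde y\in\Bru_{\rho_j a_{i_j}}$ and $\tilde\theta=\Theta_{i_j}(y_{j-1})$, so that $y_j=\tilde y\,\alpha_{i_j}(\tilde\theta+\theta_j)$ with $\tilde\theta+\theta_j<\pi$, and Theorem~\ref{theo:Bruhat} of \cite{Goulart-Saldanha0} gives $y_j\in\Bru_{\acute\rho_j}$ immediately. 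Your parenthetical that right multiplication by $\alpha_{i_j}(\pi)=\hat a_{i_j}$ ``sends $\Bru_{\acute\rho_j}$ onto itself only up to a jump'' is not a precise statement (right multiplication by an element of $\Quat_{n+1}$ permutes signed cells within the same unsigned cell but need not fix $\Bru_{\acute\rho_j}$), so as written the identification of the positive crossing at $\pi-\Theta_{i_j}(y_0z_{j-1})$ is asserted rather than proved. The factorization route avoids this entirely.

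Second, your appeal to Lemma~\ref{lemma:acstep} is circular: that lemma concerns the \emph{topological} accessibility sets $\Ac_\sigma(z_{\bfx})$, and the equivalence with the algebraic sets $\Ac_{(i_1,\ldots,i_j)}$ is only established afterwards in Lemma~\ref{lemma:twodefs}. Fortunately you do not need it: the interval shape $(0,\vartheta_{i_j}(z_{j-1}))$ is part of the recursive \emph{definition} of $\Ac_{(i_1,\ldots,i_j)}$, so there is nothing to prove about star-shapedness at this stage.
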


Notice that the inclusion in the statement is necessary to make sense
of the definition of $\vartheta_{i_j}$.
The reader should compare this result with
Lemma \ref{lemma:triangularquasiproduct}.

\begin{proof}
The proof is by induction on $k$; the case $k = 0$ is trivial.
Take
$z_k = z_{k-1} \alpha_{i_k}(\theta_k) \in \Ac_{(i_1, \ldots, i_k)}$,
$z_{k-1} \in  \Ac_{(i_1, \ldots, i_{k-1})}$,
$\theta_k \in (0,\vartheta_{i_k}(z_{k-1}))$.
We assume by induction hypothesis that
$\Ac_{(i_1, \ldots, i_{k-1})} \subseteq \Bru_{\acute\sigma_{k-1}}$.
We therefore have
$z_k \in \Bru_{\acute\sigma_{k-1}} \Bru_{\acute a_{i_k}}
= \Bru_{\acute\sigma_k}$
(%the last equation follows from 
by Corollary 6.2 of \cite{Goulart-Saldanha0}).
We also assume by induction hypothesis that
$y_{k-1} = y_0 z_{k-1} \in \Bru_{\acute\rho_{k-1}}$.
If $\rho_{k-1} \vartriangleleft \rho_k$,
Corollary 6.2 of \cite{Goulart-Saldanha0} 
implies that 
$y_k = y_0 z_{k} = y_{k-1} \alpha_{i_k}(\theta_k)
\in \Bru_{\acute\rho_{k-1}} \Bru_{\acute a_{i_k}} = 
\Bru_{\acute\rho_k}$.
If $\rho_{k-1} = \rho_k$,
%let $\tilde\rho \vartriangleleft \rho_k = \tilde\rho a_{i_k}$.
%The map
%$\Theta_{i_k}: \Bru_{\acute\rho_{k-1}} \to (0,\pi)$ is well-defined:
take $\tilde\theta = \Theta_{i_k}(y_{k-1})$
and $\tilde y \in \Bru_{\rho_k a_{i_k}}$
such that
$y_{k-1} = \tilde y \alpha_{i_k}(\tilde\theta)$.
By our recursive definition,
$\tilde\theta + \theta_k < \pi$;
by Theorem 1 of \cite{Goulart-Saldanha0},
we have $y_{k} =
\tilde y \alpha_{i_k}(\tilde\theta + \theta_k) \in \Bru_{\acute\rho_k}$.
\end{proof}

% We prove that the sets are well-defined, i.e.,
% that they do not depend on the choice of a reduced word for $\sigma$.

% We now present a different (i.e., more topological)
% definition of $\Ac_\sigma(\bfx)$;
% we shall later prove the desired equality
% $\Ac_\sigma(\bfx) = \Ac_{(i_1, \ldots, i_k)}(\bfx)$
% if $\sigma = a_{i_1}\cdots a_{i_k}$ is a reduced word.

We now prove that the two definitions are equivalent.

\begin{lemma}
\label{lemma:twodefs}
Consider $z_{\bfx} \in \Bruchop$ and
$\sigma_k = a_{i_1}\cdots a_{i_k}$ a reduced word in $S_{n+1}$.
Then $\Ac_{\sigma_k}(z_{\bfx}) = \Ac_{(i_1,\ldots,i_k)}(z_{\bfx})$.
\end{lemma}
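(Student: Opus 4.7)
The plan is to prove both inclusions by induction on $k = \inv(\sigma_k)$, using Lemma \ref{lemma:quasiproduct} to identify $\Ac_{(i_1,\ldots,i_k)}(z_{\bfx})$ with a subset of $\Bru_{\acute\sigma_k}$ parameterized by $(\theta_1,\ldots,\theta_k) \in X_k$. The base case $k = 0$ reduces in both definitions to the single point $\{1\}$: for the algebraic side this is the stipulated $\Ac_{(\,)} = \Bru_1 = \{1\}$, and for the topological side we need $1 \ll z_{\bfx}$, which follows from $z_{\bfx} \in \Bruchop$ together with Lemma \ref{lemma:cLjojo} (producing a convex curve in $\cL_{n,\conv}(1;z_{\bfx})$ lying in $\Bru_{\acute\eta}$ on the open interval).

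For the inclusion $\Ac_{(i_1,\ldots,i_k)}(z_{\bfx}) \subseteq \Ac_{\sigma_k}(z_{\bfx})$, I would take $z_k = z_{k-1}\alpha_{i_k}(\theta_k)$ with $z_{k-1}$ in the algebraic set and $\theta_k \in (0, \vartheta_{i_k}(z_{k-1}))$. By the inductive hypothesis $z_{k-1} \in \Ac_{\sigma_{k-1}}(z_{\bfx})$, so there exists a curve $\Gamma_{k-1} \in \cLjojo(z_{k-1};z_{\bfx})$. I would then build a curve in $\cLjojo(z_k;z_{\bfx})$ by concatenating a short convex detour beginning at $z_k$ with a suitably truncated tail of $\Gamma_{k-1}$, following the scheme in the proof of Lemma \ref{lemma:acstep}: after a small initial step one enters $\Bru_{\acute\eta}$, where the partial order $\ll$ on $\Lo^1_{n+1}$ (Lemma \ref{lemma:positivesemigroup} of \cite{Goulart-Saldanha0}) and Lemma \ref{lemma:totallypositive} of \cite{Goulart-Saldanha0} allow one to bridge the two curves. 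The bound $\theta_k < \vartheta_{i_k}(z_{k-1})$ is exactly what makes the algebraic definition internally consistent, ensuring that $y_0 z_k$ remains in $\Bru_{\acute\rho_k}$ (via Theorem \ref{theo:Bruhat} of \cite{Goulart-Saldanha0}).

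For the reverse inclusion $\Ac_{\sigma_k}(z_{\bfx}) \subseteq \Ac_{(i_1,\ldots,i_k)}(z_{\bfx})$, I would take $z \in \Bru_{\acute\sigma_k}$ with $z \ll z_{\bfx}$ and write $z = \alpha_{i_1}(\theta_1)\cdots\alpha_{i_k}(\theta_k)$ uniquely with $\theta_j \in (0,\pi)$, using the parameterization of signed Bruhat cells in Corollary \ref{coro:Bruhat2} of \cite{Goulart-Saldanha0}. Applying Lemma \ref{lemma:acstep} inductively (collapsing $\theta_k$ down to $0$, then $\theta_{k-1}$, and so on) shows that every partial product $z_j = \alpha_{i_1}(\theta_1)\cdots\alpha_{i_j}(\theta_j)$ still satisfies $z_j \ll z_{\bfx}$. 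It remains to verify $\theta_j < \vartheta_{i_j}(z_{j-1})$ for each $j$: when $\rho_{j-1} \vartriangleleft \rho_j$ the bound is $\pi$ and thus automatic, whereas when $\rho_{j-1} = \rho_j$ one must show that $\theta_j \ge \pi - \Theta_{i_j}(y_0 z_{j-1})$ is incompatible with $z_j \ll z_{\bfx}$. I would argue by contradiction: if this inequality held, Theorem \ref{theo:Bruhat} of \cite{Goulart-Saldanha0} would place $y_0 z_j$ in a Bruhat cell $\Bru_{\acute\tau}$ with $\tau$ strictly below $\rho_j$, which in turn (unwinding $y_0 = z_{\bfx}^{-1}\hat\eta$) would force any purported curve from $z_j$ to $z_{\bfx}$ to exit $\Bru_{\acute\eta}$ at some interior time, contradicting $z_j \ll z_{\bfx}$.

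The main obstacle is this last step, namely making precise the equivalence between the geometric condition $z_j \ll z_{\bfx}$ and the algebraic condition $y_0 z_j \in \Bru_{\acute\rho_j}$. Translating between ``there exists a locally convex curve through $\Bru_{\acute\eta}$'' and the placement of $y_0 z_j$ in the Bruhat stratification requires using both the transition behavior of Bruhat cells under right-multiplication by $\alpha_{i_j}(\theta)$ (Theorem \ref{theo:Bruhat} of \cite{Goulart-Saldanha0}) and the chop/advance structure near the boundary of $\Bru_{\acute\eta}$ (Theorem \ref{theo:chopadvance} of \cite{Goulart-Saldanha0}), and is where the structural results from the companion paper will be used most heavily.
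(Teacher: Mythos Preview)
Your overall structure (induction, two inclusions) matches the paper, and your treatment of the inclusion $\Ac_{\sigma_k} \subseteq \Ac_{(i_1,\ldots,i_k)}$ is essentially correct: Lemma~\ref{lemma:acstep} gives $z_j \ll z_{\bfx}$ for each partial product, and the contradiction showing $\theta_k < \vartheta_{i_k}(z_{k-1})$ is what the paper does (the paper organizes it more cleanly by first observing that $J_{z_{k-1}} = \{\theta : z_{k-1}\alpha_{i_k}(\theta) \ll z_{\bfx}\}$ is an initial interval, and then checking only that the single boundary value $\theta^\bullet_k = \vartheta_{i_k}(z_{k-1})$ fails, rather than handling all $\theta_k \ge \vartheta_{i_k}$ at once).

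The real gap is in the \emph{other} inclusion, $\Ac_{(i_1,\ldots,i_k)} \subseteq \Ac_{\sigma_k}$, which you treat as the easier one. Your bridging scheme---start at $z_k$, enter $\Bru_{\acute\eta}$, then graft onto a tail of a curve $\Gamma_{k-1} \in \cLjojo(z_{k-1}; z_{\bfx})$---only works for \emph{small} $\theta_k$. In triangular coordinates with $L_{k-1} = \bL(z_{k-1})$ and $L_k = \bL(z_k)$ you know $L_{k-1} \ll \Gamma_L(\epsilon)$ for small $\epsilon$, but you need $L_k \ll \Gamma_L(\epsilon)$; since $L_k^{-1}\Gamma_L(\epsilon) = (L_k^{-1}L_{k-1}) \cdot L_{k-1}^{-1}\Gamma_L(\epsilon)$ with $L_k^{-1}L_{k-1} \in \Neg_{a_{i_k}}$, this product need not lie in $\Pos_\eta$ once $\theta_k$ is not small. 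This is exactly the paper's ``$J_{z_{k-1}}$ is nonempty'' step, not the full inclusion. You might hope instead to use that $y_0 z_k \in \Bru_{\acute\rho_k}$ guarantees a curve from $y_0 z_k$ to $\hat\eta$ through $\Bru_{\acute\eta}$; but left-multiplying that curve by $y_0^{-1}$ does not preserve Bruhat cells, so the translated curve need not remain in $\Bru_{\acute\eta}$.

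The paper closes this gap with a continuity argument. It builds a \emph{continuous family} $\Gamma_\theta$, $\theta \in [0,\theta_k]$, of locally convex curves from $z_{k-1}\alpha_{i_k}(\theta)$ to $z_{\bfx}$ (constructed via the $y$-coordinates and Lemma~\ref{lemma:convex1} of \cite{Goulart-Saldanha0} so that the first half of each curve is already known to lie in $\Bru_{\acute\eta}$), then prepends convex arcs from $1$ to obtain a family $\tilde\Gamma_\theta$ in $\cL_n(1;z_{\bfx})$. Since $\tilde\Gamma_0$ has empty singular set by the inductive hypothesis, Theorem~\ref{theo:Hausdorff} (continuity of $\sing$) together with the fact that $\emptyset$ is an isolated point of $\cH([0,1])$ forces $\sing(\tilde\Gamma_\theta) = \emptyset$ for all $\theta$, whence $z_k \ll z_{\bfx}$. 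This connectedness argument is the missing idea; without it the algebraic bound $\theta_k < \vartheta_{i_k}(z_{k-1})$ is never actually used in your forward inclusion.
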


\begin{proof}
The proof is by induction on $k$; the case $k = 0$ is trivial.
Assume therefore
$\Ac_{\sigma_{k-1}}(z_{\bfx}) = \Ac_{(i_1,\ldots,i_{k-1})}(z_{\bfx})$
for $\sigma_{k-1} = a_{i_1}\cdots a_{i_{k-1}}$.

Consider $z_k = z_{k-1} \alpha_{i_k}(\theta_k)$,
$z_{k-1} \in \Bru_{\acute\sigma_{k-1}}$, $z_k \in \Bru_{\acute\sigma_k}$,
$\theta_k \in (0,\pi)$.  
%(see Theorem \ref{theo:Bruhat} of \cite{Goulart-Saldanha0}).
It follows from Lemma \ref{lemma:acstep}
that $z_k \in \Ac_{\sigma_k}$ implies
$z_{k-1} \in \Ac_{\sigma_{k-1}}$
and therefore
\[ \Ac_{\sigma_k} \subseteq \Ac_{\sigma_{k-1}} \Bru_{\acute a_{i_k}},
\qquad
\Ac_{(i_1,\ldots,i_k)} \subseteq \Ac_{\sigma_{k-1}} \Bru_{\acute a_{i_k}};
\]
we have to prove that these two sets are equal.

Given $z_{k-1} \in \Ac_{\sigma_{k-1}}$,
let $J_{z_{k-1}} \subseteq (0,\pi)$ be the set such that,
for all $\theta_k \in (0,\pi)$,
$\theta_k \in J_{z_{k-1}}$ %\qquad \Longleftrightarrow \qquad
if and only if $z_{k-1} \alpha_{i_k}(\theta_k) \in \Ac_{\sigma_k}$. 
It follows from Lemma \ref{lemma:acstep}
that $J_{z_{k-1}}$ is either empty
or an initial interval.

We claim that $J_{z_{k-1}}$ is not empty.
By applying a projective transformation,
we may assume %that $z_{k-1} \in \bQ[\Pos_{\sigma_{k-1}}]$ so that 
$z_{k-1} = \bQ(L_{k-1})$, $L_{k-1} \in \Pos_{\sigma_{k-1}}$.
Take $\Gamma \in \cLjojo(z_{k-1};z_{\bfx})$.
% a holonomic curve $\Gamma: [0,1] \to \Spin_{n+1}$
% such that $\Gamma(0) = z_{k-1}$, $\Gamma(1) = z_{\bfx}$
% and $\Gamma(t) \in \Bru_{\acute\eta}$ for all $t \in (0,1)$.
Define $\Gamma_L = \bL \circ \Gamma$,
with maximal connected domain containing $t = 0$.
Consider $t_\bullet > 0$ in this domain
and $L_{\bullet} = \Gamma_L(t_\bullet)$,
$L_{\bullet} \in \Pos_{\eta}$, $L_{k-1} \ll L_{\bullet}$.
Take $t_k > 0$ such that
$L_{k-1}\lambda_{i_k}(t_k) \ll L_{\bullet}$;
define $\theta_k > 0$ by
$\bQ(L_{k-1}\lambda_{i_k}(t_k)) = z_{k-1} \alpha_{i_k}(\theta_k)$.
Take convex $\Gamma_{L,1}:[0,t_\bullet]\to \Lo_{n+1}^1$ 
such that $\Gamma_{L,1}(0) = L_{k-1}\lambda_{i_k}(t_k)$
and $\Gamma_{L,1}(t_\bullet) = L_{\bullet}$.
Finally, take $\Gamma_1: [0,1] \to \Spin_{n+1}$,
\[ \Gamma_1(t) = \begin{cases}
\bQ(\Gamma_{L,1}(t)), & t \in [0,t_\bullet], \\
\Gamma(t), & t \in [t_\bullet,1]. \end{cases} \]
We have   
$\Gamma_1\in\cLjojo(z_{k-1} \alpha_{i_k}(\theta_k);z_\bfx)$ 
and therefore $\theta_k \in J_{z_{k-1}}$, as claimed.

We claim that $J_{z_{k-1}}$ is open.
Assume by contradiction $\theta_k^\star = \max(J_{z_{k-1}})$,
$z_k^\star = z_{k-1} \alpha_{i_k}(\theta_k^\star)\in\Ac_{\sigma_k}$.
By applying a projective transformation,
we may assume that $z_k^\star \in \bQ[\Pos_{\sigma_{k}}]$.
As in the previous paragraph, take a locally convex curve 
$\Gamma\in\cLjojo(z_k^\star;z_\bfx)$, 
%going from $z_k^\star$ to $z_{\bfx}$,
use $\bL$ to take its initial segment to $\Lo_{n+1}^{1}$
and slightly perturb it to obtain
$\theta_k \in J_{z_{k-1}}$, $\theta_k > \theta_k^\star$.
The argument is so similar that we feel that a repetition is pointless.

At this point we know that there exists a function
$\tilde\vartheta_{i_k}: \Ac_{\sigma_{k-1}} \to (0,\pi]$
such that $J_{z_{k-1}} = (0,\tilde\vartheta_{i_k}(z_{k-1}))$.
We are left with proving that
$\vartheta_{i_k} = \tilde\vartheta_{i_k}$.

We first prove that
$\tilde\vartheta_{i_k}(z_{k-1}) \le \vartheta_{i_k}(z_{k-1})$
for all $z_{k-1}$.
If $\rho_{k-1} \vartriangleleft \rho_k$ then
$\vartheta_{i_k}(z_{k-1}) = \pi$ and we are done.
If $\rho_{k-1} = \rho_k$,
take $\theta_k^{\bullet} = \vartheta_{i_k}(z_{k-1})$,
$z_k^{\bullet} = z_{k-1}\alpha_{i_k}(\theta_k^{\bullet})$
and $y_k^{\bullet} = y_0 z_k^\bullet$.
Recall that in this case there exists $\rho_{\bullet} \in S_{n+1}$,
$\rho_{\bullet} \vartriangleleft \rho_{k-1} = \rho_{k} = \rho_{\bullet} a_{i_k}$.
By definition of $\vartheta_{i_k}$,
$y_k^{\bullet} \in \Bru_{\acute\rho_{\bullet} \hat a_{i_k}}$
so that $\adv(y_k^\bullet) = {q^\bullet \acute\eta}$
for $q^\bullet \in \Quat_{n+1}$, $q^\bullet \ne 1$.
By Theorem 3 of \cite{Goulart-Saldanha0}, 
any locally convex curve starting at $y_k^{\bullet}$
immediately enters $\Bru_{q^\bullet \acute\eta}$.
Thus, $\cLjojo(y_k^\bullet;\hat\eta)=\emptyset$ 
%there is no convex curve going from $y_k^{\bullet}$ to $\hat\eta$
and therefore 
%no convex curve going from $z_k^{\bullet}$ to $z_{\bfx}$.
$\cLjojo (z_k^\bullet;z_\bfx)=\emptyset$.
It follows that $z_k^{\bullet} \notin \Ac_{\sigma_k}(z_{\bfx})$
and therefore $\theta_k^{\bullet} \ge \tilde\vartheta_{i_k}(z_{k-1})$,
proving our claim.

We finally prove that
$\tilde\vartheta_{i_k}(z_{k-1}) \ge \vartheta_{i_k}(z_{k-1})$.
Consider $\theta_k <  \vartheta_{i_k}(z_{k-1})$,
$z_k = z_{k-1} \alpha_{i_k}(\theta_k)$
and $y_k = y_{k-1} \alpha_{i_k}(\theta_k) = y_0 z_k \in \Bru_{\acute\rho_k}$.
Notice that
$z_{k-1} \alpha_{i_k}(\theta) \in \Bruadv$ and
$y_{k-1} \alpha_{i_k}(\theta) \in \Bruadv$
for all $\theta \in [0,\theta_k]$.
By compactness and 
Theorem 3 of \cite{Goulart-Saldanha0},
there exists $c > 0$ such that, for all $\theta \in [0,\theta_k]$
and for all $t \in (0,c]$, we have
both $z_{k-1} \alpha_{i_k}(\theta) \exp(t\fh) \in \Bru_{\acute\eta}$ and
$y_{k-1} \alpha_{i_k}(\theta) \exp(t\fh) \in \Bru_{\acute\eta}$.
Apply Lemma 6.1 of \cite{Goulart-Saldanha0} 
to obtain a continuous family 
$H: [0,\theta_k] \times [\frac12,1] \to \Spin_{n+1}$
of locally convex curves $H(\theta): [\frac12,1] \to \Spin_{n+1}$
going from $y_{k-1} \alpha_{i_k}(\theta) \exp(c\fh)$ to $\hat\eta$.
Extend this to 
$H: [0,\theta_k] \times [0,1] \to \Spin_{n+1}$
by defining $H(\theta)(t) = 
y_{k-1} \alpha_{i_k}(\theta) \exp(2ct\fh)$ % \in \Bru_{\acute\eta}$
for $t \in [0,\frac12]$. 
This extension is still continuous. 
For each $\theta \in [0,\theta_k]$, 
the arc $H(\theta): [0,1] \to \Spin_{n+1}$ is in 
$\cLjojo(y_{k-1} \alpha_{i_k}(\theta);\hat\eta)$, 
since we have $H(\theta)(t) \in \Bru_{\acute\eta}$
for all $t \in (0,1)$. 
%; the claim follows from Lemma \ref{lemma:convex}.
% Construct a family $H: [0,\theta_k] \times [0,1] \to \Spin_{n+1}$
% of convex curves $H(\theta): [0,1] \to \Spin_{n+1}$
% going from $y_{k-1} \alpha_{i_k}(\theta)$ to $\hat\eta$:
% notice that each curve $H(\theta)$ is in $\Brujojo$.
% The homotopy can be constructed by defining
% $H(\theta)(t) = y_{k-1} \alpha_{i_k}(\theta)\exp(ct\fh)$ for $t \le \frac12$
% where $c > 0$ is sufficiently small so that
% $H(\theta)(t) \in \Bru_{\acute\eta}$ for all $\theta \in  [0,\theta_k]$
% and for all $t \in (0,\frac12]$.
% Here $\fh$ is given in Example \ref{example:fh};
% % in Lemma \ref{lemma:convex1};
% the existence of the desired $c$ follows from compactness and
% Lemma \ref{lemma:chopadvance}.
% Lemma \ref{lemma:convex1} gives us a formula for $H$ for $t \ge \frac12$.
% Indeed, start with a convex curve $\Gamma_0: [0,1] \to \Brujojo$,
% $\Gamma_0(0) = z_{k-1}$, $\Gamma_0(1) = z_{\bfx}$,
% which exists by induction hypothesis.
% Set $H(0) = y_0 \Gamma_0$, also convex and satisfying
% $H(0)(0) = y_{k-1}$, $H(0)(1) = \hat\eta$:
% this is also in $\Brujojo$.
% Apply a projective transformation if needed so that
% $y_{k-1} \in \bQ[\Pos_{\rho_{k-1}}]$:
% fixing an auxiliary point $H(0)(t_\bullet) \in \bQ[\Pos_{\eta}]$,
% move the initial point (using coordinates in $\Lo_{n+1}^{1}$)
% to define $H(\theta)$ for $\theta < \theta_{\bullet}$
% where $\theta_{\bullet} > 0$ is fixed.
% Finally, in order to define $H(\theta)$ for $\theta > \theta_{\bullet}$
% use projective transformations.
Multiply by $y_0^{-1}$ to obtain a family $y_0^{-1}H$
of locally convex curves $\Gamma_{\theta} = y_0^{-1} H(\theta): [0,1] \to \Spin_{n+1}$
going from $z_{k-1} \alpha_{i_k}(\theta)$ to $z_{\bfx}$.
We prove that, for all $\theta$, we have
$\Gamma_{\theta} \in \cLjojo(z_{k-1} \alpha_{i_k}(\theta); z_{\bfx})$,
i.e., that $\Gamma_{\theta}(t) \in \Bru_{\acute\eta}$
for all $t \in (0,1)$.
We know that $\Gamma_0$ is convex 
%(by Lemma \ref{lemma:convex} in the appendix)
and that $z_{k-1} \in \Ac_{\sigma_{k-1}}(z_{\bfx})$
and therefore, from Lemma \ref{lemma:cLjojo},
that $\Gamma_0 \in \cLjojo(z_{k-1}; z_{\bfx})$.
We know by construction that 
$\Gamma_{\theta}(t) \in \Bru_{\acute\eta}$
for all $t \in (0,\frac12)$.
Apply again Lemma 6.1 
of \cite{Goulart-Saldanha0} to construct 
a continuous family of 
convex arcs $\tilde\Gamma_{\theta}:[0,\frac12] \to \Spin_{n+1}$
from $\tilde\Gamma_\theta(0) = 1$ to
$\tilde\Gamma_\theta(\frac12) = \Gamma_\theta(\frac12)$.
Extend $\tilde\Gamma_\theta$ to $[0,1]$
by $\tilde\Gamma_\theta(t) = \Gamma_\theta(t)$ 
for $t \in [\frac12,1]$. 
The corresponding family of extended 
locally convex curves is again continuous. 
We have $\sing(\tilde\Gamma_0) = \emptyset$.
Also, from Theorem \ref{theo:Hausdorff},
$\sing(\tilde\Gamma_\theta)$ is a continuous function of $\theta$.
Since $\emptyset \in \cH([0,1])$ is an isolated point,
we have $\sing(\tilde\Gamma_\theta) = \emptyset$ for all $\theta$,
as desired.
% Notice that every curve $\Gamma_{\theta}$
% either already begins in $\Bru_{\acute\eta}$ (if $\sigma_k = \eta$)
% or begins by immediately entering $\Bru_{\acute\eta}$.
% Similarly, every curve $\Gamma_{\theta}$
% either ends in $\Bru_{\acute\eta}$ (if $z_{\bfx} \in \Bru_{\acute\eta}$)
% or is in $\Bru_{\acute\eta}$ for $t$ near $1$ and only leaves
% at the final instant.
% We must prove that for $t \in (0,1)$ the curve does not meet
% the singular set $\Sing_{n+1} \subset \Spin_{n+1}$.
% Let $h: [0,\theta_k] \to \cH((0,1))$, 
% $h(\theta) = \{ t \in (0,1) \;|\; H(\theta,t) \in \Sing_{n+1} \}$.
% As in Corollary \ref{coro:hausdorff2}, $h$ is continuous.
% Since $h(0) = \emptyset$ and $\emptyset \in \cH((0,1))$ is an isolated point
% we have $h(\theta) = \emptyset$ for all $\theta \in [0,\theta_k]$,
% as desired.
% % Finally, the existence of the curve
% % $\Gamma_{\theta_k}: [0,1] \to \Brujojo$
This implies that $z_k=z_{k-1}\alpha_{i_k}(\theta_k) \ll z_{\bfx}$
and therefore $\theta_k < \tilde\vartheta_{i_k}(z_{k-1})$.
Since this holds for any $\theta_k <  \vartheta_{i_k}(z_{k-1})$
we have $\tilde\vartheta_{i_k}(z_{k-1}) \ge \vartheta_{i_k}(z_{k-1})$,
completing our proof.
\end{proof}

\begin{rem}
\label{rem:explicitcontraction}
We saw in Lemmas \ref{lemma:convex2} and 
\ref{lemma:cLjojo} that,
given $z_0 \in \Bruadv$ and $z_1 \in \Bruchop$, 
the set $\cLjojo(z_0;z_1)$ is either empty
or equal to $\cL_{n,\conv}(z_0;z_1)$ and contractible.
In Lemma \ref{lemma:convex2} we saw an explicit contraction
if $z_0^{-1}z_1 \in \Bru_{\acute\eta}$
but otherwise used Proposition 6.4 of \cite{Saldanha-Shapiro}.
We now present a more explicit contraction in general.
For any $\Gamma \in \cLjojo(z_0;z_1)$, we have
$(\Gamma(0))^{-1}\Gamma(\frac12)  \in \Bru_{\acute\eta}$ and
$(\Gamma(\frac12))^{-1}\Gamma(1)  \in \Bru_{\acute\eta}$.
Apply the contraction in the proof of Lemma \ref{lemma:convex2}
to each arc, leaving $\Gamma(\frac12)$ fixed.
This takes us to a set of curves parametrized
by $\Gamma(\frac12) \in z_0 \Ac_{\eta}(z_0^{-1}z_1)$.
We now know that $\Ac_{\eta}(z_0^{-1}z_1)$ is diffeomorphic to $\RR^m$,
$m=n(n+1)/2$ 
(with a rather explicit diffeomorphism).
\end{rem}

\section{Proof of Theorem \ref{theo:stratification}}
\label{sect:paths}

%In this section, we denote by $\cL_n(t_0,z_0;t_1,z_1)$ 
%the space of locally convex curves $\Gamma:[t_0,t_1]\to\Spin_{n+1}$. 
%Notice that we are not assuming $t_0=0$ and $t_1=1$. 
%We have of course 
%$\cL_n(z_0;z_1)=\cL_n(0,z_0;1,z_1)\approx\cL_n(t_0,z_0;t_1,z_1)$, 
%where the homeomorphism comes from a reparameterization.
 
Given $\Gamma \in \cL_n(z_0;z_\bullet)$, we write  
$\sing(\Gamma) = \{t_1 < \cdots < t_\ell\}$. 
Let its \emph{path} be
\[ %\iti(\Gamma) = (\sigma_1, \ldots, \sigma_\ell), \quad
\pathiti(\Gamma) = (z_1, \ldots, z_\ell), \quad
z_j = \Gamma(t_j) \in \Bru_{\eta\sigma_j}. \]
%Thus, $\iti(\Gamma)$ is a word in
%$\Word_n = (S_{n+1}\smallsetminus \{e\})^{\ast}$;
%here $S_{n+1} \smallsetminus \{e\}$ is the alphabet.
Let the \emph{length} of the corresponding itinerary 
$\iti(\Gamma)=w = (\sigma_1, \ldots, \sigma_\ell) \in \Word_n$
be $\ell=\ell(w) = \operatorname{card}(\sing(\Gamma)) \in \NN$.
Recall from Lemma \ref{lemma:cLjojo} that  
$\Gamma \in \cL_n(\hat\eta)$ is convex if and only if 
its itinerary $\iti(\Gamma)$ is the empty word of length $0$.

Given the path $(z_1, \ldots, z_\ell)$ of some $\Gamma \in \cL_n$,
it is trivial to determine
the corresponding itinerary $w = (\sigma_1, \ldots, \sigma_\ell)$.
Conversely, given an itinerary 
$w = (\sigma_1, \ldots, \sigma_\ell) \in \Word_n$,
define $B(w,j) \in \widetilde \B_{n+1}^{+}$ 
for $j \in \ZZ$, $0 \le j \le \ell+1$,  
and $B(w,j+\frac12) \in \widetilde \B_{n+1}^{+}$ 
for $j \in \ZZ$, $0 \le j \le \ell$, by
\begin{equation}
\label{eq:Bwj}
\begin{gathered}
B(w,0) = 1, \quad B\left(w,\frac12\right) = \acute\eta, \quad
B(w,j) = B\left(w,j-\frac12\right) \acute\sigma_j, \\
B\left(w,j+\frac12\right) = B\left(w,j-\frac12\right) \hat\sigma_j, \quad
B(w,\ell+1) = B\left(w,\ell+\frac12\right) \acute\eta.
\end{gathered}
\end{equation}
In particular, we have $B(w,\ell+1) =
\acute\eta\hat w\acute\eta\in\Quat_{n+1}$, 
where we define the hat of a word by 
$\hat w=\hat\sigma_1\cdots\hat\sigma_\ell\in\Quat_{n+1}$.
%\acute\eta\hat\sigma_1\cdots\hat\sigma_{\ell}\acute\eta \in \Quat_{n+1}$.
We adopt here the conventions $t_0 = 0$, $t_{\ell+1} = 1$,
$z_0 = 1$, $z_{\ell+1} = B(w,\ell+1)$, $\sigma_0 = \sigma_{\ell+1} = \eta$.
It follows from Theorem 3 
of \cite{Goulart-Saldanha0} 
that if $\Gamma \in \cL_n[w]$ and
$\sing(\Gamma) = \{t_1 < \cdots < t_\ell \}$ then
\[
\Gamma \in \cL_n(\acute\eta\hat w \acute\eta), \qquad
\Gamma(t_j) \in \Bru_{B(w,j)}, \qquad
\forall t \in (t_j,t_{j+1}), \; \Gamma(t) \in \Bru_{B\left(w,j+\frac12\right)}. \]
Thus, if $\Gamma \in \cL_n[w]$ then
\( \pathiti(\Gamma) \in \Bru_{B(w,1)} \times \cdots \times \Bru_{B(w,\ell)} \).

Given $w=(\sigma_1,\cdots,\sigma_\ell) \in \Word_n$ 
and $j \in \ZZ$, $0 \le j \le \ell$, 
define $q_j \in \Quat_{n+1}$ by
\begin{gather*}
B(w,j) = q_j \longacute(\eta\sigma_j) \in q_j \Bruadv, \qquad
B\left(w,j+\frac12\right) = q_j \acute\eta, \\
B(w,j+1) = q_j \hat\eta \longgrave(\eta\sigma_{j+1})
\in q_j \Bruchop. 
\end{gather*}
A sequence $(z_1, \ldots, z_\ell) \in \Bru_{B(w,1)} \times \cdots \times \Bru_{B(w,\ell)}$
is an \emph{accessible path} for $w$ if
\[ \forall j \in \llbracket \ell \rrbracket \;
\left(q_j^{-1} z_j \in \Ac_{\eta\sigma_j}(q_j^{-1} z_{j+1})\right). \]
Let $\Pathiti(w) \subseteq \Bru_{B(w,1)} \times \cdots \times \Bru_{B(w,\ell)}$
be the set of accessible paths for $w$.

\begin{lemma}
\label{lemma:accessible}
Consider $w \in \Word_n$.
For any $ \Gamma \in \cL_n[w]$, 
$\pathiti(\Gamma)$ is accessible, i.e.,
belongs to $\Pathiti(w)$.
\end{lemma}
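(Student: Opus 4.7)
The plan is to unpack the definitions and recognize that each arc of $\Gamma$ between consecutive singular instants is essentially a curve in some $\cLjojo$, which is precisely the topological definition of $\ll$ on which the sets $\Ac_\sigma$ were built. So the proof should be almost a tautology, once the indices and the $\Quat_{n+1}$-translation are handled carefully.

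\textbf{Setup.} Fix $\Gamma \in \cL_n[w]$ with $w = (\sigma_1,\ldots,\sigma_\ell)$ and $\sing(\Gamma) = \{t_1 < \cdots < t_\ell\}$, and adopt the conventions $t_0 = 0$, $t_{\ell+1} = 1$. First, I would verify that $\pathiti(\Gamma) \in \Bru_{B(w,1)}\times\cdots\times\Bru_{B(w,\ell)}$: this follows from Theorem \ref{theo:chopadvance} of \cite{Goulart-Saldanha0} applied inductively at each $t_j$, which dictates which open cell $\Bru_{B(w,j+\frac12)} = \Bru_{q_j\acute\eta}$ is pierced on $(t_j,t_{j+1})$ and forces $\Gamma(t_{j+1}) \in \Bru_{B(w,j+1)}$ — this is exactly how the $B(w,\cdot)$ were defined in Equation \eqref{eq:Bwj}. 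In particular $q_j^{-1}z_j \in \Bru_{\longacute(\eta\sigma_j)}$, so $q_j^{-1}z_j$ lies in the cell where $\Ac_{\eta\sigma_j}(\,\cdot\,)$ lives.

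\textbf{Main step.} For each $j \in \llbracket\ell\rrbracket$, consider the arc $\Gamma_j = \Gamma|_{[t_j,t_{j+1}]}$ reparametrized affinely to $[0,1]$, and then left-translated by $q_j^{-1}$; call the result $\tilde\Gamma_j:[0,1]\to\Spin_{n+1}$. Then $\tilde\Gamma_j$ is locally convex (translation and affine reparametrization by a positive factor preserve the condition \eqref{equation:locallyconvex}), with $\tilde\Gamma_j(0) = q_j^{-1}z_j$, $\tilde\Gamma_j(1) = q_j^{-1}z_{j+1}$, and $\tilde\Gamma_j(t)\in\Bru_{\acute\eta}$ for all $t\in(0,1)$ by construction. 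This says precisely that $\tilde\Gamma_j \in \cLjojo(q_j^{-1}z_j;\,q_j^{-1}z_{j+1})$, which by definition means $q_j^{-1}z_j \ll q_j^{-1}z_{j+1}$, and therefore $q_j^{-1}z_j \in \Ac_{\eta\sigma_j}(q_j^{-1}z_{j+1})$ by the definition of the accessibility set in the spin group given just before Lemma \ref{lemma:acstep}.

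\textbf{Expected obstacle.} The proof itself presents no real analytic difficulty; the only thing that needs care is the bookkeeping for the factors $q_j$ and to confirm that $q_j^{-1}z_{j+1}$ really lies in $\Bruchop$ so that the definition of $\Ac_{\eta\sigma_j}(q_j^{-1}z_{j+1})$ applies. This is where one must unwind $B(w,j+1) = q_j\hat\eta\longgrave(\eta\sigma_{j+1})$ and use that $\chop$ is constant along the open cell $\Bru_{q_j\acute\eta}$ traversed on $(t_j,t_{j+1})$. Once one observes that the (constant) value of $\chop\circ\Gamma$ on that open interval is forced by the cell and matches $\chop(z_{j+1})$ (so that $q_j^{-1}z_{j+1}\in\Bruchop$), the statement reduces as above. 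Iterating over $j \in \llbracket\ell\rrbracket$ yields $\pathiti(\Gamma) \in \Pathiti(w)$.
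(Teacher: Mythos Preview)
Your proposal is correct and follows exactly the same approach as the paper's proof: for each $j$, the arc $q_j^{-1}\Gamma|_{[t_j,t_{j+1}]}$ belongs (up to reparametrization) to $\cLjojo(q_j^{-1}z_j;\,q_j^{-1}z_{j+1})$, hence $q_j^{-1}z_j \in \Ac_{\eta\sigma_j}(q_j^{-1}z_{j+1})$. You have simply made explicit the bookkeeping (that $q_j^{-1}z_{j+1}\in\Bruchop$, etc.) which the paper leaves implicit because the elements $q_j$ and $B(w,j)$ were set up precisely so that these verifications become automatic.
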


\begin{proof}
Consider $t_j < t_{j+1}$ and the arc $q_j^{-1} \Gamma|_{[t_j,t_{j+1}]}$.
Except for the modified domain, this arc belongs to
$\cLjojo(q_j^{-1}\Gamma(t_j); q_j^{-1}\Gamma(t_{j+1}))$
and therefore
$q_j^{-1}\Gamma(t_j) \in \Ac_{\eta\sigma_j}(q_j^{-1}\Gamma(t_{j+1}))$,
as desired.
\end{proof}

\begin{lemma}
\label{lemma:path}
Consider $w \in \Word_n$.
For any accessible path $(z_1,\ldots,z_\ell) \in \Pathiti(w)$, the set
%$$ \pathiti^{-1}[\{(z_1,\ldots,z_\ell)\}] =
$\{ \Gamma \in \cL_n^{[H^1]}[w] \;|\; \pathiti(\Gamma) = (z_1, \ldots, z_\ell) \}$
is nonempty and contractible.
\end{lemma}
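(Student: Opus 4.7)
Let $S = \{\Gamma \in \cL_n^{[H^1]}[w] \;|\; \pathiti(\Gamma) = (z_1,\ldots,z_\ell)\}$ denote the set in the statement, and let $\Delta^\ell = \{(t_1,\ldots,t_\ell)\in\RR^\ell \;|\; 0<t_1<\cdots<t_\ell<1\}$ be the open simplex of admissible singular-time sequences; adopt the standard conventions $t_0 = 0$, $t_{\ell+1} = 1$, $z_0 = 1$, $z_{\ell+1} = \acute\eta\hat w\acute\eta$, $q_0 = 1$. The plan is to identify $S$, via a reparametrize-and-concatenate homeomorphism, with the product
\[ \Delta^\ell \times \prod_{j=0}^{\ell} \cL_{n,\conv}^{[H^1]}(q_j^{-1}z_j; q_j^{-1}z_{j+1}). \]
For $j \in \llbracket \ell \rrbracket$ the accessibility hypothesis $q_j^{-1}z_j \in \Ac_{\eta\sigma_j}(q_j^{-1}z_{j+1})$ is by definition the relation $q_j^{-1}z_j \ll q_j^{-1}z_{j+1}$ in $\Brujojo$; for $j = 0$ and $j = \ell$ the analogous relation between $q_j^{-1}z_j$ and $q_j^{-1}z_{j+1}$ holds for the same reason, given the boundary conventions above. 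By Lemma~\ref{lemma:cLjojo}, each such relation gives $\cLjojo(q_j^{-1}z_j; q_j^{-1}z_{j+1}) = \cL_{n,\conv}^{[H^1]}(q_j^{-1}z_j; q_j^{-1}z_{j+1}) \ne \emptyset$, and Lemma~\ref{lemma:convex2} asserts contractibility of this factor.

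Concretely, define
\[ \Phi : \Delta^\ell \times \prod_{j=0}^{\ell} \cL_{n,\conv}^{[H^1]}(q_j^{-1}z_j; q_j^{-1}z_{j+1}) \to S, \qquad \Phi\bigl(\vec t, (\tilde\Gamma_j)_{j}\bigr)(t) = q_j\, \tilde\Gamma_j\!\left(\frac{t-t_j}{t_{j+1}-t_j}\right) \]
for $t \in [t_j, t_{j+1}]$. The $\xi_j$-functions of Equation~\eqref{equation:xis} of this concatenation are piecewise $L^2$, hence $L^2$, so $\Phi(\vec t,(\tilde\Gamma_j))\in\cL_n^{[H^1]}$; the choice of arcs forces $\iti(\Gamma) = w$ and $\pathiti(\Gamma) = (z_1,\ldots,z_\ell)$, so $\Phi$ lands in $S$. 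Bijectivity is immediate: the inverse extracts $\vec t = \sing(\Gamma)$ together with $\tilde\Gamma_j(s) = q_j^{-1}\Gamma(t_j + s(t_{j+1}-t_j))$, both well-defined because on $S$ the singular set has cardinality exactly $\ell$, with each point lying in its prescribed signed Bruhat cell.

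It remains to verify that $\Phi$ is a homeomorphism in the $H^1$ topology. Continuity of $\Phi$ reduces to continuity of linear reparametrization in the $\xi_j$-coordinates of Subsection~\ref{subsect:Hilbert}: the rescaling factors $(t_{j+1}-t_j)^{-1}$ depend continuously on $\vec t$ and stay bounded on compact subsets of $\Delta^\ell$, so the induced rescaling of $L^2$-functions is continuous. For the inverse, Theorem~\ref{theo:Hausdorff} gives continuity of $\sing$ into $\cH([0,1])$; since on $S$ the singular points can neither collide (they lie in fixed distinct Bruhat cells along the path) nor appear or disappear, continuity in Hausdorff distance promotes to continuity as a map into $\Delta^\ell \subset \RR^\ell$, and the arc-restrictions $\tilde\Gamma_j$ then depend continuously on $\Gamma$ and $\vec t$. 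The main obstacle is this compatibility between the piecewise-$H^1$ concatenation structure and the global $\xi_j$-coordinate system on $\cL_n^{[H^1]}$; once established, $S$ is homeomorphic to a product of contractible spaces, hence is itself contractible and in particular nonempty.
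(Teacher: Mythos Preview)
Your proof is correct and takes essentially the same approach as the paper: decompose the fiber over a fixed path into the open simplex $\Delta^\ell$ of singular times crossed with the product of convex-arc spaces $\cLjojo(q_j^{-1}z_j;q_j^{-1}z_{j+1})$, each contractible by Lemma~\ref{lemma:cLjojo}. You spell out the concatenation homeomorphism $\Phi$ and its $H^1$-continuity more carefully than the paper's terse ``concatenate the above arcs,'' but the underlying structure is identical.
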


\begin{proof}
The collection of sets $\{t_1 < \cdots < t_\ell \} \in \cH([0,1])$ is a contractible subset.
At this point, the values of $q_j \in \Quat_{n+1}$,
of $t_j < t_{j+1}$,
of $z_j \in q_j \Bru_{\longacute(\eta\sigma_j)}$
and of $z_{j+1} \in q_j \Bruchop$ 
with $q_j^{-1}z_j \in \Ac_{\eta\sigma_j}(q_j^{-1} z_{j+1})$
are all given. 
The set of locally convex arcs
$\Gamma: [t_j,t_{j+1}] \to \Spin_{n+1}$
with $\Gamma(t_j) = z_j$, $\Gamma(t_{j+1}) = z_{j+1}$
and $\Gamma(t) \in q_j \Bru_{\acute\eta}$ for all $t \in (t_j,t_{j+1})$
is homeomorphic to $\cLjojo(q_j^{-1}z_j; q_j^{-1}z_{j+1})$;
by Lemma \ref{lemma:cLjojo}, this set is contractible
(with an explicit contraction given by Remark \ref{rem:explicitcontraction}).
Concatenate the above arcs to construct $\Gamma$;
this yields the desired result.
\end{proof}

\begin{lemma}
\label{lemma:Path}
Consider $w \in \Word_n$. 
The set $\Pathiti(w) \subseteq \Bru_{B(w,1)} \times \cdots \times \Bru_{B(w,\ell)}$
is diffeomorphic to $\RR^d$,
$d = \inv(\eta\sigma_1) + \cdots + \inv(\eta\sigma_\ell)$.
In particular, $\Pathiti(w)$ is contractible (and nonempty).
\end{lemma}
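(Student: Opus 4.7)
The plan is to exhibit $\Pathiti(w)$ as an iterated smooth fibration whose fibers are quasiproducts (as described in Lemma \ref{lemma:quasiproduct}), and then to trivialize each stage so as to end up with a global diffeomorphism with $\RR^d$. I proceed by downward induction on an index $j$ in a natural filtration of $\Pathiti(w)$.

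\emph{Setup.} For each $j\in\llbracket\ell\rrbracket$, let $S_j$ denote the subset of $\prod_{k=j}^{\ell}\Bru_{B(w,k)}$ consisting of tuples $(z_j,\ldots,z_\ell)$ satisfying $q_k^{-1}z_k\in\Ac_{\eta\sigma_k}(q_k^{-1}z_{k+1})$ for every $k\in\{j,\ldots,\ell\}$, with the convention $z_{\ell+1}=B(w,\ell+1)$. Thus $\Pathiti(w)=S_1$, and for $j<\ell$ the forgetful map $\pi_j:S_j\to S_{j+1}$, $(z_j,z_{j+1},\ldots,z_\ell)\mapsto(z_{j+1},\ldots,z_\ell)$, is a surjection whose fiber over $(z_{j+1},\ldots,z_\ell)$ is $q_j\Ac_{\eta\sigma_j}(q_j^{-1}z_{j+1})$, an open subset of $\Bru_{B(w,j)}$ of dimension $\inv(\eta\sigma_j)$. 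I will prove by downward induction on $j$ that $S_j$ is diffeomorphic to $\RR^{d_j}$, where $d_j=\inv(\eta\sigma_j)+\cdots+\inv(\eta\sigma_\ell)$; taking $j=1$ then gives $\Pathiti(w)\cong\RR^d$.

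\emph{Induction.} For the base case $j=\ell$, the set $S_\ell=q_\ell\Ac_{\eta\sigma_\ell}(q_\ell^{-1}B(w,\ell+1))$ is parametrized by Lemma \ref{lemma:quasiproduct} as a quasiproduct in angular variables $(\theta_1,\ldots,\theta_{\inv(\eta\sigma_\ell)})$ with rational positive bounds $g_i$; the logistic substitution $s_i=\log(\theta_i/(g_i(\theta_1,\ldots,\theta_{i-1})-\theta_i))$ smoothly identifies this quasiproduct with $\RR^{\inv(\eta\sigma_\ell)}$. For the inductive step, suppose $S_{j+1}\cong\RR^{d_{j+1}}$. The fiber of $\pi_j$ at $(z_{j+1},\ldots,z_\ell)$ is, again by Lemma \ref{lemma:quasiproduct}, a quasiproduct of dimension $\inv(\eta\sigma_j)$ whose entry bound $c_1$ and subsequent bounds $g_i$ depend smoothly on $z_{j+1}$: this smoothness comes from the smoothness of the auxiliary functions $\Theta_{i_k}$ appearing in the recursive definition of $\vartheta_{i_k}$ (Remark \ref{rem:bigtheta} of \cite{Goulart-Saldanha0}) together with the rationality of every remaining operation. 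The same logistic substitution applied fiberwise produces a smooth global trivialization $S_j\cong S_{j+1}\times\RR^{\inv(\eta\sigma_j)}$, and combining with the inductive hypothesis yields $S_j\cong\RR^{d_j}$.

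\emph{Main obstacle.} The one delicate point is establishing the smooth (not merely continuous) dependence of the quasiproduct bounds on the base point $z_{j+1}$, which is exactly what ensures that the fiberwise logistic substitution patches into a global smooth trivialization of $\pi_j$ rather than only a family of pointwise diffeomorphisms. Once this is in hand from Lemma \ref{lemma:quasiproduct}, the induction runs smoothly, and contractibility and nonemptiness of $\Pathiti(w)$ follow as immediate corollaries of its diffeomorphism with $\RR^d$.
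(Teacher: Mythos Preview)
Your proof is correct and follows essentially the same strategy as the paper's: proceed by downward induction from the $\ell$-th coordinate, invoking Lemma~\ref{lemma:quasiproduct} at each step so that the whole set $\Pathiti(w)$ is parametrized by a quasiproduct. You have simply supplied more detail than the paper's terse argument, in particular the explicit logistic trivialization and the verification that the quasiproduct bounds depend smoothly on the base point $z_{j+1}$.
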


\begin{proof}
Start constructing the set from the $\ell$-th coordinate $\Bru_{B(w,\ell)}$
and proceed backwards.
%Using the coordinates $\theta$ as in Corollary \ref{coro:bruhatstep}
%(for the first step)
Use Lemma \ref{lemma:quasiproduct}
for the inductive step.
The set $\Pathiti(w)$ is parametrized by a quasiproduct.
\end{proof}

%For $w = (\sigma_1, \ldots, \sigma_\ell) \in \Word_n$,
%define $\hat w = \hat\sigma_1 \cdots \hat\sigma_\ell \in \Quat_{n+1}$:
%we thus have $\cL_n[w] \subseteq \cL_n(\acute\eta\hat w\acute\eta)$.

\begin{lemma}
\label{lemma:cLnw}
For any $w \in \Word_n$, the subset
\( \cL_n^{[H^1]}[w] \subset \cL_n^{[H^1]}(\acute\eta\hat w\acute\eta) \)
%  \subset \cL_n \)
is contractible. % (and nonempty).
\end{lemma}

\begin{proof}
We omit the superscript $[H^1]$ throughout the proof.  
Let $\Pathiti_1(w) \subset \cL_n[w]$ be the set of 
locally convex curves $\Gamma$
such that the arcs $\Gamma|_{[t_{i-1},t_i]}$ are 
assigned base points to the contractible sets 
$\cL_{n,\conv}(\Gamma(t_{i-1});\Gamma(t_i))$ 
(up to a reparameterization). 
Here we assume that $\sing(\Gamma) = \{t_1 < \cdots < t_\ell \}$;
we may use the construction in Remark \ref{rem:explicitcontraction}
to select a basepoint.

Lemma \ref{lemma:path} yields a deformation retract
from $\cL_n[w]$ to $\Pathiti_1(w)$,
a homotopy
$H_0: [0,1] \times \cL_n[w] \to \cL_n[w]$
which starts with an arbitrary curve $\Gamma_0 \in \cL_n[w]$
and deforms it through 
$\Gamma_s = H_0(s,\Gamma_0)$ for $s \in [0,1]$.
The homotopy satisfies
$\sing(\Gamma_s) = \sing(\Gamma_0) = \{ t_1 < \cdots < t_\ell \}$ and
$\pathiti(\Gamma_s) = \pathiti(\Gamma_0)$ for all $s \in [0,1]$.
We have $\Gamma_1 \in \Pathiti_1(w)$, i.e.,
the arcs $\Gamma_1|_{[t_{i-1},t_i]}$ are the base points of
the contractible sets
$\cL_{n,\conv}(\Gamma_0(t_{i-1});\Gamma_0(t_i))$.
Also, if $\Gamma_0 \in \Pathiti_1(w)$
then $\Gamma_s = \Gamma_0$ for all $s \in [0,1]$.

Let $\Pathiti_2(w) \subset \Pathiti_1(w)$
be the set of paths $\Gamma \in \Pathiti_1(w)$
such that $\sing(\Gamma) = \{ \frac{1}{\ell+1} < \cdots < \frac{\ell}{\ell+1} \}$.
There is an easy deformation retract
$H_1: [1,2] \times \Pathiti_1(w) \to \Pathiti_1(w)$
from $\Pathiti_1(w)$ to $\Pathiti_2(w)$:
affinely reparameterize each interval $[t_{i-1},t_i]$.

Lemma \ref{lemma:accessible} shows that
$\Pathiti_2(w)$ is homeomorphic to $\Pathiti(w)$:
the homeomorphism takes $\Gamma$ to $\pathiti(\Gamma)$.
Lemma \ref{lemma:Path} shows us how to construct a homotopy
$\tilde H_2: [2,3] \times \Pathiti(w) \to \Pathiti(w)$
with $\tilde H_2(2,\mathbf{z}) = \mathbf{z}$ and $\tilde H_2(3,\mathbf{z}) = \mathbf{z}_0$
where $\mathbf{z}_0 \in \Pathiti(w)$ is a base point.
Compose with the homeomorphism above to define
a deformation retract $H_2$ from $\Pathiti_2(w)$ to a point.
Concatenate $H_0, H_1, H_2$ to construct the desired contraction.
\end{proof}

\begin{rem}
\label{rem:exactsequence}
The proof of Lemma \ref{lemma:cLnw} above
obtains a rather explicit contraction.
A slightly shorter proof is possible using
the metrizable topological manifold structure provided in 
the proof of Theorem \ref{theo:stratification} below: 
use Theorem 15 of \cite{Palais} 
and the long exact sequence of homotopy groups for the fibration 
$\pathiti:\cL_n[w]\to\Pathiti(w)$, via  
Lemmas \ref{lemma:accessible}, \ref{lemma:path} and \ref{lemma:Path}. 
%(but see Remark \ref{rem:cLnwC1}).
The longer proof above %has the advantage of being 
is more self-contained.
%In Lemma \ref{lemma:submanifold}, however, 
%such issues become unavoidable.
%For everything that follows, however, we cannot postpone 
%any longer the adoption of a manifold structure 
%for the spaces $\cL_n(z_0;z_1)$ (see Section \ref{subsect:Hilbert}).
\end{rem}

Up to this point in this section, 
all arguments relied solely on the fact that 
$\cL_n^{[H^1]}(z_0;z_1)$ is a metrizable manifold including 
piecewise $C^1$ curves. 
%In Section \ref{subsect:Hilbert} this is discussed more 
%carefully: 
%we endow $\cL_n(z_0;z_1)$ and $\cL_n$ with smooth Hilbert manifold structures.
%The construction is similar to that given in \cite{Klingenberg2}
%for a different but similar space of curves.
%Our structure allows for curves $\Gamma$ which are, say,
%piecewise $C^1$ with logarithmic derivative 
%$(\Gamma(t))^{-1}\Gamma'(t)$ a positive linear combination
%of the vectors $\fa_i$.
Herein, by piecewise $C^1$ we mean that there exists a finite family 
of compact intervals $[0,t_1], [t_1,t_2], \ldots, [t_k,1]$
covering $[0,1]$
such that $\Gamma$ is of class $C^1$ in each interval $[t_i,t_{i+1}]$.
In certain situations though, 
% particularly in \cite{Goulart-Saldanha2}, 
we prefer to work in a space of curves whose derivatives 
of certain orders are well-defined. 
The Hilbert manifolds $\cL_n^{[H^r]}(z_0;z_1)$ and $\cL_n^{[H^r]}$, 
for $r>2$, were introduced in Subsection \ref{subsect:Hilbert} 
to fulfill this role. 
Therein, we prove that the inclusions 
$\cL_n^{[H^r]}(z_0;z_1)\hookrightarrow\cL_n^{[H^1]}(z_0;z_1)$ 
are homotopy equivalences homotopic to diffeomorphisms. 
We now verify that there exist similar stratifications for $r>2$:
\[\cL_n^{[H^r]}=\bigsqcup_{w\in\Word_n}\cL_n^{[H^r]}[w],\qquad
\cL_n^{[H^r]}[w]=\cL_n^{[H^1]}[w]\cap\cL_n^{[H^r]}.\] 
It turns out that the adjacency relations 
between strata are different in the two cases  
(compare Theorem \ref{theo:poset} and 
Equation \eqref{equation:acbHk}).

\begin{lemma}
\label{lemma:cLnwHk}
Consider $w \in \Word_n$, $r\in\NN$, $r>2$. The set
\( \cL_n^{[H^r]}[w] \subset \cL_n^{[H^r]}(\acute\eta\hat w\acute\eta) \)
%  \subset \cL_n \)
is nonempty and contractible.
\end{lemma}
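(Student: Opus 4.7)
The argument mirrors the three-stage deformation retract in the proof of Lemma \ref{lemma:cLnw}, ensuring $H^r$ regularity is preserved at each stage. Nonemptiness follows by explicitly constructing a smooth curve in $\cL_n^{[H^r]}[w]$: pick any accessible path $(z_1,\ldots,z_\ell)\in\Pathiti(w)$ (nonempty by Lemma \ref{lemma:Path}), set $t_i=i/(\ell+1)$, and join consecutive points $z_{i-1}$ to $z_i$ (with $z_0=1$ and $z_{\ell+1}=\acute\eta\hat w\acute\eta$) by smooth convex arcs obtained as projective transforms of reparametrized copies of $t\mapsto\exp(t\fh)$. One can arrange the positive coefficients $\kappa_1,\ldots,\kappa_n$ of Equation \eqref{equation:locallyconvex} to extend smoothly across the $t_i$ by multiplying the arcs by suitable smooth factors with matched values and derivatives. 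The resulting curve lies in $\cL_n^{[C^\infty]}[w]\subseteq\cL_n^{[H^r]}[w]$.

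For contractibility, reuse the three homotopies $H_0,H_1,H_2$ from the proof of Lemma \ref{lemma:cLnw}, with adjustments to remain inside $\cL_n^{[H^r]}$. The first homotopy $H_0$ contracts each arc $\Gamma|_{[t_{i-1},t_i]}$ to a base point in the $H^r$ version of $\cLjojo(\Gamma(t_{i-1});\Gamma(t_i))$, namely the set $\cLjojo(\Gamma(t_{i-1});\Gamma(t_i))\cap\cL_n^{[H^r]}(\Gamma(t_{i-1});\Gamma(t_i))$. By Theorem \ref{theo:Hausdorff} and Lemma \ref{lemma:cLjojo} this is a connected component of $\cL_n^{[H^r]}(\Gamma(t_{i-1});\Gamma(t_i))$; it is contractible, because the inclusion into its $H^1$ counterpart is a homotopy equivalence (Lemma \ref{lemma:spaces}) and the $H^1$ version is contractible (Lemma \ref{lemma:cLjojo}). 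A continuous family of base points and corresponding contractions is obtained from the smooth parametrization of $\Ac_\eta(z_{i-1}^{-1}z_i)$ in Remark \ref{rem:explicitcontraction}. The second homotopy $H_1$ reparametrizes each curve so that its singular set becomes $\{i/(\ell+1):i\in\llbracket\ell\rrbracket\}$; here we replace the piecewise affine reparametrization of Lemma \ref{lemma:cLnw} by an isotopy $\phi_s:[0,1]\to[0,1]$ through \emph{smooth} diffeomorphisms of $[0,1]$. This substitution is essential because piecewise affine reparametrizations would introduce derivative discontinuities that destroy $H^r$ regularity for $r\geq 2$. The third homotopy $H_2$ contracts $\Pathiti(w)\cong\RR^d$ (Lemma \ref{lemma:Path}) to a base point, lifted to a one-parameter family of smooth curves as in the nonemptiness step.

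The main obstacle is the $H^r$-continuity of $H_0$: both the base-point selection and the contractions within each relevant space must depend $H^r$-continuously on the path $(z_1,\ldots,z_\ell)\in\Pathiti(w)$. This forces reliance on the explicit $\Ac_\eta$-parametrization of Remark \ref{rem:explicitcontraction} rather than on an abstract contractibility argument. Alternatively, as indicated in Remark \ref{rem:exactsequence}, once $\cL_n^{[H^r]}[w]$ is identified as a Hilbert manifold whose path map $\pathiti$ has contractible fibers over the contractible base $\Pathiti(w)$, contractibility also follows from Palais's theorem together with the long exact sequence of homotopy groups.
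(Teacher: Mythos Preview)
There is a genuine gap in your $H_0$ stage. You propose to contract each arc $\Gamma|_{[t_{i-1},t_i]}$ independently inside the space $\cLjojo(z_{i-1};z_i)\cap\cL_n^{[H^r]}(z_{i-1};z_i)$, which fixes only the endpoint \emph{values} $z_{i-1},z_i$. But for the concatenated curve $H_0(s,\Gamma)$ to remain in $\cL_n^{[H^r]}$ one needs the $(r-1)$-jets of consecutive arcs to match at each welding time $t_i$ (this is exactly the criterion recalled in the paper just after Equation~\eqref{equation:jetHk}). At $s=0$ the jets match because $\Gamma$ is globally $H^r$, but once you deform adjacent arcs independently inside spaces that constrain only the endpoint values, there is no reason the jets continue to match for $s\in(0,1]$; in particular the ``base points'' you land on at $s=1$ will typically have incompatible jets, so the concatenation is not in $\cL_n^{[H^r]}[w]$ at all. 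The same issue undermines your closing alternative: to show that the fibres of the ordinary map $\pathiti$ are contractible in the $H^r$ setting you must again control jets, which you have not done.

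This is precisely the obstacle the paper's proof is designed around. The paper introduces the \emph{enhanced path} $\pathiti^{[H^r]}(\Gamma)=(\jet^{r-1}(\Gamma;t_1),\ldots,\jet^{r-1}(\Gamma;t_\ell))$, so that the base of the fibration is $\Pathiti^{[H^r]}(w)=\Pathiti(w)\times\RR^{\ell n(r-1)}$ (still contractible), and then proves a new Lemma~\ref{lemma:convexHk}: the convex spaces $\cL_{n,\conv}^{[H^r]}(\bj_j;\bj_{j+1})$ with \emph{prescribed jets} at both endpoints are contractible connected components. With jets fixed at the endpoints, concatenation automatically stays $H^r$, and the fibre over a given enhanced path is a product of such contractible pieces. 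Your three-stage outline can be repaired by replacing the arc spaces in $H_0$ by $\cL_{n,\conv}^{[H^r]}(\bj_{i-1};\bj_i)$ and enlarging the target of $H_1\!\ast\!H_2$ to the enhanced path space; but that is exactly the paper's argument, and it requires the additional Lemma~\ref{lemma:convexHk}, which your proposal does not supply.
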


Fact \ref{fact:BST} is used in Section \ref{subsect:Hilbert} 
to prove that the inclusions 
$\cL_n^{[H^r]}(z_0;z_1)\hookrightarrow\cL_n^{[H^1]}(z_0;z_1)$ 
are weak homotopy equivalences. 
It is tempting to want to use the same fact to prove that 
$\cL_n^{[H^r]}[w]\subset\cL_n^{[H^1]}[w]$ is also a weak homotopy 
equivalence. 
This proof is not valid at this point, however, since we do not know 
that these sets are manifolds. 
We shall prove below that they are indeed topological manifolds, 
but this is not sufficient to apply Fact \ref{fact:BST}.

The idea is to imitate the proof of Lemma \ref{lemma:cLnw}. 
Notice that some of the building blocks, i.e., 
Lemmas \ref{lemma:accessible} and \ref{lemma:Path}, 
%are unrelated to the topology of the spaces $\cL_n$. 
apply just the same to the case $r>2$. 
We need to state and prove results leading to 
an alternate version of Lemma \ref{lemma:path}. 
In order to do so, we introduce the concepts of $r$-\emph{jet} 
and \emph{enhanced path}. 
Given $r\in\NN^\ast$, $J\subset\RR$ an interval, 
$t\in J$ and $\Gamma:J\to\Spin_{n+1}$ 
locally convex of class $C^r$, 
we define the $r$-\emph{jet} of $\Gamma$ at $t$, 
$\jet^r(\Gamma;t)\in \Spin_{n+1}\times\RR^{nr}$, by 
\begin{equation}
\label{equation:jetHk} 
\jet^r(\Gamma;t)=(\Gamma(t), 
\kappa_1(t), \ldots, \kappa_1^{(r-1)}(t),\ldots,
\kappa_n(t),\ldots,\kappa_n^{(r-1)}(t)),% \in \Spin_{n+1}\times\RR^{nk},
\end{equation}
where the real functions 
$\kappa_i(t)=\kappa_i(\Gamma;t)$ are 
described in Equation \eqref{equation:locallyconvex}
in the Introduction.
% Subsection \ref{subsect:Hilbert}, 
%Equation \eqref{equation:xis}. 
Notice that, given $\Gamma_0:[t_0,t_1]\to\Spin_{n+1}$ and 
$\Gamma_1:[t_1,t_2]\to\Spin_{n+1}$, 
$\Gamma_0\in\cL_n^{[H^r]}(z_0;z_1)$, 
$\Gamma_1\in\cL_n^{[H^r]}(z_1;z_2)$ 
(up to reparameterizations),  
the concatenation 
$\Gamma_0\ast\Gamma_1:[t_0,t_2]\to\Spin_{n+1}$ 
belongs to $\cL_n^{[H^r]}(z_0;z_2)$ 
(up to a reparameterization) if and only if 
$\jet^{r-1}(\Gamma_0;t_1)=\jet^{r-1}(\Gamma_1;t_1)$.
For $\sing(\Gamma)=\{t_1<\cdots<t_\ell\}\subset (0,1)$, 
define the \emph{enhanced path} of $\Gamma$ as 
\[\pathiti^{[H^r]}(\Gamma)=
(\jet^{r-1}(t_1),\ldots, \jet^{r-1}(t_\ell)).\] 
Notice that $\pathiti(\Gamma)$ is obtained from  
$\pathiti^{[H^r]}(\Gamma)$ by 
coordinate-wise application of the cartesian product projection 
$\Pi:\Spin_{n+1}\times\RR^{n(r-1)}\to\Spin_{n+1}$. 
%(not to be confused with the universal covering map 
%$\Pi:\Spin_{n+1}\to\SO_{n+1}$; the common symbol 
%$\Pi$ for projections is used once again below 
%with a different meaning.) 

Given $(z_1,\ldots, z_\ell)\in\Pathiti(w)$, we consider 
in the proof of Lemma \ref{lemma:path} the contractible 
set of curves 
$\cLjojo(q_j^{-1}z_j; q_j^{-1}z_{j+1})
\approx\cL_{n, \conv}^{[H^1]}(z_j;z_{j+1})$  
(the bijection is obtained by multiplication by $q_j\in\Quat_{n+1}$). 
Given jets $\bj_j\in\Spin_{n+1}\times\RR^{n(r-1)}$ with $\Pi(\bj_j)=z_j$, 
we are now interested in the subsets 
\[\cL_n^{[H^r]}(\bj_j;\bj_{j+1})\subset  
\cL_n^{[H^r]}(z_j;\bj_{j+1}), 
\cL_n^{[H^r]}(\bj_j;z_{j+1})
\subset\cL_n^{[H^r]}(z_j;z_{j+1}), \]
where, for instance, $\Gamma:[t_j,t_{j+1}]\to\Spin_{n+1}$, 
$\Gamma\in\cL_n^{[H^r]}(z_j;z_{j+1})$ 
(up to a reparameterization) 
belongs to $\cL_n^{[H^r]}(z_j;\bj_{j+1})$ if and only if 
$\jet^{r-1}(\Gamma;t_{j+1})=\bj_{j+1}$. 
In each case we consider the corresponding subset of convex curves: 
thus, for instance, $\cL_{n,\conv}^{[H^r]}(z_j;\bj_{j+1})$ 
is the subset of convex curves in $\cL_{n}^{[H^r]}(z_j;\bj_{j+1})$. 

\begin{lemma} 
\label{lemma:convexHk}
The subsets below are contractible connected components: 
\begin{gather*}
\cL_{n,\conv}^{[H^r]}(\bj_j;\bj_{j+1})
\subset\cL_{n}^{[H^r]}(\bj_j;\bj_{j+1}), \qquad
\cL_{n,\conv}^{[H^r]}(z_j;\bj_{j+1})
\subset\cL_{n}^{[H^r]}(z_j;\bj_{j+1}), \\ 
\cL_{n,\conv}^{[H^r]}(\bj_j;z_{j+1})
\subset\cL_n^{[H^r]}(\bj_j;z_{j+1}), \qquad  
\cL_{n, \conv}^{[H^r]}(z_j;z_{j+1})
\subset\cL_n^{[H^r]}(z_j;z_{j+1}). 
\end{gather*} 
\end{lemma}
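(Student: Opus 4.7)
The plan is to establish the clopen property via Theorem~\ref{theo:Hausdorff}, then obtain contractibility by combining Lemma~\ref{lemma:convex2} with the homotopy equivalences of Lemma~\ref{lemma:spaces}, supplemented by a fibration argument for the jet-constrained cases.

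For the clopen property in all four cases, each ambient space has the endpoints $z_j$ and $z_{j+1}$ fixed, so Theorem~\ref{theo:Hausdorff} applies after composition with the continuous inclusion $\cL_n^{[H^r]} \hookrightarrow \cL_n^{[H^1]}$ provided by Lemma~\ref{lemma:spaces}. The convex subspace is exactly the preimage of the isolated point $\emptyset \in \cH([0,1])$, hence clopen, and therefore a union of connected components of the respective ambient space.

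For the case $\cL_{n,\conv}^{[H^r]}(z_j;z_{j+1})$ without jet constraints, I would transport contractibility from the $H^1$ setting. By Lemma~\ref{lemma:spaces} the inclusion $\cL_n^{[H^r]}(z_j;z_{j+1}) \hookrightarrow \cL_n^{[H^1]}(z_j;z_{j+1})$ is homotopic to a homeomorphism. Since convexity is an intrinsic property of the curve and the convex subspaces are clopen on both sides, this equivalence restricts to a homotopy equivalence of the convex subspaces: a homotopy equivalence bijectively maps connected components, a component of $\cL_{n,\conv}^{[H^r]}$ must map into the clopen subset $\cL_{n,\conv}^{[H^1]}$, and conversely via the homotopy inverse. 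Contractibility then follows from Lemma~\ref{lemma:convex2}.

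For the jet-constrained cases, I would exhibit the jet-evaluation map $\cL_{n,\conv}^{[H^r]}(z_j;z_{j+1}) \to \RR^{2n(r-1)}$, $\Gamma \mapsto (\jet^{r-1}(\Gamma;t_j),\jet^{r-1}(\Gamma;t_{j+1}))$ (with the fixed $\Spin_{n+1}$ components suppressed), as a locally trivial fibration. This would combine Lemma~\ref{lemma:submersion} (to modify the final jet freely) with perturbations of curvatures supported in interior subintervals (to adjust the initial jet independently), upgraded to a fibration by Palais's Theorem~15 as invoked in Remark~\ref{rem:exactsequence}. The fibers are precisely the jet-constrained convex spaces; the total space is contractible by the previous paragraph; and the image is convex (hence contractible), because any convex combination of realized jet pairs can be realized by a suitable convex combination of curvature functions, corrected via the submersion property of the monodromy to restore the prescribed endpoint $z_{j+1}$. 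The long exact sequence of homotopy groups then forces each fiber to be contractible. For the single-jet spaces $\cL_{n,\conv}^{[H^r]}(\bj_j;z_{j+1})$ and $\cL_{n,\conv}^{[H^r]}(z_j;\bj_{j+1})$, the same argument applies with $\RR^{2n(r-1)}$ replaced by $\RR^{n(r-1)}$.

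The main obstacle is controlling the convexity of the curves along the convex combination of curvatures needed to prove convexity of the image of the jet-evaluation map. Straight-line interpolation in $(\kappa_1,\ldots,\kappa_n) \in (H^{r-1})^n$ preserves positivity and respects the jet constraints (which are linear in $\kappa$), but global convexity of the resulting curve is an open rather than linear condition and can in principle fail. I would circumvent this by a subdivision argument: given two convex curves $\Gamma_0,\Gamma_1$ with prescribed jets, subdivide $[t_j,t_{j+1}]$ into subintervals short enough that both $\Gamma_0$ and $\Gamma_1$ restrict to short (strictly convex, in the sense of Subsection~\ref{subsect:convex}) arcs on each piece, and apply the explicit contraction of Remark~\ref{rem:explicitcontraction} piecewise, with the jets at the subdivision points held fixed throughout. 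This reduces the problem to contracting spaces of short convex arcs with prescribed endpoint jets, which is essentially a finite-dimensional computation in the triangular coordinates of Section~\ref{sect:acctriangle}.
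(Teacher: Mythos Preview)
Your treatment of the clopen property and of the unconstrained case $\cL_{n,\conv}^{[H^r]}(z_j;z_{j+1})$ matches the paper's approach.

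For the jet-constrained cases your fibration argument has a real gap. The subdivision workaround in your last paragraph does not close it: the contraction of Remark~\ref{rem:explicitcontraction} is built for $H^1$, does not respect $H^r$ regularity across subdivision points, and its projective transformations and reparametrizations do not preserve prescribed jets. Moreover, ``short convex arcs with prescribed endpoint jets'' is not a finite-dimensional problem; it is an instance of the very statement you are proving, on a smaller interval, so no genuine reduction has occurred.

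The paper bypasses all of this by observing that the jet constraint is \emph{affine linear} in the $\xi$-coordinates of Equation~\eqref{equation:xis}. Fix a smooth $\tilde\xi$ with the prescribed $(r-2)$-jet at $t_j$, let $\mathbf{B}_1 \subset \mathbf{H}^{r,n} = (H^{r-1})^n$ be the closed linear subspace of functions with vanishing $(r-2)$-jet at $t_j$, and note that $\mathbf{B}_1 \hookrightarrow \mathbf{H}^{1,n} = (L^2)^n$ is bounded, injective, and has dense image. Fact~\ref{fact:BST} applied to this inclusion and to the translated submanifold $M_2 = \cL_n^{[H^1]}(z_j;z_{j+1}) - \tilde\xi$ yields that $M_1 \hookrightarrow M_2$ is a homotopy equivalence; after translating back by $\tilde\xi$, this is precisely the inclusion $\cL_n^{[H^r]}(\bj_j;z_{j+1}) \hookrightarrow \cL_n^{[H^1]}(z_j;z_{j+1})$. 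Contractibility of the convex component then follows from Lemma~\ref{lemma:convex2} exactly as in the unconstrained case. The two-sided and the other one-sided jet cases are handled identically, imposing the vanishing-jet condition at one or both endpoints.
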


\begin{proof}
In this proof we use the following notation:
$\mathbf{H}^{r,n} = (H^{r-1}([t_j,t_{j+1}];\RR))^n$. 
We also consider $\cL_n^{[H^r]}(z_j;z_{j+1})$ 
as a submanifold of $\mathbf{H}^{r,n}$, 
given by the functions $\xi_1,\ldots,\xi_n$ of Equation 
\eqref{equation:xis}. 
%of Subsection  
%\ref{subsect:Hilbert}. 
For $\mathbf{B}_1=\mathbf{H}^{r,n}$ 
and 
$\mathbf{B}_2=\mathbf{H}^{1,n}$, 
Fact \ref{fact:BST} implies that the inclusion 
\[ \cL_{n}^{[H^r]}(z_j;z_{j+1})
\subset\cL_n^{[H^1]}(z_j;z_{j+1})\]
is a homotopy equivalence 
between Hilbert manifolds.  
Since $\cL_{n, \conv}^{[H^1]}(z_j;z_{j+1})$ is a contractible 
connected component, so is $\cL_{n, \conv}^{[H^r]}(z_j;z_{j+1})$, 
as we already pointed out in 
the proof of Lemma \ref{lemma:convex2}. 

Now, we deal with the inclusion 
$\cL_{n,\conv}^{[H^r]}(\bj_j;z_{j+1})
\subset\cL_{n}^{[H^r]}(\bj_j;z_{j+1})$.  
%We consider the curves in these sets parameterized on the interval $[t_j,t_{j+1}]$. 
We want to use Fact \ref{fact:BST}. 
We have however not a linear subspace, 
but an affine subspace. 
This requires a minor adaptation.  
Take
\[ \mathbf{B}_1=
\left\{
(\xi_1,\ldots, \xi_n)\in\mathbf{H}^{r,n} 
\,\left\vert\, 
\forall i, \in\nmesmo\,
\left(\xi_i(t_j)=\xi_i'(t_j)=\cdots=\xi_i^{(r-2)}(t_j)=0\right.\right)
\right\}  \]
and $\mathbf{B}_2=\mathbf{H}^{1,n}$.
Take $\tilde\kappa\in C^{\infty}([t_j,t_{j+1}], \RR^n)$ with
$\bj_j=(z_j,\tilde\kappa_1(t_j),\ldots,\tilde\kappa_n^{(r-2)}(t_j))$ 
and the corresponding $\tilde\xi=(\tilde\xi_1,\ldots,\tilde\xi_n)$. 
%Interpret the set $\cL_n^{[H^1]}(z_j;z_{j+1})$ as a submanifold of $\mathbf{B}_2$
% $\mathbf{H}^{0,n}$ (as in Lemma \ref{lemma:spaces}): 
Consider the translated submanifold 
$M_2=\cL_n^{[H^1]}(z_j;z_{j+1}) - \tilde\xi \subset\mathbf{B}_2$. 
Apply Fact \ref{fact:BST} in order to obtain the desired conclusion. 
The other cases are similar.
\end{proof}

\begin{proof}[Proof of Lemma \ref{lemma:cLnwHk}]
Let $\Pathiti^{[H^r]}(w)=\Pathiti(w)\times\RR^{\ell nr}$ 
be the contractible set of accessible enhanced paths, 
defined in the obvious manner (here, $\ell=\ell(w)$). 
Lemma \ref{lemma:convexHk} shows that the set 
of $H^r$ locally convex curves with 
a prescribed enhanced path is contractible. 
Thus, the map from $\cL_n^{[H^r]}[w]$ to $\Pathiti^{[H^r]}(w)$ 
taking a curve $\Gamma$ to its enhanced path is a fibration 
with a fiber homemorphic to the separable Hilbert space and 
base space homeomorphic to an Euclidean space, 
proving our claim.
\end{proof}

%Let $M_0$ be a (finite or infinite dimensional) manifold
%and $M_1 \subseteq M_0$:
%the subset $M_1$ is called
%a (globally) \emph{collared topological submanifold of codimension $d$}
%if and only if there exists an open set $\hat A_0$,
%$M_1 \subseteq \hat A_0 \subseteq M_0$,
%which is a \emph{tubular neighborhood} of $M_1$
%(based on \cite{Brown}).
%We say that $\hat A_0$ as above is a tubular neighborhood if
%there exist
%an open ball $B \subseteq \RR^d$, $0 \in B$,
%a continuous projection $\Pi: \hat A_0 \to M_1 \subseteq \hat A_0$
%and a continuous map $\hat F: \hat A_0 \to B$
%such that the map $(\Pi,\hat F): \hat A_0 \to M_1 \times B$ is a homeomorphism.
%% Recall that $\Pi$ being a projection implies $\Pi\circ\Pi = \Pi$.
%% Compact oriented surfaces of class $C^2$ contained in $M_0 = \RR^3$
%Embedded $C^2$ submanifolds of Hilbert spaces 
%with finite codimension 
%are collared topological submanifolds:
%in this case $\Pi$ can be taken to be the normal projection. 

We now present a smooth example of tubular neighborhood, used in the proof of 
Theorem \ref{theo:stratification}. 
We use the notation of Remark \ref{rem:collared}. 

%\begin{lemma}
%\label{lemma:submanifold}
%Consider $w \in \Word_n$.
%The subset
%\( \cL_n[w] \subset \cL_n(\acute\eta\hat w\acute\eta) \)
%is a collared topological submanifold of codimension $\dim(w)$. 
%Similarly, for $k\in\NN$, $k\geq n$, the subset 
%\( \cL_n^{[C^k]}[w] \subset \cL_n^{[C^k]}(\acute\eta\hat w\acute\eta) \)
%is also a collared topological submanifold of codimension $\dim(w)$. 
%\end{lemma}

%It turns out that $\cL_n[w]$ is not a submanifold of class $C^1$, 
%but, given $k'$, $\cL_n^{[C^k]}[w]$ is a manifold of class $C^{k'}$ 
%for sufficiently large $k$; 
%see Remark \ref{rem:cLnwC1} below.
%Before proving Lemma \ref{lemma:submanifold}
%we state and prove a preliminary result.

\begin{rem}
\label{rem:pathcoordinates}
For all $z_0\in\widetilde\B^+_{n+1}$, the open set $\cU_{z_0}$ is a 
smooth tubular neighborhood in $\Spin_{n+1}$ 
of the signed Bruhat cell $\Bru_{z_0}$ (with $B = \RR^k$). 
We denote its smooth projection map by 
$\Pi_{z_0}: \cU_{z_0} \to \Bru_{z_0}\subseteq\cU_{z_0}$. 
Write $z_0=q\acute\sigma$ for $q\in\Quat_{n+1}$ and 
$\sigma\in S_{n+1}$. 
If $\sigma\neq\eta$, we have that $\Bru_{z_0}$ is a 
signed Bruhat cell of $\Spin_{n+1}$ with positive codimension 
$k=\inv(\eta)-\inv(\sigma)$. 
In this case, there is a smooth submersion  
$f_{z_0} = (f_{z_0,1}, \ldots, f_{z_0,k}): \cU_{z_0} \to \RR^k$ 
satisfying the following conditions: 
\begin{enumerate}
\item\label{item:submanifold}
{$\Bru_{z_0}=f_{z_0}^{-1}(0)=\{z\in\cU_{z_0}\,\vert\,f_{z_0}(z)=0\}$;}
\item\label{item:tubular}
{$(\Pi_{z_0},f_{z_0}):\cU_{z_0}\to\Bru_{z_0}\times\,\RR^k$ 
is a smooth diffeomorphism;} 
\item\label{item:transverse}{Given a locally convex curve 
$\Gamma:(-\epsilon,\epsilon)\to\cU_{z_0}$, 
if $\Gamma$ is differentiable in $t$, then $(f_{z_0,k}\circ\Gamma)'(t)>0$. 
%In particular, the continuous composite function 
%$f_{z_0,k}\circ\Gamma:(-\epsilon,\epsilon)\to\RR$ 
%is strictly increasing.
}
\end{enumerate}

The pair of maps $(\Pi_{z_0},f_{z_0})$ is explicitly constructed 
in Theorem 2 of \cite{Goulart-Saldanha0} 
using a triangular system of coordinates 
(see also Remark 6.7 therein).
\end{rem}

For the proof of Theorem \ref{theo:stratification} below, 
we also need the following technical result.
Notice that the proof uses the concept of positivity
(see Subsection \ref{subsect:Bruhat}).

\begin{lemma}
\label{lemma:singleletter}
Consider $\sigma \in S_{n+1}$, $\sigma \ne \eta$
and $z_0 = q \acute\sigma \in \tilde \B_{n+1}^{+}$, $q \in \Quat_{n+1}$.
If $\Gamma: [-\epsilon,\epsilon] \to \cU_{z_0}$ is locally convex
and there exists $t_1 \in [-\epsilon,\epsilon]$ with 
$\Gamma(t_1) \in \Bru_{z_0}$ then $\sing(\Gamma) = \{t_1\}$.
\end{lemma}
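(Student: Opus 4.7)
The plan is to exploit the path-coordinate function $f_{z_0,k}:\cU_{z_0}\to\RR$ provided by Remark \ref{rem:pathcoordinates}, whose distinguishing property \ref{item:transverse} is that $(f_{z_0,k}\circ\tilde\Gamma)'(t)>0$ at every point of differentiability of any locally convex curve $\tilde\Gamma$ in $\cU_{z_0}$. First I would set $g = f_{z_0,k}\circ\Gamma:[-\epsilon,\epsilon]\to\RR$; since $\Gamma$ is absolutely continuous and differentiable almost everywhere, so is $g$, with $g'>0$ almost everywhere, hence $g$ is strictly increasing on $[-\epsilon,\epsilon]$. Property \ref{item:submanifold} gives $\Bru_{z_0}=f_{z_0}^{-1}(0)\subseteq f_{z_0,k}^{-1}(0)$, so from $\Gamma(t_1)\in\Bru_{z_0}$ we obtain $g(t_1)=0$; thus $t_1$ is the unique zero of $g$, and in particular $\Gamma(t)\notin\Bru_{z_0}$ for $t\ne t_1$.

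To upgrade this from ``$\Gamma$ hits $\Bru_{z_0}$ only at $t_1$'' to the desired ``$\Gamma$ hits $\Sing_{n+1}$ only at $t_1$'', I would establish the structural inclusion $\cU_{z_0}\cap\Sing_{n+1}\subseteq f_{z_0,k}^{-1}(0)$. Combined with the strict monotonicity of $g$ this yields immediately $\Gamma^{-1}[\Sing_{n+1}]\subseteq g^{-1}(0)=\{t_1\}$, and the reverse containment is trivial because $\Bru_{z_0}\subset\Sing_{n+1}$. I would verify this inclusion by unpacking the explicit triangular-coordinate construction of $f_{z_0}$ referenced in Theorem \ref{theo:pathcoordinates} of \cite{Goulart-Saldanha0}, where the last component $f_{z_0,k}$ is built precisely as a defining equation for the codimension-one component of the singular locus passing through $\Bru_{z_0}$ (essentially the vanishing of one leading minor read off in the $\bL_{z_0}$ coordinate system). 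If a direct inspection of that construction turns out to be awkward, a backup is a maximality argument: by Theorem \ref{theo:chopadvance} of \cite{Goulart-Saldanha0} there exists $\delta>0$ with $\Gamma[(t_1,t_1+\delta)]\subseteq\Bru_{\adv(z_0)}$ and $\Gamma[(t_1-\delta,t_1)]\subseteq\Bru_{\chop(z_0)}$, both signed open Bruhat cells; letting $T_+$ be the supremum of $T\in(t_1,\epsilon]$ such that $\Gamma[(t_1,T)]\subseteq\Bru_{\adv(z_0)}$, a failure at $T_+<\epsilon$ would place $\Gamma(T_+)$ in $\cU_{z_0}\cap\Sing_{n+1}$ with $g(T_+)>0$, forcing us back to the same structural fact to conclude. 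The symmetric argument handles the interval $t<t_1$.

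The main obstacle is precisely the inclusion $\cU_{z_0}\cap\Sing_{n+1}\subseteq f_{z_0,k}^{-1}(0)$: once in hand, Lemma \ref{lemma:singleletter} is a direct dynamical consequence of the strict monotonicity of $g$, and may indeed be regarded as a geometric reformulation of that structural property for a single locally convex arc confined to $\cU_{z_0}$. Any subtlety will lie in checking, from the triangular-coordinate formulas, that the last defining minor chosen for $f_{z_0,k}$ vanishes not only on $\Bru_{z_0}$ but on every non-open Bruhat cell meeting $\cU_{z_0}$; this is a finite combinatorial check once the recipe for $f_{z_0}$ in \cite{Goulart-Saldanha0} is made explicit.
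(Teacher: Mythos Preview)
Your approach has a genuine gap: the structural inclusion $\cU_{z_0}\cap\Sing_{n+1}\subseteq f_{z_0,k}^{-1}(0)$ on which everything hinges is \emph{false} in general. The level set $f_{z_0,k}^{-1}(0)$ is a smooth hypersurface (it is one coordinate of the diffeomorphism $(\Pi_{z_0},f_{z_0})$), whereas $\Sing_{n+1}\cap\cU_{z_0}$ typically has several irreducible components and cannot sit inside a single smooth hypersurface. A concrete counterexample: take $n=2$, $\sigma=e$, $z_0=1$. Under $\bL:\cU_1\to\Lo_3^1$ with coordinates $(x,y,z)$ as in Example~\ref{example:Ac}, the condition $\bQ(L)\in\Bru_\eta$ is exactly $z\ne0$ and $xy-z\ne0$, so $\Sing_3\cap\cU_1$ corresponds to $\{z=0\}\cup\{xy=z\}$, two distinct surfaces meeting transversally along lines. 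On the other hand, the monotonicity condition $(f_{1,3}\circ\Gamma)'>0$ forces $df_{1,3}$ to be strictly positive on the cone spanned by $L\fl_1,L\fl_2$ at every $L$; a function such as $x+y$ meets this requirement, and its zero set $\{x+y=0\}$ misses, e.g., the singular point $(1,1,0)$. No choice of $f_{1,3}$ with the required monotonicity can absorb both branches. Your backup argument via $T_+$ runs into the same obstruction, as you yourself note.

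The paper's proof avoids any global scalar ``time function'' on $\cU_{z_0}$ and instead transports the problem into the totally positive picture. One first applies a projective transformation taking $z_0$ into $q\,\bQ[\Pos_\sigma]\subset\cU_q$, then uses the diagonal dilation $\Gamma\mapsto\Gamma^\lambda$ (which preserves $\sing$ and $\iti$) to squeeze the entire arc into a small neighborhood whose image under $\phi$ lies in $\cU_q$. In those triangular coordinates the curve passes through $\Pos_\sigma$ at $t_1$, and Lemma~\ref{lemma:transition} of \cite{Goulart-Saldanha0} gives $\tilde\Gamma_L(t)\in\Pos_\eta$ for $t>t_1$ (hence $\tilde\Gamma(t)\in\Bru_\eta$), with the mirror argument via $\Neg_\eta$ for $t<t_1$. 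The point is that $\Pos_\eta$ and $\Neg_\eta$ are genuinely semialgebraic cones adapted to the convex flow, not level sets of a single smooth function; this is what handles all branches of $\Sing_{n+1}$ at once.
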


\begin{proof}
Consider a projective transformation $\phi$ for which
$\phi(z_0) \in q \bQ[\Pos_\sigma] \subset \cU_q$.
By continuity, there exists an open set $A \subset \cU_{z_0}$, $z_0 \in A$,
such that $\phi[A] \subset \cU_q$.
We may furthermore assume that $z \in \phi[A] \cap \Bru_\sigma$
implies $z \in q \bQ[\Pos_\sigma]$.

Consider $\Gamma$ as in the statement.
Apply triangular coordinates to $\cU_{z_0}$ 
to define the convex curve
$\Gamma_L: [-\epsilon,\epsilon] \to \Lo_{n+1}^{1}$,
$\Gamma(t) = z_0 \bQ(\Gamma_L(t))$.
For $\lambda \in [1,+\infty)$, consider 
the projective transform  
\[ \Gamma_L^\lambda(t) = 
\diag(1,\lambda^{-1},\ldots,\lambda^{-n})
\Gamma_L(t) 
\diag(1,\lambda,\ldots,\lambda^{n}) \]
and the locally convex curve 
$\Gamma^\lambda(t) = z_0 \bQ(\Gamma_L^\lambda(t))$.
Notice that $\Gamma^\lambda: [-\epsilon,\epsilon] \to  \cU_{z_0}$
%is locally convex and 
satisfies $\sing(\Gamma^\lambda) = \sing(\Gamma)$
and $\iti(\Gamma^\lambda) = \iti(\Gamma)$.
Given $t_0 \in [-\epsilon,\epsilon]$, we have
$\lim_{\lambda \to +\infty} \Gamma^\lambda(t_0) = z_0$;
by compactness, there exists $\lambda_0$ such that
$\Gamma^{\lambda_0}[[-\epsilon,\epsilon]] \subset A$.
The curve $\tilde\Gamma = \phi\circ\Gamma^{\lambda_0}$ therefore
admits triangular coordinates
$\tilde\Gamma_L: [-\epsilon,\epsilon] \to \Lo_{n+1}^{1}$,
$\tilde\Gamma(t) = q \bQ[\tilde\Gamma_L(t)]$.
We have $\tilde\Gamma(t_1) \in \phi[A] \cap \Bru_\sigma$
and therefore $\tilde\Gamma_L(t_1) \in \Pos_\sigma$.
From Lemma 5.7 of \cite{Goulart-Saldanha0},
$t > t_1$ implies $\tilde\Gamma_L(t) \in \Pos_\eta$,
i.e., $\sing(\tilde\Gamma) \cap (t_1,\epsilon] = \emptyset$.
Thus $t_1$ is the last element of $\sing(\Gamma)$. 
A similar argument using the sets $\Neg_{\ast}$ instead of 
$\Pos_{\ast}$ proves that $t_1$ is also the first element of 
$\sing(\Gamma)$.
\end{proof}

% TO BE FIXED

% We have $q \bQ[\Pos_\sigma] \subset \Bru_{z_0} = q \Bru_{\acute\sigma}$.
% We may assume that
% $\Gamma(\epsilon) \in q \bQ[\Pos_\eta]$:
% apply a projective transformation if needed.
% Define $\Gamma_L$ in a maximal interval $J \subseteq [-\epsilon,\epsilon]$,
% $\epsilon \in J$, by $\Gamma(t) = q \bQ(\Gamma_L(t))$.
% We have $t_1 \in J$ and $\Gamma_L(t_1) \in \Pos_\sigma$.
% % $\Gamma(t_1) = z_1 = q \bQ(L_1)$, $L_1 \in \Pos_\sigma$
% % Assume that $A = q \bQ[A_L]$, $A_L \subseteq \Lo_{n+1}^{1}$.
% % Assume furthermore that for all $L \in A_L$,
% % if $\bQ(L) \in \Bru_{\sigma}$ then $L \in \Pos_\sigma$.
% Thus, it follows from Lemma \ref{lemma:transition}
% that $t > t_1$ implies $\Gamma(t) \in q \bQ[\Pos_\eta] \subset \Bru_{\eta}$.

\begin{proof}[Proof of Theorem \ref{theo:stratification}]
% We now define open sets $\cA_w \subset \cL_n$, $\cL_n[w] \subset \cA_w$.
% These will of course be used in the proof below.
The nonemptiness and contractibility of $\cL_n^{[H^r]}[w]$ 
is already established by previous lemmas in this section. 
It remains to be shown that, 
for $r\neq 2$,  
$\cL_n^{[H^r]}[w]$ is a 
globally collared topological submanifold of $\cL_n^{[H^r]}(q_w)$, 
$q_w=\acute\eta\hat w \acute\eta\in\Quat_{n+1}$, 
with codimension $\dim(w)$; 
also that, if $r>2$, then $\cL_n^{[H^r]}[w]$ is in fact an 
embedded submanifold of differentiability class $C^{r-1}$. 
%We omit the superscript $\ast=[H^r]$ throughout 
%the proof, since the arguments are essentially the same for all $k$,  
%with some technical difficulties removed in the case $k>1$. 

For $w = \sigma_1\cdots\sigma_\ell = (\sigma_1, \ldots, \sigma_\ell)$
and $2j \in \ZZ \cap [0,2\ell+2]$,
set $B(w,j) \in \widetilde \B_{n+1}^{+}$ 
as in Equation \eqref{eq:Bwj} above;
in particular, $B(w,\ell+1) = q_w = \acute\eta\hat w\acute\eta \in \Quat_{n+1}$.
We first define an open subset 
$\cA_w^\sharp \subset \cL_n^{[H^r]}(q_w) \times (0,1)$.
A pair $(\Gamma,\tilde\epsilon)$ belongs to $\cA_w^\sharp$ if there
exist $0 = \tilde t_0 < \tilde t_1 < \cdots
< \tilde t_\ell < \tilde t_{\ell+1} = 1$ such that:
\begin{enumerate}[label=(\roman*)]
\item{For each $i$, $\tilde t_{i+1} - \tilde t_i > 8\tilde\epsilon$.}
\item\label{item:Bwi}{Each arc $\Gamma|_{[\tilde t_{i}-2\tilde\epsilon,
\tilde t_{i}+2\tilde\epsilon]}$ is convex,
with image in $\cU_{B(w,i)}$.
In particular, for $\tilde z_i = \Gamma(\tilde t_i)$, we have
$\tilde z_i \in \cU_{B(w,i)}$. }
\item{Each arc $\Gamma|_{[\tilde t_{i}+\tilde\epsilon,
\tilde t_{i+1}-\tilde\epsilon]}$ is convex,
with image in $\cU_{B(w,i+\frac12)} = \Bru_{B(w,i+\frac12)}$.}
\item{For $f_{i,k_i}: \cU_{B(w,i)} \to \RR$ and $k_i$
as in Remark \ref{rem:pathcoordinates}, 
%Theorem \ref{theo:pathcoordinates} of \cite{Goulart-Saldanha0}, 
we have $f_{i,k_i}(\tilde z_i) = 0$.}
\item{Let $\Pi_{B(w,i)}: \cU_{B(w,i)} \to \Bru_{B(w,i)} \subset \cU_{B(w,i)}$
be the smooth projection of Remark \ref{rem:pathcoordinates}. 
Set $\check z_i = \Pi_{B(w,i)}(\tilde z_i)$.
There exist convex arcs in $\cU_{B(w,i)}$
from $\Gamma(\tilde t_{i}-\frac{\tilde\epsilon}{2})$ to $\check z_i$
and from $\check z_i$ to $\Gamma(\tilde t_{i}+\frac{\tilde\epsilon}{2})$.}
\end{enumerate}
For $\Gamma \in  \cL_n^{[H^r]}(q_w)$, set
\( J_\Gamma = \{ \tilde\epsilon \in (0,1) \;|\; 
(\Gamma,\tilde\epsilon) \in \cA_w^\sharp \} \);
clearly, $J_\Gamma$ is either an open interval or empty;
for $\Gamma \in \cL_n^{[H^r]}[w]$,
$J_\Gamma$ is an interval of the form $J_\Gamma = (0,\epsilon)$
for some $\epsilon > 0$.
Set 
\[ \cA_w = \{ \Gamma \in \cL_n^{[H^r]}(q_w) \;|\; J_\Gamma \ne \emptyset \}
\subseteq \cL_n^{[H^r]}(q_w), \]
an open subset.
For $\Gamma \in \cA_w$, the times $\tilde t_i$ are well-defined
and, from Condition \ref{item:transverse} of 
Remark \ref{rem:pathcoordinates}, 
the functions $\Gamma \mapsto \tilde t_i$ are 
of class $C^{r-1}$. %in the case $\ast=[H^r]$. 
For $\Gamma \in \cA_w$, select
$\epsilon = \epsilon_\Gamma \in J_\Gamma$;
from the $C^{r-1}$ regularity of $\tilde t_i$ and 
several uses of Lemma 5.5  
of \cite{Goulart-Saldanha0},
the function $\Gamma \mapsto \epsilon$
can be taken to be of class $C^{r-1}$.
% Is $\sup(J_\Gamma)$ continuous? $C^{r-1}$? Do we care? (Yes, no, no)
% For instance, checking that the pair $(\Gamma, \tilde\epsilon)$ 
% satisfies item \ref{item:Bwi} above involves looking for the smallest 
% $t>\tilde t_i$ such that, for $\Gamma_L$ defined by 
% $\Gamma(t)=B(w,i+\frac12)\bQ(\Gamma_L(t))$, we have 
% $\Gamma_L(t)\in\partial\Neg_\eta$: 
% Lemmas \ref{lemma:positivespeed} and \ref{lemma:transition} 
% of \cite{Goulart-Saldanha0} 
% imply that this $t$ is a continuous function of $\Gamma$.
% Again, probably not of differentiability class $C^{k-1}$.
We have $\tilde z_i = \Gamma(\tilde t_i) \in \cU_{B(w,i)}$; 
we define $\tilde z_i^{-} = \Gamma(\tilde t_{i}-\frac{\epsilon}{2})$,
$\tilde z_i^{+} = \Gamma(\tilde t_{i}+\frac{\epsilon}{2})$,
and $\check z_i = \Pi_{B(w,i)}(\tilde z_i)$.  
Also, the map 
$\Gamma \mapsto (\tilde z_i, \tilde z_i^{-}, \tilde z_i^{+}, \check z_i)$
is of class $C^{r-1}$.
For $\Gamma \in \cA_w$, $\epsilon = \epsilon_\Gamma$,
$\tilde t_i$, $\tilde z_i$ and $\check z_i$
as above we therefore have the following properties:
\begin{enumerate}[label=(\alph*)]
\item{For each $i$, $\tilde t_{i+1} - \tilde t_i \ge 8\epsilon$.}
\item{Each arc $\Gamma|_{[\tilde t_{i}-\epsilon,
\tilde t_{i}+\epsilon]}$ is convex,
with image in $\cU_{B(w,i)}$; also, $\tilde z_i \in \cU_{B(w,i)}$. }
\item{Each arc $\Gamma|_{[\tilde t_{i}+\epsilon,
\tilde t_{i+1}-\epsilon]}$ is convex,
with image in $\cU_{B(w,i+\frac12)}$.}
\item{For $f_{i,k_i}: \cU_{B(w,i)} \to \RR$ 
as in Remark \ref{rem:pathcoordinates},
we have $f_{i,k_i}(\tilde z_i) = 0$.}
\item{There exist convex arcs in $\cU_{B(w,i)}$
from $\tilde z_i^{-}$ to $\check z_i$
and from $\check z_i$ to $\tilde z_{i}^{+}$.}
\end{enumerate}

Set $d=\dim(w)=d_1+\cdots+d_\ell$, where $d_i = k_i - 1$. 
Define $F: \cA_w \to \RR^d$, 
$F(\Gamma)=(F_1(\Gamma), \ldots, F_\ell(\Gamma))$, 
where $F_i: \cA_w \to \RR^{d_i}$, 
$F_i(\Gamma) = (f_{i,1}(\tilde z_i), \ldots, f_{i,d_i}(\tilde z_i))$.
The coordinate functions $f_{i,j}$ above are 
the first $d_i$ coordinate functions of the smooth submersion 
$f_{B(w,i)}=(f_{i,1},\ldots,f_{i,k_i}):\cU_{B(w,i)}\to\RR^{k_i}$  
%$j\in\llbracket k_i\rrbracket$, $i\in\llbracket\ell\rrbracket$, 
of Remark \ref{rem:pathcoordinates}. 

We claim that, for $\Gamma \in \cA_w$,
$F(\Gamma) = 0$ if and only if $\Gamma \in \cL_n^{[H^r]}[w]$.

Indeed, if $\Gamma \in \cA_w$ and $F(\Gamma) = 0$ we have
$\tilde z_i = \Gamma(\tilde t_i) \in \Bru_{\eta\sigma_i}$.
We already know that 
$\{\tilde t_1 < \cdots < \tilde t_\ell \} \subseteq
\sing(\Gamma) \subset 
\bigcup_i (\tilde t_i-\epsilon,\tilde t_i+\epsilon)$.
By Lemma \ref{lemma:singleletter} we have
$\sing(\Gamma) = \{ \tilde t_1 < \cdots <  \tilde t_{\ell} \}$
and therefore $\iti(\Gamma) = w$.

For $r>2$, the Regular Value Theorem applied to the 
submersion $F$ shows that $\cL_n^{[H^r]}[w]$ is an 
embedded submanifold of $\cL_n^{[H^r]}(q_w)$ 
of class $C^{r-1}$ and codimension $d=\dim(w)$, as claimed. 
The set $\cA_w$ is a promising candidate for a tubular neighborhood:
all we would have to do is to construct a projection.
We prefer, however, to use the normal bundle.
Indeed, there is a well-defined tubular neighborhood, 
%of $\cL_n^{[H^r]}[w]$, 
i.e., a $C^{r-1}$ embedding 
of the normal bundle 
$N\cL_n^{[H^r]}[w]\hookrightarrow\cA_w$ 
that extends the inclusion of $\cL_n^{[H^r]}[w]$ 
(regarded as the zero section of its normal bundle) 
in $\cA_w$. 
%For $r>2$, 
This tubular neighborhood is foliated by normal sections. 

We now deal with the case $r=1$. 
We construct a projection
$\Pi: \cA_w \to \cL_n^{[H^1]}[w] \subset \cA_w$.
Given $\Gamma \in \cA_w$, the curve $\check\Gamma = \Pi(\Gamma)$,
$\check\Gamma: [0,1] \to \Spin_{n+1}$,
will coincide with $\Gamma$ except in the intervals
$[\tilde t_i -\frac{\epsilon}{2}, \tilde t_i + \frac{\epsilon}{2}]$
and will satisfy $\check\Gamma(\tilde t_i) = \check z_i$.
The restrictions
$\check\Gamma|_{[\tilde t_i - \frac{\epsilon}{2}, \tilde t_i]}$ and
$\check\Gamma|_{[\tilde t_i, \tilde t_i+\frac{\epsilon}{2}]}$ 
will be convex arcs contained in $\cU_{B(w,i)}$
joining $\tilde z_i^{-}$ to $\check z_i$
and $\check z_i$ to $\tilde z_i^{+}$, respectively.
These two convex arcs are obtained from the convex arcs
$\Gamma|_{[\tilde t_i - \frac{\epsilon}{2}, \tilde t_i]}$ and
$\Gamma|_{[\tilde t_i, \tilde t_i+\frac{\epsilon}{2}]}$ 
by projective transformations as follows. 
We have 
$\tilde z_i\,,\check z_i\in\tilde z_i^-\Bru_{\acute\eta}$; 
take $U\in\Up^1_{n+1}$ such that 
$((\tilde z_i^-)^{-1}\tilde z_i)^U= 
(\tilde z_i^-)^{-1}\check z_i$ and set 
$\check\Gamma|_{[\tilde t_i - \frac{\epsilon}{2}, \tilde t_i]}
=\tilde z_i^-((\tilde z_i^-)^{-1}\Gamma)^U|_{[\tilde t_i - \frac{\epsilon}{2}, \tilde t_i]}$. 
The convex arc 
$\check\Gamma|_{[\tilde t_i, \tilde t_i+\frac{\epsilon}{2}]}$ 
is obtained likewise. 

% Notice that $\epsilon_{\check\Gamma} = \epsilon_{\Gamma}$.
The function $\Gamma \to \epsilon$ can be constructed
so as to satisfy $\epsilon_{\check\Gamma} = \epsilon_{\Gamma}$.
If $\Gamma \in \cL_n[w]$ we have $\check z_i = \tilde z_i$
and therefore $\check\Gamma = \Gamma$.

We now have a continuous map
$(\Pi,F): \cA_w \to \cL_n^{[H^1]}[w] \times \RR^d$;
let $\cB_w \subseteq \cL_n^{[H^1]}[w] \times \RR^d$ be its image.
We construct the inverse map $\Phi: \cB_w \to \cA_w$;
in the process we see that the set
$\cB_w$ is an open neighborhood of $\cL_n^{[H^1]}[w] \times \{0\}$.
Indeed, given $\check\Gamma \in \cL_n^{[H^1]}[w]$
construct $\epsilon$, $\tilde t_i$,
$\check z_i = \check\Gamma(\tilde t_i)$
and $\tilde z_i^{\pm} = \check\Gamma(\tilde t_i \pm \frac{\epsilon}{2})$
as above.
Given $\bfx = (\bfx_1,\ldots,\bfx_\ell) \in \RR^{d_1+\cdots+d_\ell}$,
there exist unique $\tilde z_i \in \cU_{B(w,i)}$ 
with $\Pi_{B(w,i)}(\tilde z_i) = \check z_i$
and $f_{B(w,i)}(\tilde z_i) = \bfx_i$.
If there exist convex arcs contained in $\cU_{B(w,i)}$
from $\tilde z_i^{-}$ to $\tilde z_i$ and
from $\tilde z_i$ to $\tilde z_i^{+}$ then
$(\check\Gamma,\bfx) \in \cB_w$ and the curve
$\tilde\Gamma = \Phi(\check\Gamma,\bfx)$ is constructed as before.
More precisely,
$\tilde\Gamma$ coincides with $\check\Gamma$ except
in the intervals
$[\tilde t_i-\frac{\epsilon}{2},\tilde t_i+\frac{\epsilon}{2}]$.
The convex arcs 
$\tilde\Gamma|_{[\tilde t_i - \frac{\epsilon}{2}, \tilde t_i]}$ and
$\tilde\Gamma|_{[\tilde t_i, \tilde t_i+\frac{\epsilon}{2}]}$ 
are obtained from the arcs
$\check\Gamma|_{[\tilde t_i - \frac{\epsilon}{2}, \tilde t_i]}$ and
$\check\Gamma|_{[\tilde t_i, \tilde t_i+\frac{\epsilon}{2}]}$ 
by projective transformations.

In the proof of Theorem 2 
of \cite{Goulart-Saldanha0}, it was shown that 
there exists a natural diffeomorphism between 
$\cU_{B(w,i)}$ and the cartesian product 
$\Up_{\eta\sigma_i} \times \Lo_{\sigma_i^{-1}}$
of certain affine spaces of triangular matrices 
defined in Equation (4)  
of \cite{Goulart-Saldanha0}. 
%The inverse diffeomorphism takes $(U_1,L_2)$ to $\bQ(U_1 B(w,i) L_2)$ (see the proof of Lemma \ref{lemma:pathcoordinates}).
Endow the affine spaces $\Up_{\eta\sigma_i}$ and $\Lo_{\sigma_i^{-1}}$ with the natural Euclidean metrics  
coming from the sets of free coordinates 
(i.e., entries not obligatorily equal to $0$ or $1$).  
Use the above diffeomorphism to endow  $\cU_{B(w,i)}$
with a flat Euclidean metric.
Similarly, endow the cartesian product
$\cU^3_{B(w,i)} =  \cU_{B(w,i)} \times \cU_{B(w,i)} \times \cU_{B(w,i)}$
with a flat Euclidean metric.
Let $\cW_i \subset \cU^3_{B(w,i)}$
be the open set of triples $(z_i^{-},z_i,z_i^{+})$ such that
there exist convex arcs contained in $\cU_{B(w,i)}$ 
from $z_i^{-}$ to $z_i$ and from $z_i$ to $z_i^{+}$.
Let $\delta_i: \cW_i \to (0,+\infty)$ be the continuous function
taking a triple  $(z_i^{-},z_i,z_i^{+}) \in \cW_i$
to one half of the distance
(in the flat Euclidean metric constructed above)
from the complement $\cU^3_{B(w,i)} \smallsetminus \cW_i$, i.e.,
$\delta_i(z_i^{-},z_i,z_i^{+}) =
\frac12 d((z_i^{-},z_i,z_i^{+}), \cU^3_{B(w,i)} \smallsetminus \cW_i)$.
Given $\check\Gamma \in \cL_n^{[H^1]}[w]$,
define $\delta(\check\Gamma) =
\min_i \delta_i(\tilde z_i^{-},\check z_i,\tilde z_i^{+})$
where, as above, $\tilde z_i^{\pm} = \Gamma(\tilde t_i\pm \frac{\epsilon}{2})$.
Notice that $\delta: \cL_n^{[H^1]}[w] \to (0,+\infty)$ is continuous
% (see for instance the proof of Lemma \ref{lemma:distance})
%in the appendix)
and that if $|\bfx| \le \delta(\Gamma)$ then
$(\Gamma,\bfx) \in \cB_w$ (by construction).

Let $\BB^d \subset \DD^d \subset \RR^d$ be the open and closed balls of radius $1$, respectively.
Define $\hat\Phi: \cL_n^{[H^1]}[w] \times \DD^d \to \cA_w$ by
$\hat\Phi(\Gamma,\bfx) = \Phi(\Gamma,\delta(\Gamma) \bfx)$.
Let $\hat\cA_w = \hat\Phi\left[\cL_n^{[H^1]}[w] \times \BB^d\right] \subset \cA_w$
and define $\hat F: \hat\cA_w \to \BB^d$
so that $(\Pi,\hat F) = \hat\Phi^{-1}: 
\hat\cA_w \to \cL_n^{[H^1]}[w] \times \BB^d$.
This completes the construction of
the tubular neighborhood of $\cL_n^{[H^1]}[w]$.
\end{proof}

\section{Transversal sections}
\label{sect:transversal}

%The proof that $\cL_n[w] \subset \cL_n$ is a topological submanifold
%(see Lemma \ref{lemma:submanifold} in \cite{Goulart-Saldanha1})
The proof of Theorem \ref{theo:stratification} 
implicitly gives us (topologically) transversal sections.
We now construct \emph{explicit} transversal sections  
to $\cL_n^{[H^1]}[w]$ in $\cL_n^{[H^1]}$. 
%in either $\cL_n^{[H^1]}$ or $\cL_n^{[H^{K(n)}]}$, 
%where 
%$K(n)=\left\lfloor \left(\frac{n+1}{2}\right)^2 \right\rfloor =
%\max\{ \mult_j(\sigma); \sigma \in S_{n+1}, j \in \nmesmo \}$ 
%is as in Theorem \ref{theo:multHk}. 
%We start with the case $k=1$, $w = (\sigma)$;
%in this case we write $\cL_n[(\sigma)] = \cL_n^{[H^1]}[w]$.
We omit the superscript $[H^1]$ throughout this section. 
The construction roughly corresponds to going back
to Theorem \ref{theo:stratification}, 
then to Theorem 2
and Remark 6.7 
of \cite{Goulart-Saldanha0},
then to Lemma 5.5 
and Remark 5.6 
of \cite{Goulart-Saldanha0}, 
and following the steps.
A key difference is that strictly following 
the proof of Theorem \ref{theo:stratification} 
given in Section \ref{sect:paths} 
yields curves which fail to be smooth 
precisely at the times $\tilde t_i$ 
(defined in the mentioned proof);
the curves produced by our construction in this section
are smooth (indeed algebraic) in a neighborhood of $\tilde t_i$ 
(though they are not globally smooth). 
%still violate smoothness elsewhere.
%Lemma \ref{lemma:smoothen}
A standard procedure can be applied to 
smoothen out the tranversal section 
$\phi:\DD^d\to\cL_n^{[H^1]}$ 
to $\cL_n^{[H^1]}[w]$ just constructed. 
The result is, for each $r\geq 3$, a smooth map 
$\tilde\phi:\DD^d\to\cL_n^{[H^r]}$ such that 
$\tilde\Phi:\DD^d\times[0,1]\to\Spin_{n+1}$, 
$\tilde\Phi(\bfx,t)=\tilde\phi(\bfx)(t)$, 
coincides with $\Phi(\bfx,t)=\phi(\bfx)(t)$ (and hence is algebraic) 
in $\DD^d\times(\cup_i (\tilde t_i-\delta,\tilde t_i+\delta))$ 
for some $\delta>0$. 
Of course, $\tilde\phi$ is tranversal to $\cL_n^{[H^r]}[w]$. 
We first present the construction as an algorithm,
then provide examples. 

%Lemma \ref{lemma:smoothen} near the end of this section 
%provides a uniform smoothening of the locally convex curves 
%in the transversal sections just constructed, 
%yielding the desired explicit transversal sections in $\cL_n^{[H^r]}$ 
%for $k>1$. 

Consider $\sigma \in S_{n+1}$, $\sigma \ne e$, $\rho = \eta\sigma$ and
$d = \dim(\sigma) = \inv(\sigma) - 1$.
Consider $z_0 = q \acute\eta \acute\sigma \in \widetilde \B_{n+1}^{+}$, 
$q \in \Quat_{n+1}$,
so that $\chop(z_0) = q \acute\eta$
and $\adv(z_0) = q \acute\eta \hat\sigma$.
Let $Q_0 = \Pi(z_0) \in \B_{n+1}^{+}\subset\SO_{n+1}$.
% Let $P_\sigma, H \in \B_{n+1}$ be permutation matrices
% as in Section \ref{sect:itineraries};
% let 
% $$ E = \diag(\det(HP_\sigma), 1, \cdots, 1) \in \Diag_{n+1}, \quad
% Q_0 = EHP_\alpha \in \B_{n+1}^{+}. $$
We first construct an explicit transversal section
$\psi: \RR^{d+1} \to \SO_{n+1}$
to the Bruhat cell $\Bru_{Q_0}=\Pi[\Bru_{z_0}]\subset \SO_{n+1}$
passing through $Q_0 = \psi(0)$
(compare with  
Remarks 5.6 and 6.7 
of \cite{Goulart-Saldanha0}).
First we define a matrix
$\tilde M \in (\RR[x_1,\ldots,x_{d+1}])^{(n+1)\times (n+1)}$ 
with polynomial entries in the variables $x_l$, $1 \le l \le d+1$.
For $i \in \nmaisum$,
set $(\tilde M)_{i,i^\rho} = (Q_0)_{i,i^\rho} = \pm 1$.
There are $d+1$ zero entries in $Q_0$ which are simultaneously
below a nonzero entry and to the left of a nonzero entry:
these are the pairs $(i,j)$ for which
$j < i^\rho$ and $j^{\rho^{-1}} < i$.
Assign to each such position $(i,j)$ an integer $l$ from $1$ to $d+1$ 
in the same order you would read or write them on a page
(top to bottom and left to right).
For each such position $(i,j)$,
set $(\tilde M)_{i,j} = (Q_0)_{i,i^\rho} x_l$.
The other entries of $\tilde M$ are set to $0$:
this defines the desired matrix
$\tilde M \in (\RR[x_1,\ldots,x_{d+1}])^{(n+1)\times (n+1)}$
or, equivalently, a smooth map 
$\psi_L: \RR^{d+1} \to \GL^{+}_{n+1}$
where $\psi_L(\bfx)$ is obtained
by evaluating $\tilde M$ at $\bfx \in \RR^{d+1}$.
As an example, 
the matrices below correspond 
respectively to $n = 2$, $\sigma_0 =  [321]=aba$ ($d = 2$), 
and $n = 3$, $\sigma_1 =  [3142]=acb$ ($d = 2$):
\[ \tilde M_0 = \begin{pmatrix}
1 & 0 & 0 \\
x_1 & 1 & 0 \\
x_2 & x_3 & 1 \end{pmatrix}; \qquad
\tilde M_1 = \begin{pmatrix}
0 & -1 & 0 & 0 \\
0 & -x_1 & 0 &- 1 \\
-1 & 0 & 0 & 0 \\
x_2 & x_3 & 1 & 0 \end{pmatrix} \]
% here $x = x_1$, $y = x_2$ and $z = x_3$.
% in order to compute $\psi(\bfx)$ compute $M = \tilde\psi(\bfx)$,
% factor $M = QR$ where $Q \in \SO_{n+1}$, $R \in \Up^{+}$
% and set $\psi(\bfx) = Q$;
(we take $q=1$ in both examples).
Notice that the map $\psi_L$ is a smooth diffeomorphism 
from $\RR^{d+1}$ to
$Q_0 \Lo_{\sigma^{-1}} \subset \GL^{+}_{n+1}$ 
(see Equation (4) of 
\cite{Goulart-Saldanha0} for the definition 
of $\Lo_{\sigma^{-1}}$). 
% Finally, do a $QR$ factorization:
Recall that we denote by $\bQ:\GL^+_{n+1}\to\SO_{n+1}$  
the map that takes $M$ to the orthogonal part $\bQ(M)$ 
in the $QR$ decomposition 
$M=\bQ(M)R$, $R\in\Up^+_{n+1}$.
The smooth algebraic map $\psi_A = \bQ \circ \psi_L: \RR^{d+1} \to \SO_{n+1}$
is the desired transversal section to the Bruhat cell $\Bru_{Q_0}$.
% Here of course $\bQ: \GL^{+}_{n+1} \to \SO_{n+1}$ is
% defined by $X = \bQ(X) R$, $R \in \Up_{n+1}^{+}$;
% the previously considered map $\bQ: \Lo_{n+1}^{1} \to \SO_{n+1}$
% is a restriction.
In order to define $\psi: \RR^{d+1} \to \Spin_{n+1}$,
$\psi_A = \Pi \circ \psi$,
lift the map $\psi_A$ starting at $\psi(0) = z_0$.

% We now construct $\phi: \RR^d \to \cL_n$,
% the transversal section to $\cL_n[\sigma] \subset \cL_n$.
Consider $\RR^d \subset \RR^{d+1}$ defined by $x_{d+1} = 0$.
Let $\fn = \sum_i \fl_i$ be the lower triangular nilpotent matrix
whose only nonzero entries are $\fn_{j+1,j} = 1$
(see Equation \eqref{equation:nfhLfh}).
For each $\bfx \in \RR^{d}$, define a curve
$\phi_L(\bfx;\cdot): \RR \to Q_0 \Lo_{n+1}^{1} \subset \GL^{+}_{n+1}$
by the IVP
\[ \frac{\partial}{\partial t}\phi_L(\bfx;t) = \phi_L(\bfx;t) \fn, \quad
\phi_L(\bfx;0) = \psi_L(\bfx), \]
so that $\phi_L(\bfx;t) = \psi_L(\bfx) \exp(t\fn)$.
Since entries of $\phi_L(\bfx;t)$ are polynomials in $\bfx$ and $t$,
we may equivalently consider the matrix
$M \in (\RR[\bfx;t])^{(n+1)\times (n+1)}$,
$M(\bfx,t) = \phi_L(\bfx;t)$,
whose entries are polynomials in $\bfx$ and $t$,
of degree at most $n$ in the variable $t$ and satisfying
\[ (M)_{i,j+1} = \frac{\partial}{\partial t} (M)_{i,j}. \]
As an example, the two matrices below again correspond to
$n = 2$,  $\sigma_0 = [321]=aba$  and $n = 3$,  
$\sigma_1 =  [3142]=acb$ (and $q=1$ in both cases):
\begin{equation}
\label{equation:M0M1}
M_0 = \begin{pmatrix}
1 & 0 & 0 \\
t+x_1 & 1 & 0 \\
\frac{t^2}{2}+x_2 & t & 1
\end{pmatrix}; \qquad
M_1 = \begin{pmatrix}
-t & -1 & 0 & 0 \\
-\frac{t^3}{6} - x_1 t & -\frac{t^2}{2} - x_1 & - t & 1 \\
-1 & 0 & 0 & 0 \\
\frac{t^2}{2} + x_2 & t & 1 & 0
\end{pmatrix}.
\end{equation}
Notice that, given $\bfx \in \RR^d$,
the map $Q_0^{-1} \phi_L(\bfx;\cdot): \RR \to \Lo_{n+1}^{1}$
is a smooth (indeed algebraic) convex curve.
Let $\Gamma_{\bfx}: \RR \to \Spin_{n+1}$
be the locally convex curve defined by
$\Gamma_{\bfx}(t) = \bQ(\phi_L(\bfx,t))$, $\Gamma_{\bfx}(0) = \psi(\bfx)$.
Clearly, $\Gamma_0(0) = z_0$,
$\Gamma_0(t) \in \Bru_{\chop(z_0)}$ for $t < 0$ and
$\Gamma_0(t) \in \Bru_{\adv(z_0)}$ for $t > 0$.

We now construct the desired transversal surface
$\phi: \DD^d \to \cL_n$.
Choose $z_0$ above such that $\chop(z_0) = \acute\eta$
and $\adv(z_0) = \acute\eta\hat\sigma$;
%(so that $q = 1$).
let $q_1 = \acute\eta\hat\sigma\acute\eta \in \Quat_{n+1}$.
For sufficiently small $r \in (0,\frac{\pi}{4})$,
there exists a convex arc contained in $\Bru_{\chop(z_0)}$
going from $\exp(r \fh)$ to $\Gamma_0(-r)$. 
For instance, the reader may use Lemma 6.1 
of \cite{Goulart-Saldanha0} 
and %Remark \ref{rem:projtrans} 
a projective transformation to obtain such a convex arc.
Similarly, for sufficiently small $r \in \left(0,\frac{\pi}{4}\right)$,
there exists a convex arc contained in $\Bru_{\adv(z_0)}$
going from $\Gamma_0(r)$ to $q_1 \exp(-r \fh)$.
Fix such a small $r \in \left(0,\frac{\pi}{4}\right)$.
By continuity, there exists a small $\tilde s > 0$ such that,
if $|\bfx| \le \tilde s$ then there exists a convex arc
contained in $\Bru_{\chop(z_0)}$
going from $\exp(r \fh)$ to $\Gamma_{\bfx}(-r)$.
Similarly, for sufficiently small $\tilde s > 0$, 
if $|\bfx| \le \tilde s$ then there exists a convex arc
contained in $\Bru_{\adv(z_0)}$
going from $\Gamma_{\bfx}(r)$ to $q_1 \exp(-r \fh)$.
Fix such a small $\tilde s > 0$.
%Use Lemma \ref{lemma:convex1} to
We thus define, for each $\bfx \in \DD^d$, convex arcs
$\tilde \phi(\bfx)|_{\left[\frac18,\frac38\right]}$
going from $\exp(r \fh)$ to $\Gamma_{\tilde s\bfx}(-r)$ and
$\tilde \phi(\bfx)|_{\left[\frac58,\frac78\right]}$
going from $\Gamma_{\tilde s\bfx}(r)$ to $q_1 \exp(-r \fh)$.
For $t \in \left[0,\frac18\right]$, set $\tilde\phi(\bfx)(t) = \exp(8rt\fh)$;
for $t \in \left[\frac78,1\right]$, set $\tilde\phi(\bfx)(t) = q_1 \exp(8r(t-1)\fh)$;
for $t \in \left[\frac38,\frac58\right]$,
set $\tilde \phi(\bfx)(t) = 
\Gamma_{\tilde s\bfx}\left(8r\left(t-\frac12\right)\right)$.
Consider now $s \in (0,\tilde s]$ sufficiently small so that,
for all $\bfx \in \DD^d$ with $|\bfx| \le \frac{s}{\tilde s}$,
we have $\tilde\phi(\bfx) \in \hat \cA_\sigma$
(where $\hat \cA_\sigma$ is the open neighborhood 
of $\cL_n[\sigma]$ constructed in the proof of 
Theorem \ref{theo:stratification}).
Define $\phi: \DD^d \to \hat\cA_\sigma \subset \cL_n$ by
$\phi(\bfx) = \tilde\phi(\frac{s}{\tilde s} \bfx)$.

\begin{lemma}
\label{lemma:transsection}
Consider $\sigma \in S_{n+1}$, $\sigma \ne e$, $\dim(\sigma) = d$
and construct the map $\phi: \DD^d \to \cL_n$ as above.
This map is topologically transversal to $\cL_n[\sigma]$,
with a unique intersection at $\bfx = 0 \in \DD^d$.
\end{lemma}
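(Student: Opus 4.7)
The plan is to verify three things: (a) $\phi(0) \in \cL_n[\sigma]$, (b) $\phi(\bfx) \notin \cL_n[\sigma]$ for $\bfx \ne 0$ sufficiently small, and (c) topological transversality of $\phi$ and $\cL_n[\sigma]$ at $\bfx = 0$. Shrinking $s$ further if necessary, we may arrange $\phi[\DD^d] \subset \hat\cA_\sigma$, where $\hat\cA_\sigma$ is the tubular neighborhood of $\cL_n^{[H^1]}[\sigma]$ built in the proof of Theorem \ref{theo:stratification}. Claims (b) and (c) together amount to the assertion that $\hat F \circ \phi: \DD^d \to \BB^d$ is a local homeomorphism at $0$, which (since $\hat F$ differs from $F$ only by the continuous positive factor $\delta(\Pi(\cdot))$) is equivalent to the same statement for $G := F \circ \phi$; note also that $\phi(\bfx) \in \cL_n[\sigma]$ iff $G(\bfx) = 0$.

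For (a), the boundary arcs $\phi(0)|_{[0,1/8]}$ and $\phi(0)|_{[7/8,1]}$ are segments of $\exp(\cdot\,\fh)$ lying in $\Bru_{\acute\eta}$, while the intermediate convex arcs $\phi(0)|_{[1/8,3/8]}$ and $\phi(0)|_{[5/8,7/8]}$ were chosen with images inside the open Bruhat cells $\Bru_{\chop(z_0)} = \Bru_{\acute\eta}$ and $\Bru_{\adv(z_0)} = \Bru_{\acute\eta\hat\sigma}$, respectively; none of these contribute to the singular set. The middle arc $\phi(0)|_{[3/8,5/8]}$ is an affine reparameterization of $\Gamma_0|_{[-r,r]}$, contained in $\cU_{z_0}$ for small $r$, with $\phi(0)(\tfrac12) = \Gamma_0(0) = z_0 \in \Bru_{z_0} \subset \Bru_{\eta\sigma}$. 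Applying Lemma \ref{lemma:singleletter} to this middle arc gives $\sing(\phi(0)) = \{\tfrac12\}$, so $\iti(\phi(0)) = (\sigma)$ and $\phi(0) \in \cL_n[\sigma]$.

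For (b) and (c), we compute $G$ explicitly. From Remark \ref{rem:pathcoordinates}, $f_{z_0} = (f_{z_0,1},\ldots,f_{z_0,d+1}): \cU_{z_0} \to \RR^{d+1}$ is a submersion cutting out $\Bru_{z_0}$, and its last component has positive derivative along locally convex curves. By the explicit form of $f_{z_0}$ in Remark \ref{rem:explicitpathcoordinates} of \cite{Goulart-Saldanha0}, the polynomial parameters $x_1,\ldots,x_{d+1}$ of $\tilde M$ are precisely the free coordinates of the $\Lo_{\sigma^{-1}}$-factor appearing in the triangular factorization $\cU_{z_0} \leftrightarrow \Up_{\eta\sigma} \times \Lo_{\sigma^{-1}}$, so that
\[
f_{z_0}(\bQ(\psi_L(\bfy))) = \bfy, \qquad \bfy \in \RR^{d+1} \text{ small}.
\]
Consider then the smooth map $\Psi: (\bfx,t) \mapsto f_{z_0}(\bQ(\psi_L(\bfx)\exp(t\fn)))$, defined for $(\bfx,t) \in \RR^d \times \RR$ near the origin. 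The Jacobian of $\Psi$ at $(0,0)$ is upper-triangular in the natural basis: the partials $\partial/\partial x_i$ reproduce the identity on the first $d$ coordinates (by the above identity specialized to $t=0$), and the partial $\partial/\partial t$ acts on the $(d+1)$-st component by $(f_{z_0,d+1} \circ \Gamma_0)'(0) > 0$, since $\Gamma_0$ is locally convex at $0$. Hence $\Psi$ is a local diffeomorphism at $(0,0)$, and the implicit function theorem yields a smooth $\tau(\bfx)$ with $\tau(0)=0$ solving $f_{z_0,d+1}(\bQ(\psi_L(\bfx)\exp(\tau(\bfx)\fn))) = 0$. This $\tau$ recovers (up to the affine rescaling $t = 8r(\tilde t_1 - \tfrac12)$ and the scaling $\bfx \leadsto (s/\tilde s)\bfx$) the implicit time $\tilde t_1$ from the construction of $F$; the remaining $d$ components give $G(\bfx) = (f_{z_0,1},\ldots,f_{z_0,d})(\phi(\bfx)(\tilde t_1))$, which is therefore a local diffeomorphism at $\bfx = 0$. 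This proves (c), and (b) follows because $G(\bfx) = 0$ iff $\bfx = 0$.

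The main technical obstacle is the precise identification between the combinatorial placement of the variables $x_l$ in $\tilde M$ (the positions $(i,j)$ with $j < i^\rho$ and $j^{\rho^{-1}} < i$) and the coordinates read off by $f_{z_0}$ on the $\Lo_{\sigma^{-1}}$-factor of $\bL_{z_0}$. Modulo unwinding Remark \ref{rem:explicitpathcoordinates} of \cite{Goulart-Saldanha0} to produce the matching identity $f_{z_0}(\bQ(\psi_L(\bfy))) = \bfy$, the rest is a routine application of the implicit function theorem together with Lemma \ref{lemma:singleletter}.
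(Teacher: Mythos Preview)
Your proof is correct and follows the same approach as the paper: compose $\phi$ with the tubular-neighborhood map $\hat F$ from Theorem~\ref{theo:stratification} and check that the result is a local homeomorphism at the origin. The one simplification you miss is that the coordinate identity you flag as the ``main technical obstacle'' is in fact \emph{exact} by construction: $\psi_L$ is built to parameterize precisely the affine factor $Q_0\Lo_{\sigma^{-1}}$ whose free entries are the very coordinates that $f_{z_0}$ reads off (Remark~\ref{rem:explicitpathcoordinates} of \cite{Goulart-Saldanha0}), so $f_{z_0}(\psi(\bfy)) = \bfy$ holds on the nose. Consequently $f_{z_0,d+1}(\Gamma_{\bfx}(0)) = 0$ for every $\bfx \in \RR^d$, your implicit function $\tau$ is identically zero, and $F\circ\phi(\bfx) = s\bfx$; this is why the paper can simply say that $\hat F\circ\phi$ is a positive multiple of the identity, bypassing your IFT step entirely.
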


\begin{proof}
% See the construction above.
Uniqueness of intersection follows from 
Theorem 2 
of \cite{Goulart-Saldanha0}.
Topological transversality follows from taking the composition
$\hat F \circ\phi$, where 
$\hat F: \hat \cA_\sigma \to \DD^d \subset \RR^d$
is constructed in the proof of Theorem \ref{theo:stratification} 
in Section \ref{sect:paths}.
% We may assume that $s$ is sufficiently small so that
% $\phi(\bfx) \in \hat \cA_\sigma$ for all $\bfx \in \DD^d$.
% the neighborhood $\cA_\sigma$
% is centered around $\phi(0) \in \cL_n[\sigma]$.
The map $\hat F \circ \phi: \DD^d \to \DD^d$
is a positive multiple of the identity.
\end{proof}

Notice that the maps $\hat F: \hat \cA_\sigma \to \RR^d$ and
$\phi: \DD^d \to \hat\cA_\sigma \subset \cL_n$ consistently provide us
with a transversal orientation to $\cL_n[\sigma]$.
% and therefore to the cells of $\cD_n$.

This completes the construction of a transversal section
to $\cL_n[\sigma_1]$ at the path $(z_1)\in\Pathiti((\sigma_1))$,
$z_1 = q \acute\sigma_1$, $q \in \Quat_{n+1}$.
By applying affine transformations in the interval
and projective transformations in the group $\Spin_{n+1}$,
this defines a map $\phi_1$ taking each $\bfx \in \DD^{d_1}$
($d_1 = \dim(\sigma_1)$) to a convex arc 
$\Gamma_{\bfx}: [t_1 - \epsilon, t_1 + \epsilon] \to \Spin_{n+1}$
with $\sing(\Gamma_{\bfx}) \ne \emptyset$,
$\sing(\Gamma_{\bfx}) \subset
(t_1 - \frac{\epsilon}{2}, t_1 + \frac{\epsilon}{2})$
and satisfying $\iti(\Gamma_{\bfx}) = (\sigma_1)$ if and only if $\bfx = 0$.
We may furthermore assume that 
$\Gamma_{\bfx}(t_1 \pm \epsilon) = z_1 \exp(\pm\epsilon\fh)$
for all $\bfx \in \DD^{d_1}$
and that
$\Gamma_{\bfx}(t) = z_1 \exp((t-t_1)\fh)$ for $\bfx = 0 \in \DD^{d_1}$.

More generally, for any $w = \sigma_1\cdots\sigma_\ell = 
(\sigma_1, \ldots, \sigma_\ell) \in \Word_n$,
for any path $(z_1, \ldots, z_\ell) \in \Pathiti(w)$
and for any set $\{t_1 < \cdots < t_\ell \} \subset (0,1)$, 
we show how to construct
a smooth map $\phi: \DD^d \to \cL_n$, $d = \dim(w)$,
transversal to $\cL_n[w]$ at $\phi(0) \in \cL_n[w]$,
$\pathiti(\phi(0)) = (z_1, \ldots, z_\ell)$,
$\sing(\phi(0)) = \{t_1 < \cdots < t_\ell \}$.
Make the convention $t_0 = 0$, $z_0 = 1$, $t_{\ell+1} = 1$
and $z_{\ell+1} = 
\acute\eta\hat\sigma_1 \cdots \hat\sigma_\ell\acute\eta$.
For each $i\in\llbracket\ell+1\rrbracket$, 
define $q_i \in \Quat_{n+1}$ such that 
$q_i \acute\eta = \adv(z_{i-1}) = \chop(z_i)$.
First, choose $\epsilon > 0$ such that, 
for all $i\in\llbracket\ell+1\rrbracket$, 
$t_{i-1} + \epsilon < t_i - \epsilon$,
$z_{i-1} \exp(\epsilon\fh) \in \Bru_{q_i \acute\eta}$ and
$z_{i} \exp(-\epsilon\fh) \in \Bru_{q_i \acute\eta}$.
Define $L_{i,-}, L_{i,+} \in \Lo_{n+1}^1$ by
$z_{i-1} \exp(\epsilon\fh) = q_i \acute\eta \bQ(L_{i,-})$ and 
$z_{i} \exp(-\epsilon\fh) = q_i \acute\eta \bQ(L_{i,+})$:
by taking $\epsilon$ sufficiently small we may assume that
$L_{i,-} \ll L_{i,+}$.
Choose fixed convex arcs
$\Gamma_{i-\frac12}:
[t_{i-1}+\epsilon, t_i-\epsilon] \to \Bru_{q_i\acute\eta}$ satisfying 
$\Gamma_{i-\frac12}(t_{i-1}+\epsilon) = z_{i-1} \exp(\epsilon\fh)$,
$\Gamma_{i-\frac12}(t_{i}-\epsilon) = z_{i} \exp(-\epsilon\fh)$.
In each interval $[t_i - \epsilon, t_i + \epsilon]$,
define as above a map $\phi_i$ associating to each $\bfx_i \in \DD^{d_i}$
a convex arc
$\Gamma_{i,\bfx_i}: [t_i - \epsilon, t_i + \epsilon] \to \Spin_{n+1}$
with $\Gamma_{i,\bfx_i}(t_i-\epsilon) = z_i \exp(-\epsilon\fh)$,
$\Gamma_{i,\bfx_i}(t_i+\epsilon) = z_i \exp(\epsilon\fh)$.
Set $\Gamma_0: [0,\epsilon] \to \Spin_{n+1}$,
$\Gamma_0(t) = \exp(t \fh)$ and
$\Gamma_{\ell+1}: [1-\epsilon,1] \to \Spin_{n+1}$,
$\Gamma_{\ell+1}(t) = z_{\ell+1} \exp((t-1)\fh)$.
Finally, for $\bfx = (\bfx_1, \ldots, \bfx_\ell)$,
concatenate these arcs to define $\phi(\bfx) = \Gamma_{\bfx} \in \cL_n$:
the map $\phi$ is the desired transversal section.

These explicit transversal sections allow us to explore the 
vincinity of a given stratum $\cL_n[w]$. 
In the examples below, we 
%shall write the first Coxeter generators of $S_{n+1}$ as 
%$a = a_1$, $b = a_2$, $c = a_3$.  
follow the above algorithm: 
we consider a family of convex arcs 
$\phi(\bfx)=\Gamma_{\bfx}\in\cL_n$, 
$\phi(0)=\Gamma_0\in\cL_n[w]$,  
obtained from a matrix with polynomial entries 
$M=M(\bfx,t)=\phi_L(\bfx;t)$.

Let $m_j(\bfx,t)$ be the southwest 
$j\times j$ minor of $M$,   
%\[m_j(\bfx,t)=\det(M_{i_0,i_1})_{
%\substack{n+j-1\leq i_0\leq n+1 \\ 1\leq i_1 \leq j}}\]
% $\Gamma_{\bfx}(t)$.
% Similarly, let $m_j(t)$ be the determinant
% of the corresponding minor of
so that $m_j$ is an explicit %polynomial in rational coefficients
%in $t$ and $\bfx$ (or $x_i$ for $1 \le i \le d$).
element of $\RR[x_1,\ldots,x_d,t]$. 
By construction, $m_j(\bfx,t)$ is a positive multiple of 
the southwest $j\times j$ minor of $\Pi(\Gamma_\bfx(t))$. 
%which we denote by $m_{\Gamma_\bfx;j}(t)$. 
From Theorem 4 of \cite{Goulart-Saldanha0}, 
given $t_\ast\in\RR$ and $\sigma\in S_{n+1}$, 
we have $\Gamma_\bfx(t_\ast)\in\Bru_{\eta\sigma}$ 
if and only if, for each $j\in\nmesmo$, 
$t=t_\ast$ is a zero of $m_j(t)$ of multiplicity $\mult_j(\sigma)$. 
Here, 
$\mult_j(\sigma)=(1^\sigma-1)+\cdots+(j^\sigma-j)$, 
as in Equation \eqref{equation:mult}. 
%is the $j$-\emph{multiplicity} of $\sigma$. 
The permutation $\sigma\in S_{n+1}$ can be readily recovered from 
the list of its multiplicities 
$\mult(\sigma)=(\mult_1(\sigma),\ldots,\mult_n(\sigma))\in\NN^n$:  
we have $j^\sigma=\mult_j(\sigma)-\mult_{j-1}(\sigma)+j$ 
(with the convention $\mult_0=\mult_{n+1}=0$). 
%A visual algorithm due to is explained near the end of 
%Section \ref{sect:symmetric} of \cite{Goulart-Saldanha0}. 
%We see a couple of examples in Figure \ref{fig:mult}. 

Adjacent strata $\cL_n[w']$ 
of codimension $\dim(w')=0$ 
are such that $w'=(a_{i_1},\ldots,a_{i_{\ell'}})$ is 
a string of Coxeter generators. 
In this case, $\Gamma_\bfx\in\cL_n[w']$ if and only if 
the real roots of $m_1(t),\ldots,m_n(t)$ are all simple 
and distinct.  
Multiple or common real roots correspond to 
more profound strata. 
%, the associated itinerary 
%exhibiting strings of generators encased within 
%square brackets, e.g. $a[ba]$ or $[ac]b[ac]$.  
More explicitly, 
if, for some value of $\bfx$ and some $t_i\in\sing(\Gamma_\bfx)$, 
there exists a subset $\{j_1,\ldots,j_k\}\subseteq\nmesmo$ such 
that $m_{j_1}(t_i)=\cdots=m_{j_k}(t_i)=0$, then the 
corresponding letter $\sigma_i$ in the itinerary 
$\iti(\Gamma_\bfx)=(\sigma_1,\ldots,\sigma_\ell)$ 
has reduced words involving all the generators 
$a_{j_1},\ldots,a_{j_k}$. 
The set of $\bfx=(x_1,\ldots,x_d)$ for which a given profound letter 
occurs is a subset of the zero locus of discriminants and resultants 
of the polynomials $m_j$. 
Let 
%To identify profound strata, look for the zero loci of the resultants 
\begin{gather*}
\label{equation:discres}
d_{j}(\bfx) =\operatorname{discrim}_t(m_j(\bfx,t))\in\RR[\bfx], 
\quad j\in\nmesmo; \\
r_{i,j}(\bfx) =
\operatorname{res}_t(m_{i}(\bfx,t),m_{j}(\bfx,t))\in\RR[\bfx], 
\quad i, j\in\nmesmo, \quad i< j.
\end{gather*}
Thus, for instance, if a letter $[ab]$ occurs in the itinerary 
of $\Gamma_\bfx$, then $d_1(\bfx)=r_{12}(\bfx)=0$; 
we shall see other examples below. 
%In general, substituting the relations $d_j(\bfx)=0$ and $r_{i,j}(\bfx)=0$ back into the $m_j(\bfx,t)$ 
%and computing their real roots $t_\ast$ (keeping track of their 
%multiplicities), one arrives at 
%$\iti(\Gamma_\bfx)$.

\begin{example}
\label{example:transversalsectionaba}

In our first example, $n = 2$, $w=(\sigma)$, 
$\sigma = [321] = aba$ 
(see matrix $M_0$ in Equation \eqref{equation:M0M1}), 
we have
\[ m_1 = \frac{t^2}{2} + x_2, \;
m_2 = \frac{t^2}{2} + x_1t - x_2, \;
d_1=-2x_2, \, d_2=x_1^2+2x_2, \;
r_{1,2}=-\frac{d_1d_2}4.
\]
Thus, $m_1(t)$ has two simple real roots 
$t = \pm\sqrt{-2x_2}$ if $x_2 < 0$ and
$m_2(t)$ has two simple real roots $t = -x_1\pm\sqrt{x_1^2+2x_2}$ if
$x_2 > -\frac{x_1^2}{2}$.
Thus, if $x_2 > 0$ the itinerary of $\Gamma_\bfx$ is $bb$ and
if $x_2 <  -\frac{x_1^2}{2}$ the itinerary is $aa$.
If $x_1 < 0$ (resp. $x_1 > 0$) and $ -\frac{x_1^2}{2} < x_2 < 0$ 
the itinerary is $abab$ 
(resp. $baba$). 
These itineraries correspond to adjacent strata 
of codimension zero; 
more profound strata occur for $x_2=0$ or $x_1^2+2x_2=0$. 
If $x_1<0$ (resp. $x_1 > 0$) and $x_2=0$, 
%we have $d1=0$ and 
the itinerary is $[ab]b$ (resp. $b[ab]$). 
If $x_1<0$ (resp. $x_1 > 0$) and $x_1^2+2x_2=0$, 
%we have $d_2=0$ and 
the itinerary is $a[ba]$ (resp. $[ba]a$).
%we therefore have $ a[ba], [ba]a, b[ab], [ab]b \tok [aba]$ 
%for both partial orders. 
For instance, let $\bfx=(x_1,x_2)$ with 
$x_1 > 0$ and $x_1^2+2x_2=0$. 
Then, $m_1(t)$ has two simple roots at $t=\pm x_1$ 
and $d_2=0$, so that $m_2(t)$ has a double root at $t=-x_1$. 
Therefore, $\sing(\Gamma_\bfx)=\{-x_1,x_1\}$, 
$w'=\iti(\Gamma_\bfx)=(\sigma_1,\sigma_2)$ and, 
by Theorem 4 of \cite{Goulart-Saldanha0}, 
$\mult(\sigma_1)=(1,2)=\mult([ba])$ and
$\mult(\sigma_2)=(1,0)=\mult(a)$. 
Thus, $w'=[ba]a$. The other cases are similar. 
The reader should compare these results, 
summarized in Figure \ref{fig:transversalsectionaba}, 
with Example \ref{example:aba} and Figure \ref{fig:aba} 
in the introduction. 
Notice that, the more profound the stratum, 
the less generic are the curves therein. 
% (see also Figure \ref{fig:ababcb} below). 
\end{example}

\begin{figure}[ht]
\centering
\begin{tikzpicture}[scale=0.8]
\begin{axis}[
xmin=-1, xmax=1, 
ymin=-1, ymax=1,
axis x line = center, 
axis y line = center,
xtick = \empty,
ytick = \empty,
xlabel = {$x_1$},
ylabel = {$x_2$},
legend style = {nodes=right},
legend pos = north east,
clip mode = individual, 
%style = thick
]
\addplot[red!80!black, line width=0.4mm, domain=-1:0.95] {0.01};
\node[left, line width=0.4mm, red!80!black] at (-1,0) {$x_2=0$};
\addplot[red!80!black, line width=0.4mm, samples=200, domain=-1:1] {-x*x/2};
\node[below, red!80!black] at (1,-0.5) {$x_1^2+2x_2=0$};
%\node[below left, black] at (0,0) {$0$};
\node[above left, red] at (0,0) {$[aba]$};
\node[above, red!80!black] at (-0.6,0) {$[ab]b$};
\node[above, red!80!black] at (0.5,0) {$b[ab]$};
\node[below, red!80!black] at (-0.5,-0.125) {$a[ba]$};
\node[below, red!80!black] at (0.5,-0.125) {$[ba]a$};
\node[left,black] at (0,-0.5) {$aa$};
\node[left,black] at (0,0.5) {$bb$};
\node[left, black] at (-0.7,-0.125) {$abab$};
\node[right, black] at (0.7,-0.125) {$baba$};
\end{axis}
\end{tikzpicture}
\begin{tikzpicture}[scale=0.7]
\begin{axis}[
xmin=-1, xmax=1, 
ymin=-1/4, ymax=1,
axis x line = center, 
axis y line = center,
xtick = \empty,
ytick = \empty,
xlabel = {$t$},
ylabel = {$m$},
legend style = {nodes=right},
legend pos = north east,
clip mode = individual, 
%style = thick
]
\addplot[red, line width=0.4mm, samples=200, domain=-1:1] {x*x/2-1/18};
\addplot[blue, line width=0.4mm, domain=-1:1] {x*x/2+x/3+1/18};
\node[above right, red] at (1,4/9) {$m=m_1(t)$};
\node[above right , blue] at (1,8/9) {$m=m_2(t)$};
\draw[black] (-1/3,-0.03)--(-1/3,0.03) node[below] at (-1/3,-0.05) {$-1/3$};
\draw[black] (1/3,-0.03)--(1/3,0.03) node[below] at (1/3,-0.05) {$1/3$};
\node[above, purple] at (-1/3,0.03) {$[ba]$};
\node[above, red] at (1/3,0.03) {$a$};
\end{axis}
\end{tikzpicture}
\caption{
Left: transversal section $\phi:\DD^2\to\cL_2$ to $\cL_2[[aba]]$ 
(see Example \ref{example:transversalsectionaba}). 
Right: for $x_1=1/3$ and $x_2=-x_1^2/2=-1/18$ we have 
$\sing(\Gamma_\bfx)=\{\pm 1/3\}$ and 
$\iti(\Gamma_\bfx)=[ba]a$. 
%The reader may want to compare this figure with 
Compare with 
Figure \ref{fig:aba} %accompanying 
from Example \ref{example:aba}.}
\label{fig:transversalsectionaba}
\end{figure}
% Following the examples above, for $\alpha = [aba]$ we have
% $$ m_1(t) = \frac{t^2}{2} + y, \quad m_2(t) = \frac{t^2}{2} + xt - y $$

% In particular, there exist $E_{0}, E_{1} \in \Diag^{+}_{n+1}$
% such that:
% \exists U_0, U_1 \in \Up^{+}, \;
% \tilde\phi(\bfx;t) = U_0 E_{0} A^{\transpose} U_1; $$
% \exists U_0, U_1 \in \Up^{+}, \;
% \tilde\phi(\bfx;t) = U_0 E_{1} A U_1; $$
% here $A$ is the Arnold matrix.
% In our example,
% $E_0 = \diag(+1,-1,-1,+1)$ and $E_1 = -I$.

% and compute $M = E_0 \tilde\phi(x;\tilde t)$
% and perform a $QR$ factorization $M = QR$ 
% (with $Q \in \SO_{n+1}$ and $R \in \Up^{+}$):
% set $\phi(x)(t) = Qe_1$ so that $\fF_{\phi(x)}(t) = Q$.
% For each $x$, the arcs
% $\phi(x)|_{[0,\frac14]}$ and $\phi(x)|_{[\frac34,1]}$
% are chosen arbitrarily as a convex arcs
% from $\fF_{\phi(x)}(0) = I$ to $\fF_{\phi(x)}(\frac14)$ and
% from $\fF_{\phi(x)}(\frac34)$ to $\fF_{\phi(x)}(1) = E_0E_1$,
% respectively.
% Notice that  $\fF_{\phi(x)}(\frac14)$ and $\fF_{\phi(x)}(\frac34)$  
% belong to the correct open (signed) Bruhat cells
% for the desired convex arcs to exist.
% Notice also that the set of such arcs is (nonempty and) contractible,
% showing that the choice is essentially unique.
% The convex arcs at the extremes are therefore not interesting:
% we shall not talk about them.
% Also, given $x$, the functions $p_j(\tilde t)$ and $m_j(t)$
% differ by a positive (algebraic) multiplicative factor
% so that the multiplicity vector and the itinerary
% can be defined from $p$ exactly as from $m$.
% We shall therefore restrict out attention to the matrix $M$.

\begin{example}
\label{example:transversalsectionacb}

In our second example, $n = 3$, $w=(\sigma)$, 
$\sigma = [3142] = acb$ 
(see matrix $M_1$ in Equation \eqref{equation:M0M1}), 
we have
\begin{gather*}
m_1 = \frac{t^2}{2} + x_2, \;
m_2 = -t, \;
m_3 = \frac{t^2}{2} - x_1, \\  
d_1=-2r_{1,2}=-2x_2,\;
d_2=1, \;
d_3=-2r_{2,3}= 2x_1, \;
r_{1,3}=\frac{(x_1+x_2)^2}4.
\end{gather*}
Thus, $m_2(t)$ has a simple root at $t = 0$ for all values of $x_1,x_2$.
If $x_2 > 0$, $m_1(t)$ has no real roots;
if $x_2 < 0$, $m_1(t)$ has roots $t = \pm\sqrt{-2x_2}$.
Similarly, for $x_1 < 0$, $m_3(t)$ has no real roots
and for $x_1 > 0$, $m_3(t)$ has roots $t = \pm\sqrt{2x_1}$.
It is now easy to verify the itineraries of $\Gamma_\bfx$ 
in Figure \ref{fig:transversalsectionacb} using 
resultants and multiplicities.
For instance, for $\bfx=(x_1,x_2)$ with $x_1>0$ and $x_2=-x_1$, 
the simple roots of $m_1(t)$ and $m_3(t)$ coincide 
pairwise at $t=\pm\sqrt{2x_1}$. 
We therefore have 
$\sing(\Gamma_\bfx)=\{-\sqrt{2x_1},0,+\sqrt{2x_1}\}$ and 
$\iti(\Gamma_\bfx)=(\sigma_1,\sigma_2,\sigma_3)=[ac]b[ac]$, 
since, by Theorem 4 of \cite{Goulart-Saldanha0}, 
$\mult(\sigma_1)=\mult(\sigma_3)=(1,0,1)=\mult([ac])$ and 
$\mult(\sigma_2)=(0,1,0)=\mult(b)$. 
Notice that if $x_1<0$ and $x_2=-x_1$, 
then $\sing(\Gamma_\bfx)=\{0\}$ and $\iti(\Gamma_\bfx)=(b)$, 
even if we are in the zero locus of $r_{1,3}$. 
Also notice that the stratum $\cL_3[[ac]b[ac]]$ is 
as profound as $\cL_3[[acb]]$, since 
$\dim([ac]b[ac])=\dim([acb])=2$.

% (see also Figures \ref{fig:cD3} and \ref{fig:otheracb}).
%Notice that this section is transversal to $\cL_3[[3142]] = \cL_3[[acb]]$
%(as promised)
%but is not transversal to $\cL_3[([a_1a_3],a_2,[a_1a_3])] = \cL_3[[ac]b[ac]]$
%(which was never promised).
%\end{example}
% [THE NEXT PARAGRAPH SHOULD PROBABLY BE DELETED]
% Notice that the leading coefficient of $m_j$
% is always a constant multiple of $t^{\mult_j(\alpha)}$.
% In particular, there exists $r > 0$ such that 
% if $|\bfx| \le 1$ then all roots (in $t$) of all polynomials
% $m_j$ have absolute value smaller than $r$.
% We thus have
% \[
% \forall \bfx \in \DD^d, \; \forall t \le r, \;
% \Gamma_\bfx(t) \in \Bru_{\chop(z_0)}; \qquad
% \forall \bfx \in \DD^d, \; \forall t \ge r, \;
% \Gamma_\bfx(t) \in \Bru_{\adv(z_0)}. 
% \]
%\begin{example}
%\label{example:posetsection}
%Thus, for instance, again with $a = a_1$, $b = a_2$ and $c = a_3$,
%It follows from the transversal sections that
%\[[ab], [cb], a[cb]a, c[ab]c, [ac]b[ac] \stackrel{[\mult]}{\tok} [acb].\]
%Notice that $\dim([ac]b[ac]) = \dim([acb]) = 2$.
%Also, by transitivity we have
%\[acbac, cabca, [ac]bac, [ac]bca, acb[ac], cab[ac] 
%\stackrel{[\mult]}{\tok} [acb]\]
%but none of the itineraries on the left hand side appear
%in the transversal section constructed above. 
%in Example \ref{example:transversalsection}.

%\begin{figure}[ht]
%\def\svgwidth{65mm}
%\centerline{\input{acb-24.pdf_tex}}
%\centerline{\includegraphics[width =65mm]{planeacb.pdf}}
%\caption{A transversal section to $\cL_3[[acb]]$.}
%\label{fig:planeacb}
%\end{figure}
\end{example}

\begin{figure}[ht]
\centering
\begin{tikzpicture}[scale=0.7]
\begin{axis}[
xmin=-1, xmax=1, 
ymin=-1, ymax=1,
axis x line = center, 
axis y line = center,
xtick = \empty,
ytick = \empty,
xlabel = {$x_1$},
ylabel = {$x_2$},
legend style = {nodes=right},
legend pos = north east,
clip mode = individual,
%style= thick
]
\addplot[red, line width=0.4mm, domain=-1:0.95] {0.01};
\node[left, red] at (-1,0) {$x_2=0$};
\draw[blue, line width=0.4mm] (-0.008,-1)--(-0.008,0.95) node[below] at (0,-1) {$x_1=0$};
\addplot[magenta, line width=0.4mm, domain=0:1] {-x};
\addplot[dashed, black, domain=-1:0] {-x};
\node[below right, magenta] at (1,-1) {$x_2=-x_1$};
\node[above, black] at (-0.8,0.8) {$b$};
\node[above right, black] at (-0.5,0.5) {$b$};
\node[below left, black] at (-0.5,0.5) {$b$};
\node[above right, black] at (0.5,-0.5) {$cabac$};
\node[below left, black] at (0.5,-0.5) {$acbca$};
\node[black] at (0.5,0.5) {$cbc$};
\node[above, black] at (-0.7,-0.7) {$aba$};
\node[above right, magenta] at (1,-1) {$[ac]b[ac]$};
\node[left, blue] at (0,0.5) {$[cb]$};
\node[below left, blue] at (0,-0.5) {$a[cb]a$};
\node[below, red] at (-0.5,0) {$[ab]$};
\node[below, red] at (0.5,0) {$c[ab]c$};
\node[above right, magenta] at (0,0) {$[acb]$};
\end{axis}
\end{tikzpicture}
\begin{tikzpicture}[scale=0.7]
\begin{axis}[
xmin=-1, xmax=1, 
ymin=-1/4, ymax=1/2,
axis x line = center, 
axis y line = center,
xtick = \empty,
ytick = \empty,
xlabel = {$t$},
ylabel = {$m$},
legend style = {nodes=right},
legend pos = north east,
clip mode = individual, 
%style = thick
]
\addplot[red, line width=0.4mm, samples=200, domain=-1:1] {x*x/2-1/18};
\node[above, purple] at (1,4/9) {$m=m_1(t)=m_3(t)$};
\addplot[green, line width=0.4mm, samples=200, domain=-1:1] {-x};
\node[right, green] at (1/4,-1/4) {$m=m_2(t)$};
\addplot[blue, line width=0.4mm, domain=-1:1] {x*x/2-1/18+0.005};
%\node[above right, red] at (1,4/9) {$m=m_1(t)$};
%\node[above right , blue] at (1,8/9) {$m=m_2(t)$};
\draw[black] (-1/3,-0.02)--(-1/3,0.02) node[below] at (-1/3,-0.02) {$-1/3$};
\draw[black] (1/3,-0.02)--(1/3,0.02) node[below] at (1/3,-0.02) {$1/3$};
\node[above, purple] at (-1/3,0.03) {$[ac]$};
\node[above right, green] at (0,0.03) {$b$};
\node[above, purple] at (1/3,0.03) {$[ac]$};
\end{axis}
\end{tikzpicture}
\caption{
Left: transversal section $\phi:\DD^2\to\cL_3$ to $\cL_3[[acb]]$ 
(see Example \ref{example:transversalsectionacb}). 
Right: for $x_1=-x_2=1/18$ we have 
$\sing(\Gamma_\bfx)=\{0,\pm1/3\}$ and 
$\iti(\Gamma_\bfx)=[ac]b[ac]$.}
\label{fig:transversalsectionacb}
\end{figure}

\begin{figure}[p]
\def\svgwidth{15cm}
%\centerline{\input{acbbeta.pdf_tex}}
\centerline{\includegraphics[width=15cm]{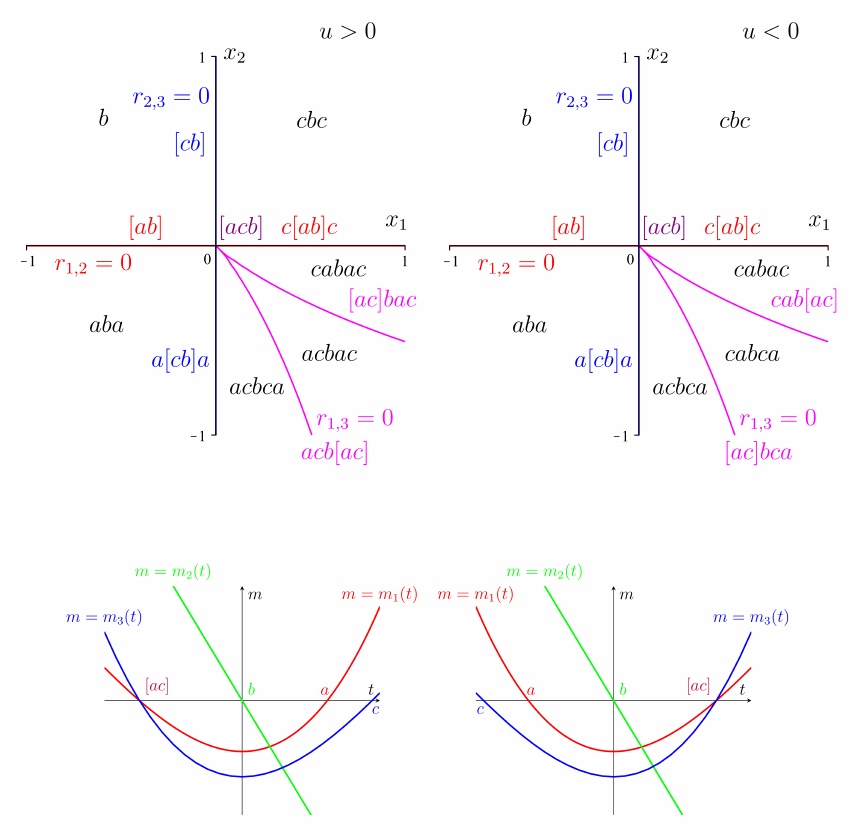}}
\caption{Above: transversal sections $\phi_u:\DD^2\to\cL_3$ to $\cL_3[[acb]]$ for $u=\pm2/5$ 
(see Example \ref{example:betaprime}).
Below: for $x_1=1/3$, and $x_2>-1/3$ 
such that $r_{1,3}(x_1,x_2,u)=0$, 
$\iti(\phi_u(x_1,x_2))=[ac]bac$ for $u=2/5$ while 
$\iti(\phi_u(x_1,x_2))=cab[ac]$ for $u=-2/5$. 
The slopes of the graphs of $m_1(t)$ and $m_3(t)$ in this example 
are suggestive of the fact that no perturbation of $\phi_u(0,0)$  
will produce the itinerary $cabca$ in the case $u>0$, 
and similarly, no perturbation of $\phi_u(0,0)$  
will produce the itinerary $acbac$ in the case $u<0$. 
We shall go back to this issue in Section \ref{sect:Hk}. 
The reader may want to compare this figure with 
Figure \ref{fig:betaprime} therein.}
\label{fig:planebetaprime}
\end{figure}

\begin{example}
\label{example:betaprime}
Consider the 1-parameter family of 
perturbations $\phi_u:\DD^2\to\cL_3$, $u\in(-\epsilon,\epsilon)$ 
(for some fixed $\epsilon\in(0,1)$), 
of the transversal section $\phi=\phi_0$ 
of Example \ref{example:transversalsectionacb} 
given by $\phi_u(\bfx)=\bQ_{\acute\eta\acute\sigma}\circ\Gamma_{\bfx;u}$, 
where $\Gamma_{\bfx;u}:[-1,1]\to \acute\eta\acute\sigma\Lo^1_4$ 
is the solution to the ODE   
\begin{gather*}
\Gamma'_{\bfx;u}(t)=\Gamma_{\bfx;u}(t)
\left(\beta_1(t)\fl_1+\beta_2(t)\fl_2+\beta_3(t)\fl_3\right), \\
\beta_1(t)=1+ut>0, \quad \beta_2(t)=1, \quad \beta_3(t)=1-ut>0,
\end{gather*}  
with the initial condition below:  
\[
\Gamma_{\bfx;u}(0)= \begin{pmatrix}
0 & -1 & 0 & 0 \\
0 & -x_1 & 0 &- 1 \\
-1 & 0 & 0 & 0 \\
x_2 & 0 & 1 & 0 \end{pmatrix}, \quad 
\Gamma_{\bfx;u}(t)=\begin{pmatrix}
g_{1,1}(t) & -1 & 0 & 0 \\
g_{2,1}(t) & g_{2,2}(t) & g_{2,3}(t) & -1 \\
-1 & 0 & 0 & 0 \\
g_{4,1}(t) & g_{4,2}(t) & 1 & 0 \end{pmatrix}.\] 
This can be explicitly integrated, yielding polynomial coefficients 
$g_{i,j}$. 
%\begin{gather*}
%g_{1,1}(t)=-\frac{ut^2}2-t, \quad
%g_{2,1}(t)=\frac{u^2t^5}{30}-\frac{ut^4}{12}
%-\frac{t^3}6+\frac{x_1ut^2}2+x_1t, \\
%g_{2,2}(t)=\frac{ut^3}6-\frac{t^2}2+x_1, \quad
%g_{2,3}(t)=\frac{ut^2}2-t, \quad 
%g_{4,1}(t)=\frac{ut^3}3+\frac{t^2}2+x_2, \quad
%g_{4,2}(t)=t.
%\end{gather*}
As before, consider the $j\times j$ southwest minors $m_j$
of $\Gamma_{\bfx;u}(t)$ as polynomials in 
the indeterminates $x_1,x_2,u,t$ 
and compute their discriminants and resultants:
\begin{gather*}
m_1=\frac{ut^3}3+\frac{t^2}2+x_2, \quad  
m_2=-t, \quad
m_3=-\frac{ut^3}3+\frac{t^2}2-x_1, \\
d_1=-\frac{x_2(6u^2x_2+1)}2, \quad
d_2=1, \quad 
d_3=-\frac{x_1(6u^2x_1-1)}2, \quad \\
r_{1,2}= x_2,\quad
r_{1,3}=\frac{u}{108}(4u^2(x_2-x_1)^3+9(x_1+x_2)^2), \quad  
r_{2,3}= x_1.
\end{gather*}
%Figure \ref{fig:betaprime} shows the zero locus 
%of $u^{-1}r_{1,3}$ in the 
%parameter space $Ox_1x_2u$. 
Figure \ref{fig:planebetaprime} 
shows the itineraries of curves 
in the section $\phi_u$ 
for fixed values of $u>0$ and $u<0$. 
The zero loci of the discriminants $d_j$ and resultants $r_{i,j}$  
contain the coordinate axes $x_1$ and $x_2$ as before, 
and a ordinary cusp $r_{1,3}$. 
The zero loci of $d_1$ and $d_3$ include lines 
far from the origin, which do not concern us. 
Notice the intersection of the zero loci of 
$d_1$, $d_3$ and $r_{i,j}$ at the origin.
The two diagrams differ combinatorially: 
for $u > 0$, the itinerary $acbac$ appears and $cabca$ does not;
for $u < 0$, it is the other way around; 
this will be discussed in Section \ref{sect:Hk}.
\end{example}

\begin{example}
\label{example:perturbedtransversalsectionacb}
Alternatively, consider another 1-parameter family of 
perturbations $\psi_u:\DD^2\to\cL_3$, $u\in(-\epsilon,\epsilon)$, 
of the transversal section $\phi=\psi_0$ 
of Example \ref{example:transversalsectionacb} 
given by taking 
\[ \tilde M_{1,u} = \begin{pmatrix}
0 & -1 & 0 & 0 \\
0 & -x_1 & 0 & -1 \\
-1 & -u & 0 & 0 \\
x_2 & x_3 & 1 & 0
\end{pmatrix}, 
\qquad
M_{1,u} = \begin{pmatrix}
-t & -1 & 0 & 0 \\
-\frac{t^3}{6} - x_1 t & -\frac{t^2}{2} - x_1 & -t & -1 \\
-ut - 1 & -u & 0 & 0 \\
\frac{t^2}{2} + x_2 & t & 1 & 0
\end{pmatrix} \]
instead of $\tilde M_1$ and $M_1$ of Equation \eqref{equation:M0M1}. 
As in Example \ref{example:betaprime}, 
the functions $m_j$ are explicit polynomials. 
Figure \ref{fig:perturbedtransversalsectionacb} 
shows the zero loci of the new resultants near the origin 
(there are complications far away 
which do not concern us). 
Notice the similarity between Figures \ref{fig:planebetaprime} and  \ref{fig:perturbedtransversalsectionacb}. 
%Now, we have 
%\begin{gather*}
%m_1=\frac{t^2}2+x_2, \quad  
%m_2=-\frac{ut^2}2-t+ux_2, \quad
%m_3=\frac{ut^3}6+\frac{t^2}2-x_1-ux_2t, \\
%r_{1,2}= \frac{x_2(1+2u^2x_2)}2,\quad
%r_{1,3}=\frac{4u^2x_2^3}9+\frac{(x_1+x_2)^2}8 \\  
%r_{2,3}= -\frac{u^3x_1^2}8+\frac{u^3x_1x_2}4
%+\frac{u^3x_2^2}{24}+\frac{ux_1}{12}+\frac{u^5x_2^3}9.
%\end{gather*}
\end{example}

\begin{figure}[t]
\def\svgwidth{15cm}
%\centerline{\input{acbu.pdf_tex}}
\centerline{\includegraphics[width =15cm]{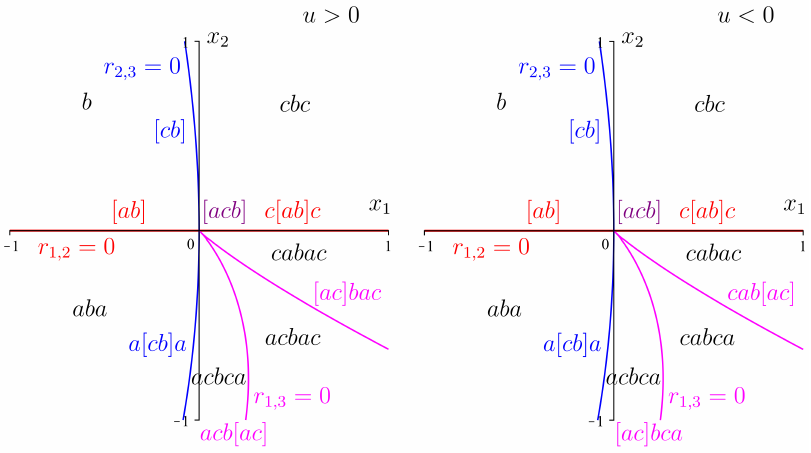}}
\caption{Transversal sections $\psi_u:\DD^2\to\cL_3$ to $\cL_3[[acb]]$ for $u=\pm2/5$ 
(see Example \ref{example:perturbedtransversalsectionacb}).}
\label{fig:perturbedtransversalsectionacb}
\end{figure}

%We have 
%$\cL_2[[aba]]\cap\overline{\cL_2[w]}\neq\emptyset$, 
%$\cL_3[[acb]]\cap\overline{\cL_2[w']}\neq\emptyset$  
%for all words $w$ appearing in Figure 
%\ref{fig:transversalsectionaba} and 
%$w'$ appearing in Figures 
%\ref{fig:transversalsectionacb} and 
%\ref{fig:perturbedtransversalsectionacb}. 
%%$w\in\{aa, abab, baba,bb,a[ba],[ab]b,[ba]a,b[ab]\}$ and 
%%$w'\in\left\{aba,acbac,acbca,b,cabac,cabca,cbc, 
%%[ab],a[cb]a,acb[ac], [ac]bac,[ac]bca,cab[ac],c[ab]c,[cb], 
%%[ac]b[ac] \right\}$.
%Theorem \ref{theo:poset} 
%(which we prove in Section \ref{sect:poset}) 
%then implies $w\tok[aba]$ and $w'\tok[acb]$ 
%for these words $w,w'$. 

%Consider $\sigma \in S_{n+1} \smallsetminus \{e\}$, $\dim(\sigma) = k$,
%and the map $\phi: \DD^k \to \cL_n$ transversal to $\cL_n[\sigma]$
%constructed in Lemma \ref{lemma:transsection}.
%By construction of $\phi$,
%if $x \in \DD^k \smallsetminus \{0\}$ and $\iti(\phi(x)) = w$
%there exists a continuous map $h: [0,1] \to \DD^k$ such that
%$h(0) = 0$, $h(1) = x$ and $\iti(\phi(h(s))) = w$ for all \( s \in (0,1] \).
%In particular, $w \tok \sigma$.
%As we shall see, the reciprocal does not hold.

\section{Proof of Theorem \ref{theo:poset}}
\label{sect:poset}

Let $\sigma \in S_{n+1}$, $\sigma \ne e$;
let $\sigma_1 = \eta\sigma$.
Let $z_1 =\acute\eta \acute\sigma= 
q \acute\sigma_1 \in \widetilde \B_{n+1}^{+}$, 
$q=\acute\eta\hat\sigma\acute\eta \in \Quat_{n+1}$.
Recall from Example 4.2 of \cite{Goulart-Saldanha0} 
that $\bL(\exp(\theta\fh)) = \exp(\tan(\theta) \fh_L)$
for $\theta \in (-\frac{\pi}{2}, \frac{\pi}{2})$. 
Let $\theta_0 \in (0,\frac{\pi}{2})$.
The smooth curve 
%$\Gamma_{z_1,\fh} \in \cL_n(z_1\exp(-\theta_0\fh); z_1\exp(\theta_0\fh))$,
$\Gamma_{z_1,\fh}: [-\theta_0,+\theta_0] \to \Spin_{n+1}$,
$\Gamma_{z_1,\fh}(\theta) = z_1 \exp(\theta\fh)$,
is locally convex and has image contained in $\cU_{z_1}$.
It is therefore strictly convex and can be expressed in triangular coordinates
(see Subsections \ref{subsect:Bruhat} and \ref{subsect:convex}):
$\Gamma_{z_1,\fh}(\theta) = z_1\bQ(\Gamma_{L}(\theta))$, 
where $\Gamma_{L}(\theta): [-\theta_0,\theta_0] \to \Lo_{n+1}^{1}$,
$\Gamma_{L}(\theta) = \exp(\tan(\theta) \fh_L)$.
We have $\sing(\Gamma_{z_1,\fh})=\{0\}$ and 
$\iti(\Gamma_{z_1,\fh}) = (\sigma)$.

%We work mainly in $\cL_n^{[H^1]}$ and omit the 
%distinctive superscript up to Section \ref{sect:Hk} 
%and the proof of Theorem \ref{theo:mult}, 
%where the disctintion is precisely the point. 

For $w \in \Word_n$, we 
define $w \dashv \sigma$
if there exists a convex curve $\Gamma_1$ 
with $\iti(\Gamma_1) = w$ in the set
\( \cL^{[H^1]}_{n,\conv}(z_1\exp(-\theta_0\fh); z_1\exp(\theta_0\fh)) \).
%Here we confuse a letter $\sigma$ with a word of length one.
%In the next sections we will consider a possibly different partial order 
%in the same set $\Word_n$ 
%based on a different topological structure for the space $\cL_n$, 
%hence the presence of the decoration $[\topo]$ in this formula. 
Lemma \ref{lemma:novanishingletter} implies that  
we have $(\,) \not\dashv \sigma$ %for all $\sigma\in S_{n+1}$ 
(here, $(\,)\in\Word_n$ is the empty word). 
Our next result shows that the condition above does not depend
on the particular choice of $\theta_0 \in (0,\frac{\pi}{2})$.

\begin{lemma}
\label{lemma:tok0}
Consider $z_1 \in \widetilde \B_{n+1}^{+}$,
$\theta_1, \theta_2 \in (0,\frac{\pi}{2})$ 
and $w \in \Word_n$.
Then there exists 
$\Gamma_1 \in \cL_{n,\conv}^{[H^1]}(z_1\exp(-\theta_1\fh); z_1\exp(\theta_1\fh))$
with $\iti(\Gamma_1) = w$
if and only if there exists
$\Gamma_2 \in \cL_{n,\conv}^{[H^1]}(z_1\exp(-\theta_2\fh); z_1\exp(\theta_2\fh))$
with $\iti(\Gamma_2) = w$.
Furthermore, if there exists
$\Gamma_1 \in
\cL_{n,\conv}^{[H^1]}(z_1\exp(-\theta_1\fh); z_1\exp(\theta_1\fh))$
such that $\iti(\Gamma_1) = w$
then there exists a homotopy 
$H: [0,1] \to 
\cL_{n,\conv}^{[H^1]}(z_1\exp(-\theta_1\fh); z_1\exp(\theta_1\fh))$,
\[
%\begin{gather*}
% \\ %\quad 
H(0) = \Gamma_0=\Gamma_{z_1,\fh}, \quad H(1) = \Gamma_1, \quad
H(s)|_{[-\theta_1,-s\theta_1] \sqcup [s\theta_1,\theta_1]} =
\Gamma_0|_{[-\theta_1,-s\theta_1] \sqcup [s\theta_1,\theta_1]}
\]%\end{gather*}
such that $\iti(H(s)) = w$ for all $s \in (0,1]$.
\end{lemma}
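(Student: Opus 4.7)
The plan is to first establish the homotopy (``furthermore'') part, from which the equivalence between $\theta_1$ and $\theta_2$ follows easily: for $\theta_2 \leq \theta_1$, restrict $H(\theta_2/\theta_1)$ to $[-\theta_2, \theta_2]$ (yielding the desired endpoints and itinerary); for $\theta_2 > \theta_1$, concatenate $\Gamma_1$ with arcs of $\Gamma_0$ on $[-\theta_2,-\theta_1]$ and $[\theta_1,\theta_2]$, producing a locally convex curve of itinerary $w$ since these added pieces lie entirely in the open Bruhat cell $\Bru_{\acute\eta}$ and contribute no singular points. Then reversing the roles of $\theta_1$ and $\theta_2$ yields the full biconditional.

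To build $H$, I would first homotope $\Gamma_1$, through a continuous family of smooth orientation-preserving reparametrizations of $[-\theta_1,\theta_1]$ fixing $\pm\theta_1$ and starting from the identity, to a curve $\tilde\Gamma_1$ with $\iti(\tilde\Gamma_1)=w$ and $\sing(\tilde\Gamma_1) \subset (-\delta,\delta)$ for any prescribed small $\delta>0$. Along this preliminary homotopy the endpoints and the itinerary are unchanged; this supplies an initial portion of $H$ that interpolates between $\Gamma_1$ and a concentrated version of it, while already preserving the identity $H(s) = \Gamma_0$ on the required outer intervals once $s\theta_1$ exceeds $\delta$.

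Next, for each $s \in (0,1]$ I would construct the middle section of $H(s)$: a locally convex curve on $[-s\theta_1, s\theta_1]$ from $z_1\exp(-s\theta_1\fh)$ to $z_1\exp(s\theta_1\fh)$ with itinerary $w$. Choose $\delta < s\theta_1/4$, affinely rescale the central sub-arc $\tilde\Gamma_1|_{[-\delta,\delta]}$ onto $[-s\theta_1/2, s\theta_1/2]$, and prepend and append strictly convex arcs on $[-s\theta_1, -s\theta_1/2]$ and $[s\theta_1/2, s\theta_1]$ joining $z_1\exp(\pm s\theta_1\fh)$ to the rescaled sub-arc's endpoints. Because $\tilde\Gamma_1(\pm\delta)$ and $z_1\exp(\pm s\theta_1\fh)$ all lie in a common $\cU_{z_\ast}$ inside the open cell $\Bru_{\acute\eta}$ (after taking $\delta$, and if necessary $s$, small), the existence and continuous parameter dependence of such bridging arcs follows from Lemma \ref{lemma:convex2} together with the explicit contraction in Remark \ref{rem:explicitcontraction}. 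Finally, glue this middle section with $\Gamma_0|_{[-\theta_1,-s\theta_1]\cup[s\theta_1, \theta_1]}$ to obtain $H(s)$.

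The main obstacle is the consistency of this construction as $s$ ranges over $(0,1]$ and the passage to the limit $s\to 0^+$: one must arrange that the choice of bridging arcs varies continuously (indeed $H^1$-continuously) in $s$ and that no spurious singular crossings are created when the middle section of $H(s)$ shrinks. Continuity is handled by using the canonical families provided by Remark \ref{rem:explicitcontraction}; absence of extra singular points is guaranteed because the bridging arcs lie in $\Bru_{\acute\eta}$, so Lemma \ref{lemma:singleletter} applies. The limit $H(s)\to\Gamma_0$ as $s\to 0^+$ follows because the middle section of $H(s)$ is pinned at $z_1\exp(\pm s\theta_1\fh)$ and stays within a shrinking neighborhood of $z_1 = \Gamma_0(0)$, forcing uniform (and $H^1$) convergence to $\Gamma_0$.
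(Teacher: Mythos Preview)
Your preliminary step has a genuine gap: a reparametrization of $\Gamma_1$ has the same image in $\Spin_{n+1}$ as $\Gamma_1$ itself, so on the growing outer intervals $[-\theta_1,-s\theta_1]\sqcup[s\theta_1,\theta_1]$ (for $s<1$) it will \emph{not} coincide with $\Gamma_0 = \Gamma_{z_1,\fh}$ unless $\Gamma_1$ already happened to agree with $\Gamma_0$ there. The assertion ``already preserving the identity $H(s)=\Gamma_0$ on the required outer intervals once $s\theta_1$ exceeds $\delta$'' is therefore unjustified, and the constraint in the lemma is violated along this entire initial portion of your homotopy. A secondary issue: you invoke Lemma~\ref{lemma:convex2} and Remark~\ref{rem:explicitcontraction} to supply bridging arcs, but those results only say the space of convex arcs between two prescribed endpoints is contractible \emph{when nonempty}; you have not verified the accessibility relation $z_1\exp(-s\theta_1\fh) \ll \tilde\Gamma_1(-\delta)$, and your parenthetical ``if necessary $s$ small'' is not available since the construction must cover all $s\in(0,1]$.

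The paper sidesteps both problems with a single device: working in triangular coordinates $\Gamma_{1;L}=\bL(z_1^{-1}\Gamma_1)$, the diagonal projective transformation $L\mapsto \diag(1,\lambda,\ldots,\lambda^n)\,L\,\diag(1,\lambda^{-1},\ldots,\lambda^{-n})$ with $\lambda = \tan(s\theta_1)/\tan(\theta_1)$ sends the endpoints $\exp(\pm\tan(\theta_1)\fh_L)$ exactly to $\exp(\pm\tan(s\theta_1)\fh_L)$ and preserves itineraries. After reparametrizing onto $[-s\theta_1,s\theta_1]$, lifting back via $z_1\bQ(\cdot)$, and extending by $\Gamma_0$ on the outer intervals, one obtains $H(s)$ directly satisfying the constraint, with continuity in $s$ automatic and no bridging arcs needed. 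This also gives the $\theta_1\leftrightarrow\theta_2$ equivalence immediately (one direction by this transformation, the other by attaching arcs of $\Gamma_0$), so there is no need to derive the first claim from the second.
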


\begin{proof}
We start with the first claim.
Assume without loss of generality that $\theta_1 < \theta_2$.
Given $\Gamma_1$ as above, $\Gamma_2$ can be constructed by attaching arcs:
set 
\[ \Gamma_2(\theta) = \begin{cases}
\Gamma_1(\theta), & \theta \in [-\theta_1,\theta_1], \\
z_1 \exp(\theta\fh), &
\theta \in [-\theta_2,-\theta_1] \sqcup [\theta_1,\theta_2].
\end{cases} \]
Conversely, given $\Gamma_2$ we apply a projective transformation
to obtain $\Gamma_1$.
More precisely, set
$\Gamma_{2;L}: [-\theta_2,\theta_2] \to \Lo_{n+1}^{1}$,
$\Gamma_{2;L}(\theta) = \bL(z_1^{-1} \Gamma_2(\theta))$.
Notice that a diagonal projective transformation
takes $\exp(\pm\tan(\theta_2) \fh_L)$
to $\exp(\pm\tan(\theta_1) \fh_L)$:
\[ \exp(\pm\tan(\theta_1) \fh_L) =
\diag\left(1,\lambda,\ldots,\lambda^n\right)
\exp(\pm\tan(\theta_2) \fh_L) 
\diag\left(1,\lambda^{-1},\ldots,\lambda^{-n}\right) \]
for $\lambda = \tan(\theta_1)/\tan(\theta_2)$;
apply this projective transformation and reparametrize the domain
to obtain $\Gamma_{1;L}$ and therefore 
$\Gamma_1 \in \cL_{n,\conv}^{[H^1]}(z_1\exp(-\theta_1\fh); z_1\exp(\theta_1\fh))$
with $\iti(\Gamma_1) = \iti(\Gamma_2)$.
For the second claim, given $\Gamma_1$,
apply a projective transformation as above to define $H(s)$
satisfying the conditions in the statement
(compare with the construction of the homotopy
in the proof of Lemma \ref{lemma:convex2}).
\end{proof}

%\begin{rem}
%\label{rem:tok0}
%The curves in the homotopy above are in the space $\cL_n^{[H^1]}$ 
%but it is easy to adapt the construction so that all curves are 
%of class $H^r$, $r>1$. 
%It is not at all clear, however, whether it is possible 
%to construct a homotopy which is continuous in 
%$\cL_n^{[H^{k}]}$ for $k>1$.  
%We will discuss this issue in greater detail below.
%\end{rem}

\begin{lemma}
\label{lemma:tok1}
Consider $w \in \Word_n$, $\sigma \in S_{n+1}$, $\sigma \ne e$,
and $\tilde\Gamma \in \cL_n^{[H^1]}[(\sigma)]$.
There exists a sequence $(\Gamma_k)_{k \in \NN}$
of curves $\Gamma_k \in \cL_n^{[H^1]}[w]$
with %$\iti(\Gamma_k) = w$ for all $k \in \NN^\ast$ and
$\lim_{k \to \infty} \Gamma_k = \tilde\Gamma$ 
in $\cL_n^{[H^1]}$ 
%(in the usual $H^1$ topology for $\cL_n$) 
if and only if  
$w \dashv \sigma$.
\end{lemma}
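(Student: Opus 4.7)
My plan is to deduce both directions from the model case in Lemma \ref{lemma:tok0}, by localizing near the unique singular time $t_\ast$ of $\tilde\Gamma$. Let $z_\star := \tilde\Gamma(t_\ast) \in \Bru_{\eta\sigma}$. Since $\Up_{n+1}^{1}$ acts transitively on each signed Bruhat cell (recalled in Subsection \ref{subsect:Bruhat}), I first apply, once and for all as a global projective transformation of type \ref{item:projUp}, an element moving $z_\star$ onto $z_1 = \acute\eta\acute\sigma$, and thus assume $z_\star = z_1$ throughout; this preserves both itineraries and the $H^1$ topology. Lemma \ref{lemma:conjecture} combined with Lemma \ref{lemma:convex} then furnishes $\epsilon_0 > 0$ such that $\tilde\Gamma$ is strictly convex on $[t_\ast - \epsilon_0, t_\ast + \epsilon_0]$ with image in $\cU_{z_1}$.

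For $(\Leftarrow)$, fix the homotopy $H: [0,1] \to \cL_{n,\conv}^{[H^1]}(z_1\exp(-\theta_0\fh); z_1\exp(\theta_0\fh))$ of Lemma \ref{lemma:tok0} with $\theta_0 \in (0, \epsilon_0)$, satisfying $\iti(H(s)) = w$ for $s > 0$ and $H(s) = \Gamma_{z_1,\fh}|_{[-\theta_0,\theta_0]}$ outside $[-s\theta_0, s\theta_0]$. For $s_k \to 0^{+}$, I construct $\Gamma_k \in \cL_n^{[H^1]}$ that coincides with $\tilde\Gamma$ outside the shrinking interval $I_k := [t_\ast - s_k\theta_0, t_\ast + s_k\theta_0]$, and on $I_k$ is an affinely reparametrized image of $H(s_k)|_{[-s_k\theta_0, s_k\theta_0]}$ after a $k$-dependent combination of projective transformations of types \ref{item:projUp} and \ref{item:projDiag} matching the pair of boundary values $(z_1\exp(\pm s_k\theta_0\fh))$ to $(\tilde\Gamma(t_\ast \pm s_k\theta_0))$. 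Itineraries are projective invariants, so $\iti(\Gamma_k) = w$; both pairs of boundary values collapse to $z_1$ as $s_k \to 0$, so the required transformations are close to the identity, and the modifications supported on the shrinking intervals $I_k$ yield $\Gamma_k \to \tilde\Gamma$ in the $H^1$ topology.

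For $(\Rightarrow)$, suppose $\Gamma_k \to \tilde\Gamma$ with $\iti(\Gamma_k) = w$. Theorem \ref{theo:Hausdorff} gives $\sing(\Gamma_k) \to \{t_\ast\}$ in the Hausdorff metric, so for any $\delta \in (0, \epsilon_0)$ and $k$ large we have $\sing(\Gamma_k) \subset (t_\ast - \delta, t_\ast + \delta)$. By $H^1$-convergence, $\Gamma_k|_{[t_\ast - \delta, t_\ast + \delta]}$ is short (image in $\cU_{z_1}$) for $k$ large, hence strictly convex by Lemma \ref{lemma:convex}, and carries the full itinerary $w$ of $\Gamma_k$. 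Reversing the construction above, I apply a combination of projective transformations of types \ref{item:projUp} and \ref{item:projDiag} together with an affine reparametrization of the domain to transport this convex arc to an element of $\cL_{n,\conv}^{[H^1]}(z_1\exp(-\theta_0\fh); z_1\exp(\theta_0\fh))$ with itinerary $w$, witnessing $w \dashv \sigma$.

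The main technical obstacle in both directions is the construction of the endpoint-matching transformations. The key is to combine the transitive action of $\Up_{n+1}^{1}$ on each signed Bruhat cell with the one-parameter action of type \ref{item:projDiag} and affine reparametrization of the domain to produce enough degrees of freedom for a perturbative matching of pairs of boundary values in a controlled neighborhood of a limiting model pair. In $(\Leftarrow)$ both pairs collapse to $(z_1, z_1) \in \Bru_{\eta\sigma} \times \Bru_{\eta\sigma}$ as $s_k \to 0$, so the argument works for $s_k$ sufficiently small; in $(\Rightarrow)$ the same mechanism produces, for $\delta$ fixed and $k$ large, the sought convex curve with endpoints $z_1\exp(\pm\theta_0\fh)$. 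All the $H^1$ estimates are then immediate from the shrinking support of the modifications.
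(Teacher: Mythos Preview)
Your overall strategy --- localize near the unique singular time, work in triangular coordinates based at $z_1=\acute\eta\acute\sigma$, and transfer between $\tilde\Gamma$ and the model curve $\Gamma_{z_1,\fh}$ --- is the right one, and coincides with the paper's. The gap is in the \emph{endpoint-matching} step, which you invoke in both directions but do not justify.

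Concretely, in $(\Leftarrow)$ you assert that a combination of a type~\ref{item:projUp} transformation (action of $\Up^1_{n+1}$), a type~\ref{item:projDiag} transformation, and an affine reparametrization of the domain can carry the pair $\bigl(z_1\exp(-s_k\theta_0\fh),\,z_1\exp(s_k\theta_0\fh)\bigr)$ to the pair $\bigl(\tilde\Gamma(t_\ast - s_k\theta_0),\,\tilde\Gamma(t_\ast + s_k\theta_0)\bigr)$. But $\Up^1_{n+1}$ acts \emph{simply} transitively on the open cell $\Bru_{\acute\eta}$ (both have dimension $n(n+1)/2$, and the cell is contractible), so once one endpoint is matched there is no residual freedom in $\Up^1_{n+1}$; the remaining ingredients contribute only three real parameters, against an $n(n+1)/2$-dimensional target for the second endpoint. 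For $n\ge 3$ this fails outright, and for $n=2$ it would still require a nondegeneracy check you do not supply. The perturbative phrasing (``both pairs collapse to $(z_1,z_1)$'') does not rescue the argument: an implicit-function-type matching would need the linearized action on pairs to be surjective, and the same dimension obstruction blocks that. The $(\Rightarrow)$ direction has the identical issue when you try to transport $\Gamma_k|_{[t_\ast-\delta,t_\ast+\delta]}$ to a curve with the prescribed endpoints $z_1\exp(\pm\theta_0\fh)$.

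The paper avoids this entirely by never attempting to match two endpoints with a single transformation. Working in the chart $\cU_{z_1}\cong\Lo^1_{n+1}$, it uses the partial order $\ll$ and Lemma~\ref{lemma:totallypositive} of \cite{Goulart-Saldanha0}: one chooses nested radii $\epsilon_0>\epsilon_1>\epsilon_2>0$ so that the relevant values of $\tilde\Gamma_L$ and of the model $\Gamma_L(t)=\exp(\tan(\pi(t-\tfrac12))\fh_L)$ are correctly $\ll$-ordered, and then inserts \emph{two} freely chosen convex gluing arcs in $\Lo^1_{n+1}$ on either side of the central piece. In $(\Leftarrow)$ the central piece is the (type~\ref{item:projDiag}-rescaled) curve of itinerary $w$; in $(\Rightarrow)$ it is the original arc of $\Gamma_k$ carrying $\sing(\Gamma_k)$. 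This ``sandwich'' construction gives the needed flexibility without any group-theoretic endpoint matching, and is what you should replace your matching step with.
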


%\begin{rem}
%\label{rem:tok1}
Notice that one implication is already known for the special case 
$\tilde\Gamma=\Gamma_{z_1,\fh}$, $z_1=\acute\eta\acute\sigma$: 
for $w \dashv \sigma$, we constructed in Lemma \ref{lemma:tok0}
a path $H$ of curves of itinerary $w$ tending to $\Gamma_{z_1,\fh}$. 
%\end{rem}

\begin{proof}
Assume first that a sequence $(\Gamma_k)$ as above 
exists: we prove that $w \dashv \sigma$.
Let $z_1=\acute\eta\acute\sigma$.
Let %$\Gamma(t)=\Gamma_{z_1;\fh}(t-\frac12)$ and 
$\Gamma(t)=\Gamma_{z_1;\fh}((t-\frac12)\pi)$ and 
$\Gamma_L(t)=\bL(z_1^{-1}\Gamma(t))$, $t\in(0,1)$; 
by Example 4.2 
of \cite{Goulart-Saldanha0}, 
%$\Gamma_L(t)=\exp(-\arctan(t-\frac12) \fh_L)$.
$\Gamma_L(t)=\exp(-\cot(\pi t) \fh_L)$, $t\in(0,1)$.
By reparametrizing the domain 
and applying a projective transformation, 
we may assume 
$\sing(\tilde\Gamma) = \{\frac12\}$ and
$\tilde\Gamma(\frac12) = \Gamma(\frac12)=z_1$.
% = q \acute\sigma_1 \in \widetilde \B_{n+1}^{+}$
%where $q \in \Quat_{n+1}$ and $\sigma_1 = \eta\sigma$.
Take $\epsilon_0 > 0$ such that the condition
$|t-\frac12| \le \epsilon_0$ implies $\tilde\Gamma(t) \in \cU_{z_1}$.
For $t \in [\frac12 - \epsilon_0, \frac12 + \epsilon_0]$,
define $\tilde\Gamma_{L}(t) = \bL(z_1^{-1} \tilde\Gamma(t))$.
Notice that, by Lemma 5.3 
of \cite{Goulart-Saldanha0}, we have 
$\tilde\Gamma_{L}(\frac12 - \epsilon_0) \ll
I \ll \tilde\Gamma_{L}(\frac12 + \epsilon_0)$.
Take $\epsilon_1 \in (0,\frac{\epsilon_0}{2})$ such that
$\tilde\Gamma_{L}(\frac12 - \epsilon_0) \ll 
%\exp(-\arctan(\epsilon_1) \fh_L)$ and
\Gamma_L(\frac12-\epsilon_1)$ and 
%$\exp(\arctan(\epsilon_1) \fh_L) 
$\Gamma_L(\frac12+\epsilon_1)\ll \tilde\Gamma_{L}(\frac12 + \epsilon_0)$.
Set $L_{1;-} = \Gamma_L(\frac12-\epsilon_1)$ and
%\exp(-\arctan(\epsilon_1) \fh_L)$ and
$L_{1;+} = \Gamma_L(\frac12+\epsilon_1)$.
%\exp(\arctan(\epsilon_1) \fh_L)$.
Take $\epsilon_2 \in (0,\frac{\epsilon_1}{2})$ such that
$L_{1;-} \ll \tilde\Gamma_{L}(\frac12 - \epsilon_2) \ll I$ and
$I \ll \tilde\Gamma_{L}(\frac12 + \epsilon_2) \ll L_{1;+}$.

Take open neighborhoods $A_{0;-}$, $A_{2;-}$, $A_{2;+}$ and
$A_{0;+} \subset \Lo_{n+1}^{1}$ of
$\tilde\Gamma_{L}(\frac12 - \epsilon_0)$,
$\tilde\Gamma_{L}(\frac12 - \epsilon_2)$,
$\tilde\Gamma_{L}(\frac12 + \epsilon_2)$ and
$\tilde\Gamma_{L}(\frac12 + \epsilon_0)$, respectively, such that,
for all $L_{i;\pm} \in A_{i;\pm}$, $i \in \{0,2\}$,
% $L_{0;-} \in A_{0;-}$, $L_{2;-} \in A_{2;-}$,
% $L_{2;+} \in A_{2;+}$ and $L_{0;+} \in A_{0;+}$,
we have
$L_{0;-} \ll L_{1;-} \ll L_{2;-} \ll I \ll
L_{2;+} \ll L_{1;+} \ll L_{0;+}$.
Let $B_{i;\pm} = z_1 \bQ[A_{i;\pm}] \subset \cU_{z_1}$,
$i \in \{0,2\}$;
% Let $B_{0;-} = z_1 \bQ[A_{0;-}]$, $B_{2;-} = z_1 \bQ[A_{2;-}]$,
% $B_{2;+} = z_1 \bQ[A_{2;+}]$ and
% $B_{0;+} = z_1 \bQ[A_{0;+}] \subset \cU_{z_1}$;
notice that
$\tilde\Gamma(\frac12 \pm \epsilon_i) \in B_{i;\pm}$, $i \in \{0,2\}$.
% $\Gamma_0(\frac12 - \epsilon_0) \in B_{0;-}$,
% $\Gamma_0(\frac12 - \epsilon_2) \in B_{2;-}$,
% $\Gamma_0(\frac12 + \epsilon_2) \in B_{2;+}$ and
% $\Gamma_0(\frac12 + \epsilon_0) \in B_{0;+}$.

For sufficiently large $k$, we have
$\Gamma_k(\frac12 \pm \epsilon_i) \in B_{i;\pm}$, $i \in \{0,2\}$.
% $\Gamma_k(\frac12 - \epsilon_0) \in B_{0;-}$,
% $\Gamma_k(\frac12 - \epsilon_2) \in B_{2;-}$,
% $\Gamma_k(\frac12 + \epsilon_2) \in B_{2;+}$ and
% $\Gamma_k(\frac12 + \epsilon_0) \in B_{0;+}$.
By Theorem \ref{theo:Hausdorff},
for sufficiently large $k$, we also have
$\sing(\Gamma_k) \subset (\frac12 - \epsilon_2, \frac12 + \epsilon_2)$.
For such large $k$, define a locally convex curve $\tilde\Gamma_k$
which coincides with $\Gamma_k$ except in the intervals
$[\frac12 - \epsilon_0, \frac12 - \epsilon_2]$ and
$[\frac12 + \epsilon_2, \frac12 + \epsilon_0]$.
In these arcs, $\tilde\Gamma_k$ is defined so that
$\tilde\Gamma_k(\frac12 - \epsilon_1) =
z_1 \bQ(L_{1;-}) = \Gamma(\frac12-\epsilon_1)$ and 
%z_1 \exp(-\epsilon_1 \fh)$ and
$\tilde\Gamma_k(\frac12 + \epsilon_1) =
z_1 \bQ(L_{1;+}) = 
\Gamma(\frac12+\epsilon_1)$:
%z_1 \exp(\epsilon_1 \fh)$:
the above conditions guarantee that this is possible.
The restriction of any such curve $\tilde\Gamma_k$ 
to the interval $[\frac12-\epsilon_1,\frac12+\epsilon_1]$ yields,
by definition, $w \dashv \sigma$.

Now, assume $w \dashv \sigma$ and take 
$\tilde\Gamma\in\cL_n^{[H^1]}[(\sigma)]$ with 
$\sing(\tilde\Gamma)=\{\frac12\}$. 
As before, take $\Gamma(t)=\Gamma_{z_1;\fh}((t-\frac12)\pi)$. 
Define $\tilde\Gamma_L(t)=\bL(z_1^{-1}\tilde\Gamma(t))$ and 
$\Gamma_L(t)=\bL(z_1^{-1}\Gamma(t))$ for $t$ 
in some interval $[\frac12-\epsilon_0,\frac12+\epsilon_0]$.
Given $k\in\NN^{\ast}$, take $\epsilon_{1}\in(0,\frac{\epsilon_0}{2^k})$. 
For sufficiently small $\epsilon_2\in(0,\epsilon_1)$, 
we have 
$\tilde\Gamma_L(\frac12-\epsilon_1)\ll \Gamma_L(\frac12-\epsilon_2)$ 
and 
$\Gamma_L(\frac12+\epsilon_2)\ll \tilde\Gamma_L(\frac12+\epsilon_1)$. 
By Lemma 5.3 
of \cite{Goulart-Saldanha0}, there exist convex arcs 
$\Gamma_{k,L,-}:[\frac12-\epsilon_1,\frac12-\epsilon_2]\to\Lo_{n+1}^1$ 
and 
$\Gamma_{k,L,+}:[\frac12+\epsilon_2,\frac12+\epsilon_1]\to\Lo_{n+1}^1$ with 
$\Gamma_{k,L,-}(\frac12-\epsilon_1)=\tilde\Gamma_L(\frac12-\epsilon_1)$, 
$\Gamma_{k,L,-}(\frac12-\epsilon_2)=\Gamma_L(\frac12-\epsilon_2)$, 
$\Gamma_{k,L,+}(\frac12+\epsilon_2)=\Gamma_L(\frac12+\epsilon_2)$, 
$\Gamma_{k,L,+}(\frac12+\epsilon_1)=\tilde\Gamma_L(\frac12+\epsilon_1)$. 
Since $w \dashv \sigma$, there exists a convex arc 
$\Gamma_{k,L,0}:[\frac12-\epsilon_2,\frac12+\epsilon_2]\to\Lo_{n+1}^1$ 
with itinerary $w$ such that 
$\Gamma_{k,L,0}(\frac12\pm\epsilon_2)
=\Gamma_L(\frac12\pm\epsilon_2)$. 
For each $k\in\NN$, define 
\[\Gamma_k(t)=\begin{cases} 
\tilde\Gamma(t), & t\in[0,\frac12-\epsilon_1], \\ 
z_1\bQ(\Gamma_{k,L,-}(t)), & t\in[\frac12-\epsilon_1,\frac12-\epsilon_2], \\
z_1\bQ(\Gamma_{k,L,0}(t)), & t\in[\frac12-\epsilon_2,\frac12+\epsilon_2], \\
z_1\bQ(\Gamma_{k,L,+}(t)), & t\in[\frac12+\epsilon_2,\frac12+\epsilon_1], \\ 
\tilde\Gamma(t), & t\in[\frac12+\epsilon_1,1].
\end{cases}\] 
Of course we have $\lim_{k\to\infty}\Gamma_k=\tilde\Gamma$ 
in $\cL_n^{[H^1]}$, as desired. 
\end{proof}

\begin{rem}
\label{rem:localarcs}
In the statement of Lemma \ref{lemma:tok1} the curves 
$\tilde\Gamma$ and $\Gamma_k$ start at  
$\tilde\Gamma(0)=\Gamma_k(0)=1$ and 
end at $\tilde\Gamma(1)=\Gamma_k(1)=\acute\eta\hat\sigma\acute\eta$. 
The reader will notice, however, that only small convex arcs  
containing the singular sets are relevant to the proof. 
We may therefore apply Lemma \ref{lemma:tok1}
whenever both $\tilde\Gamma$ and $\Gamma_k$ are convex arcs
in the open subset $\cU_{z_1}\subset\Spin_{n+1}$. 
Such arcs have free endpoints in the appropriate connected components of
$\cU_{z_1}\cap\Bru_{\acute\eta}$ and 
$\cU_{z_1}\cap\Bru_{\acute\eta\hat\sigma}$.
This is an equivalent statement since we can always append 
initial and final arcs obtained by projective transformations. 
%(see Remark \ref{rem:projtrans} and Lemma \ref{lemma:convex1}). 
\end{rem}

% \begin{lemma}
% \label{lemma:tok}
% Let $w_0, w_1 \in \Word_n$. The following conditions are equivalent:
% \begin{enumerate}[label=(\roman*)]
% \item{$\overline{\cP_n(w_0)} \supseteq \overline{\cP_n(w_1)}$;}
% \item{$\overline{\cP_n(w_0)} \supseteq {\cP_n(w_1)}$;}
% \item{$\overline{\cP_n(w_0)} \cap {\cP_n(w_1)} \ne \emptyset$;}
% \item{there exists a sequence $(\gamma_k)$ of curves in $\cP_n(w_0)$
% converging to $\gamma \in \cP_n(w_1)$;}
% \item{given $\gamma_1 \in \cP_n(w_1)$
% and an open neighborhood $U \subset \cL_n$ of $\gamma_1$
% there exists $\gamma_0 \in U \cap \cP_n(w_0)$.}
% \end{enumerate}
% \end{lemma}

% \begin{proof}
% Easy. [TO BE WRITTEN]
% \end{proof}

% Write $w_0 \tok w_1$ if the conditions in Lemma \ref{lemma:tok} hold;
% clearly (from item (i)) this relation is a partial order.
% We thus have
% $$ \overline{\cP_n(w_0)} = \bigsqcup_{w_0 \tok w_1} \cP_n(w_1). $$

\begin{lemma}
\label{lemma:preclosure}
For $\sigma \in S_{n+1} \smallsetminus \{e\}$ and $w \in \Word_n$,
the following conditions are equivalent:
\begin{enumerate}[label=(\roman*)]
\item{$w\dashv \sigma$;}
\item{$w \tok (\sigma)$;}
\item{$\cL_n^{[H^1]}[(\sigma)]\cap
\overline{\cL_n^{[H^1]}[w]}\neq\emptyset$;}
\item{$\cL_n^{[H^1]}[(\sigma)]\subseteq\overline{\cL_n^{[H^1]}[w]}$;}
%\item{given $\Gamma_1 \in \cL_n^{[L^2]}[(\sigma)]$
%and an open neighborhood  $U \subset \cL_n^{[L^2]}$ of $\Gamma_1$
%there exists $\Gamma \in U \cap \cL_n^{[L^2]}[w]$;}
\item{given $\tilde\Gamma \in \cL_n^{[H^1]}[(\sigma)]$, $\epsilon > 0$,
$\sing(\tilde\Gamma) = \{t_\ast\}$
% $t_1 \in (0,1)$ with $\sigma(\gamma_1;t_1) = \alpha_0$
and an open neighborhood $U \subset \cL_n^{[H^1]}$ 
of $\tilde\Gamma$ there exists
$\Gamma \in U \cap \cL_n^{[H^1]}[w]$
with $\Gamma$ and $\tilde\Gamma$
coinciding outside $(t_\ast-\epsilon,t_\ast+\epsilon)$.}
\end{enumerate}
\end{lemma}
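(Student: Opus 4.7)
The plan is to establish the cycle of implications $(\text{ii}) \Leftrightarrow (\text{iv}) \Rightarrow (\text{iii}) \Rightarrow (\text{i}) \Rightarrow (\text{v}) \Rightarrow (\text{iv})$, leveraging Lemma \ref{lemma:tok1} (which already packages the hardest topological work) and the definition of the partial order $\tok$.

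First, I would observe that (ii) $\Leftrightarrow$ (iv) is immediate: this is precisely the definition of $\tok$ given in Equation \eqref{equation:poset}. The implication (iv) $\Rightarrow$ (iii) is trivial since $\cL_n^{[H^1]}[(\sigma)]$ is nonempty by Theorem \ref{theo:stratification} (indeed, $\Gamma_{z_1,\fh}$ with $z_1 = \acute\eta\acute\sigma$ lies in this stratum). Similarly (v) $\Rightarrow$ (iv) is easy: fix an arbitrary $\tilde\Gamma \in \cL_n^{[H^1]}[(\sigma)]$ and apply (v) to a neighborhood basis of $\tilde\Gamma$ to produce a sequence in $\cL_n^{[H^1]}[w]$ converging to $\tilde\Gamma$, whence $\tilde\Gamma \in \overline{\cL_n^{[H^1]}[w]}$.

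The core content is in the implications (iii) $\Rightarrow$ (i) and (i) $\Rightarrow$ (v), both of which are direct applications of Lemma \ref{lemma:tok1}. For (iii) $\Rightarrow$ (i): pick any $\tilde\Gamma$ in the intersection $\cL_n^{[H^1]}[(\sigma)] \cap \overline{\cL_n^{[H^1]}[w]}$, extract a sequence $\Gamma_k \in \cL_n^{[H^1]}[w]$ with $\Gamma_k \to \tilde\Gamma$ in $\cL_n^{[H^1]}$, and invoke the first half of Lemma \ref{lemma:tok1} to conclude $w \dashv \sigma$. For (i) $\Rightarrow$ (v): given $\tilde\Gamma \in \cL_n^{[H^1]}[(\sigma)]$ with $\sing(\tilde\Gamma) = \{t_\ast\}$, a neighborhood $U$ and $\epsilon > 0$, apply the second half of Lemma \ref{lemma:tok1} to obtain a sequence $\Gamma_k \to \tilde\Gamma$ in $\cL_n^{[H^1]}[w]$; the construction in that proof already modifies $\tilde\Gamma$ only in an arbitrarily small interval around $t_\ast$, so by choosing $k$ large enough we get $\Gamma_k \in U$, $\Gamma_k \in \cL_n^{[H^1]}[w]$, and $\Gamma_k$ agreeing with $\tilde\Gamma$ outside $(t_\ast - \epsilon, t_\ast + \epsilon)$.

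Since no step requires new technical machinery beyond the lemmas already proved, I do not anticipate a serious obstacle; the mild point worth verifying is that the construction inside the proof of Lemma \ref{lemma:tok1} can indeed be confined to an arbitrarily small neighborhood of $t_\ast$. This is visible from the proof: one chooses $\epsilon_1$ as small as one wishes inside $(0, \epsilon_0/2^k)$, so the surgery replacing $\tilde\Gamma$ by $\Gamma_k$ happens only on $[t_\ast - \epsilon_1, t_\ast + \epsilon_1]$, and taking $\epsilon_1 < \epsilon$ yields exactly the coincidence condition in (v).
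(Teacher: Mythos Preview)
Your proof is correct and essentially identical to the paper's own argument: both establish (ii) $\Leftrightarrow$ (iv) by definition, derive the trivial implications among (iii), (iv), (v), and invoke Lemma~\ref{lemma:tok1} (and its proof) for the substantive steps (iii) $\Rightarrow$ (i) and (i) $\Rightarrow$ (v). The only cosmetic difference is that the paper closes the cycle via (v) $\Rightarrow$ (iii) whereas you use the equally easy (v) $\Rightarrow$ (iv).
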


\begin{proof}
Conditions (ii) and (iv) are equivalent by definition.
Condition (iv) clearly  implies (iii);
Lemma \ref{lemma:tok1} shows that (iii) implies (i) 
and that (i) implies (iv). 
The proof of Lemma \ref{lemma:tok1} shows that (i) implies (v). 
Finally, (v) clearly implies (iii).
%(iii) is a rewording of the second definition of (ii);
%the old definition of (ii) implies (iv).
% (for $\Gamma(t) = \Gamma(t_1) \exp((t-t_1)\fh)$ for $t$ near $t_1$).
\end{proof}

\begin{rem}
\label{rem:nonempty2}
The known fact that
convex curves form a connected component of $\cL_n^{[H^1]}$
(as in Lemma \ref{lemma:convex2}) 
gives us a second proof of the fact that  
$(\,)\npreceq (\sigma)$. 
%(see Remark \ref{rem:nonempty}).
\end{rem}

Lemma \ref{lemma:preclosure} is a 
local version of Theorem \ref{theo:poset}, 
which we are now ready to prove. 

\begin{proof}[Proof of Theorem \ref{theo:poset}]
Condition (iii) implies (i); Condition (i) implies (ii). 
We now show that Condition (ii) implies (iv). 
Indeed, take $\tilde\Gamma\in\cL_n^{[H^1]}[w_1]\cap
\overline{\cL_n^{[H^1]}[w_0]}$ and a sequence 
$(\Gamma_k)_{k\in\NN}$ of curves in $\cL_n^{[H^1]}[w_0]$ 
tending to $\tilde\Gamma$. 
Set $\sing(\tilde\Gamma)=\{t_1<\cdots<t_\ell\}$ and $\epsilon>0$ 
such that $3\epsilon<\min\{t_{i+1}-t_i\,;\,i\in\llbracket \ell\rrbracket\cup\{0\}\}$ (where $t_0=0$ and $t_{\ell+1}=1$, as usual). 
Notice that the intervals $J_i=[t_i-\epsilon,t_i+\epsilon]$ are disjoint. 
By Theorem \ref{theo:Hausdorff}, for sufficiently large $k$, we have 
$\sing(\Gamma_k)\subset\sqcup_i J_i$. 
The restrictions $\Gamma_k|_{J_i}$ tend to 
$\tilde\Gamma|_{J_i}$ and therefore, for large $k$,  
$\iti(\Gamma_k|_{J_i})=\tilde w_i\tok (\sigma_i)$, 
by Lemmas \ref{lemma:tok1} and \ref{lemma:preclosure} 
(see also Remark \ref{rem:localarcs}).  
We have $\tilde w_i\neq (\,)$ by Remark \ref{rem:nonempty2}. 

Now we prove that Condition (iv) implies (iii). 
The idea is to slightly perturb $\tilde\Gamma$ %\in\cL_n^{[L^2]}[w_1]$ 
about each singular point $t_i$ while leaving the curve 
unchanged outside the supports of these perturbations. 
Implication (ii) to (v) in Lemma \ref{lemma:preclosure} ensures  
that the resulting curve $\Gamma$ can be made to have the 
desired itinerary $w_0$.
\end{proof}

\section{Proof of Theorem \ref{theo:multHk} and the example $[acb]$}
\label{sect:Hk}

The proof of Theorem \ref{theo:multHk} could have been
given immediately after the proof of Theorem \ref{theo:Hausdorff}, 
but we prefer to discuss in this section questions related to 
the $H^r$ metric
% (see Equation \eqref{equation:metric})
for large $r$. 

\begin{proof}[Proof of Theorem \ref{theo:multHk}]
%Consider $\Gamma_1\in\cL_n^{[C^{k_\bullet}]}[w_1]$: 
%we construct an open set $U\subset\cL_n^{[C^{k_\bullet}]}$, 
%$\Gamma_1\in U$, such that, for all $\Gamma$, 
%if $\Gamma\in U$, then $\mult(\iti(\Gamma))\leq\mult(w_1)$,  
%proving our lemma.
We fix $w_1\in\Word_n$ and $\Gamma_1\in\cL_n^{[H^r]}[w_1]$, 
where we take  
\[r>r_{\bullet}(n)=\left\lfloor \left(\frac{n+1}2\right)^2\right\rfloor
=\max\{\mult_j(\sigma);\,\sigma\in S_{n+1}, j\in\nmesmo\}.\] 
We construct an open neighbourhood $U$ of $\Gamma_1$ 
in $\cL_n^{[H^r]}$ such that $\Gamma\in U$, 
$\iti(\Gamma)=w_0$ implies $\mult(w_0)\leq\mult(w_1)$. 
Write $\sing(\Gamma_1)=\{t_1<\cdots<t_\ell\}$. 
%As in Equation \eqref{eq:mj} in Section \ref{sect:multitineraries}, 
As before, for each $j\in\nmesmo$, consider the function 
$m_{\Gamma_1;j}:[0,1]\to\RR$ 
given by the southwest $j\times j$ minor of the matrix 
$\Pi(\Gamma_1(t))$. 
Set $\mu_{i,j}=\mult_j(\Gamma_1;t_i)$, 
the multiplicity of $t=t_i$ as a zero of $m_{\Gamma_1;j}(t)$, 
so that $\sum_{i}\mu_{i,j}=\mult_j(w_1)$, 
by Theorem 4 of \cite{Goulart-Saldanha0}. 
Notice that  
%for all $i\in\llbracket \ell \rrbracket$ and $j\in\nmesmo$, we have 
$m_{\Gamma_1;j}^{(\mu_{i,j})}(t_i)\neq 0$ 
for all $i,j$.  
The value of $r_\bullet(n)$ above was chosen so that 
these derivatives are all known to be continuous. 
 Take $\epsilon>0$ and disjoint open intervals $J_i\ni t_i$ 
%around the points $t_i$ 
such that, for all 
$i\in\llbracket \ell \rrbracket$ and $j\in\nmesmo$, 
we have $|m_{\Gamma_1;j}^{(\mu_{i,j})}(t_i)|>\epsilon$ 
and, for all $t\in J_i$,  
$|m_{\Gamma_1;j}^{(\mu_{i,j})}(t)|>\epsilon/2$. 
Using Theorem \ref{theo:Hausdorff}, we take an open set  
$U\in\cL_n^{[H^r]}$ containing $\Gamma_1$ such that 
$\Gamma\in U$ implies $\sing(\Gamma)\subset\cup_i J_i$ and 
$t\in J_i$ implies $|m_{\Gamma;j}^{(\mu_{i,j})}(t)|>\epsilon/4$. 
The fact that the derivative of order $\mu_{i,j}$ of $m_{\Gamma;j}$ 
has constant sign in $J_i$ implies that the number of zeroes of 
$m_{\Gamma;j}$ in $J_i$ (counted with multiplicity) is at most 
$\mu_{i,j}$. 
Now, Theorem 4 of \cite{Goulart-Saldanha0} 
implies the desired result.
%$\mult(\iti(\Gamma))\leq\mult(w_1)$, as desired.
\end{proof}

Given $\sigma\in S_{n+1}\smallsetminus\{e\}$, $w\in\Word_n$, 
we write $w\dashv\sigma\,[H^r]$ if and only if 
there exists a strictly convex curve 
$\Gamma_0:[-1,1]\to\Spin_{n+1}$ of class $H^r$ 
satisfying :
\begin{enumerate}
\item\label{item:dashvHk0}{
$\Gamma_0(-1), \Gamma_0(1) \in \Bru_\eta$,
$\sing(\Gamma_0)=\{0\}$ and $\iti(\Gamma_0)=(\sigma)$;}
\item\label{item:dashvHkepsilon}{for each $\epsilon>0$, 
there is a locally convex curve 
$\Gamma_\epsilon:[-1,1]\to\Spin_{n+1}$ of class $H^r$ 
satisfying $\iti(\Gamma_\epsilon)=w$ and 
$d^{[H^r]}(\Gamma_0,\Gamma_\epsilon)<\epsilon$.}
\end{enumerate}
% (see Equation \eqref{equation:metric}).

Of course,
for all $\sigma\in S_{n+1}\smallsetminus\{e\}$, 
there is a strictly convex curve $\Gamma_0$
satisfying Condition~\ref{item:dashvHk0}:
just take $\Gamma_0(t)=\acute\sigma\exp\left(\frac\pi2 t\fh\right)$. 
We stress though that, in the definition above, % of $\dashv\,[H^r]$, 
the curve $\Gamma_0$ need not have this special form. 
Under both Conditions \ref{item:dashvHk0} and  \ref{item:dashvHkepsilon} above,
Theorem \ref{theo:Hausdorff} implies that,
for all $\tau>0$, there is $\epsilon_\tau>0$ such that 
$\epsilon<\epsilon_\tau$ implies 
$\sing(\Gamma_\epsilon)\subset(-\tau,\tau)$. 
Also, Lemma \ref{lemma:convex} implies that 
there is $\epsilon_{\conv}>0$ such that 
$\epsilon<\epsilon_{\conv}$ implies $\Gamma_\epsilon$ 
being strictly convex.

\begin{lemma}
\label{lemma:subwordsHk}
Given $w_0,w_1\in\Word_n$, we have 
$\cL_n^{[H^r]}[w_1]\cap\overline{\cL_n^{[H^r]}[w_0]}
\neq\emptyset$ 
if and only if there are 
$\sigma_{1},\ldots,\sigma_{\ell}\in S_{n+1}\smallsetminus\{e\}$ 
and $w_{0,1},\ldots,w_{0,\ell}\in\Word_n$ such that 
$w_0=w_{0,1}\cdots w_{0,\ell}$, 
$w_1=(\sigma_{1},\ldots,\sigma_{\ell})$ and, 
for all $i\in\llbracket\ell\rrbracket$, 
$w_{0,i}\dashv\sigma_{i}\,[H^r]$. 
\end{lemma}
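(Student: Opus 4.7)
My plan is to adapt the proof of Theorem \ref{theo:poset}, with modifications to handle the $H^r$ topology; in both directions the argument reduces to local statements about arcs near singular points, captured precisely by the definition of $\dashv\,[H^r]$.

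For the ``only if'' direction, I would start from a sequence $(\Gamma_k)_{k\in\NN}$ of curves in $\cL_n^{[H^r]}[w_0]$ converging in $H^r$ to some $\Gamma_1 \in \cL_n^{[H^r]}[w_1]$, with $\sing(\Gamma_1) = \{t_1 < \cdots < t_\ell\}$. By Theorem \ref{theo:Hausdorff}, for a choice of small $\tau > 0$ and all large $k$ the set $\sing(\Gamma_k)$ decomposes into clusters, one in each open interval $J_i = (t_i - \tau, t_i + \tau)$, and setting $w_{0,i,k} = \iti(\Gamma_k|_{J_i})$ yields a factorization $w_0 = w_{0,1,k}\cdots w_{0,\ell,k}$. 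Since $w_0$ admits only finitely many factorizations into nonempty words, after extracting a subsequence I may assume $w_{0,i,k} = w_{0,i}$ for all large $k$. Shrinking $\tau$ further ensures each $\Gamma_1|_{[t_i-\tau,t_i+\tau]}$ is strictly convex with singular set $\{t_i\}$ and itinerary $(\sigma_i)$; an affine reparameterization onto $[-1,1]$ then lets this local arc and its $H^r$ approximants $\Gamma_k|_{[t_i-\tau,t_i+\tau]}$ play the roles of $\Gamma_0$ and $\Gamma_\epsilon$ in the definition of $\dashv\,[H^r]$, yielding $w_{0,i} \dashv \sigma_i\,[H^r]$.

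For the ``if'' direction, I would fix any $\Gamma_1 \in \cL_n^{[H^r]}[w_1]$ with $\sing(\Gamma_1) = \{t_1 < \cdots < t_\ell\}$ and choose $\tau > 0$ small enough that the arcs $\Gamma_1|_{[t_i - \tau, t_i + \tau]}$ are strictly convex, pairwise disjoint, and each has itinerary $(\sigma_i)$. The hypothesis $w_{0,i} \dashv \sigma_i\,[H^r]$ furnishes some strictly convex $\Gamma_0^{(i)}$ together with a family $\Gamma_\epsilon^{(i)}$ of $H^r$ curves of itinerary $w_{0,i}$ approximating $\Gamma_0^{(i)}$. A projective transformation and reparameterization send $\Gamma_0^{(i)}$ to a curve $H^r$-close to $\Gamma_1|_{[t_i-\tau,t_i+\tau]}$, and the same transformation applied to $\Gamma_\epsilon^{(i)}$ produces localized $H^r$ perturbations of $\Gamma_1$ on $[t_i-\tau,t_i+\tau]$ with itinerary $w_{0,i}$. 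My plan is then to glue these localized modifications into $\Gamma_1$ to produce globally defined $H^r$ locally convex curves of itinerary $w_0$ converging to $\Gamma_1$.

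The main obstacle will be this gluing, since the definition supplies only $H^r$-closeness, not matching of the $(r-1)$-jet at the boundary points $t_i \pm \tau$, whereas an $H^r$ concatenation requires exact jet agreement. I plan to bridge the mismatch on the two shells $[t_i-\tau, t_i-\tau/2]$ and $[t_i+\tau/2,t_i+\tau]$: by Theorem \ref{theo:Hausdorff}, for small $\epsilon$ the singular set of each local perturbation lies in the central subinterval $(t_i-\tau/2, t_i+\tau/2)$, and since $r > r_{\bullet}(n)$ the $(r-1)$-jet evaluation is continuous in the $H^r$ topology, so the jets of the perturbations at $t_i \pm \tau/2$ converge to those of $\Gamma_1$. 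On each shell I would insert a strictly convex $H^r$ arc realizing the prescribed jets at both ends, whose existence follows from Lemma \ref{lemma:convexHk} and the jet-surjectivity furnished by Lemma \ref{lemma:submersion}, and whose $H^r$-continuous dependence on the jet data allows them to converge in $H^r$ to the corresponding restrictions of $\Gamma_1$ as $\epsilon \to 0$, yielding the desired sequence in $\cL_n^{[H^r]}[w_0]$.
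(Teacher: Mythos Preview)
Your ``only if'' direction is correct and essentially identical to the paper's argument.

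Your ``if'' direction contains a genuine gap. By fixing an \emph{arbitrary} $\Gamma_1 \in \cL_n^{[H^r]}[w_1]$ and attempting to produce curves in $\cL_n^{[H^r]}[w_0]$ converging to that specific $\Gamma_1$, you are in effect trying to prove the inclusion $\cL_n^{[H^r]}[w_1]\subseteq\overline{\cL_n^{[H^r]}[w_0]}$ whenever the subword condition holds. But this inclusion is \emph{false} for $r\geq 3$: Equation~\eqref{equation:acbHk} gives $w_1=[acb]$, $w_0=cabca$ with $\cL^{[H^r]}_3[[acb]] \cap \overline{\cL^{[H^r]}_3[cabca]} \ne \emptyset$ (so $cabca\dashv[acb]\,[H^r]$ by the forward direction) yet $\cL^{[H^r]}_3[[acb]] \not\subset \overline{\cL^{[H^r]}_3[cabca]}$. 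The step that breaks is the claim that a projective transformation and reparameterization send the witness $\Gamma_0^{(i)}$ to something $H^r$-close to $\Gamma_1|_{[t_i-\tau,t_i+\tau]}$. A projective transformation can move the point $\Gamma_0^{(i)}(0)$ anywhere in $\Bru_{\eta\sigma_i}$, but it cannot in general match higher-order jet data along the arc; the $H^r$-geometry near a letter such as $[acb]$ carries invariants (cf.\ Proposition~\ref{prop:acbHk}) that projective transformations do not erase.

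The paper's proof avoids this by not fixing $\Gamma_1$ in advance. Instead it \emph{builds} a particular $\Gamma_0\in\cL_n^{[H^r]}[w_1]$ out of the given witnesses $\Gamma_{0,i}$: it applies projective transformations $U_i$ only to position endpoints so that consecutive arcs can be joined by convex bridges $\Gamma_{0,i+\frac12}$, then concatenates. The jet-matching problem at the welds is handled not by inserting $\epsilon$-dependent shells but by first invoking Lemma~\ref{lemma:dashvHkprime}, which upgrades $\dashv\,[H^r]$ so that each perturbation $\tilde\Gamma_{\epsilon,i}$ agrees \emph{exactly} with $\Gamma_{0,i}$ outside a fixed interval $[-\tau,\tau]$. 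Consequently all the concatenated curves $\Gamma_k$ coincide with $\Gamma_0$ in fixed neighborhoods of the welding points, and a single $k$-independent smoothening there yields the desired $H^r$ sequence. (Incidentally, your appeal to $r>r_\bullet(n)$ for continuity of jets is unnecessary: $(r-1)$-jet evaluation is continuous on $H^r$ by Sobolev embedding alone.)
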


In the proof of Lemma \ref{lemma:subwordsHk} we are going 
to use the following alternate characterization of the relation 
$\dashv\,[H^r]$. 

\begin{lemma}
\label{lemma:dashvHkprime}
Given $\sigma\in S_{n+1}\smallsetminus\{e\}$ and $w\in\Word_n$, 
we have $w\dashv\sigma\,[H^r]$ if and only if 
there exists a strictly convex curve 
$\Gamma_0:[-1,1]\to\Spin_{n+1}$ of class $H^r$ 
for which the following conditions hold: 
\begin{enumerate}[label={\arabic*'}.]
\item\label{item:dashvHk0prime}{
$\Gamma_0(-1), \Gamma_0(1) \in \Bru_\eta$,
$\sing(\Gamma_0)=\{0\}$ and $\iti(\Gamma_0)=(\sigma)$;}
\item\label{item:dashvHkepsilonprime}{given $\epsilon>0$,  
$\tau\in(0,1)$, 
there is a locally convex curve 
$\tilde\Gamma:[-1,1]\to\Spin_{n+1}$ of class $H^r$ 
%$\tilde\Gamma=\tilde\Gamma_{\epsilon,\tau}$,  
satisfying $\iti(\tilde\Gamma)=w$, 
$d^{[H^r]}(\Gamma_0,\tilde\Gamma)<\epsilon$ and 
$\tilde\Gamma|_{[-1,-\tau]\cup[\tau,1]}
=\Gamma_0|_{[-1,-\tau]\cup[\tau,1]}$.}
\end{enumerate}
\end{lemma}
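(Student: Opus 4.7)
The implication ``(1$'$)+(2$'$) $\Rightarrow$ $w\dashv\sigma\,[H^r]$'' is immediate: given $\Gamma_0$ satisfying Conditions (1$'$) and (2$'$), fix any $\tau \in (0,1)$ and apply (2$'$) for each $\epsilon > 0$ to produce the $\Gamma_\epsilon$ required in the definition of $w\dashv\sigma\,[H^r]$. For the converse, my plan is to fix a $\Gamma_0$ satisfying the original Conditions (1) and (2) and to show that the \emph{same} $\Gamma_0$ satisfies (2$'$). Given $\epsilon > 0$ and $\tau \in (0,1)$, I would first apply (2) with a small $\delta > 0$ (to be determined), obtaining a locally convex $H^r$-curve $\Gamma_\delta$ with $\iti(\Gamma_\delta) = w$ and $d^{[H^r]}(\Gamma_0,\Gamma_\delta) < \delta$. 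By Theorem~\ref{theo:Hausdorff}, for $\delta$ sufficiently small $\sing(\Gamma_\delta) \subset (-\tau/4,\tau/4)$, so $\Gamma_\delta$ is strictly convex on the buffer intervals $[-\tau,-\tau/4]$ and $[\tau/4,\tau]$; and since the evaluation of the first $r-1$ derivatives is $H^r$-continuous, the jets $\jet^{r-1}(\Gamma_\delta;\pm\tau/4)$ lie as close as desired to $\jet^{r-1}(\Gamma_0;\pm\tau/4)$ in $\Spin_{n+1}\times\RR^{n(r-1)}$.

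Next, my plan is to build $\tilde\Gamma$ by cut-and-paste, setting
\[ \tilde\Gamma(t) = \begin{cases}
\Gamma_0(t), & t \in [-1,-\tau]\cup[\tau,1], \\
\Gamma_\delta(t), & t \in [-\tau/4,\tau/4], \\
\Gamma_-^{\mathrm{tr}}(t), & t \in [-\tau,-\tau/4], \\
\Gamma_+^{\mathrm{tr}}(t), & t \in [\tau/4,\tau],
\end{cases} \]
where $\Gamma_\pm^{\mathrm{tr}}$ are convex $H^r$-arcs whose boundary $(r-1)$-jets match $\Gamma_0$ at $\pm\tau$ and $\Gamma_\delta$ at $\pm\tau/4$. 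Assuming such transition arcs can be chosen $H^r$-close to the corresponding restrictions of $\Gamma_0$, the glued curve $\tilde\Gamma$ will be of class $H^r$ (the $(r-1)$-jets match at all four gluing points), will coincide with $\Gamma_0$ on $[-1,-\tau]\cup[\tau,1]$ by design, and will satisfy $d^{[H^r]}(\Gamma_0,\tilde\Gamma) < \epsilon$ for small $\delta$. Its itinerary is $w$ because all singularities lie inside $(-\tau/4,\tau/4)$ where $\tilde\Gamma = \Gamma_\delta$, while the transition and outer pieces are convex: they are $H^r$-close to the strictly convex $\Gamma_0$, and strict convexity, being equivalent to ``short'' by Lemma~\ref{lemma:convex}, is a $C^0$-open condition and hence $H^r$-open.

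The hard part is the construction of $\Gamma_\pm^{\mathrm{tr}}$, which I would handle by a submersion principle. On the space of convex $H^r$-arcs on $[-\tau,-\tau/4]$ with prescribed initial $(r-1)$-jet $\bj = \jet^{r-1}(\Gamma_0;-\tau)$, consider the final-jet evaluation map $\mathrm{ev}: \Gamma \mapsto \jet^{r-1}(\Gamma;-\tau/4) \in \Spin_{n+1}\times\RR^{n(r-1)}$. Parameterizing such arcs by their positive curvature functions $\kappa = (\kappa_1,\ldots,\kappa_n) \in (H^{r-1}([-\tau,-\tau/4]))^n$ (subject to the first $r-2$ derivatives at $-\tau$ being determined by $\bj$), the differential of $\mathrm{ev}$ is surjective: perturbations of $\kappa$ supported near $-\tau/4$ can independently shift the final monodromy $\Gamma(-\tau/4) \in \Spin_{n+1}$ arbitrarily (a local submersion statement analogous to Lemma~\ref{lemma:submersion}) and prescribe any desired values for $\kappa_j(-\tau/4), \ldots, \kappa_j^{(r-2)}(-\tau/4)$. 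The implicit function theorem then yields a continuous local right inverse of $\mathrm{ev}$ near $\Gamma_0|_{[-\tau,-\tau/4]}$, whose image automatically consists of convex arcs by openness of strict convexity. Feeding $\jet^{r-1}(\Gamma_\delta;-\tau/4)$ into this right inverse produces $\Gamma_-^{\mathrm{tr}}$; as $\delta \to 0$ the target jet tends to $\jet^{r-1}(\Gamma_0;-\tau/4)$, forcing $\Gamma_-^{\mathrm{tr}} \to \Gamma_0|_{[-\tau,-\tau/4]}$ in $H^r$. The arc $\Gamma_+^{\mathrm{tr}}$ is built symmetrically. Choosing $\delta$ small enough then ensures $d^{[H^r]}(\Gamma_0,\tilde\Gamma) < \epsilon$ and completes the plan.
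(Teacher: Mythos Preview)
Your proof is correct and follows essentially the same cut-and-paste strategy as the paper: both take the original $\Gamma_0$, choose a nearby $\Gamma_\delta$ with itinerary $w$ whose singular set has been squeezed into a small central interval, and then splice in convex transition arcs on buffer intervals to restore the outer portions of $\Gamma_0$. The paper's proof is in fact terser than yours---it produces the transition arcs ``by means of projective transformations and then applying a smoothening procedure,'' deferring the details to \cite{gsie}---whereas you give a self-contained submersion/implicit-function-theorem argument to obtain $H^r$ transition arcs with prescribed boundary $(r-1)$-jets directly. Your route avoids the separate smoothening step at the cost of invoking a jet-level analogue of Lemma~\ref{lemma:submersion}; both are valid, and yours is arguably more transparent about why the glued curve lands in $H^r$.
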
  

\begin{proof}
The only nontrivial claim is: if $\Gamma_0$ satisfies 
Conditions \ref{item:dashvHk0} and \ref{item:dashvHkepsilon}, 
then it also satisfies Condition \ref{item:dashvHkepsilonprime} 
Given $\epsilon>0$ and $\tau\in(0,1)$, 
take $\Gamma_{\epsilon'}$ satisfying 
Condition~\ref{item:dashvHkepsilon} 
for $\epsilon'\in(0,\epsilon)$ such that 
$\sing(\Gamma_{\epsilon'})\subset\left(-\frac\tau2,\frac\tau2\right)$. 
The idea is now to obtain locally convex arcs 
$\Gamma_-:\left[-\tau,-\frac\tau2\right]\to\Spin_{n+1}$ and 
$\Gamma_+:\left[\frac\tau2,\tau\right]\to\Spin_{n+1}$ 
such that $\tilde\Gamma:[-1,1]\to\Spin_{n+1}$, 
given by 
\[
\tilde\Gamma(t)=
\begin{cases}
\Gamma_0(t), \quad t\in[-1,-\tau]\cup[\tau,1], \\
\Gamma_-(t), \quad t\in\left[-\tau,-\frac\tau2\right], \\
\Gamma_+(t), \quad t\in\left[\frac\tau2,\tau\right], \\
\Gamma_{\epsilon'}(t), \quad t\in\left[-\frac\tau2,\frac\tau2\right], 
\end{cases}
\]
has the desired properties. 
One may do so by producing standard convex arcs 
between $\Gamma_0(\pm\tau)$ and $\Gamma_{\epsilon'}(\pm\frac{\tau}2)$ 
by means of projective transformations and then applying a smoothening procedure. 
We omit the details (but see \cite{gsie}). 
%The possibility of doing so is an immediate consequence of
%Lemma \ref{lemma:tildejet} below.
% the following technical result. 
\end{proof}

\begin{proof}[Proof of Lemma \ref{lemma:subwordsHk}]
One direction is easy: 
write $w_1=(\sigma_{1},\ldots,\sigma_{\ell})$, 
$\sigma_i\in S_{n+1}\smallsetminus\{e\}$.  
Given $\Gamma_0\in\cL_n^{[H^r]}[w_1]$, 
let $\sing(\Gamma_0)=\{t_1<\cdots<t_\ell\}$ and 
take $\tau>0$ such that 
$\Gamma_{0,i}=\Gamma_0|_{(t_i-\tau,t_i+\tau)}$ 
is strictly convex and $\iti(\Gamma_{0,i})=(\sigma_i)$ for all $i$. 
Now, assume there is a sequence 
$(\Gamma_k)$, $k\in\NN^\ast$, 
in $\cL_n^{[H^r]}[w_0]$ such that 
$\lim_{k\to\infty}d^{[H^r]}(\Gamma_k,\Gamma_0)=0$ 
and consider the restrictions 
$\Gamma_{k,i}=\Gamma_k|_{(t_i-\tau,t_i+\tau)}$. 
By Theorem \ref{theo:Hausdorff}, for 
sufficiently large $k$, we can assume 
$\sing(\Gamma_{k,i})\subset(t_i-\tau,t_i+\tau)$ for all $i$. 
For each $k$, write $\iti(\Gamma_{k,i})=w_{0,i}^k$ so that 
$w_0=w_{0,1}^k\cdots w_{0,\ell}^k$. 
Since there are finitely many decompositions of $w_0$ in subwords, 
we take a subsequence and assume $(\Gamma_k)$ is such that 
there are fixed subwords 
$w_{0,1},\ldots,w_{0,\ell}\in\Word_n$ with 
$\iti(\Gamma_{k,i})=w_{0,i}$ for all $k$. 
It follows that $w_{0,i}\dashv \sigma_i\,[H^r]$ for all $i$. 
Now for the reciprocal, let 
$\sigma_1,\ldots,\sigma_\ell\in S_{n+1}\smallsetminus\{e\}$ and  
$w_{0,1},\ldots,w_{0,\ell}\in\Word_n$ be such that 
$w_{0,i}\dashv\sigma_i\,[H^r]$ for all $i\in\llbracket\ell\rrbracket$. 
Also, fix once and for all a single $\tau\in(0,1)$ and, 
for each $i$ and each $\epsilon>0$, 
let $\Gamma_{0,i}$, 
$\iti(\Gamma_{0,i})=(\sigma_i)$, and 
$\tilde\Gamma_{\epsilon,i}$, 
$\iti(\tilde\Gamma_{\epsilon,i})=w_{0,i}$, 
be as in Conditions \ref{item:dashvHk0}' and 
\ref{item:dashvHkepsilon}' of 
Lemma \ref{lemma:dashvHkprime} above. 
We shall produce from these ingredients a 
locally convex curve $\tilde\Gamma_0\in\cL_n^{[H^r]}[w_1]$, 
$w_1=(\sigma_1,\ldots,\sigma_\ell)$, 
and a sequence of locally convex curves 
$\tilde\Gamma_k\in\cL_n^{[H^r]}[w_0]$, $k\in\NN^\ast$, 
$w_0=w_{0,1}\cdots w_{0,\ell}$, such that 
$\lim_{k\to\infty}d^{[H^r]}(\tilde\Gamma_k,\tilde\Gamma_0)=0$. 
We begin by setting $\tilde\Gamma_{k,i}=\tilde\Gamma_{\frac1k,i}$ 
(i.e., take $\epsilon=\frac1k$) for all $i\in\llbracket\ell\rrbracket$, 
and all $k\in\NN^\ast$. 
Let $q_0=1$, $q_1=\hat\sigma_1$, 
$q_2=\hat\sigma_1\hat\sigma_2$, 
\ldots, $q_\ell=\hat\sigma_1\cdots\hat\sigma_\ell=\hat w_1
\in\Quat_{n+1}$. 
We have, 
%By discarding the initial terms of the sequences $(\Gamma_{s,i})$, 
%we may assume without loss that there is $\tau>0$ such that, 
for all $i$ and all $k$, 
%$\Gamma_{0,i}[[-1,0)]\subset\Bru_{\acute\eta q_{i-1}}$, 
%$\Gamma_{0,i}[(0,\tau]]\subset\Bru_{\acute\eta q_i}$, 
%$\sing(\Gamma_{s,i})\subset(-\tau,\tau)$, 
%$\Gamma_{s,i}(-\tau)\in\Bru_{\acute\eta q_{i-1}}$, 
%$\Gamma_{s,i}(\tau)\in\Bru_{\acute\eta q_i}$. 
$\tilde\Gamma_{k,i}(-1)=\Gamma_{0,i}(-1)
\in\Bru_{\acute\eta q_{i-1}}=\cU_{\acute\eta q_{i-1}}$ and  
$\tilde\Gamma_{k,i}(1)=\Gamma_{0,i}(1)
\in\Bru_{\acute\eta q_i}=\cU_{\acute\eta q_i}$. 
Choose recursively a sequence of matrices 
$U_1,\ldots,U_\ell\in\Up^1_{n+1}$ such that 
$1\ll
\Gamma^{U_1}_{0,1}(-1)$,  
$\Gamma^{U_1}_{0,1}(1)q^{-1}_1
\ll 
\Gamma^{U_2}_{0,2}(-1)q^{-1}_1$,  
\ldots, 
$\Gamma^{U_{\ell-1}}_{0,{\ell-1}}(1)q^{-1}_{\ell-1}
\ll 
\Gamma^{U_\ell}_{0,\ell}(-1)q^{-1}_{\ell-1}$, 
$\Gamma^{U_\ell}_{0,\ell}(1)q^{-1}_{\ell}
\ll
\acute\eta\hat w_1\acute\eta q^{-1}_{\ell}$   
(where $\ll$ is the relation of accessibility in 
$\Bru_{\acute\eta}$ defined in Section \ref{sect:acc}; 
see also Lemma 5.3 of \cite{Goulart-Saldanha0}). 
We now fix, for all 
$i\in\llbracket\ell-1\rrbracket$ and all $k$, 
smooth strictly convex arcs 
$\Gamma_{0,i+\frac12}:[-1,1]\to\Bru_{\acute\eta q_i}$ 
such that $\Gamma_{0,i+\frac12}(-1)=
\Gamma_{0,i}^{U_i}(1)$ and 
$\Gamma_{0,i+\frac12}(1)
=\Gamma_{0,i+1}^{U_{i+1}}(-1)$. 
Of course, there are smooth strictly convex arcs 
$\Gamma_{0,\frac12}:[-1,1]\to\Bru_{\acute\eta}$ and 
$\Gamma_{0,\ell+\frac12}:[-1,1]\to\Bru_{\acute\eta\hat w_1}$ 
such that $\Gamma_{0,\frac12}(-1)=1$, 
$\Gamma_{0,\frac12}(1)=\Gamma_{0,1}^{U_1}(-1)$, 
$\Gamma_{0,\ell+\frac12}(-1)=\Gamma_{0,\ell}^{U_\ell}(1)$ and 
$\Gamma_{0,\ell+\frac12}(1)=\acute\eta\hat w_1\acute\eta$.
Now, consider the concatenations   
%\begin{equation}
%\begin{gathered}
$\Gamma_0=
\Gamma_{0,\frac12}\ast
\Gamma_{0,1}^{U_1}\ast
\Gamma_{0,\frac32}\ast
\cdots\ast
\Gamma_{0,\ell}^{U_\ell}\ast
\Gamma_{0,\ell+\frac12}$ and  
%\\ 
$\Gamma_k=
\Gamma_{0,\frac12}\ast
\tilde\Gamma_{k,1}^{U_1}\ast
\Gamma_{0,\frac32}\ast
\cdots\ast
\tilde\Gamma_{k,\ell}^{U_\ell}\ast
\Gamma_{0,\ell+\frac12}$,  
%\end{gathered}
%\end{equation}
for each $k\in\NN^\ast$ 
(the same piecewise affine reparameterization onto $[0,1]$ is used
in all these concatenations). 
These locally convex curves are of class $H^r$ 
except at the finitely many welding points 
$0<\tau_1<\tau_2<\cdots<\tau_{2\ell}<1$, 
all of them far away from the singular sets 
$\sing(\Gamma_0), \sing(\tilde\Gamma_s)\subset(0,1)$. 
Also, notice that, by Condition \ref{item:dashvHkepsilon}' of 
Lemma \ref{lemma:dashvHkprime}, 
satisfied by the sequences $\tilde\Gamma_{k,i}$ 
(recall we have fixed $\tau>0$ right from the start), 
there is $\delta>0$ such that, for all $j\in\llbracket\ell\rrbracket$,  
we have 
$\Gamma_0|_{[\tau_j-\delta,\tau_j+\delta]}
=\tilde\Gamma_k|_{[\tau_j-\delta,\tau_j+\delta]}$ 
for all $k\in\NN^\ast$. 
Apply a standard smoothening procedure
% Lemma \ref{lemma:smoothen}
to each one of 
these $\ell$ arcs (of class $H^r$ except at $t=\tau_j$),   
obtaining the corresponding locally convex arcs  
$\tilde\Gamma_{0,j}:[\tau_j-\delta,\tau_j+\delta]\to\Spin_{n+1}$
of class $H^r$ (coinciding with the original ones 
on an initial and on a final segment). 
It is now easily seen that the maps 
$\tilde\Gamma_k:[0,1]\to\Spin_{n+1}$, 
$k\in\NN$, %given by 
\[
\tilde\Gamma_k(t)=
\begin{cases}
\tilde\Gamma_{0,j}(t), \quad t\in[\tau_j-\delta,\tau_j+\delta], \\
\Gamma_k(t), \quad t\in[0,1]
\smallsetminus\left(\sqcup_j[\tau_j-\delta,\tau_j+\delta]\right)
\end{cases}
\]
satisfy all the desired properties. 
\end{proof}

\begin{coro}
[of Theorem \ref{theo:multHk} and Lemma \ref{lemma:subwordsHk}]
\label{coro:subwordsHk}
Given $w_0,w_1\in\Word_n$, 
if there are 
$\sigma_{1},\ldots,\sigma_{\ell}\in S_{n+1}\smallsetminus\{e\}$ 
and $w_{0,1},\ldots,w_{0,\ell}\in\Word_n$ such that 
$w_0=w_{0,1}\cdots w_{0,\ell}$, 
$w_1=(\sigma_{1},\ldots,\sigma_{\ell})$ and  
for all $i\in\llbracket\ell\rrbracket$, 
$w_{0,i}\dashv\sigma_{i}\,[H^r]$, 
then, $\mult(w_0)\leq\mult(w_1)$. 
\end{coro}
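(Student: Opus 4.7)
The plan is to chain together the two results cited in the attribution. The hypothesis on $w_0$ and $w_1$ is precisely the combinatorial condition on the right-hand side of Lemma \ref{lemma:subwordsHk}. Applying that lemma directly translates the hypothesis into the geometric statement $\cL_n^{[H^r]}[w_1]\cap\overline{\cL_n^{[H^r]}[w_0]}\neq\emptyset$, i.e., the existence of a curve $\Gamma_1\in\cL_n^{[H^r]}[w_1]$ that is a limit (in the $H^r$ topology) of a sequence of curves in $\cL_n^{[H^r]}[w_0]$.

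Once we are in this situation, and provided $r>r_{\bullet}(n)$ so that the required derivatives of the minors $m_{\Gamma;j}$ are continuous in the $H^r$ topology, Theorem \ref{theo:multHk} applies and yields directly $\mult(w_0)\leq\mult(w_1)$, which is the desired conclusion.

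The only step that requires any care is the regularity hypothesis: the corollary as stated is meaningful only for $r>r_{\bullet}(n)$, since both the definition of $\dashv\,[H^r]$ and the conclusion of Theorem \ref{theo:multHk} (which hinges on controlling $\mu_{i,j}$-th derivatives of southwest minors by Hausdorff-continuity of the singular set) implicitly rely on that bound. There is no genuine obstacle here: the argument is a two-line chaining of the preceding results and no new machinery needs to be introduced.

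\begin{proof}[Proof of Corollary \ref{coro:subwordsHk}]
Assume $r>r_{\bullet}(n)$. By Lemma \ref{lemma:subwordsHk}, the existence of the decomposition $w_0=w_{0,1}\cdots w_{0,\ell}$ with $w_{0,i}\dashv\sigma_i\,[H^r]$ for every $i\in\llbracket\ell\rrbracket$ implies
\[ \cL_n^{[H^r]}[w_1]\cap\overline{\cL_n^{[H^r]}[w_0]}\neq\emptyset. \]
Theorem \ref{theo:multHk} then gives $\mult(w_0)\leq\mult(w_1)$.
\end{proof}
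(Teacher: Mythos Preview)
Your proof is correct and follows exactly the approach the paper intends: the corollary is stated without proof in the paper, as it is an immediate chaining of Lemma \ref{lemma:subwordsHk} (to pass from the combinatorial hypothesis to the nonempty intersection) and Theorem \ref{theo:multHk} (to conclude $\mult(w_0)\le\mult(w_1)$). Your remark that the assumption $r>r_\bullet(n)$ is implicitly needed is a valid and useful clarification.
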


%\begin{rem}
%\label{rem:acbHk}
%The following technical result implies that 
%the intersections above are nonempty in either 
%version $\ast=[H^k]$. 
%It can be applied 
%to uniformly smoothen out the 
%locally convex curves $\Gamma_\bfx$ of the 
%tranversal sections $\phi:\DD^d\to\cL_n^{[H^1]}$ 
%just constructed,   
%yielding tranversal sections 
%$\tilde\phi:\DD^d\to\cL_n^{[H^k]}$, $k>1$. 
%Moreover, $\tilde\phi$ is a $C^{k-1}$ map and, 
%for each $\bfx\in\DD^d$, 
%the locally convex curve 
%$\tilde\Gamma_\bfx=\tilde\phi(\bfx)$ 
%coincide with $\Gamma_\bfx=\phi(\bfx)$ 
%in an open interval containing the singular set. 
%%In this particular sense, 
%%these $[H^k]$ sections are explicit. 
%\end{rem}

%MAYBE THE FOLLOWING CONJECTURE SHOULD 
%WAIT FOR THE NEXT PAPER

The following statement is already known to be true for $n=2$. 

\begin{conj}
\label{conj:dashv}
For $\dim(\sigma)=\inv(\sigma)-1<n$ and all $w\in\Word_n$, 
we have $w\dashv\sigma\,[H^1]$ (i.e., $w\tok(\sigma)$) if and only if 
$w\dashv\sigma\,[H^r]$ for all $r>2$.
\end{conj}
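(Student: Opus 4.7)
One direction is essentially formal. If $w\dashv\sigma\,[H^r]$ for some $r>2$, take the witnessing strictly convex curve $\Gamma_0$ and the sequence $(\Gamma_\epsilon)$ from the definition; since the $H^r$ topology refines the $H^1$ topology, $\Gamma_0$ is an $H^1$ curve, and $d^{[H^1]}(\Gamma_0,\Gamma_\epsilon)\leq C\,d^{[H^r]}(\Gamma_0,\Gamma_\epsilon)$, so the same data witnesses $w\dashv\sigma\,[H^1]$. (Equivalently, by Lemma~\ref{lemma:preclosure}, the $H^1$ direction amounts to $w\tok(\sigma)$.)

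For the hard direction, the plan is to exploit the explicit algebraic transversal sections of Section \ref{sect:transversal}. Assume $w\tok(\sigma)$ and fix $\sigma\in S_{n+1}$ with $d=\dim(\sigma)=\inv(\sigma)-1<n$. Consider a family $\phi_u:\DD^d\to\cL_n$, $u\in(-\epsilon,\epsilon)$, generalizing the construction of Examples \ref{example:betaprime} and \ref{example:perturbedtransversalsectionacb}: each $\phi_u(\bfx)$ is obtained from an explicit polynomial matrix $M(\bfx,u,t)\in(\RR[\bfx,u,t])^{(n+1)\times(n+1)}$ via $\Gamma_{\bfx,u}(t)=\bQ(M(\bfx,u,t))$, so that every curve in the image is algebraic, hence of class $H^r$ for every $r$. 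Take $\Gamma_0=\phi_0(0)\in\cL_n^{[H^r]}[(\sigma)]$. The itinerary of $\phi_u(\bfx)$ is read off from the multiplicity stratification of the common real-zero set of the southwest minors $m_j(\bfx,u,t)$, so classifying the itineraries that occur in a neighborhood of $(\bfx,u)=(0,0)$ reduces to a finite enumeration of connected components of the complement of the discriminant-resultant variety.

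The hard direction would then follow from the combinatorial claim: for every $w\tok(\sigma)$, there exists a sequence $(\bfx_k,u_k)\to(0,0)$ with $\iti(\phi_{u_k}(\bfx_k))=w$, since $\phi_{u_k}(\bfx_k)\to\Gamma_0$ in $H^r$ (the derivatives of $M$ in $(\bfx,u)$ are uniformly bounded on compact sets). I would establish this claim by induction on $\ell(w)$ and on $\inv(\sigma)$, using Theorem \ref{theo:poset}\ref{item:subwords} to split $w$ into subwords $\tilde w_i$ with $\tilde w_i\tok(\sigma_i)$, where each $\sigma_i$ satisfies $\inv(\sigma_i)\leq\inv(\sigma)$, and Theorem \ref{theo:multHk} to narrow the space of candidate itineraries via the constraint $\mult(w)\leq\mult(\sigma)$. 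The rôle of the perturbation parameter $u$ is illustrated by Example \ref{example:betaprime}: in the unperturbed transversal section of Example \ref{example:transversalsectionacb} for $\sigma=acb$, the itineraries $acbac$ and $cabca$ sit on the two sides of a cusp and only $u\neq 0$ resolves one of them, but \emph{both} are realized as $u$ varies.

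The principal obstacle is this combinatorial statement itself: controlling how the algebraic degeneration of the minor variety at $(\bfx,u)=(0,0)$ encodes the partial order $\tok$. The dimension hypothesis $\dim(\sigma)<n$ should enter precisely here, via the observation that when $\inv(\sigma)\leq n$ the southwest minors $m_1,\ldots,m_n$ share at most $\inv(\sigma)<n$ real roots at $t=0$, leaving at least one minor essentially unconstrained locally; this extra freedom corresponds to the parameter $u$ and should suffice to realize every combinatorially allowed neighbor. I expect that, for $\dim(\sigma)=n$ (i.e.~$\inv(\sigma)=n+1$), the analogous algebraic flexibility can fail, which would explain why the conjecture is phrased with the strict inequality, and why in the borderline Whitney-defective examples like $[acb]$ in $S_4$ (where the conjecture does hold, as $\dim(acb)=2<3=n$), the pathology is only detected at the level of \emph{inclusions} of closures rather than \emph{nonempty intersections}.
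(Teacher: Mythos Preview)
This statement is a \emph{conjecture} in the paper, not a theorem; the paper offers no proof (it only notes the case $n=2$ is known). So there is no paper proof to compare against, and your proposal must be assessed on its own merits as a proof attempt.

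The easy direction is fine. For the hard direction, the plan has a genuine gap. Your proposed induction via Theorem~\ref{theo:poset}\ref{item:subwords} does not get off the ground: when $w_1=(\sigma)$ is a single letter, condition~\ref{item:subwords} reads ``there exists a nonempty $\tilde w_1$ with $w_0=\tilde w_1$ and $\tilde w_1\tok(\sigma)$'', which is just $w_0\tok(\sigma)$ again --- no reduction in $\ell(w)$ or $\inv(\sigma)$ occurs. Nothing in the paper decomposes the single letter $\sigma$ into smaller pieces in a way that would drive such an induction.

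More fundamentally, your ``combinatorial claim'' --- that every $w\tok(\sigma)$ is realized inside some finite-parameter algebraic family $\phi_u$ near $(0,0)$ --- is the conjecture itself in disguise. You illustrate it for $\sigma=acb$ using one extra parameter $u$, but you give no construction of the perturbation space for general $\sigma$, no argument that a finite-dimensional perturbation suffices, and no mechanism relating the stratification of the discriminant--resultant variety to the poset $\tok$. Proposition~\ref{prop:acbHk} already shows that even in the $acb$ case the picture is delicate: a \emph{fixed} $\Gamma_0$ with $u(\Gamma_0)>0$ sees $acbac$ but not $cabca$ among its $H^r$-close perturbations, so the needed $w$ only appears after moving the basepoint $\Gamma_0$ itself. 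Your heuristic for where the hypothesis $\dim(\sigma)<n$ enters (``one minor essentially unconstrained'') does not translate into a usable degree-of-freedom count; note for instance that the total multiplicity $\sum_j\mult_j(\sigma)=\inv(\sigma)$ can still force all $n$ minors to vanish simultaneously even when $\inv(\sigma)\le n$. In short, the proposal identifies a plausible line of attack but does not supply the missing idea that would turn it into a proof.
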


We now discuss in greater detail the example $[acb]$, 
with emphasis on the $H^r$ metric, $r\geq3$.   
This example has already been 
mentioned in Equation \eqref{equation:acbHk} 
in the Introduction and in Examples 
\ref{example:transversalsectionacb} and 
\ref{example:perturbedtransversalsectionacb}. 
%\begin{example}
%\label{example:acbHk}
We already know from Example 
\ref{example:perturbedtransversalsectionacb} 
(via Lemma \ref{lemma:subwordsHk}) that 
$\cL_3^{[H^r]}[[acb]]\cap\overline{\cL_3^{[H^r]}[cabca]}
\neq\emptyset$, for all $r$.
By Theorem \ref{theo:poset} 
(or Lemma \ref{lemma:preclosure}), 
we have the inclusion  
$\cL_3^{[H^1]}[[acb]]\subset\overline{\cL_3^{[H^1]}[cabca]}$. 
% We now produce a class of examples of locally convex curves 
% $\Gamma\in\cL_3^{[H^k]}[[acb]]$ %such that there is
% and open neighborhoods $U\subset\cL_3^{[H^k]}$, 
% $\Gamma\in U$, with 
% $U\cap\cL_3^{[H^k]}[cabca]=\emptyset$. 
% %In particular, 
% %$\cL_3^{[H^5]}[[acb]]\not\subset\overline{\cL_3^{[H^5]}[acbac]}$. 

\begin{prop}
\label{prop:acbHk}
Take $n=3$ and $r \ge 3$. 
% $w=(\sigma)$
There exists a continuous function
$u: \cL_3^{[H^r]}[[acb]] \to \RR$
with the following properties:
\begin{enumerate}
\item{The set $u^{-1}[\{0\}] \subset \cL_3^{[H^r]}[[acb]]$
is a non empty closed subset and
a topological submanifold of codimension $1$.}
\item{The function $u$ is topologically transversal to
$u^{-1}[\{0\}] \subset \cL_3^{[H^r]}[[acb]]$.}
\item{If $\Gamma_0 = \phi_u(0)$ for $\phi_u$
as in Example \ref{example:betaprime} then
$u(\Gamma_0) = u$.}
\item{If $\Gamma_0 \in \cL_3^{[H^r]}[[acb]]$
and $u(\Gamma_0) > 0$ then
the left hand side of Figure \ref{fig:planebetaprime} 
provides a local topological normal form for the itinerary 
of locally convex curves near the curve $\Gamma$. }
\item{Conversely, if $u(\Gamma_0) < 0$ then
the right hand side of Figure \ref{fig:planebetaprime} 
provides a local topological normal form.}
\end{enumerate}
\end{prop}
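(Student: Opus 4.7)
My plan is to build $u$ from first-derivative data of the spin curvatures $\kappa_j$ at the singular time, to derive items (1)--(3) as near-immediate consequences of its smoothness, and to handle the normal form claims in items (4)--(5) by bootstrapping from the explicit transversal of Example \ref{example:betaprime}. For $\Gamma_0 \in \cL_3^{[H^r]}[[acb]]$ with $r \ge 3$, $\sing(\Gamma_0) = \{t_\ast\}$ is a singleton, and Theorem \ref{theo:mult} of \cite{Goulart-Saldanha0} gives $m_{2,\Gamma_0}'(t_\ast) \ne 0$; hence $t_\ast = t_\ast(\Gamma_0)$ is a continuous function on $\cL_3^{[H^r]}[[acb]]$ via the implicit function theorem. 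Since $\Gamma_0 \in H^r$ with $r \ge 3$, each curvature $\kappa_j = \kappa_j(\Gamma_0;\cdot)$ lies in $H^{r-1} \hookrightarrow C^{r-2}$, so $\kappa_j'(t_\ast)$ is continuous in $\Gamma_0$. I will then set
\[ u(\Gamma_0) = \Lambda\!\left( \kappa_1(t_\ast),\kappa_2(t_\ast),\kappa_3(t_\ast),\kappa_1'(t_\ast),\kappa_3'(t_\ast) \right), \]
where $\Lambda$ is a specific smooth function (essentially a linear combination of $\kappa_1'/\kappa_1$ and $\kappa_3'/\kappa_3$ with weights depending smoothly on $\kappa_j(t_\ast)$) pinned down by the normalization $u(\phi_u(0)) = u$ from Example \ref{example:betaprime}; fixing $\Lambda$ reduces to the routine but fiddly task of tracing the passage from the triangular coefficients $\beta_j$ of $\Gamma_{0;u}$ in that example to the curvatures $\kappa_j$ of $\bQ_{\acute\eta\acute\sigma} \circ \Gamma_{0;u}$.

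Items (1) and (2) will then follow from item \ref{item:theo:stratification:Hklarge} of Theorem \ref{theo:stratification}: this tells me that $\cL_3^{[H^r]}[[acb]]$ is an embedded $C^{r-1}$ submanifold of $\cL_3^{[H^r]}(\acute\eta\hat\sigma\acute\eta)$, and $u$ inherits regularity $C^{r-2}$. To check that $u$ is a submersion at every point of $u^{-1}\{0\}$, for $\Gamma_1 \in u^{-1}\{0\}$ I will transport the $u$-family of Example \ref{example:betaprime} to a neighborhood of $\Gamma_1$ via a projective transformation of type \ref{item:projUp} normalizing $\Gamma_1(t_\ast) = \acute\eta\acute\sigma$; along this deformation $u$ varies linearly to leading order, so its derivative is non-zero. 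The regular value theorem then supplies the codimension-$1$ submanifold $u^{-1}\{0\}$, and topological transversality is immediate.

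The main obstacle lies in items (4) and (5). I plan to construct, in a neighborhood of $\Gamma_0 \in \cL_3^{[H^r]}[[acb]]$, a local trivialization of the full space $\cL_3^{[H^r]}$ by combining the tubular neighborhood from the proof of Theorem \ref{theo:stratification} with a continuously varying family (parameterized by $\Gamma_0$) of two-parameter transversals modelled on $\phi_{u(\Gamma_0)}$ of Example \ref{example:betaprime}. After projectively normalizing $\Gamma_0(t_\ast) = \acute\eta\acute\sigma$, the curve $\Gamma_0$ is close in $H^r$ to the centre $\phi_{u_0}(0)$ of the model transversal with $u_0 = u(\Gamma_0)$, and the combinatorial stratification pattern in the model is encoded by the signs of the discriminants and resultants of $m_1, m_2, m_3$ near $(x_1,x_2) = 0$, which by the explicit polynomial formulas in Example \ref{example:betaprime} depend only on the sign of $u_0$. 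The delicate step is to verify that these sign patterns, and in particular the presence of $acbac$ versus $cabca$, are stable under the $H^r$-small perturbation from $\phi_{u_0}(0)$ to $\Gamma_0$: this is a Rolle-type count on the zeroes of $m_1, m_2, m_3$ in a small interval around $t_\ast$, analogous to but more refined than the argument of Theorem \ref{theo:multHk}. The $u_0 < 0$ case is symmetric, with the roles of $acbac$ and $cabca$ exchanged.
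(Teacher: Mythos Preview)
Your overall architecture is sound and close in spirit to the paper's, but you make life harder than necessary by working with the spin curvatures $\kappa_j$ rather than with the triangular coefficients $\beta_j$. The paper passes immediately to a \emph{local triangular presentation}: write $\Gamma_L = \bL_{\acute\eta\acute\sigma}\circ\Gamma$ near $t_\ast$, reparametrize so that $\beta_2 \equiv 1$, and set
\[
u(\Gamma) = \frac{b_3\,\beta_1'(0) - b_1\,\beta_3'(0)}{2 b_1 b_3}, \qquad b_j = \beta_j(0).
\]
In these coordinates $u$ is explicit, items (1)--(3) are immediate (in particular (3) is a one-line check against Example~\ref{example:betaprime}), and you avoid the ``routine but fiddly'' passage from $\beta_j$ to $\kappa_j$ that you never actually perform. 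Since item (3) demands the exact normalization $u(\phi_u(0)) = u$, leaving $\Lambda$ undetermined is a genuine gap, not just a deferred computation.

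The more substantive gap is in your plan for items (4)--(5). Your claim that, after projective normalization, $\Gamma_0$ is ``close in $H^r$ to $\phi_{u_0}(0)$'' is false: the model curves have very special (linear) $\beta_j$, and a general $\Gamma_0$ agrees with $\phi_{u_0}(0)$ only to first order at $t_\ast$. So a perturbation-stability argument from the model transversal does not suffice; you must show directly that the sign of $u$ alone controls the combinatorics. The paper does this by a concrete root-comparison that you do not have: for a perturbation $\tilde\Gamma$ with minors $\tilde m_j$ and parameters $(x_1,x_2)$, it slices the fourth quadrant by segments $(x_1,x_2) = (2s\tilde b_3 c,\, -2(1-s)\tilde b_1 c)$, observes that at $s=\tfrac12$ one has $\tilde b_3\tilde m_1(0) = \tilde b_1\tilde m_3(0)$, and then uses $u>0$ (i.e.\ $\tilde b_3\tilde\beta_1' > \tilde b_1\tilde\beta_3'$ near $0$) to compare $\tilde b_3\tilde m_1(t)$ with $\tilde b_1\tilde m_3(t)$ and pin down the root order as $acbac$. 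Monotonicity in $s$ then forces the full pattern of Figure~\ref{fig:planebetaprime}. Your ``Rolle-type count'' gestures toward this but misses the weighted comparison $\tilde b_3\tilde m_1$ versus $\tilde b_1\tilde m_3$, which is the idea that makes the argument go through for \emph{arbitrary} $\Gamma_0$ rather than just the model family.
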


The concept of a local topological normal form needs clarification.
Let $\mathbf{H}$ be an infinite dimensional separable Hilbert space.
We claim, for instance, that if $u(\Gamma_0) > 0$
then there exists a neighborhood $W \subset \cL_3^{[H^r]}$
of $\Gamma_0$
and a homeomorphism $\tilde\psi: \RR^2 \times \mathbf{H} \to W$
such that the itinerary of $\tilde\psi(x_1,x_2,\ast)$
is given by $(x_1,x_2)$ in the left hand side of Figure \ref{fig:planebetaprime}.

Notice, in particular, that if a curve $\Gamma\in\cL_3^{[H^r]}$, 
$r\geq3$, has a letter $[acb]$ in its itinerary and  
$u>0$, then there exist perturbations 
$\tilde \Gamma$ of $\Gamma$ where the letter $[acb]$ 
splits into the string $acbac$ 
but there are no perturbations $\tilde\Gamma$ 
of $\Gamma$ where $[acb]$ becomes $cabca$. 
Similarly, for $u<0$, the letter $[acb]$ can become 
$cabca$, but not the itinerary $acbac$. 

\begin{rem}
\label{rem:betaprime}
Figure \ref{fig:betaprime} shows the itineraries
near a specific curve 
$\Gamma_0 \in \cL_3^{[H^r]}[[acb]]$
with $u(\Gamma_0) = 0$.
In fact, Figure \ref{fig:betaprime} provides
a local topological normal form
near all such curves.
The proof of this last fact shall not be given,
but is similar (but more laborious)
than that of Proposition \ref{prop:acbHk}.
\end{rem}

\begin{figure}[h]
\def\svgwidth{15cm}
%\centerline{\input{betaprime.pdf_tex}}
\centerline{\includegraphics[width=15cm]{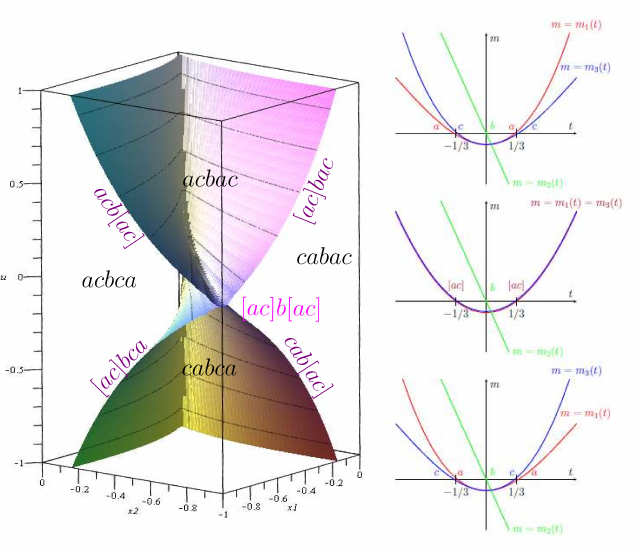}}
\caption{We set 
$\beta_1(t)=1+ut$, $\beta_2(t)=1$, $\beta_3(t)=1-ut$.  
Left: zero locus of $u^{-1}r_{1,3}(x_1,x_2)$ 
and adjacent itineraries. 
%$(\bfx,u)\in\RR^2\times\RR\mapsto\Gamma_{\bfx;u}\in\cL^{[H^k]}_3$ 
%to %both 
%$\cL^{[H^k]}_3[[acb]]$. %and $\cL^{[H^k]}_3[[ac]b[ac]]$. 
Right: graphs of $m_j(t)$ %, $m_2(t)$ and $m_3(t)$ 
for $x_1=x_2=-1/18$ 
and $u=1/2$, $u=0$ and $u=-1/2$. 
%respectively.
}
\label{fig:betaprime}
\end{figure}

\goodbreak

% Given $r,n\in\NN^\ast$, $n\geq 2$, 
% consider the Hilbert manifold 
% $\mathcal{H}^r_n=\Spin_{n+1}\times\,\mathbf{H}^{r,n}$, 
% (we adopt the notation from Section \ref{subsect:Hilbert}). 
% This space is homeomorphic to the space of 
% locally convex curves of class $H^r$ with free endpoints, 
% i.e., the disjoint union 
% \[\cL^{[H^r]}_n(\cdot\,;\cdot)=\bigsqcup_{z_0\in\Spin_{n+1}}\cL_n^{[H^r]}(z_0;\cdot),\] 
% via $\Psi:\mathcal{H}^r_n\to\cL^r_n$, where 
% $\Gamma=\Psi(z_0;\xi_1,\ldots,\xi_n)$ is the 
% locally convex curve of class $H^r$ obtained by 
% integrating the the IVP 
% $\Gamma'(t)=\Gamma(t)\Lambda_\xi(t)$, 
% $\Gamma(0)=z_0$, where 
% $\Lambda_\xi(t)=\sum_{j\in\nmesmo} \kappa_j(t)\fa_j$ 
% is obtained from the functions $\xi_j$ via 
% Equation \eqref{equation:xis}. 
% The metric we adopt in $\cL^{[H^r]}_n(\cdot\,;\cdot)$ is the one given by 
% Equation \eqref{equation:metric}. 
% Notice that the point-set inclusion 
% $\cL_n^{[H^r]}\hookrightarrow\cL^{[H^r]}_n(\cdot\,;\cdot)$ 
% is an isometry. 

%Let $\acute\eta\acute\sigma=\acute\eta\acute\sigma\in\widetilde\B^+_4$ and 
%$\beta_1,\beta_2,\beta_3:[-\epsilon,\epsilon]\to(0,+\infty)$ be 
%functions with continuous derivatives. 
%Write the unique solution $\Gamma:[-\epsilon,\epsilon]\to \acute\eta\acute\sigma\Lo^1_4$ 
%to the IVP 
%\[\Gamma'(t)=
%\Gamma(t)(\beta_1(t)\fl_1+\beta_2(t)\fl_2+\beta_3(t)\fl_3), \quad 
%\Gamma(0)=\Pi(\acute\eta\acute\sigma)\] 

\begin{proof}[Proof of Proposition \ref{prop:acbHk}]
Consider a locally convex curve  
$\Gamma\in\cL^{[H^r]}_3(\cdot\,;\cdot)$ of class $H^r$
with $\iti(\Gamma)=[acb]$ 
and \emph{local triangular presentation} 
$\Gamma_L:[-\epsilon,\epsilon]\to \acute\eta\acute\sigma\Lo^1_4$, 
\begin{equation*}
\label{equation:gij}
\Gamma_L(t)=
\begin{pmatrix}
-g_{1,1}(t) & -1 & 0 & 0 \\
-g_{2,1}(t) & -g_{2,2}(t) & -g_{2,3}(t) & -1 \\ 
-1 & 0 & 0 & 0 \\ 
g_{4,1}(t) & g_{4,2}(t) & 1 & 0 
\end{pmatrix} 
\end{equation*}
with logarithmic derivative 
$(\Gamma_L(t))^{-1}\Gamma'_L(t)=
\beta_1(t)\fl_1+\beta_2(t)\fl_2+\beta_3(t)\fl_3$, i.e., 
\begin{equation}
\label{equation:IVPgij}
\begin{gathered}
g'_{1,1} =\beta_1, \quad
g'_{2,1} =g_{2,2}\beta_1, \quad 
g'_{2,2}=g_{2,3}\beta_2, \quad 
g'_{2,3}=\beta_3, \\
g'_{4,1} =g_{4,2}\beta_1, \quad 
g'_{4,2}=\beta_2. 
\end{gathered}
\end{equation} 

Let as before $m_j=m_j(t)$ be the $j\times j$ southwest minor 
of $\Gamma_L(t)$. %(also of $\Gamma(h(t))$). %for $j=1,2,3$. 
We have 
\begin{equation}
\label{equation:mj}
\begin{gathered}
m_1=g_{4,1}, \quad
m'_1=g_{4,2}\beta_1, \quad
% m''_1=\beta_1\beta_2+g_{4,2}\beta'_1 \\ 
%\label{equation:m2}
m_2=-g_{4,2}, \quad
m'_2=-\beta_2, \\
% m''_2=-\beta'_2 \\ 
%\label{equation:m3}
m_3=g_{4,2}g_{2,3}-g_{2,2}, \quad
m'_3=g_{4,2}\beta_3.
% m''_3=\beta_2\beta_3+g_{4,2}\beta'_3.
\end{gathered}
\end{equation}

The so called \emph{local triangular presentation} 
is obtained as follows. 
Fix a compact subinterval $J\subset(0,1)$ containing the 
singular set $\sing(\Gamma)=\{t_\Gamma\}$ in its interior and 
such that $\Gamma[J]\subset\cU_{\acute\eta\acute\sigma}$ 
and consider an orientation-preserving diffeomorphism  
$\theta:[-\epsilon,\epsilon]\to J$ satisfying $\theta(0)=t_\Gamma$.  
We set 
$\Gamma_L(t)=
\acute\eta\acute\sigma\bL_{\acute\eta\acute\sigma}(\Gamma(\theta(t)))$.  
%$-\epsilon\le t\le \epsilon$. 
The positive functions 
$\beta_1,\beta_2,\beta_3:[-\epsilon,\epsilon]\to(0,+\infty)$ 
thus obtained are of class $H^{r-1}$. 
%so that $\bQ\circ\Gamma\in\cL_3^{[H^r]}$ 
%(after appending convex arcs to its endpoints, 
%smoothening out via Lemma \ref{lemma:smoothen} and 
%reparameterizing).
To simplify the computations, 
we assume without loss of generality that the reparameterization $\theta$ 
is chosen so as to produce $\beta_2(t)=1$ constant 
(at least in a small neighborhood of $t=0$). 
We already know from Theorem 4 of 
\cite{Goulart-Saldanha0} that 
%$\sing(\bQ_{\acute\eta\acute\sigma}\circ\Gamma_L)=\{0\}$ and 
$\iti(\Gamma)=[acb]$ 
if and only if 
$m_2(t)$ has a simple zero at $t=0$ and  
both $m_1(t)$ and $m_3(t)$ have a double zero at 
$t=0$. 
%, where, as above, $\sing(\Gamma)=\{t_\Gamma\}$. 
From \eqref{equation:mj}, we therefore have  
$g_{4,1}(0)=g_{4,2}(0)=g_{2,2}(0)=0$. 
Notice that the parameters 
$g_{1,1}(0), g_{2,1}(0), g_{2,3}(0)$ 
%remain free for us to choose; they 
parameterize the intersection point 
$\Gamma(t_\Gamma)\in\Bru_{\acute\eta\acute\sigma}$, 
but do not change either the singular set or the itinerary of 
$\Gamma$. 
All these simplifications taken into account, 
Equation \eqref{equation:mj} boils down to 
\begin{equation}
\label{equation:mjbeta2const}
\begin{gathered}
m_1(0)=0, \quad
m'_1(t)=t\beta_1(t), \quad
% m''_1(t)=\beta_1(t)+t\beta'_1(t) \\ 
%\label{equation:m2}
m_2(t)=-t, \\ 
%\label{equation:m3}
m_3(0)=0, \quad
m'_3(t)=t\beta_3(t).
% m''_3(t)=\beta_3(t)+t\beta'_3(t).
\end{gathered}
\end{equation}
% Set 
% \[\hat\beta_1(t)=\frac{\beta_1(t)}{\beta_1(0)}, \quad 
% \hat\beta_3(t)=\frac{\beta_3(t)}{\beta_3(0)} \quad 
% \hat m_1(t)=\frac{m_1(t)}{\beta_1(0)}, \quad
% \hat m_3(t)=\frac{m_3(t)}{\beta_3(0)} \]
% so that $\hat m'_1(t)=t\hat\beta_1(t)$ and 
% $\hat m'_3(t)=t\hat\beta_3(t)$.
We are ready to define the desired function $u$:
\[
b_1 = \beta_1(0) > 0, \quad
b_3 = \beta_3(0) > 0, \quad
u= u(\Gamma) = \frac{b_3 \beta'_1(0) - b_1 \beta'_3(0)}{2b_1b_3}.\]
The first three items are clear.

We now study the ways the common double zero of $m_1(t)$ 
and $m_3(t)$ at $t=0$ can possibly split 
as we slightly perturb $\Gamma$
to obtain a curve $\tilde\Gamma$.
Consider the hyperplane    
\[S=
\left\{M(\bfx,\bfy) = \begin{pmatrix}
-y_1 & -1 & 0 & 0 \\
-y_2 & -x_1 & -y_3 &- 1 \\
-1 & 0 & 0 & 0 \\
x_2 & 0 & 1 & 0 \end{pmatrix}
\in\acute\eta\acute\sigma\Lo^1_4 \,;\, 
\begin{array}{l} \bfx=(x_1,x_2)\in\RR^2, \\ 
\bfy=(y_1,y_2,y_3)\in\RR^3 
\end{array}\right\}.\] 
Notice that the locally convex curve $\Gamma$ intersects 
the smooth codimension one submanifold 
$\bQ_{\acute\eta\acute\sigma}[S]
\subset\cU_{\acute\eta\acute\sigma}\subset\Spin_{n+1}$ 
transversally and at a single value of the parameter $t\in(0,1)$:
transversality comes from $g_{4,2}(t)=t$.
Transversality can also be assumed for $\tilde\Gamma$.
For each $\tilde\Gamma$, 
we also consider the corresponding 
local triangular presentation 
$\tilde\Gamma_L$, with logarithmic derivative 
%$(\tilde\Gamma(t))^{-1}\tilde\Gamma'(t)=
$\tilde\beta_1(t)\fl_1+\fl_2+\tilde\beta_3(t)\fl_3$ 
(as before, we assume the reparameterization $\tilde\theta$ 
is such that $\tilde\beta_2(t)=1$, constant). 
%Equations \eqref{equation:IVPgij} and \eqref{equation:mj}   
%still apply to $\tilde\Gamma_L$ with tildes added. 
% Let $\tilde t\in (-\epsilon,\epsilon)$ be the 
% moment when $\tilde\Gamma_L$ intersects $S$ 
% at, say, $M(\bfx,\bfy)$ 
% (notice that $\tilde t$ is a continuous function 
% of $\tilde\Gamma\in\cL^{[H^k]}_3(\cdot\,;\cdot)$). 
After a reparametrization of $t$ by a translation,
we may assume that $\tilde\Gamma_L$ intersects $S$ at $t = 0$.
The corresponding version of 
Equation \eqref{equation:mjbeta2const} is 
\begin{equation}
\label{equation:tildemjbeta2const}
\begin{gathered}
\tilde m_1(t)=x_2, \quad
\tilde m'_1(t)=t\tilde\beta_1(t), \quad
% \tilde m''_1(t)=\tilde\beta_1(t)+t\tilde\beta'_1(t) \\ 
%\label{equation:m2}
\tilde m_2(t)=-t, \\ 
%\label{equation:m3}
\tilde m_3(t)=-x_1, \quad
\tilde m'_3(t)=t\tilde\beta_3(t).
% \tilde m''_3(t)=\tilde\beta_3(t)+t\tilde\beta'_3(t).
\end{gathered}
\end{equation}
% \begin{equation}
% \label{equation:tildemjbeta2const}
% \begin{gathered}
% \tilde m_1(\tilde t)=x_2, \quad
% \tilde m'_1(t)=(t-\tilde t)\tilde\beta_1(t), \quad
% \tilde m''_1(t)=\tilde\beta_1(t)+(t-\tilde t)\tilde\beta'_1(t) \\ 
% %\label{equation:m2}
% \tilde m_2(t)=\tilde t-t, \\ 
% %\label{equation:m3}
% \tilde m_3(\tilde t)=-x_1, \quad
% \tilde m'_3(t)=(t-\tilde t)\tilde\beta_3(t), \quad
% \tilde m''_3(t)=\tilde\beta_3(t)+(t-\tilde t)\tilde\beta'_3(t).
% \end{gathered}
% \end{equation}
Notice that the graphs of the 
functions $\tilde m_1=\tilde m_1(t)$ 
and $\tilde m_3=\tilde m_3(t)$ are convex in a 
neighborhood of their local minima 
$(0, x_2)$ and $(0, -x_1)$, respectively. 

Assume $u > 0$:
we prove the topological normal form.
If $x_2 > 0$ or $x_1 < 0$ the itinerary
is as in Figure \ref{fig:planebetaprime}.
We are left with studying the quadrant $x_1 > 0$, $x_2 < 0$.
Let $\tilde b_1 = \tilde\beta_1(0) \approx b_1$,
$\tilde b_3 = \tilde\beta_3(0) \approx b_3$.
For fixed $\tilde\beta_1$ and $\tilde\beta_3$,
we consider the line segment
$(x_1,x_2) = (2s \tilde b_3 c,- 2(1-s) \tilde b_1 c)$ 
where $c > 0$ is fixed and $s \in [0,1]$.
When $s$ moves from $0$ to $1$,
the roots of $\tilde m_1$ move monotonically towards $0$.
Similarly,
the roots of $\tilde m_3$ move monotonically away from $0$.
% Thus, the positive roots cross at some $s_{+} \in [0,1]$
% and the negative roots at some $s_{-} \in [0,1]$.

We study the point $s = \frac12$.
We have $\tilde m_1(0) = x_2 = -\tilde b_1c$,
$\tilde m_3(0) = -x_1 = -\tilde b_3c$
and therefore $\tilde b_3\tilde m_1(0) = \tilde b_1\tilde m_3(0)$.
For small $|t|$ and $\tilde\beta_i$ near $\beta_i$, we have
$\tilde b_3\tilde \beta_1'(t) > \tilde b_1\tilde\beta_3'(t)$.
Thus, for small $|t|$, $t \ne 0$, we have
$\tilde b_3t \tilde \beta_1(t) > \tilde b_1t \tilde\beta_3(t)$
and therefore
$\tilde b_3\tilde m_1'(t) > \tilde b_1\tilde m_3'(t)$.
Thus, $t > 0$ implies
$\tilde b_3\tilde m_1(t) > \tilde b_1\tilde m_3(t)$
and $t < 0$ implies
$\tilde b_3\tilde m_1(t) < \tilde b_1\tilde m_3(t)$.
Thus, if $t > 0$ and $\tilde m_1(t) = 0$ we have $\tilde m_3(t) < 0$;
if $t < 0$ and $\tilde m_1(t) = 0$ we have $\tilde m_3(t) > 0$.
At this point the itinerary is therefore $acbac$.

Monotonicity implies that as $s$ goes from $0$ to $1$
the itinerary changes from $a[cb]a$ (at $s = 0$)
to $acbca$ to $acb[ac]$ (for a unique $s_{-} \in (0,\frac12)$)
to $acbac$ (at an open neighborhood of $s = \frac12$)
to $[ac]bac$ (for a unique $s_{+} \in (\frac12,1)$)
to $cabac$ to $c[ab]c$ (at $s = 1$).
A piecewise linear reparamatrization leads to
Figure \ref{fig:planebetaprime}.
The case $u < 0$ is of course similar.
\end{proof}

\section{Final remarks}
\label{section:final}

For $q \in \Quat_{n+1}$, let $\cL_n(q)$ be
the space of locally convex curves $\Gamma$ in $\Spin_{n+1}$
with $\Gamma(0) = 1$ and $\Gamma(1) = q$.
The present paper constructs a stratification of the space $\cL_n(q)$.
We proved the necessary results for the construction
of a homotopy equivalent CW complex.
We detail this construction
and prove some consequences in \cite{Goulart-Saldanha-cw}.
% we hope to obtain other consequences in future papers.
In this final section we discuss some of the methods involved,
state a few consequences proved in
\cite{Goulart-Saldanha-cw, Alves-Goulart-Saldanha}
and also mention a conjectural result.
% state as a conjecture a sample result of the work in progress.

One important construction in \cite{Shapiro-Shapiro} 
and \cite{Saldanha3} is the add-loop procedure, 
which, in certain cases, is used to loosen up 
compact families of nondegenerate curves
through a homotopy in $\cL_n(q)$. 
The resulting families of curly curves are then maleable:
if a homotopy exists in the space of immersions, another
homotopy exists in the space of locally convex curves. 
In \cite{Saldanha3}, for instance, open dense subsets 
$\mathcal{Y}_{\pm}\subset\cL_2(\pm 1)$ are shown to be 
homotopy equivalent to the space of loops $\Omega\Ss^3$.
% Thus, certain questions regarding homotopies can be 
% transferred to the space of continuous paths in the 
% group $\Spin_{n+1}$.
This approach is reminiscent of classical constructions such as 
Thurston's eversion of the sphere by corrugations 
\cite{Levy-Maxwell-Munzner} 
and the proof of Hirsch-Smale Theorem \cite{Hirsch, Smale}. 
It can be considered as an elementary instance of the h-principle 
of Eliashberg and Gromov \cite{Eliashberg-Mishachev, Gromov}.
Theorem 3, % \ref{theo:Y}
Corollary 1.1 % \ref{coro:notcenter}
and Lemma 10.3 % \ref{lemma:66bis}
in \cite{Goulart-Saldanha-cw}
are based on this method and
apply to higher dimensions.
We restate here that Corollary 1.1:

% Recall that the center $Z(\SO_{n+1})$ of $\SO_{n+1}$ equals
% $\{I\}$ if $n$ is even and $\{\pm I\}$ if $n$ is odd
% (here $I$ is the identity matrix).
% We have $Z(\Quat_{n+1}) = \Pi^{-1}[Z(\SO_{n+1})]$.

\begin{coro}
\label{coro:center}
If $q \in \Quat_{n+1}\smallsetminus Z(\Quat_{n+1})$ then
the inclusion $i_q: \cL_n(q) \to \Omega\Spin_{n+1}$
is a weak homotopy equivalence.
\end{coro}

% In particular, there are at most two or four homotopy types 
% distinct from that of $\Omega\Spin_{n+1}$, depending on 
% the parity of $n$.

We now restate the main result from \cite{Alves-Goulart-Saldanha},
which gives the homotopy type of spaces of locally convex curves
in the sphere $\Ss^3$.
A similar result for $\Ss^2$ is the main result in \cite{Saldanha3}.
Recall that $\Spin_4 = \Ss^3 \times \Ss^3$; 
% $\Quat_3=\operatorname{Q}_8
% =\{\pm1, \pm\mathbf{i}, \pm\mathbf{j}, \pm\mathbf{k}\}$ 
the subgroup $\Quat_4 \subset  \Spin_4$
is generated by $(1,-1)$, $(\mathbf{i},\mathbf{i})$
and $(\mathbf{j},\mathbf{j})$.
The center of $\Quat_4$ is $Z(\Quat_4) = \{(\pm 1, \pm 1)\}$.
% The main result in \cite{Saldanha3} classifies the spaces $\cL_2(q)$, 
% $q\in\Quat_3$, into the following three weak homotopy types: 
% for $q\neq\pm 1$, 
% consistently with Conjecture \ref{conj:center}, we have 
% $\cL_2(q)\approx\Omega\Ss^3$; otherwise, 
% \begin{equation}
% \label{equation:L2}
% %\cL_2(-1) \approx
% \cL_2(-1)\approx\Omega\Ss^3 \vee \Ss^0 \vee \Ss^4 \vee \Ss^{8} \vee \cdots, \qquad 
% % \cL_2(z) \approx
% \cL_2(1)\approx\Omega\Ss^3 \vee \Ss^2 \vee \Ss^6 \vee \Ss^{10} \vee \cdots .
% \end{equation}

% The following conjecture promises a similar result for $n=3$
% (see \cite{Alves-Saldanha} for more on these spaces).

\begin{theo}
\label{theo:L3}
%  If $q \in \Quat_4 \smallsetminus Z(\Quat_4)$
%  then $\cL_3(q)$ is weakly homotopy equivalent
%  to $\Omega(\Ss^3 \times \Ss^3)$.
We have the following weak homotopy equivalences:
\begin{align*}
\cL_3((+1,+1)) &\approx 
\Omega(\Ss^3 \times \Ss^3) \vee \Ss^4 \vee \Ss^8 \vee \Ss^8
\vee \Ss^{12} \vee \Ss^{12} \vee \Ss^{12} \vee \cdots, \\
\cL_3((-1,-1)) &\approx 
\Omega(\Ss^3 \times \Ss^3) \vee \Ss^2 \vee \Ss^6 \vee \Ss^6
\vee \Ss^{10} \vee \Ss^{10} \vee \Ss^{10} \vee \cdots, \\
\cL_3((+1,-1)) &\approx 
\Omega(\Ss^3 \times \Ss^3) \vee \Ss^0 \vee \Ss^4 \vee \Ss^4
\vee \Ss^{8} \vee \Ss^{8} \vee \Ss^{8} \vee \cdots, \\
\cL_3((-1,+1)) &\approx 
\Omega(\Ss^3 \times \Ss^3) \vee \Ss^2 \vee \Ss^6 \vee \Ss^6
\vee \Ss^{10} \vee \Ss^{10} \vee \Ss^{10} \vee \cdots.
\end{align*}
The above bouquets include one copy of $\Ss^k$,
two copies of $\Ss^{(k+4)}$, \dots, $j+1$ copies of $\Ss^{(k+4j)}$, \dots,
and so on.
\end{theo}

The presence of $\Ss^0$ in the bouquet
indicates the presence of the contractible connected component
of convex curves.

Our methods allow us to study the corresponding problem
for locally convex curves in $\Ss^n$, $n > 3$.
We do not have a conjectural homotopy type in general,
but we hope to be able to prove the following result.

\begin{conj}
\label{conj:nothomotopic}
If $q \in Z(\Quat_{n+1})$
then the space $\cL_n(q)$ is \emph{not} homotopy equivalent
to $\Omega\Spin_{n+1}$.
\end{conj}

\bibliography{gs}
\bibliographystyle{plain}

%\printbibliography
% \footnote{jose.g.nascimento@ufes.br;
% Departamento de Matem\'atica, UFES,
% Av. Fernando Ferrari 514; Campus de Goiabeiras, Vit\'oria, ES 29075-910, Brazil;
% Departamento de Matem\'atica, PUC-Rio,  
% R. Marqu\^es de S. Vicente 255, Rio de Janeiro, RJ 22451-900, Brazil.  

% \footnote{saldanha@puc-rio.br;
% Departamento de Matem\'atica, PUC-Rio.  }

\noindent
Victor Goulart \\
Departamento de Matem\'atica, UFES, \\
Av. Fernando Ferrari 514; Campus de Goiabeiras, Vit\'oria, ES 29075-910, Brazil. \\
\url{jose.g.nascimento@ufes.br}

\smallskip

\noindent
 Nicolau C. Saldanha \\
Departamento de Matem\'atica, PUC-Rio, \\
R. Marqu\^es de S. Vicente 255, Rio de Janeiro, RJ 22451-900, Brazil.  \\
\url{saldanha@puc-rio.br}

\end{document}